\theoremstyle{plain}
\newtheorem{proposition}{Proposition}[section] 
\newtheorem{theorem}[proposition]{Theorem}
\newtheorem{lemma}[proposition]{Lemma}  
\newtheorem{corollary}[proposition]{Corollary}
\theoremstyle{definition}
\newtheorem{example}[proposition]{Example} 
\newtheorem{definition}[proposition]{Definition}
\newtheorem{observation}[proposition]{Observation}
\newtheorem{conjecture}[proposition]{Conjecture}
\newtheorem{problem}[proposition]{Problem}
\theoremstyle{remark}
\newtheorem{remark}[proposition]{Remark}
\DeclareMathOperator{\Isom}{\mathsf{Isom}}
\DeclareMathOperator{\diam}{diam}
\DeclareMathOperator{\supp}{supp}
\DeclareMathOperator{\interior}{int}
\DeclareMathOperator{\id}{id}
\DeclareMathOperator{\rank}{rank}
\DeclareMathOperator{\dist}{d}
\DeclareMathOperator{\Cc}{\mathcal{C}}
\DeclareMathOperator{\Fc}{\mathcal{F}}
\DeclareMathOperator{\Nc}{\mathcal{N}}
\DeclareMathOperator{\Oc}{\mathcal{O}}
\DeclareMathOperator{\Tc}{\mathcal{T}}
\DeclareMathOperator{\Pc}{\mathcal{P}}
\DeclareMathOperator{\Hb}{\mathbb{H}}
\DeclareMathOperator{\Nb}{\mathbb{N}}
\DeclareMathOperator{\Pb}{\mathbb{P}}
\DeclareMathOperator{\Rb}{\mathbb{R}}
\DeclareMathOperator{\Sb}{\mathbb{S}}
\DeclareMathOperator{\Gsf}{\mathsf{G}}
\DeclareMathOperator{\Psf}{\mathsf{P}}
\DeclareMathOperator{\SL}{\mathsf{SL}}
\newcommand{\abs}[1]{\left|#1\right|}
\newcommand{\norm}[1]{\left\|#1\right\|}
\newcommand{\Ga}{\Gamma}
\newcommand{\ga}{\gamma}
\newcommand{\F}{\mathcal{F}}
\newcommand{\La}{\Lambda}
\newcommand{\Msf}{\mathsf{M}}
\newcommand{\Asf}{\mathsf{A}}
\newcommand{\Nsf}{\mathsf{N}}
\newcommand{\Ksf}{\mathsf{K}}
\newcommand{\Usf}{\mathsf{U}}
\newcommand{\fa}{\mathfrak{a}}
\newcommand{\mfa}{\mathfrak{a}}
\newcommand{\mfp}{\mathfrak{p}}
\newcommand{\Hsf}{\mathsf{H}}
\newcommand{\R}{\mathbb{R}}
\newcommand{\ba}{\backslash}
\newcommand{\opp}{\mathrm{i}}
\newcommand{\Mod}{\operatorname{Mod}}
\newcommand{\PMF}{\mathcal{PMF}}   
\newcommand{\UE}{\mathcal{UE}}
\begin{document}

\title[Rigidity for PS-systems, random walks, and entropy~rigidity]{Rigidity for Patterson--Sullivan systems with applications to random walks and entropy~rigidity}

\author[Kim]{Dongryul M. Kim}
\email{dongryul.kim97@gmail.com}
\address{Department of Mathematics, Yale University, USA}

\author[Zimmer]{Andrew Zimmer}
\email{amzimmer2@wisc.edu}
\address{Department of Mathematics, University of Wisconsin-Madison, USA}

%\date{\today}

 \keywords{Patterson--Sullivan measures, random walks, Gromov hyperbolic metric spaces, mapping class groups, semisimple Lie groups,  entropy rigidity}
 \subjclass[2020]{20H10, 37F32, 37H12, 22E40, 37A99, 57K20, 30F60}

\begin{abstract} In this paper we introduce Patterson--Sullivan systems,  which consist of a group action on a compact metrizable space and a quasi-invariant measure which behaves like a classical Patterson--Sullivan measure. For such systems we prove a generalization of Tukia's measurable boundary rigidity theorem. We then apply this generalization to (1) study the singularity conjecture for Patterson--Sullivan measures (or, conformal densities) and stationary measures of random walks on isometry groups of Gromov hyperbolic spaces, mapping class groups, and discrete subgroups of semisimple Lie groups; (2)  prove versions of Tukia's theorem for word hyperbolic groups, Teichm\"uller spaces, and higher rank symmetric spaces; and (3) in a companion paper prove an entropy rigidity result for  Anosov groups with Lipschitz limit sets.
\end{abstract}

\maketitle

\vspace{-3em}

\setcounter{tocdepth}{1}
\tableofcontents

\section{Introduction}  

Let $\Hb^n$ denote real hyperbolic $n$-space and let $\partial_\infty \Hb^n$ denote its boundary at infinity. Given a discrete subgroup $\Ga < \Isom(\Hb^n)$ and $\delta \ge 0$, a Borel probability measure $\mu$ on $\partial_{\infty} \Hb^n$ is called a Patterson--Sullivan measure (or conformal measure) for $\Ga$ of dimension $\delta$ if for any $\ga \in \Ga$ and Borel subset $E \subset \partial_{\infty} \Hb^n$,
\begin{equation} \label{eqn.conformalmeasure}
\mu(\ga E) = \int_E | \ga'|^{\delta} d \mu.
\end{equation}
These measures play a fundamental role in the study of geometry and dynamics of discrete subgroups of $\Isom(\Hb^n)$, or equivalently, of hyperbolic $n$-manifolds.

The celebrated rigidity theorem of Mostow \cite{Mostow_QC, Mostow_book} asserts that the geometry of a finite-volume hyperbolic $n$-manifold, $n \ge 3$, is determined by its fundamental group (see also \cite{Prasad_rigidity}). 
By considering Patterson--Sullivan measures, Tukia generalized Mostow's rigidity theorem to infinite-volume hyperbolic manifolds, as in the following theorem (which implies Mostow's rigidity).

\begin{theorem} \cite[Thm. 3C]{Tukia1989} \label{thm:Tukia} For $i=1,2$ let $\Gamma_i < \Isom(\Hb^{n_i})$ be a Zariski dense discrete subgroup and let $\mu_i$ be a Patterson--Sullivan measure for $\Gamma_i$ of dimension $\delta_i$. Suppose 
\begin{itemize}
\item $\sum_{\gamma \in \Gamma_1} e^{-\delta_1 \dist(o,\gamma o)} = +\infty$ for some $o \in \Hb^{n_1}$.  
\item There exists an onto homomorphism $\rho : \Gamma_1 \rightarrow \Gamma_2$ and a $\mu_1$-a.e. defined measurable $\rho$-equivariant injective boundary map $f : \partial_{\infty} \Hb^{n_1} \rightarrow \partial_{\infty} \Hb^{n_2}$.
\end{itemize}
If the measures $f_* \mu_1$ and $\mu_2$ are not singular, then $n_1=n_2$ and $\rho$ extends to an isomorphism $\Isom(\Hb^{n_1}) \rightarrow \Isom(\Hb^{n_2})$. 
\end{theorem}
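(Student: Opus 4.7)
The target is to upgrade the measurable injection $f$ to a Möbius homeomorphism $F : \partial_\infty \Hb^{n_1} \to \partial_\infty \Hb^{n_2}$ that intertwines $\Gamma_1$ and $\Gamma_2$ via $\rho$. Since $\Gamma_2$ is Zariski dense in $\Isom(\Hb^{n_2})$, the existence of such an $F$ immediately forces $n_1 = n_2$ and extends $\rho$ to an isomorphism of the ambient isometry groups by conjugation. The whole problem therefore reduces to a measurable rigidity statement: $f$ must agree $\mu_1$-a.e. with a Möbius transformation.

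First I would invoke the Hopf--Tsuji--Sullivan dichotomy: divergence of $\sum_{\gamma \in \Gamma_1} e^{-\delta_1 \dist(o,\gamma o)}$, together with $\mu_1$ being a $\delta_1$-conformal density, forces $\Gamma_1$ to act ergodically on $(\partial_\infty \Hb^{n_1} \times \partial_\infty \Hb^{n_1}, \mu_1 \otimes \mu_1)$ and makes $\mu_1$-almost every boundary point a conical limit point. Combining ergodicity with $\rho$-equivariance and the non-singularity hypothesis (the set where $f_\ast \mu_1$ and $\mu_2$ are comparable is $\rho(\Gamma_1)$-invariant, and the analogous invariance on the target side passes through $f$) upgrades non-singularity to mutual absolute continuity: $f_\ast \mu_1 \sim \mu_2$ on $f(\partial_\infty \Hb^{n_1})$.

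The heart of the proof is a conformal zooming argument at a generic conical limit point. Fix a $\mu_1$-typical $\xi \in \partial_\infty \Hb^{n_1}$ at which $f$ is defined, $f(\xi)$ is a density point for $\mu_2$, and the conical condition holds: there exist $\gamma_k \in \Gamma_1$ with $\gamma_k^{-1} o$ tracking a geodesic ray to $\xi$. Then $\gamma_k^{-1}$ acts on $\partial_\infty \Hb^{n_1}$ with North--South dynamics and, in stereographic coordinates, expands shrinking neighborhoods $U_k$ of $\xi$ by uniformly bi-Lipschitz conformal maps onto neighborhoods of bounded diameter around some limit point. After passing to a subsequence, $\rho(\gamma_k)^{-1}$ does the analogous thing near $f(\xi)$ in $\partial_\infty \Hb^{n_2}$. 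By $\rho$-equivariance,
\[
f\big|_{U_k} \;=\; \rho(\gamma_k)^{-1} \circ f \circ \gamma_k\big|_{U_k},
\]
so in these charts $f$ is displayed as a uniformly quasisymmetric family of injections between bounded regions. The conformality equation \eqref{eqn.conformalmeasure} controls how $(\gamma_k^{-1})_\ast \mu_1$ and $(\rho(\gamma_k)^{-1})_\ast \mu_2$ rescale, and the mutual absolute continuity of $f_\ast \mu_1$ and $\mu_2$ passes to the rescaled tangent measures, which are positive multiples of the round conformal densities of appropriate dimension on $\Sb^{n_1-1}$ and $\Sb^{n_2-1}$.

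The main obstacle is extracting a genuine limit of the zoomed maps and identifying it as Möbius; I expect this to be the most delicate step. It should follow from the Arzelà--Ascoli-type compactness of normalized quasisymmetric families combined with Tukia's measurable Liouville-type theorem, which states that a measurable injection between positive-measure subsets of round spheres that conjugates two round conformal measures is the restriction of a Möbius transformation. Any subsequential limit $F_0$ is therefore a local Möbius embedding $W \to \partial_\infty \Hb^{n_2}$ with $W \subset \partial_\infty \Hb^{n_1}$ of positive measure, which forces $n_1 = n_2$. Finally, $\rho$-equivariance of $f$ and Zariski density of $\Gamma_1$ and $\Gamma_2$ in $\Isom(\Hb^{n_1})$, $\Isom(\Hb^{n_2})$ propagate $F_0$ to a global Möbius homeomorphism $F$ with $F \circ \gamma = \rho(\gamma) \circ F$ for every $\gamma \in \Gamma_1$, yielding the desired extension of $\rho$.
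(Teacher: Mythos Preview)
Your sketch follows Tukia's original strategy: zoom in at conical limit points and show that $f$ coincides $\mu_1$-a.e.\ with a M\"obius map. One point to tighten: the displayed identity $f|_{U_k} = \rho(\gamma_k)^{-1}\circ f\circ\gamma_k|_{U_k}$ is literally the $\rho$-equivariance of $f$ and holds identically, so the ``family'' of zoomed maps is the constant family $\{f\}$ and no nontrivial limit is being taken. The genuine content of the zoom is that the rescaled \emph{measures} converge (shadow lemma plus Lebesgue differentiation) and that non-singularity forces the expansion rate of $\rho(\gamma_k)^{-1}$ at $f(\xi)$ to be comparable to that of $\gamma_k^{-1}$ at $\xi$; your assertion that ``$\rho(\gamma_k)^{-1}$ does the analogous thing near $f(\xi)$'' is exactly this step and is where the shadow-lemma and absolute-continuity input actually enters.

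The paper's derivation is genuinely different. Rather than identifying $f$ as M\"obius, it applies the abstract PS-system machinery (Theorem~\ref{thm:main rigidity theorem in intro}; see Remark~\ref{rmk.divergence implies full}) to obtain the displacement comparison
\[
\sup_{\gamma\in\Gamma_1}\bigl|\,\delta_1\,\dist_{\Hb^{n_1}}(o_1,\gamma o_1)-\delta_2\,\dist_{\Hb^{n_2}}(o_2,\rho(\gamma)o_2)\,\bigr|<+\infty,
\]
and then invokes marked length spectrum rigidity to conclude $n_1=n_2$ and that $\rho$ extends. Your approach stays on the boundary and recovers $f$ explicitly as a M\"obius map; the paper's approach works on the group, never shows $f$ is M\"obius, and outsources the final step to an external rigidity theorem---but in exchange the same argument applies verbatim to all the non-hyperbolic PS-systems treated later.
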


Prior to Tukia's work,  Sullivan~\cite[Thm. 5]{Sullivan1982} proved the above theorem in the special case when $\delta_1 = \delta_2$ and $n_1 = n_2$. Later Yue~\cite{Yue1996} extended Tukia's theorem to discrete subgroups in isometry groups of negatively curved symmetric spaces.

In this paper, we define ``Patterson--Sullivan systems'' which consist of a group action and a quasi-invariant measure which behaves like a classical Patterson--Sullivan measure. More precisely, given a  compact metrizable space $M$ and a subgroup $\Gamma < \mathsf{Homeo}(M)$, a function $\sigma : \Ga \times M \to \mathbb{R}$ is called a \emph{$\kappa$-coarse-cocycle} if 
\begin{equation} \label{eqn.coarsecocycle intro}
   \left| \sigma(\ga_1 \ga_2, x) - \left( \sigma(\ga_1, \ga_2 x) + \sigma(\ga_2, x) \right) \right| \le \kappa
   \end{equation}
  for any $\ga_1, \ga_2 \in \Ga$ and $x \in M$.
Given such a coarse-cocycle and $\delta \ge 0$, a Borel probability measure $\mu$ on $M$ is called \emph{coarse $\sigma$-Patterson--Sullivan measure of dimension $\delta$} if there exists $C\geq1$ such that for any $\ga \in \Ga$ the measures $\mu, \gamma_*\mu$ are absolutely continuous and 
   \begin{equation} \label{eqn.coarse PS meaasure intro}
   C^{-1}e^{ - \delta \sigma(\ga^{-1}, x)} \le \frac{d \ga_* \mu}{d \mu}(x) \le Ce^{- \delta \sigma(\ga^{-1}, x)} \quad \text{for } \mu\text{-a.e. } x \in M.
   \end{equation} 
When $C = 1$ and hence equality holds in Equation \eqref{eqn.coarse PS meaasure intro}, we call $\mu$ a \emph{$\sigma$-Patterson--Sullivan measure}.
\begin{remark}
   We note that we \emph{do not assume anything on the support} of a Patterson--Sullivan measure (e.g. supported on a minimal set). Further, in specific settings these measures are sometimes called \emph{(quasi-)conformal densities}. 

\end{remark}

   A Patterson--Sullivan system consists of a coarse Patterson--Sullivan measure, a collection of open sets called shadows, and a choice of magnitude function all of which satisfy certain properties (see Section~\ref{sec:general framework} for the precise definition). The definition is quite robust and in Example~\ref{ex:PS systems} below we list a number of examples of Patterson--Sullivan systems.
   
   In a classical setting, $M$ is the boundary of real hyperbolic space, the coarse-cocycle is an actual cocycle (implicit in Equation~\eqref{eqn.conformalmeasure}), the shadows are the geodesic shadows, and the magnitude of an element is the distance it translates a fixed basepoint.

   For Patterson--Sullivan systems we prove a version of Tukia's measurable boundary rigidity theorem (Theorem~\ref{thm:Tukia}). Before stating our general theorem in Section~\ref{sec:general framework} below, we describe a number of applications.

\subsection{Random walks}

In this section we describe applications of our main theorem towards the singularity conjecture for Patterson--Sullivan measures and stationary measures of random walks in a variety of settings.  

One novelty in this work is the observation that the singularity conjecture can be studied via Tukia-type measurable boundary rigidity theorems.

\subsubsection{Random walks on Gromov hyperbolic spaces}\label{sec:random walks GH}  Suppose $(X, \dist_X)$  is a proper geodesic Gromov hyperbolic metric space and $\Gamma < \Isom(X)$ is a non-elementary discrete subgroup. Let $\mathsf{m}$ be a probability measure on $\Gamma$ whose support generates $\Gamma$ as a semigroup, i.e.
\begin{equation} \label{eqn.RW generates Gamma} 
\bigcup_{n \geq 1} [\supp \mathsf{m} ]^n = \Gamma.
\end{equation} 
Consider the random walk $W_n = \gamma_1 \cdots \gamma_n$ where the $\gamma_i$'s are independent identically distributed elements of $\Gamma$ each with distribution $\mathsf{m}$. Then, given $o \in X$, almost every sample path $W_n o \in X$ converges to a point in the Gromov boundary $\partial_\infty X$ \cite[Remark following Thm. 7.7]{Kaimanovich2000} (see also \cite{MaherTiozzo2018}). Further, 
\begin{equation} \label{eqn.hitting measure intro}
\nu(A) := {\rm Prob}\left( \lim_{n \rightarrow \infty} W_n o  \in A\right)
\end{equation}
defines a Borel probability measure $\nu$ on $\partial_\infty X$ called the \emph{hitting measure} (or \emph{harmonic measure}) for the random walk associated to $\mathsf{m}$, and is the unique \emph{$\mathsf{m}$-stationary measure} on $\partial_{\infty} X$, that is $\mathsf{m} * \nu = \nu$. 

Fixing a basepoint $o \in X$, the \emph{coarse Busemann cocycle} $\beta : \Gamma \times \partial_\infty X \rightarrow \Rb$ is the coarse-cocycle defined by 
\begin{equation} \label{eqn.Busemann cocycle intro}
\beta(g, x) = \limsup_{p \rightarrow x} \, \dist_X(p, g^{-1} o) - \dist_X(p,o).
\end{equation}
A \emph{coarse Busemann Patterson--Sullivan measure} on $\partial_\infty X$ is a coarse $\beta$-Patterson--Sullivan measure in the sense of Equation~\eqref{eqn.coarse PS meaasure intro}.

We will apply our generalization of Theorem \ref{thm:Tukia} to the following well-studied problem.

\begin{problem}[Singularity Problem]\label{problem:random walks GH} If $\mathsf{m}$ has finite support, determine when the $\mathsf{m}$-stationary measure $\nu$ is singular to some/any coarse Busemann Patterson--Sullivan measure for $\Gamma$ on $\partial_\infty X$.
\end{problem} 

In what follows, we will consider a slightly more general class of probability measures: The probability measure $\mathsf{m}$ has \emph{finite superexponential moment} if 
\begin{equation} \label{eqn.superexponential intro}
   \sum_{\ga \in \Ga} c^{\abs{\ga}} \mathsf{m}(\ga)< +\infty
\end{equation} for any $c > 1$, where $\abs{\cdot}$ is the distance from the identity with respect to a word metric on $\Ga$. 

We first present some applications of one of our main results (Theorem \ref{thm:random walks GH in intro}) towards Problem \ref{problem:random walks GH}. For \emph{any finitely generated Kleinian group}, we obtain the following, which was previously known only for geometrically finite groups \cite{GT2020}.

\begin{corollary}[corollary of Theorem~\ref{thm.singularity harm GH} and Corollary \ref{cor.singularity hitting PS symmetric space intro}]\label{cor.singularity hitting PS H3 intro} Suppose $X = \Hb^3$, $\Gamma < \Isom(\Hb^3)$ is a non-elementary finitely generated discrete subgroup, and $\mathsf{m}$ has finite superexponential moment. If $\Gamma$ is not convex cocompact, then the $\mathsf{m}$-stationary measure  $\nu$ is singular to every coarse Busemann Patterson--Sullivan measure of $\Gamma$ on $\partial_\infty \Hb^3$. In particular, if $\Gamma$ is not a cocompact lattice, then $\nu$ is singular to the Lebesgue measure class on $\partial_{\infty} \Hb^3$.
\end{corollary}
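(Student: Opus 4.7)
The plan is to prove the first claim by contrapositive, and then derive the ``in particular'' statement through a short case analysis. So suppose the $\mathsf{m}$-stationary measure $\nu$ is not singular to some coarse Busemann Patterson--Sullivan measure $\mu$ of dimension $\delta$ on $\partial_\infty \Hb^3$. Since $\mathsf{m}$ has finite superexponential moment, the hitting measure $\nu$ is itself a coarse Patterson--Sullivan measure of dimension $1$ for a Martin-type coarse-cocycle arising from the random walk, so $\nu$ and $\mu$ fit into two Patterson--Sullivan systems over the same $\Gamma$-action on $\partial_\infty \Hb^3$. Applying the general Tukia-type rigidity theorem for Patterson--Sullivan systems with boundary map $f = \id$ and homomorphism $\rho = \id$, non-singularity forces the two underlying coarse-cocycles to be cohomologous up to a bounded error. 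This is precisely the rigidity input needed to invoke Theorem~\ref{thm.singularity harm GH}.

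For a non-elementary finitely generated Kleinian group $\Gamma < \Isom(\Hb^3)$, the output of Theorem~\ref{thm.singularity harm GH} is incompatible with $\Gamma$ being non convex cocompact once combined with two classical results from Kleinian group theory: the Tameness Theorem of Agol and of Calegari--Gabai (asserting that $\Hb^3/\Gamma$ is homeomorphic to the interior of a compact $3$-manifold), and Canary's Covering Theorem controlling geometrically infinite ends. Together these rule out both cusps and geometrically infinite ends, forcing $\Gamma$ to be convex cocompact. This contradicts the hypothesis and establishes the first claim.

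For the ``in particular'' statement, assume $\Gamma$ is not a cocompact lattice, and consider two cases. If $\Gamma$ is not convex cocompact, the first claim applies with $\mu$ taken to be the Lebesgue measure class on $\partial_\infty \Hb^3 \cong \Sb^2$---a coarse Busemann Patterson--Sullivan measure of dimension $2$, as Lebesgue is M\"obius conformal of weight $2$ under every element of $\Isom(\Hb^3)$---so $\nu$ is singular to Lebesgue. If instead $\Gamma$ is convex cocompact but not a cocompact lattice, then the critical exponent satisfies $\delta_\Gamma < 2$ and the limit set $\Lambda(\Gamma) \subset \partial_\infty \Hb^3$ has Hausdorff dimension $\delta_\Gamma$ by Sullivan--Tukia, hence Lebesgue measure zero. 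Since $\nu$ is supported on $\Lambda(\Gamma)$, $\nu$ is again singular to Lebesgue.

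The main technical obstacle is the step from the abstract rigidity conclusion provided by Theorem~\ref{thm.singularity harm GH} to genuine convex cocompactness of $\Gamma$; this step is what requires the Tameness Theorem and, in turn, is what allows the result to cover all finitely generated Kleinian groups rather than just the geometrically finite case previously treated in \cite{GT2020}.
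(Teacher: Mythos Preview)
Your argument has a genuine gap: you never verify the hypothesis of Theorem~\ref{thm.singularity harm GH} that $\Gamma$ is relatively hyperbolic \emph{as an abstract group}. This is not automatic for a finitely generated Kleinian group, and it is exactly the step that distinguishes Corollary~\ref{cor.singularity hitting PS H3 intro} from the previously known geometrically finite case. The paper supplies this via the Scott core theorem together with Thurston's hyperbolization (see the paragraph following Corollary~\ref{cor.singularity hitting PS symmetric space intro}): $\Gamma$ is the fundamental group of a compact $3$-manifold with boundary, which then carries a geometrically finite hyperbolic structure, yielding relative hyperbolicity relative to virtually abelian peripherals.

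This gap also undermines your first paragraph. The assertion that $\nu$ is a coarse PS-measure for a ``Martin-type coarse-cocycle'' with magnitude $\dist_G(\id,\cdot)$, and that this sits in a well-behaved PS-system, is the content of Theorem~\ref{thm:random walks are well behaved}, whose proof rests on the Ancona-type inequality of \cite{GGPY2021}---a result proved for relatively hyperbolic groups. Without relative hyperbolicity you have no shadow estimate tying $\sigma_{\mathsf m}$ to the Green metric, so the abstract rigidity theorem cannot be invoked with the Green magnitude on the random-walk side.

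Finally, your use of Tameness and Canary's Covering Theorem is misplaced. Once relative hyperbolicity is known, Theorem~\ref{thm.singularity harm GH} (or its refinement Theorem~\ref{thm:random walks GH in intro}) already yields quasi-convexity directly from non-singularity; in $\Hb^3$ quasi-convex is the same as convex cocompact, and there is nothing further to rule out. Tameness could instead be used as an alternative route to relative hyperbolicity, but that is a different role from the one you assign it. Your treatment of the ``in particular'' clause via the Hausdorff dimension of the limit set is essentially correct and parallels the paper's argument in Corollary~\ref{cor.singularity hitting PS symmetric space}.
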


Our results for general $X$ involve relatively hyperbolic groups which is a class of finitely generated groups including word hyperbolic groups, whose definition we delay to Definition \ref{defn:RH}, and quasi-convex subgroups of $\Isom(X)$ which are discrete subgroups whose orbits are quasi-convex in $X$.

\begin{theorem}[corollary of Theorem~\ref{thm:random walks GH in intro}]\label{thm.singularity harm GH} Suppose  $\Gamma$ is relatively hyperbolic (as an abstract group) and $\mathsf{m}$ has finite superexponential moment. If $\Gamma$ is not a quasi-convex subgroup of $\Isom(X)$, then the $\mathsf{m}$-stationary measure $\nu$ is singular to every coarse Busemann Patterson--Sullivan measure on $\partial_\infty X$. 
\end{theorem}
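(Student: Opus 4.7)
I would argue by contrapositive: assuming that $\nu$ is not singular to some coarse Busemann Patterson--Sullivan measure $\mu$ of dimension $\delta$ on $\partial_\infty X$, I will deduce that $\Gamma$ must be quasi-convex in $\Isom(X)$. The strategy is to package both measures as Patterson--Sullivan systems over a common intrinsic boundary of $\Gamma$ and then invoke the general rigidity theorem of the paper.

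The first step is to realize the hitting measure $\nu$ as the pushforward of a coarse Patterson--Sullivan measure on the Bowditch boundary $\partial\Gamma$. Since $\Gamma$ is relatively hyperbolic, random walks on $\Gamma$ with finite superexponential moment converge almost surely to $\partial \Gamma$ (by results going back to Ancona and extended by Gou\"ezel and others), and the resulting hitting measure $\nu_G$ is a coarse Patterson--Sullivan measure for a Green cocycle $\sigma_G : \Gamma \times \partial \Gamma \to \R$; the superexponential moment hypothesis is exactly what ensures that $\sigma_G$ is a well-defined coarse cocycle that is coarsely comparable to the word length, i.e. $\sigma_G(g, x) = \abs{g} + O(1)$ on a full $\nu_G$-measure subset. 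Meanwhile, since $\Gamma$ is relatively hyperbolic and $X$ is Gromov hyperbolic, the orbit map $\ga \mapsto \ga o$ extends to a measurable $\Gamma$-equivariant boundary map $\iota : \partial \Gamma \to \partial_\infty X$ pushing $\nu_G$ forward to $\nu$, and the pullback $\iota^* \mu$ is a coarse Patterson--Sullivan measure of dimension $\delta$ for the cocycle $\beta \circ \iota$ on $\partial \Gamma$.

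The second step is to apply the general Patterson--Sullivan system rigidity theorem of this paper to the two coarse Patterson--Sullivan measures $\nu_G$ and $\iota^* \mu$ on $\partial \Gamma$. Non-singularity of $\nu$ and $\mu$, together with the almost-sure injectivity of $\iota$ (a standard consequence of convergence of the random walk and minimality of the $\Ga$-action on $\partial \Ga$), promotes to non-singularity of $\nu_G$ and $\iota^* \mu$. The rigidity theorem then forces a coarse linear relation between the two cocycles on the support of $\nu_G$: there exist $\lambda > 0$ and a uniformly bounded function $u$ on $\Ga \times \partial \Ga$ with $\delta \, \beta(g, \iota(x)) = \lambda \, \sigma_G(g, x) + u(g, x)$ for $\nu_G$-almost every $x$ and every $g \in \Ga$.

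The final step is geometric. Specializing $x$ to a generic limit point of the sequence $g^{-n} o$ (or applying a standard attracting--fixed--point argument), the Busemann value $\beta(g, \iota(x))$ agrees with $-\dist_X(o, g o)$ up to bounded error. Combined with $\sigma_G(g,x) = \abs{g} + O(1)$, this yields $\dist_X(o, g o) \asymp \abs{g}$, so the orbit map $\Ga \to \Ga o \subset X$ is a quasi-isometric embedding. For a properly discontinuous action on a proper Gromov hyperbolic space, this is equivalent to the orbit being quasi-convex in $X$, which contradicts the hypothesis. The main obstacle will be the first step: rigorously producing the Green cocycle and the boundary map $\iota$ with the right regularity, and verifying that both $(\Ga, \partial\Ga, \sigma_G, \nu_G)$ and $(\Ga, \partial\Ga, \beta\circ\iota, \iota^*\mu)$ genuinely form Patterson--Sullivan systems in the sense of Section~\ref{sec:general framework}, especially handling the parabolic subgroups of the relatively hyperbolic structure where the cocycle comparisons are most delicate.
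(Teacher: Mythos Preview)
Your contrapositive strategy is the same as the paper's, but two of your steps are set up incorrectly and one would actually fail.

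\textbf{The boundary map.} You assert that the orbit map $\gamma \mapsto \gamma o$ extends to a measurable equivariant map $\iota : \partial(\Gamma,\Pc) \to \partial_\infty X$. This is exactly what is \emph{not} available when $\Gamma$ is not quasi-convex: there is no geometric extension in general. The paper obtains the map differently: by Kaimanovich, both $(\partial(\Gamma,\Pc), \nu_0)$ and $(\partial_\infty X, \nu)$ realize the Poisson boundary of $(\Gamma,\mathsf{m})$, and uniqueness of the Poisson boundary gives a $\Gamma$-equivariant measurable isomorphism $f : (\partial(\Gamma,\Pc), \nu_0) \to (\partial_\infty X, \nu)$ with $f_*\nu_0 = \nu$. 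No orbit-map extension is needed.

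\textbf{The comparison.} You then try to pull $\mu$ back to $\partial(\Gamma,\Pc)$ along $\iota$ and compare two PS-measures on the same space. This is unnecessary and ill-defined (pullback of a measure along a merely measurable map makes no sense, and $\mu$ need not be supported on the image). The main rigidity theorem (Theorem~\ref{thm:main rigidity theorem}) is designed precisely to avoid this: one takes the well-behaved PS-system $(\partial(\Gamma,\Pc),\Gamma,\sigma_{\mathsf m},\nu_0)$ of Theorem~\ref{thm:random walks are well behaved}, the PS-system $(\partial_\infty X,\Gamma,\beta,\mu)$ of Theorem~\ref{thm.convergencepssystem}, and the map $f$; the conclusion is directly a bound on magnitudes, $\sup_\gamma |\dist_G(\id,\gamma) - \delta\,\dist_X(o,\gamma o)| < +\infty$, not a pointwise cocycle identity $\delta\beta = \lambda\sigma_G + u$ as you wrote (and there is no extra scalar $\lambda$). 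Since $\dist_G$ is quasi-isometric to a word metric \cite[Prop.~7.8]{GT2020}, this immediately forces the orbit map to be a quasi-isometric embedding, i.e.\ $\Gamma$ is quasi-convex. Your ``final step'' via attracting fixed points is then superfluous (and contains a sign error: on the relevant shadow $\beta(g,\cdot)\approx +\dist_X(o,go)$). Likewise the claim $\sigma_G(g,x)=|g|+O(1)$ is false; what holds is $\sigma_{\mathsf m}(g,x)\approx \dist_G(\id,g)$ on shadows, and $\dist_G$ is only quasi-isometric, not additively close, to the word length.
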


\begin{remark}  

   In the special case when $\Gamma$ is word hyperbolic, $X$ admits a geometric group action, and $\mathsf{m}$ is symmetric, Theorem~\ref{thm.singularity harm GH} is due to Blach{\`e}re--Ha\"issinsky--Mathieu~\cite[Prop. 5.5]{BHM2011}. In the special case when $\Gamma$ acts geometrically finitely on $X$ (which implies it is relatively hyperbolic), Theorem~\ref{thm.singularity harm GH} is due to Gekhtman--Tiozzo~\cite[Coro. 4.2]{GT2020}.

\end{remark} 

Theorem~\ref{thm.singularity harm GH}, in full generality, is new even for negatively curved symmetric spaces. In this case, quasi-convex subgroups are  convex cocompact subgroups,  $\partial_\infty X$ has a smooth structure, and there is always a Busemann Patterson--Sullivan measure in the Lebesgue measure class. Using these facts, we will prove the following. 

\begin{corollary}[see Corollary \ref{cor.singularity hitting PS symmetric space} below]\label{cor.singularity hitting PS symmetric space intro} Suppose $X$ is a negatively curved symmetric space, $\Gamma$ is relatively hyperbolic (as an abstract group), and $\mathsf{m}$ has finite superexponential moment. If $\Gamma$ is not a cocompact lattice in $\Isom(X)$, then the $\mathsf{m}$-stationary measure $\nu$ is singular to the Lebesgue measure class on $\partial_{\infty} X$.
\end{corollary}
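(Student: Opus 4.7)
The plan is to split into two cases according to whether $\Gamma$ is convex cocompact in $\Isom(X)$, using the well-known fact that for a negatively curved symmetric space $X$ the quasi-convex subgroups of $\Isom(X)$ coincide with the convex cocompact ones. The key preliminary observation is that the Lebesgue measure class on $\partial_\infty X$ is represented by a probability measure $\mu_{\mathrm{Leb}}$ which transforms $\Isom(X)$-equivariantly as
\begin{equation*}
\frac{d\gamma_* \mu_{\mathrm{Leb}}}{d\mu_{\mathrm{Leb}}}(x) = e^{-\delta_X \beta(\gamma^{-1},x)},
\end{equation*}
where $\delta_X$ is the volume entropy of $X$ and $\beta$ is the Busemann cocycle of Equation~\eqref{eqn.Busemann cocycle intro}. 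Restricting to $\Gamma$, this makes $\mu_{\mathrm{Leb}}$ a genuine (not merely coarse) Busemann Patterson--Sullivan measure for $\Gamma$ of dimension $\delta_X$.

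\textbf{Case 1: $\Gamma$ is not convex cocompact.} Then $\Gamma$ is not a quasi-convex subgroup of $\Isom(X)$, so Theorem~\ref{thm.singularity harm GH} applies verbatim and yields that $\nu$ is singular with respect to every coarse Busemann Patterson--Sullivan measure on $\partial_\infty X$. In particular $\nu \perp \mu_{\mathrm{Leb}}$, which is the desired conclusion.

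\textbf{Case 2: $\Gamma$ is convex cocompact but not a cocompact lattice.} Then the limit set $\Lambda(\Gamma)$ is a proper compact subset of $\partial_\infty X$, and since the random walk almost surely converges into $\Lambda(\Gamma)$, the hitting measure $\nu$ is supported on $\Lambda(\Gamma)$. It therefore suffices to show $\mu_{\mathrm{Leb}}(\Lambda(\Gamma))=0$. This is a classical input from rank-one Patterson--Sullivan theory: by Sullivan (for real hyperbolic space) and Corlette/Yue (for the remaining negatively curved symmetric spaces), the Hausdorff dimension of $\Lambda(\Gamma)$ in a visual metric on $\partial_\infty X$ equals the critical exponent $\delta_\Gamma$, and convex cocompactness together with $\Gamma$ not being a cocompact lattice forces $\delta_\Gamma < \delta_X$. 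Since $\partial_\infty X$ has Hausdorff dimension $\delta_X$ in the visual metric, it follows that $\Lambda(\Gamma)$ is Lebesgue null, and hence $\nu \perp \mu_{\mathrm{Leb}}$.

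The only nontrivial step is the Hausdorff-dimension / critical-exponent comparison in Case 2, but this is a classical result for negatively curved symmetric spaces and would simply be cited. Once in place, the two cases together cover all $\Gamma$ not equal to a cocompact lattice, yielding the corollary.
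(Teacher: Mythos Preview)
Your proof is correct and follows the same overall architecture as the paper's: identify the Lebesgue class as a Busemann Patterson--Sullivan measure of dimension $\delta_X$, use the main singularity theorem to dispose of the non-quasi-convex case, and then handle the convex cocompact non-lattice case by a classical rank-one argument.

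The only genuine difference is in Case~2. You argue externally: invoke the Sullivan/Corlette/Yue result that a convex cocompact non-lattice has $\delta_\Gamma < \delta_X$, hence $\Lambda(\Gamma)$ has Hausdorff dimension strictly below $\delta_X$ and is Lebesgue-null. The paper instead argues by contrapositive and leverages the full strength of Theorem~\ref{thm:random walks GH in intro}: assuming non-singularity, part~(3) of that theorem gives not just convex cocompactness but also $\delta_\Gamma = \delta_X$ with divergence of the Poincar\'e series; then the Hopf--Tsuji--Sullivan dichotomy forces the Lebesgue measure (as a PS-measure of the critical dimension) to be supported on $\Lambda(\Gamma)$, so $\Lambda(\Gamma) = \partial_\infty X$ and $\Gamma$ is a cocompact lattice. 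Your route imports a slightly deeper classical input (the entropy gap), while the paper's route stays closer to its own main theorem and only needs the HTS dichotomy; both are standard and neither is materially shorter.
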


Corollary~\ref{cor.singularity hitting PS H3 intro} follows from Theorem \ref{thm.singularity harm GH} and Corollary \ref{cor.singularity hitting PS symmetric space intro}. Indeed, when $X=\Hb^3$, every finitely generated non-elementary discrete subgroup of $\Isom(\Hb^3)$ is relatively hyperbolic relative to some (possibly empty) collection of peripheral subgroups which are virtually abelian. This can be deduced by Scott core theorem \cite{Scott_compact} and Thurston's hyperbolization \cite{Thurston_BAMS} (see also \cite[Thm. 4.10]{MatsuzakiTaniguchi}).

\begin{remark} In the special case when $X = \Hb^n$ is real hyperbolic space, $n \geq 3$, and $\Gamma$ is a non-uniform lattice in $\Isom(\Hb^n)$, Corollary~\ref{cor.singularity hitting PS symmetric space intro}  is due to Randecker--Tiozzo~\cite{RT_cusp}. When $X = \Hb^2$,  this was obtained in different contexts \cite{GL_geodesic, DKN_circle, KP_matrix, GMT_word}. Further, Kosenko--Tiozzo \cite{KT_cocompact} explicitly constructed cocompact lattices of $\Isom(\Hb^2)$ such that hitting measures are singular to the Lebesgue measure class on $\partial_{\infty} \Hb^2$.

\end{remark}

In fact, we show that the non-singularity occurs precisely when any $\Ga$-orbit is roughly isometric to the Green metric associated to the random walk. 
The \emph{Green metric} on $\Gamma$ is defined by 
\begin{equation} \label{eqn.green metric intro}
\dist_G(g,h) = -\log \frac{G_{\mathsf{m}}(g,h)}{G_{\mathsf{m}}(\id,\id)} \quad \text{for } g, h \in \Ga
\end{equation}
where $G_{\mathsf{m}}(g,h) = \sum_{n=0}^\infty \mathsf{m}^{*n}(g^{-1}h)$ is the Green function.  When $\mathsf{m}$ has finite superexponential moment and $\Gamma$ is finitely generated and non-amenable,  the Green metric $\dist_G$ on $\Gamma$ is quasi-isometric to a word metric with respect to a finite generating set \cite[Prop. 7.8]{GT2020}. So Theorem~\ref{thm.singularity harm GH}  is a consequence of the following.

\begin{theorem}[see Theorem~\ref{thm:random walks GH} below]\label{thm:random walks GH in intro} Suppose  $\Gamma$ is relatively hyperbolic (as an abstract group), $\mathsf{m}$ has finite superexponential moment, $\nu$ is the $\mathsf{m}$-stationary measure, and $\mu$ is a coarse Busemann   Patterson--Sullivan measure for $\Ga$ on $\partial_\infty X$ of dimension $\delta$. Then the following are equivalent:
\begin{enumerate}
\item The measures $\nu$ and $\mu$ are not singular.
\item  The measures $\nu$ and $\mu$ are in the same measure class and the Radon--Nikodym derivatives are a.e. bounded from above and below by a positive number.
\item For any $o \in X$, $$\sup_{\gamma \in \Gamma} \abs{ \dist_G(\id, \gamma) - \delta \dist_X(o, \gamma o)} < +\infty.$$ In particular, $\Ga$ is quasi-convex and $\delta$ is the critical exponent of $\Gamma$.
\end{enumerate}
\end{theorem}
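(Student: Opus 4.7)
The plan is to prove the cycle of implications (3) $\Rightarrow$ (2) $\Rightarrow$ (1) $\Rightarrow$ (3). The direction (2) $\Rightarrow$ (1) is immediate. For (3) $\Rightarrow$ (2), I would first invoke the theory of random walks on (relatively) hyperbolic groups, extending Blach\`ere--Ha\"issinsky--Mathieu to the present setting, to identify $\nu$ as a coarse Patterson--Sullivan measure on $\partial_\infty X$ of dimension $1$ for the \emph{Green cocycle}
\[
\beta_G(g, x) := \limsup_{p \to x}\bigl[\dist_G(\id, g^{-1} p) - \dist_G(\id, p)\bigr].
\]
Passing to $\limsup$ as $p \to x$ in the inequality of (3) gives $\sup_{g, x}\abs{\beta_G(g, x) - \delta\,\beta(g, x)} < +\infty$, so $\nu$ satisfies the coarse Patterson--Sullivan condition~\eqref{eqn.coarse PS meaasure intro} for the cocycle $\delta\beta$ at dimension $1$; comparing with $\mu$ via that condition then shows the two measures are mutually absolutely continuous with Radon--Nikodym derivative bounded above and below, which is (2).

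For the main direction (1) $\Rightarrow$ (3), I would construct two Patterson--Sullivan systems on $\partial_\infty X$ for $\Gamma$: a \emph{geometric} system carrying $\mu$, with cocycle $\delta\beta$, magnitude function $\gamma \mapsto \delta\,\dist_X(o, \gamma o)$, and the standard geodesic shadows; and a \emph{Green} system carrying $\nu$, with cocycle $\beta_G$, magnitude function $\gamma \mapsto \dist_G(\id, \gamma)$, and shadows defined via the Green geometry. Relative hyperbolicity of $\Gamma$ and finite superexponential moment of $\mathsf{m}$ are the key inputs for verifying the axioms of the Green system, especially its shadow lemma. The identity map on $\partial_\infty X$ is then an injective $\Gamma$-equivariant boundary map, and hypothesis (1) is precisely the non-singularity of the two measures. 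Applying the paper's general Tukia-type rigidity theorem for Patterson--Sullivan systems to this pair yields that the two magnitude functions are within a uniform additive constant of each other on all of $\Gamma$, which is exactly the inequality in (3). The ``in particular'' clause follows because finite superexponential moment implies $\dist_G$ is quasi-isometric to any word metric on $\Gamma$ (Gekhtman--Tiozzo), so (3) exhibits $\gamma \mapsto \gamma o$ as a quasi-isometric embedding of $\Gamma$ into $X$, i.e.\ $\Gamma$ is quasi-convex in $\Isom(X)$; and since the critical exponent of $\dist_G$ on $\Gamma$ equals $1$ for finitely generated non-amenable groups under our moment assumption, (3) forces $\delta$ to be the critical exponent of $\Gamma$ in $X$.

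The main obstacle will be constructing the Green Patterson--Sullivan system and verifying its axioms on $\partial_\infty X$ — in particular, establishing a shadow lemma for $\nu$ with respect to Green shadows and reconciling $\beta_G$-Gromov products with Green-distance estimates. This is precisely where relative hyperbolicity of $\Gamma$ and finite superexponential moment of $\mathsf{m}$ do the crucial geometric work, by transferring intrinsic estimates for the Green geometry on $\Gamma$ to the extrinsic geometry of the orbit in $\partial_\infty X$. Once both Patterson--Sullivan systems are in place on the same space with the same group action, the general rigidity theorem of the paper does the heavy lifting of converting measure-theoretic non-singularity into the uniform additive comparability of magnitude functions recorded in~(3).
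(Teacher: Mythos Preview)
Your plan for (1) $\Rightarrow$ (3) has a genuine gap. You propose to build the Green PS-system directly on $\partial_\infty X$, with ``shadows defined via the Green geometry,'' and you correctly flag this as the main obstacle. But the relative Ancona inequality needed to verify the shadow axioms (Properties~\ref{item:almost constant on shadows} and~\ref{item:intersecting shadows}) for the Green cocycle is established in \cite{GGPY2021} on the \emph{Bowditch boundary} $\partial(\Gamma,\Pc)$, not on $\partial_\infty X$. Transferring Green shadows and the associated estimates to $\partial_\infty X$ would require a priori geometric control of the orbit $\Gamma o$ in $X$ --- essentially quasi-convexity --- which is exactly what you are trying to conclude. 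Without it there is no continuous, or even canonically defined, map $\partial(\Gamma,\Pc)\to\partial_\infty X$ along which to push Green shadows; your proposed $\beta_G(g,x)=\limsup_{p\to x}[\dist_G(\id,g^{-1}p)-\dist_G(\id,p)]$ already illustrates the problem, since $\dist_G$ is defined on $\Gamma$, not on $X$.

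The paper sidesteps this entirely. The well-behaved PS-system is built on $\partial(\Gamma,\Pc)$ (Theorem~\ref{thm:random walks are well behaved}), where the relative Ancona inequality applies directly. The bridge to $\partial_\infty X$ is not geometric but measure-theoretic: since both $(\partial(\Gamma,\Pc),\nu_0)$ and $(\partial_\infty X,\nu)$ realize the Poisson boundary of $(\Gamma,\mathsf{m})$ (Kaimanovich), there is a $\Gamma$-equivariant measurable isomorphism $f:(\partial(\Gamma,\Pc),\nu_0)\to(\partial_\infty X,\nu)$. One then applies Theorem~\ref{thm:main rigidity theorem} with $M_1=\partial(\Gamma,\Pc)$, $M_2=\partial_\infty X$, and boundary map $f$; crucially, the second system (carrying $\mu$) only needs to be a PS-system, not well-behaved, and this follows since the Busemann cocycle on $\partial_\infty X$ is expanding. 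Your (3) $\Rightarrow$ (2) sketch is closer in spirit but again the paper avoids a Green cocycle on $\partial_\infty X$: under (3) the orbit map is a quasi-isometric embedding, so one pulls $\mu$ back to $\partial_\infty\Gamma$ along the continuous boundary extension and invokes \cite[Prop.~13.1, 13.2]{BCZZ2024a}.
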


When $\Gamma$ is assumed to be a quasi-convex subgroup of $\Isom(X)$ (in particular, word hyperbolic) and $\mathsf{m}$ is symmetric, Theorem \ref{thm:random walks GH in intro} was obtained by Blach{\`e}re--Ha\"issinsky--Mathieu \cite[Thm. 1.5]{BHM2011}. In the special case when $\Gamma$ acts geometrically finitely on $X$ (which implies it is relatively hyperbolic), Theorem~\ref{thm:random walks GH in intro} is due to Gekhtman--Tiozzo~\cite[Thm. 4.1]{GT2020}.  For relatively hyperbolic groups, Dussaule--Gekhtman \cite{DG_entropy} proved an analogous statement for Patterson--Sullivan measure coming from a word metric on $\Ga$.

\subsubsection{Random walks on mapping class groups and Teichm\"uller spaces}\label{sec:RW on Tecihmuller space in intro}

Let $\Sigma$ be a closed connected orientable surface of genus at least two, $\Mod(\Sigma)$ denote the mapping class group of $\Sigma$, and  $(\Tc, \dist_{\Tc})$  denote the Teichm\"uller space of $\Sigma$ endowed with Teichm\"uller metric $\dist_{\Tc}$. 

Thurston~\cite{Thurston_geometry_dynamics} compactified $\Tc$ by the space $\PMF$ of projective measured foliations on $\Sigma$. This compactification is called Thurston's compactification and $\PMF$ is also referred to as Thurston's boundary.

Let $\Ga < \Mod(\Sigma)$ be a non-elementary subgroup (i.e. $\Ga$ is not virtually cyclic and contains a pseudo-Anosov element)  and $\mathsf{m}$ a probability measure on $\Ga$ whose support generates $\Gamma$ as a semigroup. Kaimanovich--Masur \cite{KM_MCG} showed that there exists a unique $\mathsf{m}$-stationary measure $\nu$ on $\PMF$ and the subset $\UE \subset \PMF$ of uniquely ergodic foliations has full $\nu$-measure. Further, for any $o \in \Tc$ the measure $\nu$ is the hitting measure for the associated random walk on the orbit $\Gamma(o) \subset \Tc$.

Analogous to Problem~\ref{problem:random walks GH}, Kaimanovich--Masur suggested the following. 

\begin{conjecture}[Kaimanovich--Masur {\cite[pg. 9]{KM_MCG}}] \label{conj.Teich}
   If $\mathsf{m}$ has finite support, then the $\mathsf{m}$-stationary measure  $\nu$ is singular to every Busemann Patterson--Sullivan measure for $\Gamma$.
\end{conjecture}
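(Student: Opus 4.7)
The plan is to fit Conjecture~\ref{conj.Teich} into the framework of Patterson--Sullivan systems and run the same rigidity strategy that yields Theorem~\ref{thm:random walks GH in intro}. First I would package the Teichm\"uller geometry as a Patterson--Sullivan system on Thurston's boundary: fix a basepoint $o \in \Tc$, use the magnitude $\abs{\ga} := \dist_\Tc(o, \ga o)$, take the Teichm\"uller Busemann coarse-cocycle
\[
\beta_\Tc(\ga, x) = \limsup_{p \to x} \, \dist_\Tc(p, \ga^{-1} o) - \dist_\Tc(p, o),
\]
and define shadows as projections of metric balls along Teichm\"uller geodesics issued from $o$. Any Busemann Patterson--Sullivan measure $\mu$ of dimension $\delta$ is then by definition a coarse $\beta_\Tc$-Patterson--Sullivan measure. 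Since $\beta_\Tc$ behaves well only on $\UE$ and both $\mu$ and $\nu$ give full mass to $\UE$ (by Kaimanovich--Masur for $\nu$, and by standard arguments exploiting unique ergodicity for $\mu$), restricting the system to $\UE$ is harmless.

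Next I would realize the stationary measure $\nu$ itself as a coarse Patterson--Sullivan measure for a second cocycle on the same system, namely the \emph{Green coarse-cocycle} $\sigma_G$ coming from the Green function $G_\mathsf{m}$ on $\Ga$. Because $\Ga$ acts non-elementarily on $\Mod(\Sigma)$ and $\mathsf{m}$ has finite superexponential moment, one should have Ancona-type inequalities for $G_\mathsf{m}$ (via acylindrical hyperbolicity and boundary convergence of the random walk in $\PMF$), giving a boundary extension of $\sigma_G$ to $\UE$ for which $\nu$ is a coarse $\sigma_G$-Patterson--Sullivan measure of dimension $1$; this is the mapping-class-group analogue of the Blach\`ere--Ha\"issinsky--Mathieu / Gekhtman--Tiozzo reinterpretation used in Theorem~\ref{thm:random walks GH in intro}.

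With $\nu$ and $\mu$ exhibited as coarse Patterson--Sullivan measures on the same system for the cocycles $\sigma_G$ and $\delta \beta_\Tc$ respectively, I would apply the general Tukia-type rigidity theorem to the identity boundary map $\UE \to \UE$, equivariant under $\id : \Ga \to \Ga$. The conclusion, mirroring Theorem~\ref{thm:random walks GH in intro}(3), would be: if $\nu$ and $\mu$ are not mutually singular, then the two cocycles are uniformly comparable on $\Ga$, i.e.
\[
\sup_{\ga \in \Ga} \bigl| \dist_G(\id, \ga) - \delta \, \dist_\Tc(o, \ga o) \bigr| < +\infty.
\]
To prove Conjecture~\ref{conj.Teich} it then suffices to rule out such a rough isometry between the Green metric on $\Ga$ and the Teichm\"uller metric on the orbit $\Ga o$.

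The hard step is the last one. The Green metric $\dist_G$ is quasi-isometric to a word metric and, via the action on the curve complex, behaves in a Gromov-hyperbolic way, so its Gromov boundary admits a natural conformal gauge. By contrast, the Teichm\"uller metric on a $\Ga$-orbit contains coarse quasi-flats whenever $\Ga$ meets thin parts (Dehn twist directions, Minsky product regions), and translation lengths of pseudo-Anosov elements in $\dist_\Tc$ obey different asymptotics than those in $\dist_G$. I would rule out the displayed rough isometry by comparing, for instance, translation length spectra of pseudo-Anosov elements, conformal dimensions of boundaries, or the divergence functions of the two metrics; the main obstacle is doing this uniformly over \emph{all} non-elementary $\Ga < \Mod(\Sigma)$ rather than only for specific classes such as convex cocompact subgroups, which may require fine input from Rafi-type coarse descriptions of $\dist_\Tc$ or from Gou\"ezel's Ancona inequalities on acylindrically hyperbolic groups.
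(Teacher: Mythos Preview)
This statement is an \emph{open conjecture}; the paper does not prove it. The paper only establishes the partial result Theorem~\ref{thm.singularity hitting PS MCG intro} and Corollary~\ref{cor.singularity hitting PS MCG multitwist}: under the extra hypotheses that $\Gamma$ is relatively hyperbolic as an abstract group \emph{and} contains a multitwist, the stationary measure is singular to every Busemann PS-measure. Your outline follows the same architecture the paper uses for these partial results, but you underestimate two genuine gaps, each of which corresponds exactly to one of the extra hypotheses the paper is forced to impose.

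First, realizing $\nu$ as a coarse PS-measure for a Green cocycle, with enough structure to serve as the \emph{well-behaved} side of Theorem~\ref{thm:main rigidity theorem}, requires Ancona-type inequalities for $G_\mathsf{m}$. These are not known for arbitrary non-elementary subgroups of $\Mod(\Sigma)$; acylindrical hyperbolicity alone has not been shown to yield them. This is precisely why the paper assumes $\Gamma$ relatively hyperbolic: it then invokes the Ancona inequalities of Gekhtman--Gerasimov--Potyagailo--Yang on the Bowditch boundary $\partial(\Gamma,\Pc)$ (Theorem~\ref{thm:random walks are well behaved}) and passes to $\partial_{GM}\Tc$ via the Poisson-boundary isomorphism $(\partial(\Gamma,\Pc),\nu_0)\to(\PMF,\nu)$, rather than building the well-behaved PS-system directly on $\UE$ as you propose.

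Second, even granting the conclusion
\[
\sup_{\gamma\in\Gamma}\bigl|\dist_G(\id,\gamma)-\delta\,\dist_\Tc(o,\gamma o)\bigr|<+\infty,
\]
the paper cannot rule this out for general $\Gamma$ either. The only obstruction it extracts is that such a rough isometry makes the orbit map $(\Gamma,\dist_w)\to(\Tc,\dist_\Tc)$ a quasi-isometric embedding, which is impossible if $\Gamma$ contains a multitwist (positive stable word-length, zero Teichm\"uller translation length, via Farb--Lubotzky--Minsky). For convex cocompact $\Gamma$ the orbit map \emph{is} a quasi-isometric embedding, and Theorem~\ref{thm.RW Teich PGF} shows non-singularity is then \emph{equivalent} to the displayed rough isometry; whether this can actually occur for finitely supported $\mathsf{m}$ is exactly the residual content of the conjecture. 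Your proposed obstructions (length spectra, conformal dimension, divergence) are plausible directions but none is currently known to work.
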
 

For a special type of Patterson--Sullivan measure which is of Lebesgue measure class on $\PMF$, Gadre \cite{Gadre_singular} proved the singularity of $\mathsf{m}$-stationary measure for finitely supported $\mathsf{m}$. Later, Gadre--Maher--Tiozzo \cite{GMT_Teichmueller} extended this result to $\mathsf{m}$ with finite first moment with respect to a word metric as well. 
   
To the best of our knowledge, Conjecture \ref{conj.Teich} is only known for the Lebesgue measure class. We also note that many subgroups of $\Mod(\Sigma)$ have limit sets with Lebesgue measure zero (e.g. handlebody groups \cite{Masur_handlebody,Kerckhoff_handlebody}), which automatically implies that the stationary measure is singular to the Lebesgue measure class. See (\cite{Masur_interval}, \cite[Sect. 3.3]{KL_subgroups}) for more discussion on the nullity of limit sets.

As an application of our generalization of Tukia's theorem, we prove Conjecture \ref{conj.Teich} for a certain class of subgroups of $\Mod(\Sigma)$, showing the singularity of $\mathsf{m}$-stationary measure and \emph{any} Busemann Patterson--Sullivan measure.  Before presenting the theorem, we first define Patterson--Sullivan measures in this context.

Gardiner--Masur \cite{GM_boundary} introduced another compactification by $\partial_{GM} \Tc$, called \emph{Gardiner--Masur boundary} of $\Tc$, and proved that $\PMF$ is a proper subset of $\partial_{GM} \Tc$.
Liu--Su  \cite{LS_horofunction} showed that $\partial_{GM} \Tc$ is the horofunction boundary of $(\Tc, \dist_{\Tc})$. Hence, after fixing $o \in \Tc$, one can define a cocycle $\beta : \Mod(\Sigma) \times \partial_{GM} \Tc \to \Rb$ by
\begin{equation*} 
   \beta(g, x) = \lim_{p \rightarrow x} \, \dist_{\Tc}(p, g^{-1} o) - \dist_{\Tc}(p,o)
\end{equation*}
where $p \in \Tc$ converges to $x \in \partial_{GM}\Tc$.
A \emph{Busemann Patterson--Sullivan measure} on $\partial_{GM} \Tc$ is a $\beta$-Patterson--Sullivan measure in the sense of Equation~\eqref{eqn.coarse PS meaasure intro}. These measures have been constructed and studied by several authors, including Coulon \cite{Coulon_PS} and Yang \cite{Yang_conformal}.

We also note that Athreya--Bufetov--Eskin--Mirzakhani \cite{ABEM_MCG} constructed a Patterson--Sullivan measure for $\Mod(\Sigma)$ on $\PMF$ using Thurston measure, and Gekhtman \cite{Gekhtman_CC} constructed Patterson--Sullivan measures for convex cocompact subgroups of $\Mod(\Sigma)$ on $\UE$. Since  the identity map $\Tc \to \Tc$ continuously extends to a topological embedding $\UE \hookrightarrow \partial_{GM} \Tc$ \cite{Miyachi_UE}, the Patterson--Sullivan measures in~\cite{Gekhtman_CC} are Patterson--Sullivan measures on $\partial_{GM} \Tc$. Further, by works of Masur \cite{Masur_interval} and Veech \cite{Veech_Gauss}, the Patterson--Sullivan measure constructed in \cite{ABEM_MCG} gives a full measure on $\UE$, and therefore can be identified with a Busemann Patterson--Sullivan measure on $\partial_{GM}\Tc$. 

Finally, since the $\mathsf{m}$-stationary measure $\nu$ also gives a full measure on $\UE$, we can view $\nu$ as a measure on $\partial_{GM} \Tc$. Moreover, any measure on $\PMF$ is non-singular to $\nu$ on $\PMF$ if and only if its restriction on $\UE$ is non-singular to $\nu$ viewed as measures on $\partial_{GM} \Tc$.

We now state our  contribution towards Conjecture \ref{conj.Teich}.

\begin{theorem}[see Corollary \ref{cor.singularity hitting PS MCG multitwist} below] \label{thm.singularity hitting PS MCG multitwist intro} Suppose  $\Gamma$ is relatively hyperbolic (as an abstract group) and $\mathsf{m}$ has finite superexponential moment. If $\Gamma$ contains a multitwist, then the $\mathsf{m}$-stationary measure $\nu$ is singular to every Busemann Patterson--Sullivan measures on $\partial_{GM} \Tc$. 
\end{theorem}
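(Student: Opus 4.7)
The plan is to mirror the argument that deduced Theorem~\ref{thm.singularity harm GH} from Theorem~\ref{thm:random walks GH in intro}, but in the Teichm\"uller setting. The goal is a dichotomy of the same shape for $\partial_{GM}\Tc$: either $\nu$ and $\mu$ are singular, or the Green metric $\dist_G$ on $\Ga$ is coarsely proportional to the Teichm\"uller displacement from $o$ along the $\Ga$-orbit. A multitwist $g \in \Ga$ will then rule out the second alternative because its Teichm\"uller displacement grows only logarithmically while its Green displacement grows linearly.

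First, I would exhibit both $\mu$ and $\nu$ as coarse Patterson--Sullivan measures for two Patterson--Sullivan systems on $\partial_{GM}\Tc$ under the action of $\Ga$. For $\mu$ one uses the horofunction cocycle
$$ \beta(g,x) \;=\; \lim_{p \to x} \bigl( \dist_{\Tc}(p, g^{-1} o) - \dist_{\Tc}(p,o) \bigr) $$
with shadows coming from geodesic pencils in $(\Tc,\dist_{\Tc})$ and magnitude equal to the Teichm\"uller displacement; this should be one of the motivating entries in Example~\ref{ex:PS systems}. For $\nu$, viewed as a measure on $\partial_{GM}\Tc$ via the embedding $\UE \hookrightarrow \partial_{GM}\Tc$, one instead uses the Green coarse-cocycle $\sigma_G(g,x) = -\log \tfrac{d g_{*} \nu}{d \nu}(x)$, with magnitude $\dist_G(\id,\cdot)$ and Green-metric shadows; the coarse-cocycle condition \eqref{eqn.coarsecocycle intro} for $\sigma_G$ follows from Ancona-type inequalities for random walks of finite superexponential moment on relatively hyperbolic groups. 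By construction $\mu$ is a coarse $\beta$-Patterson--Sullivan measure of dimension $\delta$ (its critical exponent) and $\nu$ is a coarse $\sigma_G$-Patterson--Sullivan measure of dimension $1$.

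Next, I would apply the general rigidity theorem for Patterson--Sullivan systems from Section~\ref{sec:general framework} with $\rho = \id_{\Ga}$ and boundary map the identity of $\partial_{GM}\Tc$ (restricted to $\UE$, which carries full measure for both $\mu$ and $\nu$). Under the non-singularity hypothesis, the theorem identifies the two cocycles up to bounded error along the $\Ga$-orbit of $o$, yielding the Teichm\"uller analogue of Theorem~\ref{thm:random walks GH in intro}(3):
$$ \sup_{\ga \in \Ga} \bigl| \dist_G(\id,\ga) \;-\; \delta\,\dist_{\Tc}(o, \ga o) \bigr| \;<\; +\infty. $$
To derive a contradiction, let $g \in \Ga$ be a multitwist. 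Classical extremal-length estimates (or Minsky's product regions theorem for the thin part of $\Tc$) give $\dist_{\Tc}(o, g^n o) = O(\log n)$. On the other hand, since parabolic subgroups in a relatively hyperbolic group are undistorted, the infinite-order element $g$ satisfies $|g^n|_S \asymp n$ in any word metric on $\Ga$; combined with non-amenability of $\Ga$ and finite superexponential moment of $\mathsf{m}$, the Green function decays exponentially in word length, giving $\dist_G(\id, g^n) \ge c n$ for some $c>0$ and all large $n$. These two estimates contradict the displayed bound, so $\nu$ and $\mu$ must be singular.

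The main obstacle is the first step, namely encoding $\nu$ in a Patterson--Sullivan system at $\partial_{GM}\Tc$ rather than at a Gromov boundary, and doing so compatibly with the shadow, magnitude, and regularity requirements that make the main theorem of Section~\ref{sec:general framework} applicable. This involves choosing shadows on $\partial_{GM}\Tc$ that simultaneously detect the Teichm\"uller geometry and the Green-metric geometry on $\Ga$ through the orbit map $\ga \mapsto \ga o$, and verifying the coarse-cocycle condition for $\sigma_G$ via Ancona inequalities for relatively hyperbolic random walks. Once these are in place, Steps 2 and 3 are essentially mechanical, reducing the theorem to the single geometric fact that multitwists have sublinear Teichm\"uller displacement.
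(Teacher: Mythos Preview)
Your outline has the right shape—the key estimate $\sup_{\gamma}|\dist_G(\id,\gamma)-\delta\dist_\Tc(o,\gamma o)|<\infty$ is exactly what the paper extracts from the main rigidity theorem, and the contradiction via a multitwist is the right endgame—but you diverge from the paper precisely at the point you flag as ``the main obstacle,'' and the paper's route around it is different from what you propose.

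You try to realize $\nu$ as the source well-behaved PS-system \emph{on $\partial_{GM}\Tc$}, with Green cocycle and ``Green-metric shadows.'' The paper does not attempt this. Instead it places the well-behaved source system on the Bowditch boundary $\partial(\Gamma,\Pc)$: by Theorem~\ref{thm:random walks are well behaved} (built on the Ancona inequalities of~\cite{GGPY2021}), the stationary measure $\nu_0$ on $\partial(\Gamma,\Pc)$ is a well-behaved PS-system with magnitude $\dist_G(\id,\cdot)$ and divergent Poincar\'e series. The target is $(\partial_{GM}\Tc,\mu)$, which only needs to be a PS-system (Theorem~\ref{thm.contractingPS}), with no assumption on $\mu(\UE)$. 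The measurable equivariant injective map $f$ is then supplied by Poisson boundary uniqueness: both $(\partial(\Gamma,\Pc),\nu_0)$ and $(\PMF,\nu)$ are Poisson boundaries for $(\Gamma,\mathsf m)$, giving an isomorphism $f:(\partial(\Gamma,\Pc),\nu_0)\to(\UE,\nu)\hookrightarrow\partial_{GM}\Tc$. This sidesteps your obstacle entirely: the Ancona inequalities are used where they are available (on the Bowditch boundary, via~\cite{GGPY2021}), and nothing needs to be proved about Green shadows on $\partial_{GM}\Tc$. Trying to build the Green PS-system directly on $\partial_{GM}\Tc$ would require relating the Green metric to Teichm\"uller shadows, which is essentially circular.

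Your contradiction step also has a gap. The claim that $|g^n|_S\asymp n$ does not follow from ``peripheral subgroups in a relatively hyperbolic group are undistorted'': the multitwist need not lie in a peripheral subgroup of $\Gamma$ (the paper explicitly remarks on this), and even if it did, undistortion of $P$ in $\Gamma$ says nothing about the growth of $\langle g\rangle$ inside $P$. The paper instead invokes Farb--Lubotzky--Minsky~\cite{FLM_MCG}: every infinite-order element of $\Mod(\Sigma)$ has positive stable translation length in any word metric on $\Mod(\Sigma)$, hence a fortiori in any word metric on $\Gamma<\Mod(\Sigma)$; multitwists have zero stable translation length on $(\Tc,\dist_\Tc)$; together these contradict the quasi-isometric embedding of Theorem~\ref{thm.RW Teich body}(2).
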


As explained above, Theorem \ref{thm.singularity hitting PS MCG multitwist intro} implies the same statement for Patterson--Sullivan measures on $\PMF$, such as the measures constructed in \cite{ABEM_MCG,Gekhtman_CC}. 
Note also that Patterson--Sullivan measures under consideration do not have any assumptions on their supports. We also remark that in Theorem \ref{thm.singularity hitting PS MCG multitwist intro}, the multitwist in $\Ga$ does not necessarily belong to a peripheral subgroup of $\Ga$.

There are many examples of subgroups of $\Mod(\Sigma)$ which are relatively hyperbolic and containing multitwists, so Theorem \ref{thm.singularity hitting PS MCG multitwist intro} applies to. For instance, the combination theorem for Veech subgroups by Leininger--Reid \cite{LR_combination} produces closed surface subgroups in $\Mod(\Sigma)$ with multitwists, and so-called parabolically geometrically finite subgroups introduced by Dowdall--Durham--Leininger--Sisto \cite{DDLS_PGF} are relatively hyperbolic and contain multitwists in their peripheral subgroups. Many examples of parabolically geometrically finite subgroups were also constructed by  Udall \cite{Udall_PGF}, Aougab et al.~\cite{aougab2025constructing}, and Loa~\cite{Loa_PGF}. Finally, in their proof of the purely pseudo-Anosov surface subgroup conjecture, Kent--Leininger \cite{KL_atoroidal} constructed a type-preserving homomorphism from a finite index subgroup of the fundamental group of the figure-8 knot complement into $\Mod(\Sigma)$ when $\Sigma$ has genus at least 4. The image of such a homomorphism is relatively hyperbolic and contains a multitwist.

Theorem \ref{thm.singularity hitting PS MCG multitwist intro} will be a consequence of the following.

   \begin{theorem}[see Theorem \ref{thm.RW Teich body} below] \label{thm.singularity hitting PS MCG intro} Suppose  $\Gamma$ is relatively hyperbolic (as an abstract group), $\mathsf{m}$ has a finite superexponential moment with the $\mathsf{m}$-stationary measure $\nu$, and $\mu$ is a Busemann Patterson--Sullivan measure for $\Ga$ on $\partial_{GM} \Tc$ of dimension $\delta$. If the measures $\nu$ and $\mu$ are not singular, then:
      \begin{enumerate}
      \item For any $o \in \Tc$, $$\sup_{\gamma \in \Gamma} \abs{ \dist_G(\id, \gamma) - \delta \dist_{\Tc}(o, \ga o)} < +\infty.$$ In particular, $\delta$ is the critical exponent of $\Ga$ and $\sum_{\ga \in \Ga} e^{-\delta \dist_{\Tc}(o, \ga o)} = + \infty$.
      \item If   $\dist_w$ is a word metric on $\Gamma$ with respect to a finite generating set, then  the map 
      $$
      \gamma \in (\Gamma, \dist_w) \mapsto \gamma o \in (\Tc, \dist_{\Tc})
      $$
      is a quasi-isometric embedding.
      \end{enumerate} 
      \end{theorem}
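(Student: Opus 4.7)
The plan is to apply the general Tukia-type rigidity theorem for Patterson--Sullivan systems promised in Section~\ref{sec:general framework}, using two PS systems on the Gardiner--Masur boundary $\partial_{GM} \Tc$ with the identity as the $\Ga$-equivariant boundary map: one built from Teichm\"uller geometry, and one built from the random walk. The hypothesis that $\nu$ and $\mu$ are non-singular will then force the two magnitude functions on $\Ga$ to differ (up to the scaling by $\delta$) by a bounded amount, which is exactly what (1) and (2) require.

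First I would set up the \emph{geometric} PS system in the obvious way: $\Ga$ acts on $\partial_{GM} \Tc$, the cocycle is the Busemann cocycle $\beta$ of $\dist_{\Tc}$, the magnitude of $\ga \in \Ga$ is $\dist_{\Tc}(o, \ga o)$, one takes Teichm\"uller shadows, and the PS measure is $\mu$. Next I would construct the \emph{probabilistic} PS system on the same space, with coarse cocycle equal to (minus) the logarithm of the Martin kernel of the random walk, magnitude $\ga \mapsto \dist_G(\id, \ga)$, a family of ``Green shadows'' cut out by the random walk, and PS measure $\nu$ viewed on $\partial_{GM} \Tc$ via the embedding $\UE \hookrightarrow \partial_{GM} \Tc$ (which is legitimate since $\nu$ gives full measure to $\UE$); the stationary measure is then a coarse PS measure of dimension $1$ for this coarse cocycle. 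The main obstacle is verifying the PS-system axioms for the random walk data, especially the shadow lemma and a quantitative comparison between Green shadows and Teichm\"uller shadows along the orbit. This rests on Ancona-type deviation inequalities for the Green function, which Gou\"ezel and Dussaule--Gekhtman--Tiozzo have established for relatively hyperbolic groups under finite superexponential moment; the additional input here is to transfer these inequalities to the Teichm\"uller setting via the orbit map $\Ga \to \Tc$.

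With both PS systems in hand, applying the general rigidity theorem to the identity boundary map (using the non-singularity of $\nu$ and $\mu$) should yield a constant $C \geq 0$ such that
\[
\bigl| \dist_G(\id, \ga) - \delta\, \dist_{\Tc}(o, \ga o) \bigr| \leq C \qquad \text{for all } \ga \in \Ga,
\]
which is the first assertion of (1). The critical exponent statement then follows by combining this inequality with the divergence of the Green series, since unwinding definitions gives $\sum_{\ga \in \Ga} e^{-\dist_G(\id, \ga)} = G_{\mathsf{m}}(\id,\id)^{-1} \sum_n 1 = +\infty$; this immediately yields $\sum_{\ga} e^{-\delta \dist_{\Tc}(o, \ga o)} = +\infty$, so $\delta$ is at least the critical exponent of $\Ga$ on $(\Tc, \dist_{\Tc})$, while the reverse inequality is the usual Sullivan-type shadow upper bound that will be built into the PS-system framework.

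Part (2) will then be immediate from (1) combined with \cite[Prop.~7.8]{GT2020}, which ensures that $\dist_G$ is quasi-isometric to any word metric $\dist_w$ on $\Ga$ with respect to a finite generating set under the standing assumptions. Chaining this quasi-isometry with the rough isometry from (1) shows that $\ga \mapsto \ga o$ from $(\Ga, \dist_w)$ to $(\Tc, \dist_{\Tc})$ is a quasi-isometric embedding.
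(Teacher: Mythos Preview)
Your high-level strategy is correct and matches the paper: cast the stationary measure $\nu$ as a coarse PS-measure of dimension $1$ for the Green cocycle, cast $\mu$ as a Busemann PS-measure on $\partial_{GM}\Tc$, and apply the main rigidity theorem (Theorem~\ref{thm:main rigidity theorem}) to compare magnitudes. Your derivation of (2) from (1) via \cite[Prop.~7.8]{GT2020} and your divergence computation $\sum_\gamma e^{-\dist_G(\id,\gamma)}=+\infty$ are also what the paper does.

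The substantive difference is \emph{where} you place the well-behaved random-walk PS-system. You propose building it directly on $\partial_{GM}\Tc$ with ``Green shadows'' and the identity as boundary map. The paper instead builds it on the Bowditch boundary $\partial(\Gamma,\Pc)$ (this is Theorem~\ref{thm:random walks are well behaved}), where the convergence-group shadows of Equation~\eqref{eqn.shadowinconvergence} are available and the Ancona inequality of \cite{GGPY2021} applies intrinsically. The equivariant map $f:\partial(\Gamma,\Pc)\to\partial_{GM}\Tc$ then comes for free from Poisson-boundary uniqueness: both $(\partial(\Gamma,\Pc),\nu_0)$ and $(\PMF,\nu)$ are Poisson boundaries for $(\Gamma,\mathsf{m})$, and $\nu$ is concentrated on $\UE\subset\partial_{GM}\Tc$. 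The target system $(\partial_{GM}\Tc,\Gamma,\beta,\mu)$ only needs to be a PS-system, not well-behaved, and this is supplied by the contracting-isometry framework (Theorem~\ref{thm.contractingPS}).

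Your route runs into a real obstacle. To make the random-walk cocycle into a well-behaved PS-system on $\partial_{GM}\Tc$ you would need shadows there satisfying \ref{item:almost constant on shadows} and \ref{item:intersecting shadows} with respect to the Green magnitude. You propose transferring the Ancona inequalities ``to the Teichm\"uller setting via the orbit map $\Gamma\to\Tc$,'' but controlling Green shadows by Teichm\"uller shadows in this way typically requires the orbit map to already be a quasi-isometric embedding---which is precisely conclusion (2). The Bowditch-boundary approach sidesteps this circularity entirely: the Ancona estimates live on the group, the well-behavedness is verified there, and the passage to $\partial_{GM}\Tc$ is purely measure-theoretic via the Poisson isomorphism, with no geometric comparison needed beforehand.
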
 

For parabolically geometrically finite subgroups, we will also prove the converse of Theorem~\ref{thm.singularity hitting PS MCG intro}, see Theorem  \ref{thm.RW Teich PGF} below.

\subsubsection{Random walks on discrete subgroups of Lie groups} \label{subsec.singularity Liegp}

Let $\Gsf$ be a connected semisimple Lie group without compact factors and with finite center. Suppose $\Gamma < \Gsf$ is a Zariski dense discrete subgroup, and $\mathsf{m}$ is a probability measure on $\Gamma$ whose support generates $\Gamma$ as a semigroup. Fix a minimal parabolic subgroup $\Psf$ and let $\Fc:=\Gsf/\Psf$ denote the Furstenberg boundary. Then there is a unique \emph{$\mathsf{m}$-stationary measure} $\nu$ on $\Fc$ \cite{Furstenberg_boundary, GR_Furstenberg}. The measure $\nu$ is also referred to as the \emph{Furstenberg measure}.

 In this section we consider the following well-known conjecture, cf. \cite{KP_matrix}.

\begin{conjecture}[Singularity conjecture] \label{conj.singularity Liegp} If $\mathsf{m}$ has finite support, then the $\mathsf{m}$-stationary measure $\nu$ is singular to the Lebesgue measure class on $\Fc$. 
\end{conjecture}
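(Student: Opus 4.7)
The plan is to apply the paper's Tukia-type rigidity theorem for Patterson--Sullivan systems with the \emph{identity} on $\Fc$ playing the role of the $\Ga$-equivariant boundary map, so one needs to view both $\nu$ and the Lebesgue class as coarse Patterson--Sullivan measures on the same space for two (different) coarse cocycles on $\Ga$, and then derive a contradiction from the rigidity conclusion when $\mathsf{m}$ is finitely supported.

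First, I would exhibit both measures in the Patterson--Sullivan framework. Let $\beta : \Ga \times \Fc \to \mfa$ denote the Iwasawa/Busemann cocycle valued in a Cartan subspace $\mfa$ of the Lie algebra of $\Gsf$. The classical transformation law for the Haar measure on $\Gsf/\Psf$ identifies the Lebesgue class as a $\sigma_{\mathrm{Leb}}$-Patterson--Sullivan measure of dimension $1$, where $\sigma_{\mathrm{Leb}}(\ga,x) = \ip{2\rho,\, \beta(\ga,x)}$ and $2\rho$ is the sum of positive roots with multiplicity. In parallel, via a Doob $h$-transform in the style of Ledrappier (or Benoist--Quint in higher rank), one represents $\nu$ as a coarse $\sigma_G$-Patterson--Sullivan measure of some dimension $\delta_G > 0$, with $\sigma_G$ a coarse Green cocycle satisfying $\sigma_G(\ga,x)$ close to $-\log G_{\mathsf{m}}(\id,\ga)$ up to a bounded $x$-dependent error. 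Natural shadows and magnitudes coming from the symmetric space $\Gsf/\Ksf$ (pulled back along a fixed Weyl chamber) then upgrade both into genuine Patterson--Sullivan systems.

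Second, applying the Tukia-type theorem to $\id : \Fc \to \Fc$ under the non-singularity hypothesis would force the two coarse cocycles to be coarsely proportional. Specializing to attracting fixed points of regular elements and using standard properties of the Busemann cocycle gives
\[
\sup_{\ga \in \Ga} \abs{\,\delta_G\, \dist_G(\id,\ga) \,-\, \ip{2\rho,\,\mu(\ga)}\,} < +\infty,
\]
where $\mu(\ga) \in \mfa^+$ is the Cartan projection of $\ga$. In words, the Green metric on $\Ga$ would agree, up to a bounded error, with the pullback through the Cartan projection of the single positive linear functional $\ip{2\rho,\cdot}$.

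Third, I would derive a contradiction from this rigidity when $\mathsf{m}$ has finite support. For \emph{non-lattice} Zariski dense $\Ga$, Benoist's theorem implies that the limit cone $\Lc_\Ga \subset \mfa^+$ is a proper convex cone with non-empty interior. Since finite superexponential moment makes $\dist_G$ quasi-isometric to a word metric, the display would force the word length on $\Ga$ to be within bounded distance of $\ip{2\rho,\mu(\cdot)}$; one then picks two sequences in $\Ga$ whose Cartan projections diverge in strongly different directions of $\Lc_\Ga$ but whose word lengths are comparable, contradicting the coarse linearity. This step should carry through and yield singularity for all \emph{thin} Zariski dense $\Ga$. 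I expect the main obstacle to be the \emph{lattice} case: then $\Lc_\Ga = \mfa^+$, no directional argument is available, and the rigidity is not immediately contradictory; one would need a genuinely new input, in the spirit of a sharp higher rank Guivarc'h--Kaimanovich entropy-drift inequality, to exclude the rigidity for finitely supported $\mathsf{m}$. This is exactly where Conjecture~\ref{conj.singularity Liegp} remains open, and the Tukia-type framework clarifies but does not resolve it.
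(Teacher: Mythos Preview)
The statement is a \emph{conjecture}, and the paper does not prove it in full; it only establishes the partial result Theorem~\ref{thm:singularity discrete group} under the extra hypotheses that $\Gsf$ has no rank one factor and $\Gamma$ is relatively hyperbolic. You yourself acknowledge at the end of your proposal that the lattice case remains open, so your write-up is really a strategy sketch rather than a proof. That is fine, but several steps in your sketch for the \emph{non-lattice} case are also incorrect or unjustified, and the paper's route to its partial result is different from yours.

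First, your use of Benoist's theorem is wrong: Benoist shows that the limit cone of a Zariski dense subgroup is convex with non-empty interior, but it need \emph{not} be a proper subcone of $\mfa^+$ when $\Gamma$ is not a lattice. There are non-lattice Zariski dense discrete subgroups whose limit cone is all of $\mfa^+$. Even granting a proper limit cone, your ``pick two sequences in different directions with comparable word length'' argument does not contradict the displayed scalar relation $\delta_G\,\dist_G(\id,\ga)\approx\ip{2\rho,\kappa(\ga)}$: that relation constrains only the value of a single linear functional on $\kappa(\ga)$, not the direction of $\kappa(\ga)$, so elements in very different directions can perfectly well have comparable $\ip{2\rho,\kappa(\cdot)}$ and comparable word length. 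Second, your plan to exhibit $\nu$ directly as a well-behaved PS-system on $\Fc$ via a ``coarse Green cocycle'' is not substantiated; the well-behavedness (in particular Properties~\ref{item:intersecting shadows} and \ref{item:diam goes to zero ae} and the full-measure conical condition) is exactly what requires work, and the paper does not establish it on $\Fc$ for general $\Gamma$.

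The paper's argument for Theorem~\ref{thm:singularity discrete group} avoids both issues. It places the well-behaved PS-system on the \emph{Bowditch boundary} $\partial(\Gamma,\Pc)$ (Theorem~\ref{thm:random walks are well behaved}, which needs the relatively hyperbolic assumption), transports to $\Fc$ via the Poisson boundary identification, and applies the rigidity theorem to conclude that $\Psi=$ (sum of positive roots) is tangent to the growth indicator $\psi_\Gamma$. The contradiction then comes from the strict inequality $\psi_\Gamma<\Psi$ on $\mfa^+\smallsetminus\{0\}$ for non-lattices (Quint, Lee--Oh), together with Haettel's theorem that a relatively hyperbolic group cannot be a lattice in a higher-rank group with no rank one factor. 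This growth-indicator argument is the correct replacement for your limit-cone step.
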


In \cite{KZ2025}, we give an affirmative answer to the singularity conjecture when $\Gsf$ has Kazhdan's property (T) and $\Ga$ is not a lattice. In this case, it is not necessary to assume any moment condition on $\mathsf{m}$, and it suffices to have that $\supp \mathsf{m}$ generates $\Ga$ as a group, not necessarily as a semigroup.

In this paper,  we consider the singularity conjecture for a more general class of measures, the ``Iwasawa Patterson--Sullivan measures'' introduced by Quint~\cite{Quint_PS}, and for general semisimple Lie groups.

Delaying precise definitions until Section \ref{sec.Liegps}, we fix a Cartan decomposition $\mathfrak{g} = \mathfrak{k} + \mathfrak{p}$ 
of the Lie algebra of $\Gsf$, a Cartan subspace $\mfa \subset \mfp$, and a positive closed Weyl chamber $\mathfrak{a}^+\subset\mathfrak a$. Then let $\Delta \subset \mathfrak{a}^*$ be the corresponding system of simple restricted roots, and let $\kappa:\Gsf\to\mathfrak{a}^+$ denote the associated Cartan projection.

For the usage in later sections, we consider general flag manifolds.
Given a non-empty subset $\theta \subset \Delta$, we let $\Psf_\theta < \Gsf$ denote the associated parabolic subgroup and let $\Fc_\theta=\Gsf / \Psf_\theta$ denote the associated partial flag manifold. We denote by $B_{\theta} : \Gsf \times \Fc_{\theta} \to \fa_{\theta}$ the \emph{partial Iwasawa cocycle}, a vector valued cocycle whose image lies in a subspace $\fa_{\theta} \subset \fa$ associated to $\theta$.

Given a functional $\phi \in \mfa_\theta^*$ and a subgroup $\Gamma < \Gsf$, a Borel probability measure $\mu$ on $\Fc_\theta$ is called a \emph{coarse $\phi$-Patterson--Sullivan measure for $\Gamma$} if it is a  coarse  $(\phi \circ B_\theta)$-Patterson--Sullivan measure for $\Gamma$ in the sense of Equation~\eqref{eqn.coarse PS meaasure intro}. We refer to these measures as \emph{coarse Iwasawa Patterson--Sullivan measures}.

In the case when $\Gsf = \Isom_0(\Hb^n)$, $\Delta =\{\alpha\}$ consists of a single simple restricted root  and $\Fc_{\alpha}$ naturally identifies with $\partial_\infty\Hb^n$. Employing the ball model for $\Hb^n$ with $o \in \Hb^n$ as the center of the ball so that $\partial_{\infty} \Hb^n = \Sb^{n-1}$, 
$$
| g'(x) |_{\partial_{\infty} \Hb^n} = e^{-(\alpha \circ B_{\alpha})(g, x)}
$$
for all $g \in \Gsf$ and $x \in \partial_{\infty} \Hb^n$. So the above definitions encompasses the classical case described  in Equation \eqref{eqn.conformalmeasure}.

As $\Fc = \Fc_{\Delta}$ always supports a Iwasawa Patterson--Sullivan measure in the Lebesgue measure class \cite[Lem. 6.3]{Quint_PS},  it is natural to consider the following generalization of Conjecture \ref{conj.singularity Liegp}.

\begin{conjecture}[generalized Singularity conjecture] \label{conj.generalized Singularity} If $\mathsf{m}$ has finite support, then the $\mathsf{m}$-stationary measure $\nu$ is singular to every coarse Iwasawa Patterson--Sullivan measure on $\Fc$. 
\end{conjecture}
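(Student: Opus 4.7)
The plan is to deduce this conjecture from the general Tukia-type rigidity theorem for Patterson--Sullivan systems introduced in the paper, following the same strategy used for Theorem~\ref{thm:random walks GH in intro} and Theorem~\ref{thm.singularity hitting PS MCG intro}. The key idea is to realize the stationary measure $\nu$ itself as a (coarse) Patterson--Sullivan measure with respect to the Green cocycle of the random walk, and then observe that non-singularity of $\nu$ and any coarse Iwasawa Patterson--Sullivan measure $\mu$ would force a rigid relationship between the Green metric on $\Gamma$ and the functional $\phi$ that controls $\mu$, which is incompatible with $\mathsf{m}$ having finite support.

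First, I would upgrade the Furstenberg measure $\nu$ to a Patterson--Sullivan system. Under finite superexponential moment and Zariski density, the stationary measure is expected to satisfy $d(\gamma_*\nu)/d\nu(x) \asymp e^{-\sigma_G(\gamma^{-1},x)}$ for an appropriate Green cocycle $\sigma_G : \Gamma \times \Fc \to \Rb$, obtained by pushing the Martin cocycle through the boundary map from the Martin boundary to $\Fc$. This makes $\nu$ a coarse $\sigma_G$-Patterson--Sullivan measure of dimension $1$. Second, I would apply the paper's Tukia-type rigidity theorem to the pair of Patterson--Sullivan systems $(\nu, \sigma_G)$ and $(\mu, \phi \circ B_\theta)$, using the identity map on $\Fc$ as the equivariant injective boundary map. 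If $\nu$ and $\mu$ are not singular, the conclusion should give
\[
\sup_{\gamma \in \Gamma} \bigl| \dist_G(\id, \gamma) - \phi(\kappa(\gamma)) \bigr| < +\infty.
\]
Combined with the fact that $\dist_G$ is quasi-isometric to a word metric on $\Gamma$ when $\mathsf{m}$ has finite superexponential moment and $\Gamma$ is non-amenable, this forces $\gamma \mapsto \phi(\kappa(\gamma))$ to be bi-Lipschitz to a word length on $\Gamma$. In particular, the orbit map would be a quasi-isometric embedding into $\Gsf$ in the direction prescribed by $\phi$, placing $\Gamma$ in a very restricted Anosov-type regime perfectly calibrated to $\mathsf{m}$.

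The main obstacle is the final step: ruling out this perfect calibration for every finitely supported $\mathsf{m}$ and every functional $\phi \in \mfa^*$ simultaneously. I expect two tools to be essential here. The first is an entropy/drift inequality in the spirit of Kaimanovich--Le~Prince and Furstenberg: the $\phi$-drift of $W_n$ should strictly dominate the logarithmic Green-metric drift unless $\mathsf{m}$ is supported on a semigroup of very special structure, which Zariski density of $\Gamma$ forbids. The second is a counting/critical exponent argument, using that the rigidity $\dist_G \asymp \phi \circ \kappa$ would equate the Green growth rate of $\Gamma$ with the $\phi$-critical exponent in $\Gsf$, a coincidence which should fail generically for finitely supported measures. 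I expect that, as in the proofs of Theorems~\ref{thm.singularity harm GH} and \ref{thm.singularity hitting PS MCG intro}, the relative hyperbolicity hypothesis will enter precisely at this step: peripheral subgroups provide enough independent directions in $\mfa$ that a single functional $\phi$ cannot simultaneously match the Green geometry on all peripherals and on a hyperbolic cusp-excursion of the random walk.
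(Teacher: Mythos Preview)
The statement you are attempting to prove is Conjecture~\ref{conj.generalized Singularity}, which the paper explicitly states as an \emph{open conjecture}. There is no proof of it in the paper to compare against. The paper only establishes partial results in this direction: Theorem~\ref{thm:random walks Zdense in intro} (which requires the additional hypothesis that $\Gamma$ is relatively hyperbolic as an abstract group) shows that non-singularity forces $\sup_{\gamma}\abs{\dist_G(\id,\gamma)-\delta\phi(\kappa(\gamma))}<+\infty$ and that the orbit map is a quasi-isometric embedding; Corollary~\ref{cor:RW in G wh and unipotent} then deduces singularity only under the further assumption that $\Gamma$ is word hyperbolic and contains a unipotent element.

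Your proposal correctly identifies the first half of this strategy, and indeed your steps up through the displayed estimate on $\dist_G$ versus $\phi\circ\kappa$ match the paper's approach for Theorem~\ref{thm:random walks Zdense in intro} (with one difference: the paper does not build a Green cocycle directly on $\Fc$, but instead works on the Bowditch boundary $\partial(\Gamma,\Pc)$ --- which is where the relative hyperbolicity hypothesis enters --- and then transports via the Poisson boundary isomorphism $(\partial(\Gamma,\Pc),\nu_0)\cong(\Fc,\nu)$). But the ``main obstacle'' you flag is exactly the part that is \emph{not} resolved: the paper provides no mechanism to rule out the calibration $\dist_G \asymp \delta\phi\circ\kappa$ for general Zariski dense $\Gamma$ and general $\phi$, and the entropy/drift and counting heuristics you sketch are not carried out there. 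In particular, your expectation that peripheral subgroups supply ``enough independent directions in $\mfa$'' to obstruct any single $\phi$ is not something the paper proves; the only obstructions it exhibits are the presence of a unipotent element (Corollary~\ref{cor:RW in G wh and unipotent}) or the absence of rank one factors combined with the specific functional $\Psi$ for the Lebesgue class (Theorem~\ref{thm:singularity discrete group}). So your proposal is a reasonable outline of how one might attack the conjecture, but the decisive step remains a genuine gap, both in your sketch and in the paper.
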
 

We prove that non-singularity implies strong restrictions on how a discrete subgroup embeds in $\Gsf$.

\begin{theorem}[see Theorem~\ref{thm:random walks Zdense} below]\label{thm:random walks Zdense in intro} Suppose  $\Gamma$ is relatively hyperbolic (as an abstract group), $\mathsf{m}$ has finite superexponential moment, and $\mu$ is a coarse $\phi$-Patterson--Sullivan measure on $\Fc$ of dimension $\delta$. If the measures $\nu$ and $\mu$ are not singular, then: 
\begin{enumerate}
\item $\sup_{\gamma \in \Gamma} \abs{ \dist_G(\id, \gamma) - \delta \phi(\kappa(\gamma))} < +\infty$. In particular,  $$\sum_{\ga \in \Ga} e^{-\delta \phi(\kappa(\gamma))} =~+ \infty$$ and $\delta \phi \in \fa^*$ is tangent to the growth indicator of $\Ga$.
\item If  $\dist_w$ is a word metric on $\Gamma$ with respect to a finite generating set, $(X,\dist_X)$ is the symmetric space associated to $\Gsf$, and $x_0 \in X$, then  the map 
$$
\gamma \in (\Gamma, \dist_w) \mapsto \gamma x_0 \in (X,\dist_X)
$$
is a quasi-isometric embedding.
\end{enumerate} 
\end{theorem}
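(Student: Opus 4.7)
The plan is to parallel the strategy used for Theorem~\ref{thm:random walks GH in intro} and reduce part~(1) to an application of the general Tukia-type rigidity theorem for Patterson--Sullivan systems from Section~\ref{sec:general framework}. The idea is to view both $\nu$ and $\mu$ as coarse Patterson--Sullivan measures on the common space $\Fc$ for two a priori different coarse-cocycles on $\Gamma$, and then use the non-singularity hypothesis to force the two cocycles, and hence the two magnitude functions, to agree up to a bounded additive error.

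On the random-walk side, the key input is that under finite superexponential moment, the $\mathsf{m}$-stationary measure $\nu$ on $\Fc$ is a coarse Patterson--Sullivan measure of dimension $1$ for the \emph{Green coarse-cocycle}
\[
\sigma_G(\gamma, x) = \limsup_{g \to x} \, \bigl( \dist_G(\id, \gamma^{-1} g) - \dist_G(\id, g) \bigr),
\]
where $g \in \Gamma$ is any net whose orbit on $\Fc$ accumulates on $x$. This is extracted from Ancona-type estimates for random walks on relatively hyperbolic groups and from the fact that $\dist_G$ is hyperbolic and quasi-isometric to any word metric~\cite[Prop. 7.8]{GT2020}; the shadows and magnitude function of the associated Patterson--Sullivan system are then built from $\dist_G$. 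On the Iwasawa side, $\mu$ is by definition a coarse Patterson--Sullivan measure on $\Fc$ for the cocycle $\phi \circ B_\theta$, with magnitude $\phi \circ \kappa$ and standard shadows on $\Fc$ cut out by the partial Iwasawa cocycle. With both systems set up on $\Fc$, I take the identity map on $\Fc$ as the boundary map between them; non-singularity of $\nu$ and $\mu$ is exactly the hypothesis needed to invoke the main rigidity theorem, whose conclusion is
\[
\sup_{\gamma \in \Gamma} \bigl| \dist_G(\id, \gamma) - \delta \, \phi(\kappa(\gamma)) \bigr| < +\infty,
\]
which is~(1). For part~(2), I use that $\Gamma$ is finitely generated and non-amenable (being relatively hyperbolic with $\supp \mathsf{m}$ generating), so that under finite superexponential moment $\dist_G$ is quasi-isometric to $\dist_w$; combined with~(1) this gives $\dist_w(\id,\gamma) \lesssim \phi(\kappa(\gamma)) + O(1)$. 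Since $\phi(\kappa(\gamma)) \leq \|\phi\| \cdot \|\kappa(\gamma)\|$ is controlled by $\dist_X(x_0, \gamma x_0)$ and conversely $\dist_X(x_0,\gamma x_0)$ is bounded above by $\dist_w(\id,\gamma)$ times the displacement of generators, the orbit map is a quasi-isometric embedding into $(X,\dist_X)$.

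The main obstacle I anticipate is the \emph{compatibility} of the two Patterson--Sullivan systems on $\Fc$: the random-walk system wants shadows coming from the Green metric on $\Gamma$, while the Iwasawa system wants shadows cut out by $\phi \circ B_\theta$ on $\Fc$. To apply the general rigidity theorem one must verify that these two shadow structures fit together—either by showing that one natural family of shadows on $\Fc$ satisfies the axioms of both systems simultaneously, or by proving a comparability statement between them. Together with the fact that $\nu$ need not be supported on a boundary of $(\Gamma,\dist_G)$ but rather lives on all of $\Fc$, checking these axioms for the random-walk Patterson--Sullivan system on $\Fc$ (rather than on an abstract Bowditch/Gromov boundary of $\Gamma$) is the most delicate point of the argument.
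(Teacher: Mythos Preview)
Your overall strategy---reduce to the main rigidity theorem for PS-systems and then deduce (2) from (1) via the quasi-isometry between $\dist_G$ and $\dist_w$---is correct, and your derivation of (2) from (1) matches the paper's. However, your setup for (1) creates an obstacle that the paper simply avoids.

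You try to realize \emph{both} PS-systems on the single space $\Fc$ and then use the identity map as $f$. This forces you to build a well-behaved PS-system for the random walk directly on $\Fc$: a Green coarse-cocycle $\sigma_G$ on $\Gamma\times\Fc$, shadows on $\Fc$ coming from $\dist_G$, and verification of all the axioms~\ref{item:properness}--\ref{item:diam goes to zero ae}. You correctly flag this as the delicate point, and it is: $\Gamma$ need not act as a convergence group on $\Fc$, your $\sigma_G(\gamma,x)=\limsup_{g\to x}(\dist_G(\id,\gamma^{-1}g)-\dist_G(\id,g))$ is not even clearly well-defined off the limit set, and the Ancona-type inequalities you would need are proved relative to the compactification $\Gamma\sqcup\partial(\Gamma,\Pc)$, not to $\Fc$. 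There is also no requirement in the main theorem that the two shadow structures ``fit together''; each system carries its own shadows, so the compatibility you worry about is not part of the hypotheses.

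The paper exploits the asymmetry in Theorem~\ref{thm:main rigidity theorem}: only the \emph{first} system must be well-behaved. It takes $M_1=\partial(\Gamma,\Pc)$ with the PS-system of Theorem~\ref{thm:random walks are well behaved} (magnitude $\dist_G(\id,\cdot)$, shadows from the convergence-group compactification, well-behavedness via the Ancona inequality of~\cite{GGPY2021}), and $M_2=\Fc$ with the Iwasawa PS-system of Theorem~\ref{thm.irreduciblePS} (which only needs Properties~\ref{item:coycles are bounded}--\ref{item:empty Z intersection}). The map $f:\partial(\Gamma,\Pc)\to\Fc$ is the \emph{Poisson boundary isomorphism}: by Kaimanovich, both $(\partial(\Gamma,\Pc),\nu_0)$ and $(\Fc,\nu)$ are Poisson boundaries of $(\Gamma,\mathsf m)$, hence measurably $\Gamma$-isomorphic with $f_*\nu_0=\nu$. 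Non-singularity of $\nu$ and $\mu$ is then exactly non-singularity of $f_*\nu_0$ and $\mu$, and Theorem~\ref{thm:main rigidity theorem} yields the bound in (1). So the missing idea in your proposal is to place the random-walk system on the Bowditch boundary rather than on $\Fc$, and to use Poisson-boundary uniqueness to produce the measurable equivariant map.
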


For some classes of groups, it is easy to verify that the map in part (2) cannot be a quasi-isometric embedding.

\begin{corollary}[see Corollary \ref{cor:RW in G wh and unipotent body} below] \label{cor:RW in G wh and unipotent}  Suppose  $\Gamma$ is word hyperbolic (as an abstract group) and $\mathsf{m}$ has finite superexponential moment. If $\Gamma$ contains a unipotent element of $\Gsf$, then the $\mathsf{m}$-stationary measure  $\nu$ is singular to every coarse Iwasawa Patterson--Sullivan measure on $\Fc$. \end{corollary}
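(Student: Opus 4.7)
The plan is to derive the corollary as a contrapositive of Theorem~\ref{thm:random walks Zdense in intro}(2). Since a word hyperbolic group is relatively hyperbolic (relative to the empty family of peripheral subgroups), the hypotheses of Theorem~\ref{thm:random walks Zdense in intro} are in force. Suppose, for contradiction, that $\nu$ fails to be singular to some coarse Iwasawa Patterson--Sullivan measure $\mu$ on $\Fc$. Then for any basepoint $x_0$ in the symmetric space $X$ associated to $\Gsf$ and any word metric $\dist_w$ on $\Gamma$, the orbit map
\[
\gamma \in (\Gamma, \dist_w) \mapsto \gamma x_0 \in (X, \dist_X)
\]
is a quasi-isometric embedding. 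I will contradict this by exhibiting a cyclic subgroup of $\Gamma$ on which the two distances grow at incompatible rates.

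Let $u \in \Gamma$ be the non-trivial unipotent element of $\Gsf$ guaranteed by the hypothesis. Two standard facts combine to give the contradiction. First, infinite cyclic subgroups of a word hyperbolic group are quasi-convex, hence undistorted; in particular, $u$ has infinite order and $\dist_w(\id, u^n) \asymp n$ as $n \to \infty$. Second, for a non-trivial unipotent element $u = \exp(N)$ of $\Gsf$ (with $N$ nilpotent in $\mathfrak{g}$), the entries of $u^n$ grow polynomially in $n$, so the singular values of $u^n$ grow polynomially as well, and the Cartan projection satisfies $\norm{\kappa(u^n)} \asymp \log n$. Since $\dist_X(x_0, g x_0)$ is comparable to $\norm{\kappa(g)}$ up to a uniform additive constant depending on $x_0$, we conclude $\dist_X(x_0, u^n x_0) \asymp \log n$.

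Comparing $\dist_w(\id, u^n) \asymp n$ with $\dist_X(x_0, u^n x_0) \asymp \log n$ shows that the orbit map above cannot be a quasi-isometric embedding, contradicting Theorem~\ref{thm:random walks Zdense in intro}(2). Hence $\nu$ must be singular to every coarse Iwasawa Patterson--Sullivan measure on $\Fc$. The main conceptual point is the reduction (via Theorem~\ref{thm:random walks Zdense in intro}) of a singularity question for stationary measures to a geometric distortion question for cyclic subgroups, and once that reduction is made the only real ingredient is the elementary logarithmic estimate for unipotent translation lengths, which I expect to be the step that requires the most care to state cleanly (particularly fixing conventions so that $\norm{\kappa(u^n)}$ is honestly comparable to $\dist_X(x_0, u^n x_0)$).
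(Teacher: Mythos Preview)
Your proposal is correct and follows essentially the same route as the paper: assume non-singularity, invoke Theorem~\ref{thm:random walks Zdense in intro}(2) to get a quasi-isometric orbit embedding, and then contradict this using the incompatible growth rates of $n \mapsto \dist_w(\id,u^n)$ (linear, since infinite-order elements in a word hyperbolic group are undistorted) versus $n \mapsto \dist_X(x_0,u^nx_0)$ (sublinear for unipotent $u$). The paper phrases the unipotent estimate as $\lim_{n\to\infty}\tfrac{1}{n}\dist_X(u^nx_0,x_0)=0$ rather than your sharper $\asymp \log n$, but either suffices.
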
  

More generally, Corollary~\ref{cor:RW in G wh and unipotent} holds when $\Gamma$ is relatively hyperbolic and contains an element $u$ which is unipotent (as an element of $\Gsf$) and the stable translation length of $u$ is positive on a Cayley graph of $\Ga$ (e.g. $u$ is loxodromic \cite[Prop. 7.8]{DG_entropy}).

\subsection{Tukia's measurable boundary rigidity theorem} 

In this section we describe special cases of our main theorem in a variety of settings.

\subsubsection{Tukia's theorem for word hyperbolic groups} 

We establish a version of Tukia's theorem for word metrics on word hyperbolic groups, which implies that any measurable isomorphism between Gromov boundaries with respect to coarse Patterson--Sullivan measures always extends to a \emph{homeomorphism}.

\begin{theorem}[see Theorem~\ref{thm:GH case meas=>homeo} below]\label{thm:tukia for word hyperbolic in intro} For $i=1,2$ suppose $\Gamma_i$ is a non-elementary word hyperbolic group endowed with a word metric $\dist_i$ with respect to a finite generating set and $\mu_i$ is a coarse Busemann Patterson--Sullivan measure for $\Gamma_i$ of dimension $\delta_i$ on $\partial_{\infty} \Ga_i$. Assume there exist
\begin{itemize}
\item a homomorphism $\rho : \Gamma_1 \rightarrow \Gamma_2$ with non-elementary image and 
\item a $\mu_1$-almost everywhere defined measurable $\rho$-equivariant injective map $f : \partial_\infty \Gamma_1 \rightarrow \partial_\infty \Gamma_2$.
\end{itemize} 
If $f_* \mu_1$ and $\mu_2$ are not singular, then $\ker \rho$ is finite, $\rho(\Gamma_1) < \Gamma_2$ has finite index, 
$$
\sup_{\gamma_1,\gamma_2 \in \Gamma_1} \abs{\delta_1\dist_1(\gamma_1,\gamma_2) - \delta_2\dist_2(\rho(\gamma_1), \rho(\gamma_2))} < +\infty,
$$
and there exists a $\rho$-equivariant homeomorphism $\tilde f : \partial_\infty \Gamma_1 \rightarrow \partial_\infty \Gamma_2$ such that 
\begin{enumerate}
\item $\tilde f = f$ $\mu_1$-a.e., 
\item $\tilde f_* \mu_1$, $\mu_2$ are in the same measure class and the Radon--Nikodym derivatives are a.e. bounded from above and below by a positive number.
\end{enumerate} 
\end{theorem}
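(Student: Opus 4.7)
The plan is to specialize the general Patterson--Sullivan rigidity theorem of Section~\ref{sec:general framework} to the present word-hyperbolic setting, extract a distance comparison from the resulting cocycle comparison, and then read off the remaining structural conclusions.

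First, I set up Patterson--Sullivan systems. For each $i\in\{1,2\}$, view $(\Gamma_i, \dist_i)$ as a proper geodesic Gromov hyperbolic metric space with $\Gamma_i$ acting by isometries, form the coarse Busemann cocycle $\beta_i:\Gamma_i\times\partial_\infty\Gamma_i\to\Rb$ as in Equation~\eqref{eqn.Busemann cocycle intro} based at the identity, and take the standard Gromov shadows and the magnitude $\abs{\gamma}_i := \dist_i(e,\gamma)$. Standard Gromov-hyperbolic estimates show these data satisfy the axioms of a Patterson--Sullivan system (this specializes Example~\ref{ex:PS systems}). Since $f$ is injective and $\rho$-equivariant with non-elementary image, and $f_*\mu_1$ is not singular to $\mu_2$, the hypotheses of the main rigidity theorem are satisfied. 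Its output furnishes a constant $C_0>0$, a $\rho$-equivariant homeomorphism $\tilde f:\partial_\infty\Gamma_1\to\partial_\infty\Gamma_2$ with $\tilde f=f$ $\mu_1$-a.e. and with $\tilde f_*\mu_1,\mu_2$ in the same class with Radon--Nikodym derivatives essentially bounded above and below, together with the cocycle comparison
\[ \bigl|\delta_1 \beta_1(\gamma,x) - \delta_2 \beta_2\bigl(\rho(\gamma),\tilde f(x)\bigr)\bigr| \le C_0 \]
for every $\gamma\in\Gamma_1$ and every $x\in\partial_\infty\Gamma_1$.

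Next I convert this to the distance comparison. A standard Gromov-hyperbolicity computation shows that for any non-elementary word hyperbolic group $(\Gamma,\dist)$ and any $\gamma\in\Gamma$, on one hand $\beta(\gamma,x)\le \dist(e,\gamma)+O(1)$ for every $x$, and on the other hand $\beta(\gamma,x_\gamma)\ge \dist(e,\gamma)-O(1)$ whenever $x_\gamma\in\partial_\infty\Gamma$ lies on a geodesic ray from $e$ through $\gamma^{-1}$. Since conical limit points of loxodromic elements are dense in the Gromov boundary of a non-elementary hyperbolic group, for each $\gamma$ I can choose such an $x_\gamma$ and evaluate the cocycle comparison there; doing this for $\gamma$ and for $\gamma^{-1}$ separately and absorbing constants yields
\[ \sup_{\gamma\in\Gamma_1}\bigl|\delta_1\dist_1(e,\gamma)-\delta_2\dist_2(e,\rho(\gamma))\bigr| \le C_1. \]
Left-invariance of the word metrics gives the stated uniform bound on $|\delta_1\dist_1(\gamma_1,\gamma_2)-\delta_2\dist_2(\rho(\gamma_1),\rho(\gamma_2))|$.

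It remains to deduce finite kernel and finite image index. If $\gamma\in\ker\rho$ then $\delta_1\dist_1(e,\gamma)\le C_1$, so $\ker\rho$ lies in a finite ball of the Cayley graph and is finite. The distance comparison also says $\rho$ is a quasi-isometric embedding with multiplicative constant $\delta_1/\delta_2$, so $\rho(\Gamma_1)$ is a quasi-convex subgroup of $\Gamma_2$ whose limit set is precisely $\tilde f(\partial_\infty\Gamma_1)$. Because $\tilde f_*\mu_1$ is non-singular with respect to $\mu_2$ and is supported on $\tilde f(\partial_\infty\Gamma_1)$, this limit set has positive $\mu_2$-measure; the axioms of a Patterson--Sullivan system (applied via standard shadow-lemma arguments on $\Gamma_2$) then force this closed $\rho(\Gamma_1)$-invariant subset to equal $\partial_\infty\Gamma_2$, which together with quasi-convexity forces $[\Gamma_2:\rho(\Gamma_1)]<\infty$. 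The Radon--Nikodym bounds and agreement $\tilde f = f$ $\mu_1$-a.e.\ come directly from the general theorem.

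I expect the main obstacle to be the finite-index step: one must promote a measure-theoretic non-singularity statement into the purely topological conclusion that $\tilde f(\partial_\infty\Gamma_1)$ is all of $\partial_\infty\Gamma_2$. This is precisely the kind of ``support rigidity'' that the PS-system framework is designed to handle, but it must be invoked carefully since the measures $\mu_i$ are only assumed coarse and need not be supported on a minimal set.
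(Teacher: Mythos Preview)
Your proposal has a fundamental gap: you are overstating what the main rigidity theorem (Theorem~\ref{thm:main rigidity theorem in intro}) outputs. That theorem only gives the magnitude comparison
\[
\sup_{\gamma\in\Gamma_1}\bigl|\delta_1\norm{\gamma}_{\sigma_1}-\delta_2\norm{\rho(\gamma)}_{\sigma_2}\bigr|<+\infty,
\]
which in this setting is exactly the distance comparison $\sup_{\gamma}\abs{\delta_1\dist_1(e,\gamma)-\delta_2\dist_2(e,\rho(\gamma))}<\infty$. It does \emph{not} produce the homeomorphism $\tilde f$, the cocycle comparison, the equality $\tilde f=f$ $\mu_1$-a.e., or the Radon--Nikodym bounds. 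Those are precisely the additional conclusions you are asked to prove, so invoking them as output of the general theorem is circular. (Your second paragraph, deriving the distance comparison from the cocycle comparison, is therefore backwards: the distance comparison comes first and for free.)

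In the paper's argument the remaining conclusions are obtained as follows. The magnitude comparison makes $\rho$ a quasi-isometric embedding, hence induces a \emph{topological} equivariant embedding $\tilde f:\partial_\infty\Gamma_1\hookrightarrow\partial_\infty\Gamma_2$; surjectivity of $\tilde f$ (and thus finite index) is deduced not from a measure-support argument but from a critical exponent equality $\delta_{\sigma_2}(\rho(\Gamma_1))=\delta_{\sigma_2}(\Gamma_2)=\delta_2$ together with~\cite[Thm.~4.3]{BCZZ2024a}. The hardest remaining step is showing $\tilde f=f$ $\mu_1$-a.e.: after reducing to $\Gamma_1=\Gamma_2$ and $\rho=\id$, one must show the measurable $\Gamma$-map $f$ is the identity a.e., which the paper does via the approximate-continuity corollary of the Lebesgue differentiation theorem (Corollary~\ref{cor:approx continuity}) and a convergence argument along conical sequences. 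The cocycle comparison and Radon--Nikodym bounds then follow from~\cite[Prop.~13.1, 13.2]{BCZZ2024a}. Your sketch omits all of this.
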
 

In fact we prove Theorem \ref{thm:tukia for word hyperbolic in intro} for Patterson--Sullivan measures associated to a more general class of cocycles introduced in \cite{BCZZ2024a}, see Definition \ref{defn.expanding coarse cocycle} and Theorem~\ref{thm:GH case meas=>homeo}.

\begin{remark} \label{rmk.convergence group bdr map}
   Given two minimal convergence group actions $\Ga_1 \curvearrowright M_1$ and $\Ga_2 \curvearrowright M_2$ and an onto homomorphism $\rho : \Ga_1 \to \Ga_2$, it is known that any continuous $\rho$-equivariant map $f : M_1 \to M_2$ is injective on the so-called Myrberg limit set of $\Ga_1$ \cite[Prop. 7.5.2]{Gerasimov_floyd} (see also \cite[Lem. 10.5]{Yang_conformal}). Moreover, for a word hyperbolic group, the Myrberg limit set on its Gromov boundary is of full measure with respect to any coarse Busemann Patterson--Sullivan measure \cite[Thm. 1.14]{Yang_conformal} (see also \cite[Cor. 7.3]{Coornaert_PS}). Hence, any continuous equivariant maps between Gromov boundaries of word hyperbolic groups satisfies the condition in Theorem \ref{thm:tukia for word hyperbolic in intro}.
\end{remark}

\subsubsection{Tukia's theorem for Teichm\"uller spaces}

We establish a version of Tukia's theorem for Teichm\"uller spaces. 

\begin{theorem}[corollary to Theorems~\ref{thm:main rigidity theorem in intro} and~\ref{thm.Teichmueller is wellbehavedPS}]\label{thm.Tukia Teichmueller intro} For $i = 1, 2$, let $\Sigma_i$ be a closed connected orientable surface of genus at least two and $\Tc_i$ its Teichm\"uller space. Let $\Ga_i < \Mod(\Sigma_i)$ be a non-elementary subgroup and $\mu_i$ a Busemann Patterson--Sullivan measure for $\Ga_i$ of dimension $\delta_i$ on $\partial_{GM} \Tc_i$. Suppose
   \begin{itemize}
      \item $\displaystyle \sum_{\ga \in \Ga_1} e^{-\delta_1 \dist_{\Tc_1}(o_1, \ga o_1)} = + \infty$ for $o_1 \in \Tc_1$.
      \item There exists an onto homomorphism $\rho : \Ga_1 \to \Ga_2$ and a $\mu_1$-almost everywhere defined measurable $\rho$-equivariant injective map $f : \partial_{GM} \Tc_1 \to \partial_{GM} \Tc_2$.
   \end{itemize}
   If $f_*\mu_1$ and $\mu_2$ are not singular, then for any $o_2 \in \Tc_2$, the orbit map $\ga o_1 \mapsto \rho(\ga) o_2$ is a rough isometry after scaling, i.e.,
   $$
   \sup_{\ga \in \Ga_1} \abs{ \delta_1 \dist_{\Tc_1}(o_1, \ga o_1) - \delta_2 \dist_{\Tc_2}(o_2, \rho(\ga) o_2)} < + \infty.
   $$
\end{theorem}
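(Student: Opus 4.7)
The plan is to recognize both sides as Patterson--Sullivan systems and then invoke the general rigidity theorem (Theorem~\ref{thm:main rigidity theorem in intro}) directly; essentially all of the analytic work should be absorbed into Theorem~\ref{thm.Teichmueller is wellbehavedPS}, which certifies that Teichm\"uller space with its Busemann cocycle fits into the general PS-system framework.

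Concretely, for each $i \in \{1,2\}$ I would fix the basepoint $o_i \in \Tc_i$ from the statement and assemble the data $(\Ga_i \curvearrowright \partial_{GM}\Tc_i,\, \beta_i,\, \mu_i,\, \abs{\cdot}_i)$, where $\beta_i \colon \Mod(\Sigma_i) \times \partial_{GM}\Tc_i \to \Rb$ is the Busemann cocycle with respect to $o_i$ defined in Section~\ref{sec:RW on Tecihmuller space in intro}, the magnitude is $\abs{\ga}_i := \dist_{\Tc_i}(o_i, \ga o_i)$, and the shadows are the ones coming from Teichm\"uller geodesic rays issuing from $o_i$. Theorem~\ref{thm.Teichmueller is wellbehavedPS} then asserts that, together with the prescribed $\mu_i$ and dimension $\delta_i$, this data is a Patterson--Sullivan system in the sense of Section~\ref{sec:general framework}. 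The divergence assumption $\sum_{\ga \in \Ga_1} e^{-\delta_1 \dist_{\Tc_1}(o_1, \ga o_1)} = +\infty$ is, by construction of $\abs{\cdot}_1$, the divergence-type condition for the first PS system, and the hypothesis on $\rho$ and $f$ is verbatim the boundary-map hypothesis of Theorem~\ref{thm:main rigidity theorem in intro}. Applying that theorem produces a constant $C \geq 0$ with
\[
\abs{\delta_1 \abs{\ga}_1 - \delta_2 \abs{\rho(\ga)}_2} \leq C \quad \text{for all } \ga \in \Ga_1,
\]
which, once $\abs{\cdot}_i$ is unwound, is exactly the claimed rough isometry. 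The freedom to replace $o_2$ by any other basepoint follows because changing $o_2$ perturbs $\dist_{\Tc_2}(o_2, \rho(\ga) o_2)$ by a uniformly bounded amount.

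The real obstacle is not at the level of this corollary but in Theorem~\ref{thm.Teichmueller is wellbehavedPS} itself. Since $(\Tc_i, \dist_{\Tc_i})$ is not Gromov hyperbolic, and $\partial_{GM}\Tc_i$ is merely the horofunction boundary of $(\Tc_i, \dist_{\Tc_i})$ by Liu--Su~\cite{LS_horofunction}, one cannot simply quote hyperbolic-space shadow lemmas. Instead, one has to establish a shadow lemma and verify the remaining PS-system axioms (coarse-cocycle property, expansion, and so on) for an \emph{arbitrary} Busemann Patterson--Sullivan measure on $\partial_{GM}\Tc_i$, with no a priori information on its support (in particular, no assumption that the measure is concentrated on $\UE$ or on any specific $\Mod(\Sigma_i)$-invariant subset). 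Once that input is available, the corollary is a direct application of the general rigidity theorem, and no further Teichm\"uller-specific arguments are needed for this deduction.
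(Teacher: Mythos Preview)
Your proposal is correct and follows exactly the approach indicated by the paper, which simply labels Theorem~\ref{thm.Tukia Teichmueller intro} as a corollary of Theorems~\ref{thm:main rigidity theorem in intro} and~\ref{thm.Teichmueller is wellbehavedPS} without giving a separate proof. One small clarification: the hypothesis of Theorem~\ref{thm:main rigidity theorem in intro} is $\mu_1(\Lambda^{\rm con}(\mathscr{H}_1))=1$, not divergence per se, and since the hierarchy in Theorem~\ref{thm.Teichmueller is wellbehavedPS} is the contracting-tails hierarchy rather than the trivial one, you should invoke the ``Moreover'' clause of Theorem~\ref{thm.Teichmueller is wellbehavedPS} (rather than the general equivalence for trivial hierarchies) to pass from the divergence assumption to full conical measure; also, the shadows in Section~\ref{sec.contracting} are defined via horofunction Gromov products rather than geodesic rays.
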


\begin{remark}
   As shown by Yang \cite{Yang_conformal}, $\sum_{\ga \in \Ga_1} e^{-\delta_1 \dist_{\Tc_1}(o_1, \ga o_1)} = + \infty$ implies that $\UE(\Sigma_1) \subset \partial_{GM}\Tc_1$ has a full $\mu_1$-measure. Hence,  the boundary map $f$ and measure $\mu_1$ can be regarded to be defined on $\PMF(\Sigma_1)$, i.e. Thurston's boundary.
\end{remark}
 
For a convex cocompact $\Ga < \Mod(\Sigma)$, there exists a unique $\Ga$-minimal subset of $\PMF$, called the limit set of $\Ga$, and is the image of a $\Ga$-equivariant embedding of $\partial_{\infty} \Ga$ into $\UE$ \cite[Prop. 3.2]{FM_CC}. Moreover, if $\mu$ is a Patterson--Sullivan measure for $\Ga$ of dimension $\delta$ and $\sum_{\ga \in \Ga} e^{-\delta \dist_{\Tc}(o, \ga o)} = + \infty$, then $\mu$ is supported on the limit set of $\Ga$ \cite{Gekhtman_CC} (see also \cite{Coulon_PS, Yang_conformal}). Hence, the boundary map $f$ as in Theorem \ref{thm.Tukia Teichmueller intro} always exists for two isomorphic convex cocompact subgroups. See also Remark \ref{rmk.convergence group bdr map}.

\subsubsection{Tukia's theorem in higher rank} 

Using the Iwasawa Patterson--Sullivan measures introduced in Section~\ref{subsec.singularity Liegp}, we extend Tukia's theorem to a class of discrete subgroups in higher rank semisimple Lie groups called transverse groups, which can be viewed as a higher rank analogue of Kleinian groups. This class  is defined in Section \ref{sec.Liegps} and includes the Anosov and relatively Anosov subgroups and their subgroups. Further, any discrete subgroup of a rank one non-compact simple Lie group is transverse.

\begin{theorem}[see Corollary \ref{cor.Tukia in higher rank} below] \label{thm:Tukia in higher rank} 
   Let $\Gsf_1, \Gsf_2$ be  non-compact simple Lie groups with trivial centers. Let $\Ga < \Gsf_1$ be a Zariski dense $\Psf_{\theta_1}$-transverse subgroup, $\mu$ a coarse $\phi$-Patterson--Sullivan measure for $\Ga$ of dimension $\delta \ge 0$ on $\F_{\theta_1}$, and $\rho : \Ga \to \Gsf_2$ a representation with Zariski dense image. Suppose
   \begin{itemize}
      \item $\displaystyle \sum_{\ga \in \Ga} e^{-\delta \phi(\kappa(\ga))} = + \infty$.
      \item There exists a $\mu$-almost everywhere defined measurable $\rho$-equivariant injective map $f : \Fc_{\theta_1} \rightarrow \Fc_{\theta_2}$.
   \end{itemize}
   If $f_*\mu$ is not singular to some coarse Iwasawa Patterson--Sullivan measure for $\rho(\Ga)$, then $\rho$ extends to a Lie group isomorphism $\Gsf_1 \rightarrow \Gsf_2$. 
   \end{theorem}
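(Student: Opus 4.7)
The plan is to reduce Theorem~\ref{thm:Tukia in higher rank} to the general Patterson--Sullivan rigidity theorem (Theorem~\ref{thm:main rigidity theorem in intro}), and then exploit Zariski density together with higher rank algebraic rigidity to upgrade the resulting dynamical identification to a Lie group isomorphism.

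First I would realize the data as a pair of Patterson--Sullivan systems. For $(\Ga, \Fc_{\theta_1}, \mu)$, the coarse-cocycle is $\phi \circ B_{\theta_1}$, the magnitude function is $\phi \circ \kappa$, and shadows on $\Fc_{\theta_1}$ come from the $\Psf_{\theta_1}$-transverse structure of $\Ga$; the divergence hypothesis $\sum_{\ga} e^{-\delta \phi(\kappa(\ga))} = +\infty$ provides conservativity of the $\Ga$-action, and the remaining axioms follow from general properties of transverse groups established in Section~\ref{sec.Liegps} (cf. Example~\ref{ex:PS systems}). The non-singularity hypothesis supplies a second Patterson--Sullivan system on $\Fc_{\theta_2}$ for $\rho(\Ga)$ associated to some $\phi' \circ B_{\theta_2}$, and the injective measurable $\rho$-equivariant map $f$ provides the required transfer between the two systems.

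Applying Theorem~\ref{thm:main rigidity theorem in intro} should then yield: (i) $\ker \rho$ is finite and $\rho(\Ga)$ is $\Psf_{\theta_2}$-transverse; (ii) a continuous $\rho$-equivariant injective map $\tilde f$ between the limit sets of $\Ga$ and $\rho(\Ga)$ which agrees with $f$ almost everywhere; and (iii) a rough isometry
$$
\sup_{\ga \in \Ga} \bigl| \delta\, \phi(\kappa(\ga)) - \delta'\, \phi'(\kappa(\rho(\ga))) \bigr| < +\infty
$$
for some $\delta' > 0$. In particular, $\rho$ matches the Cartan projections of $\Ga$ and $\rho(\Ga)$ up to a single linear functional and bounded error.

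To promote $\rho$ to a Lie group isomorphism, I would use that both limit sets are Zariski dense in the respective flag manifolds, since $\Gsf_1, \Gsf_2$ are simple and $\Ga, \rho(\Ga)$ are Zariski dense. The continuous $\rho$-equivariant boundary map $\tilde f$ between these Zariski dense sets, together with the matching of Jordan projections coming from (iii), allows one to extend $\rho$ algebraically: the Zariski closure of the graph of $\rho$ in $\Gsf_1 \times \Gsf_2$ projects surjectively onto $\Gsf_1$, and equivariance of $\tilde f$ combined with the Cartan-projection rough isometry forces this closure to be the graph of a Lie group morphism. Simplicity of $\Gsf_1$ forces this morphism to have trivial kernel, and Zariski density of the image in the simple group $\Gsf_2$ forces surjectivity, giving the desired isomorphism. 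The main obstacle is precisely this last extraction of an algebraic morphism from a merely continuous equivariant boundary map in higher rank: in rank one one appeals to Tukia's quasiconformal techniques, whereas here the essential new input is the rigidification provided by (iii), which pins down the two restricted root systems up to a linear isomorphism and rules out the ``twisted'' continuous extensions that would otherwise be compatible with the dynamics.
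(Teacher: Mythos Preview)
Your overall architecture is right and matches the paper: realize both sides as PS-systems, apply Theorem~\ref{thm:main rigidity theorem in intro} to obtain the rough isometry of magnitudes, and then invoke an algebraic rigidity result to upgrade to a Lie group isomorphism. However, two points need correction.

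First, Theorem~\ref{thm:main rigidity theorem in intro} gives \emph{only} your item (iii), namely
\[
\sup_{\ga \in \Ga}\bigl|\delta\,\phi(\kappa(\ga)) - \delta'\,\phi'(\kappa(\rho(\ga)))\bigr| < +\infty.
\]
It does not produce a continuous boundary map $\tilde f$, nor does it show $\rho(\Ga)$ is $\Psf_{\theta_2}$-transverse; indeed, the second system is only a PS-system (not well-behaved), obtained because Zariski density makes $\rho(\Ga)$ $\Psf_{\theta_2}$-irreducible (Theorem~\ref{thm.irreduciblePS}). Your step 3 then leans on the continuous $\tilde f$, which you do not actually have.

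Second, and fortunately, you do not need $\tilde f$ at all. From (iii) one immediately deduces, by passing to $\lambda(g)=\lim_n \kappa(g^n)/n$, that
\[
\delta\,\phi(\lambda(\ga)) = \delta'\,\phi'(\lambda(\rho(\ga))) \quad \text{for all } \ga \in \Ga.
\]
The paper then simply cites the Dal'Bo--Kim rigidity theorem (Theorem~\ref{thm.dalbokim}): for Zariski dense subgroups of simple centerless groups, an equality of nonzero functionals on Jordan projections forces $\rho$ to extend to a Lie group isomorphism $\Gsf_1 \to \Gsf_2$. Your Zariski-closure-of-the-graph sketch is in the spirit of how such results are proved, but you should invoke the theorem directly rather than route the argument through a boundary map you cannot construct.
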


\begin{remark}
   \
   \begin{enumerate}
      \item As in Margulis' superrigidity theorem, the representation $\rho$ is not assumed to be discrete in Theorem \ref{thm:Tukia in higher rank}, in contrast to Theorem \ref{thm:Tukia} and Yue's generalization \cite{Yue1996}. 
      
      \item Theorem \ref{thm:Tukia in higher rank} follows from a more general statement  (Corollary \ref{cor.Tukia in higher rank}) about  a non-elementary transverse subgroup of a semisimple Lie group and its irreducible representation into a semisimple Lie group. 
      \item See Remark \ref{rmk.directional} for a version of the theorem for non-transverse Zariski dense discrete subgroups. 
   \end{enumerate}
\end{remark}

\begin{remark} \label{rmk.prevworks} Theorem~\ref{thm:Tukia in higher rank} was previously established in a variety of special cases. In all of these previous works, the representation $\rho$ was assumed to be discrete faithful and the boundary map was assumed to be a topological embedding. 
   \begin{itemize}
      \item Kim--Oh \cite{KO_ConfMR} considered the cases when either 
      \begin{enumerate}
      \item $\Gsf_1$ is rank one, $\rho$ is faithful, and $\rho(\Ga)$ is $\Psf_{\Delta_2}$-divergent.
      \item $\Ga$ is $\Psf_{\Delta_1}$-Anosov, $\rho$ is faithful, and $\rho(\Gamma)$ is $\Psf_{\Delta_2}$-Anosov.
      \end{enumerate} 
      \item Kim \cite{Kim_ConfMR} considered the case where $\Gamma$ is $\Psf_{\theta_1}$-hypertransverse ($\Psf_{\theta_1}$-transverse with an extra assumption), $\rho$ is faithful, and $\rho(\Ga)$ is $\Psf_{\theta_2}$-divergent.
      \item Blayac--Canary--Zhu--Zimmer \cite{BCZZ2024a} considered the case where $\Ga$ is $\Psf_{\theta_1}$-transverse, $\rho$ is faithful, and $\rho(\Gamma)$ is $\Psf_{\theta_2}$-transverse. 
         \end{itemize}
In contrast to these previous works, in Theorem~\ref{thm:Tukia in higher rank}, $\rho$ does not need to be discrete or faithful, and the boundary map \emph{does not even need to be continuous}.  Further, in many natural settings the boundary maps will not be a topological embedding (e.g. Cannon--Thurston maps \cite{CT_CannonThurston_map}), continuous, or even defined everywhere (e.g. maps between limit sets of isomorphic geometrically finite groups \cite{Tukia_limit_map}).
\end{remark}

\subsection{Entropy rigidity for Anosov groups with Lipschitz limit sets}

The \emph{Hilbert critical exponent} of a discrete subgroup $\Gamma < \SL(d,\Rb)$ is 
$$
\delta_{\rm Hil}(\Gamma) : = \limsup_{R \rightarrow \infty} \frac{1}{R} \log \# \left\{ \gamma \in \Gamma : \frac{1}{2} \log \norm{\gamma} \norm{\gamma^{-1}} \leq R\right\}. 
$$
Notice that if $\Gamma$ preserves a properly convex domain $\Omega \subset \Pb(\Rb^d)$, then $\delta_{\rm Hil}(\Gamma)$ coincides with the critical exponent of $\Gamma$ with respect to the Hilbert metric on $\Omega$ (this follows from \cite[Proposition 10.1]{DGK_cc}).

In a companion paper, we use Corollary \ref{cor.Tukia in higher rank} (the general version of Theorem~\ref{thm:Tukia in higher rank}) to prove a rigidity result for this quantity for Anosov groups with Lipschitz limit sets. We briefly and informally describe this result here, for more details see ~\cite{KimZimmer_Furstenberg}. Pozzetti--Sambarino--Wienhard \cite{PSW2023} proved that 
$$
\delta_{\rm Hil}(\Gamma) \leq p-1
$$
when $\Gamma < \SL(d,\Rb)$ is projective Anosov whose projective limit set in $\Pb(\Rb^d)$  is a $p$-dimensional Lipschitz  manifold and $\Gamma$ acts irreducibly on $\Rb^d$ and $\wedge^{p+1} \Rb^d$, for some $p \le d-2$. Under this hypothesis, in~\cite[Theorem 1.13]{KimZimmer_Furstenberg} we prove that 
  $$
\delta_{\rm Hil}(\Gamma) = p-1
$$
if and only if $p=d-2$ and up to conjugation $\Gamma$ is a uniform lattice in $\mathsf{SO}(d-1,1)$. 

This generalizes a result of Crampon~\cite{Crampon2009} who proved that if $\Gamma < \SL(d,\Rb)$ is a discrete group which acts cocompactly on a strictly convex domain $\Omega \subset \Pb(\Rb^d)$, then 
$$
\delta_{\rm Hil}(\Gamma) \leq d-2 
$$
with equality if and only if $\Omega$ is an ellipsoid (and hence up to conjugation $\Gamma$ is a uniform lattice in $\mathsf{SO}(d-1,1)$). Under Crampon's hypothesis, $\Gamma$ is $\Psf_1$-Anosov and acts irreducibly on $\Rb^d$ and hence also $\wedge^{d-1} \Rb^d$, see~\cite[Section 6.2]{GW2012}. Further, $\Lambda_1(\Gamma)$ coincides with $\partial\Omega$ and hence is a Lipschitz $(d-2)$-manifold. 

In the proof of~\cite[Theorem 1.13]{KimZimmer_Furstenberg} we construct a measurable boundary map from the projective limit set into the partial flag manifold $\Fc_{1,p+1,d-p-1, d-1}(\Rb^d)$ and then show that two Patterson--Sullivan measures are non-singular. Then Corollary \ref{cor.Tukia in higher rank} constrains the Jordan projection and we use a result of Benoist~\cite{Benoist_properties} to determine the Zariski closure. 

\subsection{Patterson--Sullivan systems}\label{sec:general framework}

We now define Patterson--Sullivan systems and then state our generalization of Tukia's theorem.  In the classical setting of real hyperbolic geometry, ``geodesic shadows'' play a fundamental role in the study of  Patterson--Sullivan measures and our definition of  Patterson--Sullivan systems attempts to extract the key properties of these sets.

As in the beginning of the introduction, let $M$ be a compact metric space and let $\Gamma < \mathsf{Homeo}(M)$ be a subgroup. Recall that coarse-cocycles and coarse Patterson--Sullivan measures  were introduced in Equations \eqref{eqn.coarsecocycle intro} and \eqref{eqn.coarse PS meaasure intro}.

\begin{definition}\label{defn:PS systems}
A \emph{Patterson--Sullivan-system (PS-system) of dimension $\delta$} consists of
\begin{itemize}
\item a coarse-cocycle $\sigma : \Gamma \times M \rightarrow \Rb$, 
\item coarse $\sigma$-Patterson--Sullivan measure (PS-measure) $\mu$ of dimension $\delta$,
\item for each $\gamma \in \Gamma$, a number $\norm{\gamma}_\sigma \in \R$ called the \emph{$\sigma$-magnitude of $\gamma$}, and
\item for each $\gamma \in \Gamma$ and $R > 0$, a non-empty open set $\Oc_R(\gamma) \subset M$ called the \emph{$R$-shadow of $\gamma$}
\end{itemize}
such that:
\begin{enumerate}[label=(PS\arabic*)]
\item\label{item:coycles are bounded} For any $\ga \in \Ga$, there exists $c=c(\gamma) > 0$ such that $\abs{\sigma(\ga, x)}\leq  c(\gamma)$ for  $\mu$-a.e. $x\in M$. 
\item\label{item:almost constant on shadows} For every $R> 0$ there is a constant $C=C(R) > 0$ such that
$$
 \norm{\gamma}_\sigma - C  \leq \sigma(\gamma,x) \leq \norm{\gamma}_\sigma + C 
$$
for all $\gamma \in \Gamma$ and $\mu$-a.e. $x \in \gamma^{-1} \Oc_R(\gamma)$.
\item\label{item:empty Z intersection} If $\{\gamma_n\} \subset \Gamma$, $R_n \rightarrow +\infty$, $Z \subset M$ is compact, and $[M \smallsetminus \gamma_n^{-1}\Oc_{R_n}(\gamma_n)] \rightarrow Z$ with respect to the  Hausdorff distance, then for any $x \in Z$, there exists $g \in \Ga$ such that 
$$gx \notin Z.$$
\end{enumerate} 
We call the PS-system \emph{well-behaved} with respect to a collection
$$
\mathscr{H} := \{ \mathscr{H}(R) \subset \Ga : R \ge 0 \}
$$
 of non-increasing subsets of $\Ga$ if the following additional properties hold:
\begin{enumerate}[label=(PS\arabic*)]
\setcounter{enumi}{3}
\item\label{item:properness} $\Gamma$ is countable and for any $T > 0$, the set $\{ \gamma \in \mathscr{H}(0) : \norm{\gamma}_\sigma \leq T\}$ is finite. 

\item\label{item:baire} If $\{\gamma_n\} \subset \Gamma$, $R_n \rightarrow +\infty$, $Z \subset M$ is compact, and $[M \smallsetminus \gamma_n^{-1}\Oc_{R_n}(\gamma_n)] \rightarrow Z$ with respect to the  Hausdorff distance, then for any $h_1, \ldots, h_m \in \Ga$ and $x \in Z$, there exists $g \in \Ga$ such that
$$
g x \notin \bigcup_{i = 1}^m h_i Z.
$$

\item\label{item:shadow inclusion} If $R_1 \leq R_2$ and $\gamma \in \mathscr{H}(0)$, then $\Oc_{R_1}(\gamma) \subset \Oc_{R_2}(\gamma)$. 

\item\label{item:intersecting shadows} For any $R > 0$ there exist $C>0$ and $R'> 0$ such that: if $\alpha, \beta \in \mathscr{H}(R)$, $\norm{\alpha}_\sigma \leq \norm{\beta}_\sigma$, and $\Oc_R(\alpha) \cap \Oc_R(\beta) \neq \emptyset$, then 
$$
\Oc_R(\beta) \subset \Oc_{R'}(\alpha)
$$ 
and  
$$
\abs{\norm{\beta}_\sigma - (\norm{\alpha}_\sigma + \norm{\alpha^{-1}\beta}_\sigma)} \leq C. 
$$
\item \label{item:diam goes to zero ae} For every $R > 0$, there exists a set $M' \subset M$ of full $\mu$-measure such that 
$$
\lim_{n \rightarrow\infty} {\rm diam} \Oc_R(\gamma_n) = 0
$$
whenever $\{\gamma_n\} \subset \mathscr{H}(R)$ is an escaping sequence and 
$$
x \in M' \cap  \bigcap_{n =1}^\infty \Oc_R(\gamma_n).
$$
\end{enumerate} 
We call the collection $\mathscr{H}$ the \emph{hierarchy} of the Patterson--Sullivan system. 
\end{definition} 

\begin{remark} Property \ref{item:empty Z intersection} and the stronger Property  \ref{item:baire} can be viewed as saying the action of $\Gamma$ on $Z$ is ``irreducible'' and ``strongly irreducible'' respectively. 

\end{remark}

For a well-behaved Patterson--Sullivan system $(M, \Ga, \sigma, \mu)$ with respect to a hierarchy $\mathscr{H} = \{\mathscr{H}(R) \subset \Ga : R \ge 0\}$, we  define the $\mathscr{H}$-conical limit set:
\begin{equation} \label{eqn.conical hierarchy}
\La^{\rm con}(\mathscr{H}) := \left\{x \in M : \begin{matrix}
\exists \ R > 0, \ \ga \in \Ga, \ \text{an escaping sequence } \{ \ga_n \in \mathscr{H}(n) \} \\
\text{s.t. } x \in \ga \Oc_R(\ga_n) \text{ for all } n \ge 1
\end{matrix}
 \right\}.
\end{equation}
We now state our generalization of Tukia's rigidity theorem (Theorem \ref{thm:Tukia}) to PS-systems.

\begin{theorem}[see Theorem~\ref{thm:main rigidity theorem} below] \label{thm:main rigidity theorem in intro}
 Suppose 
\begin{itemize} 
\item $(M_1, \Gamma_1, \sigma_1, \mu_1)$ is a well-behaved PS-system of dimension $\delta_1$ with respect to a hierarchy $\mathscr{H}_1 = \{ \mathscr{H}_1(R) \subset \Ga_1  : R \ge 0 \}$ and
$$\mu_1(\La^{\rm con}(\mathscr{H}_1)) = 1.$$
\item $(M_2, \Gamma_2, \sigma_2, \mu_2)$ is a  PS-system of dimension $\delta_2$.
\item There exists an onto homomorphism $\rho : \Gamma_1 \rightarrow \Gamma_2$ and a $\mu_1$-a.e. defined measurable $\rho$-equivariant injective map $f : M_1 \rightarrow M_2$.
\end{itemize} 
If the measures $f_*\mu_1$ and $\mu_2$ are not singular, then 
$$
\sup_{\gamma \in \Gamma_1} \abs{\delta_1 \norm{\gamma}_{\sigma_1} - \delta_2 \norm{\rho(\gamma)}_{\sigma_2}} < +\infty.
$$
\end{theorem}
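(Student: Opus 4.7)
I would mimic the classical Tukia argument, adapted to PS-systems, in three phases: a Lusin-plus-shadow-lemma setup on $(M_1,\Gamma_1,\sigma_1,\mu_1)$; a density-point transfer along a conical sequence that produces matched shadow estimates on both sides; and a bootstrap from that sequence to all of $\Gamma_1$.

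\textbf{Phase 1 (Lusin + shadow lemma on $M_1$).} Since $f_*\mu_1$ and $\mu_2$ are not singular, Lusin's theorem applied to $f$ and to the Radon--Nikodym derivative of the absolutely continuous part of $f_*\mu_1$ with respect to $\mu_2$ yields a compact $A \subset M_1$ with $\mu_1(A)>0$ on which $f$ is continuous and injective and $f_*(\mu_1|_A)$, $\mu_2|_{f(A)}$ are mutually absolutely continuous with Radon--Nikodym derivatives bounded above and below by positive constants. In parallel I would establish a shadow lemma: for any $R>0$ and escaping $\{\gamma_n\}\subset\mathscr{H}_1(R)$,
\[
\mu_1\bigl(\Oc_R(\gamma_n)\bigr) \asymp_R e^{-\delta_1 \|\gamma_n\|_{\sigma_1}}.
\]
The upper bound follows from \ref{item:almost constant on shadows} combined with \eqref{eqn.coarse PS meaasure intro}. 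The lower bound reduces to $\mu_1(\gamma_n^{-1}\Oc_R(\gamma_n)) \gtrsim 1$, which I would derive from \ref{item:empty Z intersection} by a Baire-style argument: for any Hausdorff limit $Z$ of $M_1\setminus\gamma_n^{-1}\Oc_R(\gamma_n)$, \ref{item:empty Z intersection} forces $\bigcap_{g\in\Gamma_1} g^{-1}Z = \emptyset$, and countable subadditivity together with quasi-invariance of $\mu_1$ yields $\mu_1(Z)<1$; upper semicontinuity of $\mu_1$ on the (compact) space of Hausdorff limits then produces a uniform lower bound.

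\textbf{Phase 2 (density point and transfer).} Pick $x_0 \in A \cap \Lambda^{\rm con}(\mathscr{H}_1) \cap M'$ that is a $\mu_1$-density point of $A$ with respect to the shadow collection, obtained by a Vitali/Lebesgue-density argument built from \ref{item:intersecting shadows} and the shadow lemma. The conical-limit definition and \ref{item:diam goes to zero ae} supply $\gamma \in \Gamma_1$, $R>0$, and escaping $\gamma_n \in \mathscr{H}_1(n)$ with $x_0 \in \gamma\Oc_R(\gamma_n)$ and $\diam\Oc_R(\gamma_n)\to 0$; after replacing $A$ by $\gamma^{-1}A$ I may assume $\gamma=\id$. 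From Phase 1 and the density property, $\mu_1(A\cap\Oc_R(\gamma_n)) \asymp e^{-\delta_1\|\gamma_n\|_{\sigma_1}}$, and comparability gives $\mu_2(f(A\cap\Oc_R(\gamma_n))) \asymp e^{-\delta_1\|\gamma_n\|_{\sigma_1}}$. For the dual estimate, I use equivariance to write $f(A\cap\Oc_R(\gamma_n)) = \rho(\gamma_n)\,f(\gamma_n^{-1}A \cap \gamma_n^{-1}\Oc_R(\gamma_n))$; the pre-image has $\mu_1$-mass $\asymp 1$ by the scaling in Phase 1, and applying \eqref{eqn.coarse PS meaasure intro} for $\mu_2$ together with \ref{item:almost constant on shadows} on $M_2$ (applied to $\rho(\gamma_n)$) extracts $\mu_2(f(A\cap\Oc_R(\gamma_n))) \asymp e^{-\delta_2 \|\rho(\gamma_n)\|_{\sigma_2}}$. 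Comparing the two expressions yields $|\delta_1\|\gamma_n\|_{\sigma_1} - \delta_2\|\rho(\gamma_n)\|_{\sigma_2}| \le C$ along the sequence.

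\textbf{Phase 3 (extension and main obstacle).} To extend the bound to all $\gamma \in \Gamma_1$: elements of bounded $\sigma_1$-magnitude are handled by \ref{item:properness}; for the rest I would use \ref{item:intersecting shadows} (which gives quasi-additivity $\|\beta\|_\sigma \approx \|\alpha\|_\sigma + \|\alpha^{-1}\beta\|_\sigma$ when shadows of $\alpha,\beta$ intersect) together with varying the density point over a conull subset of $A$, so that every large $\gamma$ either appears in some conical sequence treated by Phase 2 or has its shadow intersecting one already treated, and the additivity propagates the comparison on both sides simultaneously. The main obstacle is Phase 2: since $(M_2,\Gamma_2,\sigma_2,\mu_2)$ is only a PS-system and not well-behaved, there is no a priori shadow lemma, no hierarchy, and no a.e. shadow-shrinking on the $M_2$-side, so the factor $e^{-\delta_2\|\rho(\gamma_n)\|_{\sigma_2}}$ must be squeezed out of the quasi-invariance of $\mu_2$, the equivariance of $f$, and the coarse-cocycle data alone, using only \ref{item:coycles are bounded}--\ref{item:empty Z intersection}. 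A secondary subtlety is building the Lebesgue-density/Vitali-type property for the shadow collection out of \ref{item:intersecting shadows} and the shadow lemma, which is not entirely routine.
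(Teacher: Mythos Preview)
Your overall architecture is the right one, and you have correctly identified the main obstacle, but you have not resolved it; Phases 2 and 3 both contain genuine gaps.

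\textbf{Phase 2.} The step ``applying \ref{item:almost constant on shadows} on $M_2$ to $\rho(\gamma_n)$ extracts $\mu_2(f(A\cap\Oc_R(\gamma_n))) \asymp e^{-\delta_2\|\rho(\gamma_n)\|_{\sigma_2}}$'' does not go through. Property \ref{item:almost constant on shadows} only controls $\sigma_2(\rho(\gamma_n),\cdot)$ on the specific set $\rho(\gamma_n)^{-1}\Oc_{R'}(\rho(\gamma_n))$, and you have no reason whatsoever for $f\bigl(\gamma_n^{-1}A\cap\gamma_n^{-1}\Oc_R(\gamma_n)\bigr)$ to lie inside such a set. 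Continuity of $f$ on $A$ (from Lusin) does not help: it gives no relation between $M_1$-shadows and $M_2$-shadows. The paper's fix is to work from the start with \emph{mixed shadows}
\[
\Oc_R^f(\gamma):=\Oc_R(\gamma)\cap f^{-1}\bigl(\Oc_R(\rho(\gamma))\bigr),
\]
so that the image is in an $M_2$-shadow by construction. The substantive work is then a \emph{Mixed Shadow Lemma}: $\mu_1(\Oc_R^f(\gamma))\asymp e^{-\delta_1\|\gamma\|_{\sigma_1}}$ and $\mu_2(f(\Oc_R^f(\gamma)))\asymp e^{-\delta_2\|\rho(\gamma)\|_{\sigma_2}}$ for all $\gamma$. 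The lower bounds require a diagonal covering argument (Lemma~\ref{lem.keylemma}): one takes Hausdorff limits $Z_1\subset M_1$ and $Z_2\subset M_2$ of complements of translated shadows simultaneously, and uses \ref{item:baire} on $M_1$ together with \ref{item:empty Z intersection} on $M_2$ to show $\bigcup_{\gamma}(M_1\smallsetminus\gamma Z_1)\times(M_2\smallsetminus\rho(\gamma)Z_2)=M_1\times M_2$. This is the missing idea.

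\textbf{Phase 3.} Your proposed extension via \ref{item:intersecting shadows} does not work: that property is only stated for elements of $\mathscr{H}_1(R)$, not for arbitrary $\gamma\in\Gamma_1$, and more fatally you would need the same quasi-additivity on the $M_2$ side, where no well-behavedness is assumed. The paper's route is entirely different. It sets $\tilde\mu_2:=\mu_2(f(\cdot))$, uses ergodicity (Corollary~\ref{cor:ergodicity}) to reduce to $\tilde\mu_2\asymp\mu_1$, and studies $h:=d\tilde\mu_2/d\mu_1$. A second application of the diagonal covering lemma (Proposition~\ref{prop:technical result}) produces finitely many $\alpha_1,\dots,\alpha_m\in\Gamma_1$ such that for a.e.\ $x$ one can find $i,n$ with $(x,f(x))$ lying in $\alpha_i\gamma_n^{-1}\Oc_{R'}(\gamma_n)\times\rho(\alpha_i\gamma_n^{-1})\Oc_{R'}(\rho(\gamma_n))$ and $x\in\alpha_i\gamma_n^{-1}E$ for a set $E$ where $h$ is bounded. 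The cocycle transformation rule for $h$ then forces $h$ to be globally bounded, and the Mixed Shadow Lemma applied to \emph{every} $\gamma\in\Gamma_1$ finishes the proof. No additivity property is ever used to pass from the sequence to the full group.
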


\begin{remark} \label{rmk.divergence implies full} Although formulated differently, Theorem~\ref{thm:main rigidity theorem in intro} contains Tukia's theorem as a special case. Under the hypothesis of Theorem~\ref{thm:Tukia}, the Patterson--Sullivan measures $\mu_i$ are part of a well-behaved PS-system with respect to a trivial hierarchy $\mathscr{H}_i(R) \equiv \Gamma_i$ and with magnitude function 
$$
\gamma \mapsto \dist_{\Hb^{n_i}}(o_i, \gamma o_i)
$$
where $o_i \in \Hb^{n_i}$ is a basepoint. Further, the conical limit set defined in Equation~\eqref{eqn.conical hierarchy} coincides with the classical conical limit set in hyperbolic geometry. The classical Hopf--Tsuji--Sullivan dichotomy then implies that $\mu_1(\Lambda^{\rm con}(\mathscr{H}_1)) = 1$ and hence Theorem \ref{thm:main rigidity theorem in intro} implies that 
$$
\sup_{\gamma \in \Gamma_1} \abs{ \delta_1\dist_{\Hb^{n_1}}( o_1, \gamma o_1) - \delta_2\dist_{\Hb^{n_2}}( o_2, \rho(\gamma) o_2)} < +\infty.
$$
It then follows from marked length spectrum rigidity that $n_1=n_2$ and $\rho$ extends to an isomorphism $\Isom(\Hb^{n_1}) \rightarrow \Isom(\Hb^{n_2})$, as in Theorem \ref{thm:Tukia}. Similarly, Theorem \ref{thm:tukia for word hyperbolic in intro}, Theorem \ref{thm.Tukia Teichmueller intro}, and Theorem \ref{thm:Tukia in higher rank} are consequences of Theorem \ref{thm:main rigidity theorem in intro}.
\end{remark}

\begin{example}[PS-systems]\label{ex:PS systems} Our abstract setting encompasses the following: 
\begin{enumerate}
\item  Stationary measures on the Bowditch boundary of a relatively hyperbolic group associated to random walks with finite superexponential moments are contained in  well-behaved PS-systems (see Section~\ref{sec.randomwalks}).  
\item Coarse Busemann PS-measures on the Gromov boundary of a proper geodesic Gromov hyperbolic space are contained in well-behaved PS-systems. More generally, coarse PS-measures associated to expanding coarse-cocycles (introduced in  \cite{BCZZ2024a}) are contained in well-behaved PS-systems (see Section~\ref{sec.convergenceaction}). 
\item Coarse Iwasawa PS-measures on a partial flag manifold associated to Zariski dense subgroups (more generally ``$\Psf_{\theta}$-irreducible'' subgroups)  are always contained in  PS-systems. When the subgroup is transverse and the measure is supported on the limit set, they are  contained in well-behaved PS-systems (see Section~\ref{sec.Liegps}; see also Theorem \ref{thm.directionalps} for general Zariski dense discrete subgroups). 
 \item Busemann PS-measures 
 \begin{itemize}
 \item on the Gardiner--Masur boundary  $\partial_{GM} \Tc$ of Teichm\"uller space for non-elementary subgroups of a mapping class group, 
 \item on the geodesic boundary of a ${\rm CAT}(0)$-space for discrete groups of isometries with rank one elements, 
 \end{itemize} 
  are contained in well-behaved PS-systems (see Section~\ref{sec.contracting} for a general discussion on group actions with contracting isometries). 
   \end{enumerate}

\end{example}

\subsection*{Acknowledgements} 
We would like to thank Harry Bray, Dick Canary, Sebastian Hurtado-Salazar, Yair Minsky, and Hee Oh for valuable conversation. Kim expresses his special gratitude to his Ph.D. advisor Hee Oh for her encouragement and guidance.

Kim thanks the University of Wisconsin--Madison for hospitality during a visit in October 2024 where work on this project started. Zimmer was partially supported by a Sloan research fellowship and grant DMS-2105580 from the National Science Foundation.

\section{Preliminaries}

\subsection{Possibly ambiguous notation/terminology} We briefly define any possible ambiguous notation and terminology. 
\begin{enumerate} 
\item A sequence $\{y_n\}$ in a countable set $Y$  is \emph{escaping} if it eventually leaves every finite set, i.e. if $F \subset Y$ is finite, then $\#\{ n : y_n \in F\}$ is finite. 
\item Any connected semisimple Lie group $\Gsf$ with trivial center is real algebraic \cite[Prop. 3.1.6]{Zimmer_book}. Hence, Zariski density is defined for $\Hsf < \Gsf$, in the sense that no finite index subgroup of $\Hsf$ is contained in a proper connected closed subgroup of $\Gsf$.
\item Given a proper metric space $X$ we endow the isometry group $\Isom(X)$ with the compact open topology. Then a subgroup $\Gamma < \Isom(X)$ is discrete if and only if it is countable and acts properly on $X$. 
\end{enumerate}

\subsection{The Hausdorff distance} Suppose $(M,\dist)$ is a compact metric space. Given a subset $C \subset M$ and $\epsilon > 0$, let $\Nc_\epsilon(C)$ denote the open $\epsilon$-neighborhood of $C$ with respect to $\dist$. The \emph{Hausdorff distance} between two compact subsets $C_1, C_2 \subset M$ is 
$$
\dist^{\rm Haus}(C_1,C_2) : = \inf\{ \epsilon : C_1 \subset \Nc_\epsilon(C_2) \text{ and } C_2 \subset \Nc_\epsilon(C_1)\}. 
$$ 
Notice that for the empty set we have 
$$
\dist^{\rm Haus}(\emptyset, C) = \dist^{\rm Haus}(C,\emptyset)= \begin{cases} 0 & \text{if } C = \emptyset \\ +\infty & \text{if } C \neq \emptyset \end{cases}. 
$$ 
This metric induces a compact topology on the space of compact subsets of $M$ where $C_n \rightarrow C$ if and only if
$$
\lim_{n \rightarrow \infty} \dist^{\rm Haus}(C_n,C) =0.
$$
Notice that the empty set is an isolated point: $C_n \rightarrow \emptyset$ if and only if $C_n = \emptyset$ for all $n$ sufficiently large.

\subsection{Relatively hyperbolic groups} There are several equivalent definitions of relatively hyperbolic groups and we state the definition we use in this paper. 

Suppose $\Gamma < \mathsf{Homeo}(M)$ is a convergence group. 
\begin{itemize} 
\item A point $x \in M$ is a \emph{conical limit point} of $\Ga$ if there are $a,b \in M$ distinct and $\{\gamma_n\} \subset \Gamma$ such that $\gamma_n(x) \rightarrow a$ and $\gamma_n(y) \rightarrow b$ for all $y \in M \smallsetminus \{x\}$. 
\item An element $\gamma \in \Gamma$ is \emph{parabolic} if it has infinite order and fixes exactly one point in $M$.
\item A point $x \in M$ is a \emph{parabolic fixed point} of $\Ga$ if the stabilizer ${\rm Stab}_{\Gamma}(x)$ is infinite and every infinite order element in ${\rm Stab}_{\Gamma}(x)$ is parabolic. A \emph{bounded parabolic fixed point} $x \in M$ is a parabolic fixed point where the quotient ${\rm Stab}_{\Gamma}(x) \backslash (M- \{x\})$ is compact. 
\item $\Gamma$ is a \emph{geometrically finite convergence group} if every point in $M$ is either a conical limit point or a bounded parabolic fixed point of $\Ga$.
\end{itemize} 

\begin{definition}\label{defn:RH}
Given a finitely generated group $\Gamma$ and a collection $\Pc$ of finitely generated infinite subgroups, we say that $(\Gamma,\Pc)$ is \emph{relatively hyperbolic}, if $\Gamma$ acts on a compact perfect metrizable space $M$ as a geometrically finite convergence group and the maximal parabolic subgroups are exactly the set
$$
 \{ \gamma P \gamma^{-1} : P \in \Pc, \gamma \in \Gamma\}.
$$
\end{definition}

Given a relatively hyperbolic group $(\Gamma,\Pc)$, any two compact perfect metrizable spaces satisfying Definition~\ref{defn:RH} are $\Gamma$-equivariantly homeomorphic (see \cite[Thm.\ 9.4]{Bowditch_relhyp}). This unique topological space is then denoted by $\partial(\Gamma,\Pc)$ and called the \emph{Bowditch boundary of $(\Gamma, \Pc)$}. 

\begin{remark} Note that by definition we assume that a relatively hyperbolic group is non-elementary, finitely generated, and has finitely generated peripheral subgroups. \end{remark}

\part{Abstract PS-systems}

\section{Basic properties of PS-systems}

In this section we observe some immediate consequences of the definitions introduced in Section~\ref{sec:general framework}.

\begin{proposition}[Shadow Lemma] \label{prop.shadowlemma} Let $(M, \Ga, \sigma, \mu)$ be a PS-system of dimension $\delta \ge 0$. For any $R > 0$ sufficiently large there exists $C=C(R) > 1$ such that 
$$
\frac{1}{C} e^{-\delta \norm{\gamma}_\sigma} \leq \mu( \Oc_R(\gamma)) \leq C e^{-\delta \norm{\gamma}_\sigma} 
$$
for all $\ga \in \Ga$
and 
\begin{equation*}
\inf_{\gamma \in \Gamma} \mu( \gamma^{-1}\Oc_R(\gamma))>0.
\end{equation*}
\end{proposition}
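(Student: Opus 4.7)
The plan is to first reduce both displayed bounds to a single uniform estimate on the $\mu$-mass of the pulled-back shadow $\ga^{-1}\Oc_R(\ga)$, and then to establish that estimate via a compactness argument using \ref{item:empty Z intersection}.

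For the reduction, the identity $(\ga^{-1})_*\mu(A) = \mu(\ga A)$ gives
\[
\mu(\Oc_R(\ga)) \;=\; (\ga^{-1})_*\mu\bigl(\ga^{-1}\Oc_R(\ga)\bigr) \;=\; \int_{\ga^{-1}\Oc_R(\ga)} \frac{d(\ga^{-1})_*\mu}{d\mu}\, d\mu.
\]
Applying the coarse PS-measure condition~\eqref{eqn.coarse PS meaasure intro} to $\ga^{-1}$ yields $\frac{d(\ga^{-1})_*\mu}{d\mu}(x) \asymp e^{-\delta \sigma(\ga, x)}$ $\mu$-a.e., while \ref{item:almost constant on shadows} pins down $\sigma(\ga,x) = \norm{\ga}_\sigma + O_R(1)$ for $\mu$-a.e.\ $x \in \ga^{-1}\Oc_R(\ga)$. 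Together these give
\[
\mu(\Oc_R(\ga)) \;\asymp\; e^{-\delta\norm{\ga}_\sigma}\,\mu\bigl(\ga^{-1}\Oc_R(\ga)\bigr),
\]
so the upper bound is immediate from $\mu(\ga^{-1}\Oc_R(\ga)) \le 1$, and the lower bound reduces to the second displayed claim.

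To prove $\inf_\ga \mu(\ga^{-1}\Oc_R(\ga)) > 0$ for all sufficiently large $R$, I would proceed by contradiction, choosing $R_n \to \infty$ and $\ga_n \in \Ga$ with $\mu(K_n) \to 1$, where $K_n := M \smallsetminus \ga_n^{-1}\Oc_{R_n}(\ga_n)$ is closed. By Hausdorff compactness of the space of closed subsets of $M$, a subsequence converges $K_n \to Z$ for some compact $Z \subset M$; the standard semicontinuity argument ($K_n \subset \Nc_\epsilon(Z)$ eventually, $\bigcap_\epsilon \Nc_\epsilon(Z) = Z$) then forces $\mu(Z) \ge \limsup_n \mu(K_n) = 1$, so $Z \ne \emptyset$. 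Property \ref{item:empty Z intersection} then supplies, for each $x \in Z$, an element $g_x \in \Ga$ with $g_x x \in M \smallsetminus Z$, so $\{g^{-1}(M \smallsetminus Z)\}_{g \in \Ga}$ is an open cover of the compact set $Z$, admitting a finite subcover $Z \subset \bigcup_{i=1}^m g_i^{-1}(M \smallsetminus Z)$. Using $\mu(M \smallsetminus Z)=0$ together with $(g_i)_*\mu \ll \mu$ (a consequence of \eqref{eqn.coarse PS meaasure intro} and \ref{item:coycles are bounded}), one obtains $\mu(Z) \le \sum_i (g_i)_*\mu(M \smallsetminus Z) = 0$, contradicting $\mu(Z) = 1$.

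The main obstacle is this last step: \ref{item:empty Z intersection} is only a pointwise condition, but compactness of $Z \subset M$ upgrades it to a \emph{finite} cover, and that finiteness --- rather than countability of $\Ga$, which is not hypothesized --- is what allows absolute continuity of the push-forwards to collapse the mass of $Z$ to zero. The Radon--Nikodym reduction beforehand is routine once \ref{item:almost constant on shadows} and \eqref{eqn.coarse PS meaasure intro} are at hand.
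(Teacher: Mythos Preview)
Your proof is correct and follows essentially the same approach as the paper: both first reduce the shadow bounds to the uniform lower bound on $\mu(\ga^{-1}\Oc_R(\ga))$ via the Radon--Nikodym identity and \ref{item:almost constant on shadows}, then establish that lower bound by contradiction using Hausdorff compactness and \ref{item:empty Z intersection} to extract a finite family of translates whose quasi-invariance forces $\mu(Z)=0$. The only cosmetic difference is that the paper phrases the endgame as $\bigcap_{i} \ga_i Z = \emptyset$ contradicting $\mu(\ga_i Z)=1$, while you phrase it as a finite cover $Z \subset \bigcup_i g_i^{-1}(M\smallsetminus Z)$; these are the same statement.
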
 

\begin{proof} We first show that for any $R > 0$ sufficiently large,
\begin{equation}\label{eqn:translate of shadow has positive mass}
\inf_{\gamma \in \Gamma} \mu( \gamma^{-1}\Oc_R(\gamma))>0.
\end{equation}
Suppose not. Then for every $n \geq 1$ there exists $\gamma_n \in \Gamma$ with 
$$
\mu( \gamma_n^{-1}\Oc_{n}(\gamma_n)) < \frac{1}{n}.
$$

Fix a metric on $M$ which generates the topology. Passing to a subsequence, we can suppose that $M \smallsetminus \gamma_n^{-1}\Oc_{n}(\gamma_n)$ converges to a compact set $Z$ with respect to the  Hausdorff distance (note it is possible for $Z$ to be the empty set, in which case $M \smallsetminus \gamma_n^{-1}\Oc_{n}(\gamma_n)$ is also empty for $n$ sufficiently large). 

Fix $\epsilon > 0$. Then  $M \smallsetminus \gamma_n^{-1}\Oc_{n}(\gamma_n) \subset \Nc_\epsilon(Z)$ for $n$ sufficiently large and hence 
$$
\mu(\Nc_\epsilon(Z)) \geq \lim_{n \rightarrow \infty} \mu(M \smallsetminus \gamma_n^{-1}\Oc_{n}(\gamma_n)) = 1. 
$$
Since $\epsilon > 0$ is arbitrary and $Z$ is closed,  
$$
\mu(Z) = \lim_{n \rightarrow \infty} \mu(\Nc_{1/n}(Z)) =1. 
$$
On the other hand, by Property \ref{item:empty Z intersection}, $\bigcap_{\ga \in \Ga} \ga Z = \emptyset$. Since $M$ is compact, there exist finitely many $\ga_1, \dots, \ga_n \in \Ga$ such that $\bigcap_{i = 1}^n \ga_i Z = \emptyset$, which is a contradiction to $\mu(Z) = 1$ and the $\Ga$-quasi-invariance of $\mu$. Thus Equation~\eqref{eqn:translate of shadow has positive mass} is true for sufficiently large $R > 0$. 

Fix $R > 0$ satisfying Equation~\eqref{eqn:translate of shadow has positive mass} and let $\epsilon_0 : = \inf_{\gamma \in \Gamma} \mu( \gamma^{-1}\Oc_R(\gamma))$. 
Since 
$$
\mu(\Oc_R(\ga)) =  \int_{\gamma^{-1}\Oc_R(\gamma)} \frac{d\gamma^{-1}_*\mu}{d\mu} d\mu,
$$
by Property~\ref{item:almost constant on shadows}, there exists $C = C(R) > 1$ such that
\begin{align*}
\frac{\epsilon_0}{C}e^{-\delta \norm{\gamma}_\sigma} & \le \mu(\Oc_R(\ga))  \le C e^{-\delta \norm{\gamma}_\sigma}. \qedhere
\end{align*}
\end{proof}
  
  We will use the following version of the Vitali covering lemma.

\begin{lemma} \label{lem:V covering} Let $(M, \Ga, \sigma, \mu)$ be a well-behaved PS-system with respect to a hierarchy $\mathscr{H} = \{ \mathscr{H}(R) \subset \Ga : R \ge 0 \}$. Let $R > 0$ and let $R' > 0$ be the constant satisfying Property \ref{item:intersecting shadows} for $R$. Then  for any $I \subset \mathscr{H}(R)$, there exists $J \subset I$ such that 
$$
\bigcup_{\gamma \in I} \Oc_R(\gamma) \subset \bigcup_{\gamma \in J} \Oc_{R'}(\gamma)
$$
and the shadows $\{ \Oc_R(\gamma) : \gamma \in J\}$ are pairwise disjoint. 
\end{lemma}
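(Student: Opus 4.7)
This is a standard Vitali-style covering argument, where the extra control needed to run the greedy selection comes precisely from Property~\ref{item:properness} (which makes $I$ countable and well-ordered by magnitude) together with Property~\ref{item:intersecting shadows} (which lets us replace a not-chosen shadow by the $R'$-enlargement of an earlier chosen one).

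First I would enumerate $I$. Since $\mathscr{H}$ is non-increasing in $R$, we have $I \subset \mathscr{H}(R) \subset \mathscr{H}(0)$, so by Property~\ref{item:properness} the set
$$
\{ \gamma \in I : \norm{\gamma}_\sigma \leq T\}
$$
is finite for every $T > 0$. Writing $I = \bigcup_{n \geq 1}\{\gamma \in I : \norm{\gamma}_\sigma \leq n\}$ shows that $I$ is countable, and the finiteness of each sublevel set allows me to enumerate $I = \{\gamma_1, \gamma_2, \dots\}$ (possibly finite) so that $\norm{\gamma_1}_\sigma \leq \norm{\gamma_2}_\sigma \leq \cdots$.

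Next I would construct $J$ inductively by the obvious greedy rule: put $\gamma_1 \in J$, and then for $n \geq 2$ put $\gamma_n \in J$ if and only if
$$
\Oc_R(\gamma_n) \cap \Oc_R(\gamma_k) = \emptyset \quad \text{for every } \gamma_k \in J \text{ with } k < n.
$$
Pairwise disjointness of $\{\Oc_R(\gamma) : \gamma \in J\}$ is immediate from this rule.

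Finally I would verify the covering conclusion. Fix $\gamma \in I$; if $\gamma \in J$ then $\Oc_R(\gamma) \subset \Oc_{R'}(\gamma)$ by Property~\ref{item:shadow inclusion} applied with $R \leq R'$. Otherwise $\gamma = \gamma_n$ for some $n \geq 2$ with $\gamma_n \notin J$, so by construction there exists $k < n$ with $\gamma_k \in J$ and $\Oc_R(\gamma_k) \cap \Oc_R(\gamma_n) \neq \emptyset$. Since $k < n$ we have $\norm{\gamma_k}_\sigma \leq \norm{\gamma_n}_\sigma$, so Property~\ref{item:intersecting shadows} applies with $\alpha = \gamma_k$ and $\beta = \gamma_n$, yielding
$$
\Oc_R(\gamma) = \Oc_R(\gamma_n) \subset \Oc_{R'}(\gamma_k) \subset \bigcup_{\alpha \in J} \Oc_{R'}(\alpha).
$$
This gives the desired inclusion $\bigcup_{\gamma \in I}\Oc_R(\gamma) \subset \bigcup_{\gamma \in J} \Oc_{R'}(\gamma)$, completing the proof. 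There is no real obstacle here beyond checking that the magnitude ordering and the hypothesis $\norm{\alpha}_\sigma \leq \norm{\beta}_\sigma$ in Property~\ref{item:intersecting shadows} line up correctly, which is why the enumeration is done in non-decreasing order of $\norm{\cdot}_\sigma$.
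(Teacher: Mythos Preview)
Your proposal is correct and is essentially the same greedy Vitali argument the paper gives: enumerate $I$ by non-decreasing $\norm{\cdot}_\sigma$ via Property~\ref{item:properness}, select indices greedily, and use Property~\ref{item:intersecting shadows} to absorb each rejected shadow into an earlier selected one. The only cosmetic point is that when $\gamma\in J$ you invoke Property~\ref{item:shadow inclusion} with $R\le R'$, but Property~\ref{item:intersecting shadows} does not literally promise $R'\ge R$; this is harmless, since you may replace $R'$ by $\max(R,R')$ (using Property~\ref{item:shadow inclusion}) or simply apply Property~\ref{item:intersecting shadows} with $\alpha=\beta=\gamma$ to get $\Oc_R(\gamma)\subset\Oc_{R'}(\gamma)$ directly.
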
 

\begin{proof} By Property~\ref{item:properness} we can enumerate $I = \{\gamma_n\}$ so that 
$$
\norm{\gamma_1}_\sigma \leq \norm{\gamma_2}_\sigma \leq \norm{\gamma_3}_\sigma \leq \cdots
$$
Now we define indices $j_1 < j_2 < \cdots$ as follows. First let $j_1 = 1$. Then supposing $j_1,\dots, j_{k}$ have been selected, let $j_{k+1}$ be the smallest index greater than $j_k$ such that 
$$
\Oc_R(\gamma_{j_{k+1}}) \cap \bigcup_{i=1}^k \Oc_R(\gamma_{j_{i}}) = \emptyset. 
$$
(This process could terminate after finitely many steps). 

We claim that $J=\{\gamma_{j_k}\}$ has the desired properties. By construction, the shadows $\{ \Oc_R(\gamma) : \gamma \in J\}$ are pairwise disjoint. For any  $\gamma_n \in I \smallsetminus J$, we can pick $k$ such that $j_k < n$ and that $k$ is the maximal index with this property. Since $\gamma_n \notin J$, we must have 
 $$
\Oc_R(\gamma_{n}) \cap \bigcup_{i=1}^k \Oc_R(\gamma_{j_{i}}) \neq \emptyset
$$
and so 
$$
\Oc_R(\gamma_{n}) \subset  \bigcup_{i=1}^k \Oc_{R'}(\gamma_{j_{i}})
$$
by Property \ref{item:intersecting shadows}.
Thus
\begin{equation*}
\bigcup_{\gamma \in I} \Oc_R(\gamma) \subset \bigcup_{\gamma \in J} \Oc_{R'}(\gamma). \qedhere
\end{equation*}
\end{proof}

We will crucially use the following diagonal covering lemma several times in the arguments that follow. It applies in the case when $\Gamma < \mathsf{Homeo}(M_1)$ is part of a well-behaved PS-system and $\rho(\Gamma) < \mathsf{Homeo}(M_2)$ is part of a PS-system.

\begin{lemma}\label{lem.keylemma} Let $M_1,M_2$ be compact metrizable spaces. Suppose $\Gamma < \mathsf{Homeo}(M_1)$ and $\rho : \Gamma \rightarrow \mathsf{Homeo}(M_2)$ is a homomorphism. If 
\begin{itemize}
\item $Z_1 \subset M_1$, $Z_2 \subset M_2$ are compact, 
\item for any finitely many $h_1, \ldots, h_m \in \Ga$ and $x \in Z_1$, there exists $g \in \Ga$ such that
$$g x \notin \bigcup_{i = 1}^m h_i Z_1,$$
and
\item for any $y \in Z_2$, there exists $h \in \Ga$ such that $\rho(h) y \notin Z_2$,
\end{itemize}
then we have
$$
M_1 \times M_2 = \bigcup_{\gamma \in \Gamma} (M_1 \smallsetminus \gamma Z_1) \times (M_2 \smallsetminus \rho(\gamma)Z_2).
$$
\end{lemma}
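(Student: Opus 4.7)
The plan is to argue by contradiction. Suppose there exists $(x_0,y_0) \in M_1 \times M_2$ that is not covered, so for every $\gamma \in \Gamma$ either $x_0 \in \gamma Z_1$ or $y_0 \in \rho(\gamma) Z_2$. Replacing $\gamma$ by $\eta^{-1}$, this is equivalent to the implication
\begin{equation*}
(\ast) \qquad \eta x_0 \notin Z_1 \;\Longrightarrow\; \rho(\eta) y_0 \in Z_2 \qquad \text{for every } \eta \in \Gamma.
\end{equation*}

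The first step uses the $Z_2$-hypothesis together with the compactness of $M_2$ to produce finitely many translates of $Z_2$ with empty intersection. Any point in $\bigcap_{h \in \Gamma} \rho(h) Z_2$ must lie in $\rho(\id) Z_2 = Z_2$; but for every $z \in Z_2$ the hypothesis provides some $h \in \Gamma$ with $\rho(h) z \notin Z_2$, equivalently $z \notin \rho(h^{-1}) Z_2$, so this global intersection is empty. Since each $\rho(h) Z_2$ is closed in the compact space $M_2$, the finite intersection property yields $h_1,\dots,h_m \in \Gamma$ with $\bigcap_{i=1}^m \rho(h_i) Z_2 = \emptyset$.

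Next I produce some $\gamma_1 \in \Gamma$ with $\gamma_1 x_0 \in Z_1$, so that the stronger $Z_1$-hypothesis can be invoked. If $x_0 \in Z_1$, take $\gamma_1 = \id$; otherwise $\eta = \id$ in $(\ast)$ forces $y_0 \in Z_2$, then the $Z_2$-hypothesis supplies $h$ with $\rho(h) y_0 \notin Z_2$, and the contrapositive of $(\ast)$ with $\eta = h$ gives $h x_0 \in Z_1$, so we set $\gamma_1 = h$. Now apply the $Z_1$-hypothesis to $\gamma_1 x_0 \in Z_1$ with the finite list $h_1,\dots,h_m$ produced above: there exists $g \in \Gamma$ with $g \gamma_1 x_0 \notin \bigcup_{i=1}^m h_i Z_1$, equivalently $h_i^{-1} g \gamma_1 x_0 \notin Z_1$ for every $i$. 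Applying $(\ast)$ with $\eta = h_i^{-1} g \gamma_1$ then gives $\rho(h_i^{-1} g \gamma_1) y_0 \in Z_2$, i.e.\ $\rho(g \gamma_1) y_0 \in \rho(h_i) Z_2$, for each $i$. Intersecting over $i$ places $\rho(g \gamma_1) y_0$ inside $\bigcap_{i=1}^m \rho(h_i) Z_2 = \emptyset$, the required contradiction.

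The only real obstacle is matching the asymmetry between the two hypotheses to the structure of the argument: the weak $Z_2$-hypothesis combined with compactness of $M_2$ manufactures the finite family $h_1,\dots,h_m$ whose intersection is empty, while the stronger $Z_1$-hypothesis, which in one application dodges any prescribed finite list of translates of $Z_1$, is what drives $\rho(g \gamma_1) y_0$ into that empty intersection. Once these roles are correctly identified, the rest is bookkeeping.
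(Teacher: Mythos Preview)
Your proof is correct and follows essentially the same approach as the paper's: both use compactness with the $Z_2$-hypothesis to find finitely many $h_1,\dots,h_m$ with $\bigcap_i \rho(h_i) Z_2 = \emptyset$, push the bad point into $Z_1$, then invoke the strong $Z_1$-hypothesis to dodge all the $h_i Z_1$ and force a landing in the empty intersection. Your phrasing via the implication $(\ast)$ and the final contradiction via ``lies in the empty intersection'' are minor reformulations of the paper's argument, which instead tracks the $\Gamma$-invariant set $C$ and derives the contradiction in contrapositive form.
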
 

\begin{proof} 
   The third hypothesis implies that $\bigcap_{\ga \in \Ga} \rho(\ga) Z_2 = \emptyset$.
   Since $Z_2$ is compact, there exist finitely many elements $h_1, \ldots, h_m \in \Ga$ such that 
$$
\rho(h_1) Z_2 \cap \cdots \cap \rho(h_m) Z_2 = \emptyset.
$$

Now suppose to the contrary that
\begin{align*}
C & := M_1 \times M_2 \smallsetminus \bigcup_{\gamma \in \Gamma} (M_1 \smallsetminus \gamma Z_1) \times (M_2 \smallsetminus \rho(\gamma)Z_2) \\
& = \bigcap_{\gamma \in \Gamma} \big( M_1 \times \rho(\gamma)Z_2 \big)\cup\big( \gamma Z_1 \times M_2\big)
\end{align*}
is non-empty. Let $(x, y) \in C$. Since $C$ is invariant under the action of $\{ (\ga, \rho(\ga)) : \ga \in \Ga \}$, we have
$$
(\ga x, \rho(\ga) y) \in C \quad \text{for all } \ga \in \Ga.
$$

By the choice of $\{\rho(h_1), \ldots, \rho(h_m) \}$, we have for some $j \in \{1, \ldots, m\}$ that $y \notin \rho(h_j) Z_2$, and hence
$$
(x, y) \in h_j Z_1 \times M_2.
$$
In other words,
$$
(h_j^{-1} x, \rho(h_j)^{-1} y) \in Z_1 \times M_2.
$$

By the second hypothesis, there exists $g \in \Ga$ such that
$$
g (h_j^{-1} x) \notin \bigcup_{i = 1}^m h_i Z_1.
$$
On the other hand, there exists $i \in \{ 1, \ldots, m\}$ such that $\rho(g) \rho(h_j)^{-1}y \notin \rho(h_i) Z_2$. Since $(g h_j^{-1} x, \rho(g) \rho(h_j)^{-1} y) \in C$, we have
$$
(g h_j^{-1} x, \rho(g) \rho(h_j)^{-1} y) \in h_i Z_1 \times M_2.
$$
In particular,
$$
g (h_j^{-1} x) \in  h_i Z_1.
$$
This is a contradiction to the choice of $g \in \Ga$.
\end{proof}

\section{An analogue of the conical limit set}\label{sec:conical limit set}    

Let $(M, \Ga, \sigma, \mu)$ be a PS-system of dimension $\delta \ge 0$. In this section we introduce an analogue of the conical limit set and relate its measure to the divergence of the Poincar\'e series.

Given a subset $H \subset \Gamma$, let $\Lambda_R(H) \subset M$ be the set of points $x \in M$ where there exists an escaping sequence $\{\gamma_n\} \subset H$ and $R >0$ such that 
$$
x \in \bigcap_{n \geq 1} \Oc_{R}(\gamma_n).
$$
Using this notation, the  $\mathscr{H}$-conical limit set  defined in Equation \eqref{eqn.conical hierarchy} can be rewritten as
$$
\La^{\rm con}(\mathscr{H}) = \Ga \cdot \bigcup_{R > 0} \bigcap_{n \ge 1} \Lambda_R(\mathscr{H}(n)).
$$
For simplicity, we denote by $\La^{\rm con}(\Ga)$ the conical limit set of the trivial hierarchy $\mathscr{H}(R) \equiv \Ga$.

\begin{theorem}\label{thm:conical limit set has full measure} \ 
\begin{enumerate} 
\item If $\mu(\Lambda_R(H)) > 0$ for some $H \subset \Ga$ and $R > 0$, then $\sum_{\gamma \in H} e^{-\delta \norm{\gamma}_\sigma} = +\infty$. 
\item If $(M, \Ga, \sigma, \mu)$ is well-behaved with respect to the trivial hierarchy $ \mathscr{H}(R) \equiv \Ga$ and $\sum_{\gamma \in \Gamma} e^{-\delta \norm{\gamma}_\sigma} = +\infty$, then $\mu \left(\Lambda^{\rm con}(\Gamma)\right)=1.$
\end{enumerate} 
 \end{theorem}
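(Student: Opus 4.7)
This is a Hopf--Tsuji--Sullivan-type dichotomy in the abstract PS-system framework. I would handle the two parts separately.

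\emph{Part (1).} I would prove the contrapositive via Borel--Cantelli. The upper half of the shadow lemma in fact holds for every $R > 0$ without any well-behavedness hypothesis: following the computation at the end of the proof of Proposition~\ref{prop.shadowlemma}, Properties~\ref{item:coycles are bounded} and~\ref{item:almost constant on shadows} combined with the coarse PS-measure inequality~\eqref{eqn.coarse PS meaasure intro} produce a constant $C(R) > 0$ with $\mu(\Oc_R(\gamma)) \le C(R) e^{-\delta \|\gamma\|_\sigma}$ for every $\gamma \in \Gamma$. If $\sum_{\gamma \in H} e^{-\delta\|\gamma\|_\sigma} < +\infty$, then at most countably many terms of this positive-term sum are non-zero, so Borel--Cantelli yields $\mu(\limsup_{\gamma \in H}\Oc_R(\gamma)) = 0$. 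Since any $x \in \Lambda_R(H)$ lies in $\Oc_R(\gamma_n)$ for an escaping (hence eventually injective) sequence $\{\gamma_n\} \subset H$, we have $\Lambda_R(H) \subset \limsup_{\gamma \in H}\Oc_R(\gamma)$, and therefore $\mu(\Lambda_R(H)) = 0$.

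\emph{Part (2).} This is the harder divergence half. My plan is to argue by contradiction: suppose $B := M \smallsetminus \Lambda^{\rm con}(\Gamma)$ has positive $\mu$-measure. Since $\Lambda^{\rm con}(\Gamma)$ is $\Gamma$-invariant by construction, so is $B$. Fix $R$ large enough that Proposition~\ref{prop.shadowlemma} applies. Using the trivial hierarchy $\mathscr{H}(R)\equiv\Gamma$ together with Property~\ref{item:properness}, stratify
$$B = \bigcup_{T > 0} B_T, \qquad B_T := \{x \in B : x \notin \Oc_R(\eta) \text{ for any } \eta \in \Gamma \text{ with } \|\eta\|_\sigma > T\},$$
as a countable increasing union, and pick $T_0$ with $\mu(B_{T_0}) > 0$. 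The target contradiction is that divergence must force a definite $\mu$-proportion of $B_{T_0}$ to be covered by shadows $\Oc_R(\eta)$ with $\|\eta\|_\sigma > T$ for arbitrarily large $T$, which is impossible since $B_{T_0}$ avoids such shadows by definition.

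To obtain the covering estimate, I would apply Lemma~\ref{lem:V covering} to $\{\eta : \|\eta\|_\sigma > T\}$ to extract a disjoint Vitali subfamily $J_T$; Property~\ref{item:intersecting shadows} together with the shadow lemma then transfers the divergence of the Poincar\'e series into a ``tree estimate'' $\sum_{\alpha\in J_T} e^{-\delta\|\alpha\|_\sigma}\sum_{\beta} e^{-\delta\|\beta\|_\sigma} = +\infty$ that persists under refinement. To push this global sum into a \emph{local} density statement at $\mu$-generic points of $B_{T_0}$, I would combine Property~\ref{item:diam goes to zero ae} (shrinking shadows along escaping sequences) with Property~\ref{item:baire} (strong irreducibility, which prevents $B_{T_0}$ from being trapped in a bad Hausdorff-limit compact set) and the $\Gamma$-saturation $\Lambda^{\rm con}(\Gamma) = \bigcup_{\gamma\in\Gamma} \gamma \cdot \bigcup_R \Lambda_R(\Gamma)$, then invoke a Halmos--Hopf-type recurrence argument once $\mu$-conservativity of the $\Gamma$-action has been extracted from divergence. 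The main obstacle I expect is cleanly converting the global Vitali/Poincar\'e estimate into the local density statement at a $\mu$-generic point of $B_{T_0}$: this is precisely the step where the refined axioms~\ref{item:baire}, \ref{item:shadow inclusion}, \ref{item:intersecting shadows}, and~\ref{item:diam goes to zero ae} (i.e.\ the full strength of well-behavedness relative to the trivial hierarchy) all come into play, in contrast to part~(1) which only uses the general PS-system axioms.
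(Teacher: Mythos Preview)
Your Part~(1) is correct and matches the paper: the upper shadow estimate coming from Property~\ref{item:almost constant on shadows} plus Borel--Cantelli is exactly how the paper argues.

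For Part~(2), however, there is a real gap. Your stratification $B = \bigcup_T B_T$ is fine, and you correctly isolate the difficulty: you need the shadows $\Oc_R(\eta)$ with $\|\eta\|_\sigma > T_0$ to cover a positive-measure portion of $B_{T_0}$, yet by construction they cover none of it. The problem is that nothing in your outline actually forces this contradiction. Divergence of the Poincar\'e series together with the shadow lemma gives $\sum_\gamma \mu(\Oc_R(\gamma)) = +\infty$, but the first Borel--Cantelli lemma goes the wrong way for this, and your ``tree estimate'' and references to Properties~\ref{item:baire} and~\ref{item:diam goes to zero ae} and Halmos--Hopf recurrence do not supply a mechanism to pass from a divergent global sum to positive mass of the limsup. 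In particular, Properties~\ref{item:baire} and~\ref{item:diam goes to zero ae} play no role in the paper's argument for this theorem; invoking them here is a sign that the plan has not yet found the right lever.

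The paper's route is different and cleaner. It applies the Kochen--Stone lemma (the second-moment converse to Borel--Cantelli) directly to the sets $A_n = \Oc_R(\gamma_n)$, with $\Gamma$ enumerated by increasing magnitude using Property~\ref{item:properness}. The first-moment hypothesis $\sum \mu(A_n) = +\infty$ comes from the shadow lemma and divergence. The second-moment bound
\[
\sum_{m,n=1}^N \mu(A_n \cap A_m) \lesssim \Big(\sum_{n=1}^N \mu(A_n)\Big)^2
\]
comes from Property~\ref{item:intersecting shadows}: when $A_n \cap A_m \neq \emptyset$ with $\|\gamma_n\|_\sigma \leq \|\gamma_m\|_\sigma$, the additivity estimate there yields $\mu(A_n \cap A_m) \leq \mu(A_m) \lesssim e^{-\delta\|\gamma_n\|_\sigma} e^{-\delta\|\gamma_n^{-1}\gamma_m\|_\sigma}$, and resumming gives the bound. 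This is the correct formalization of the ``tree estimate'' you were reaching for. Kochen--Stone then gives $\mu(\Lambda_R(\Gamma)) > 0$ directly, hence $\mu(\Lambda^{\rm con}(\Gamma)) > 0$. To upgrade to full measure, the paper observes that if $\mu(\Lambda^{\rm con}(\Gamma)^c) > 0$, the normalized restriction $\mu'$ of $\mu$ to the complement is again a coarse $\sigma$-PS measure of the same dimension, so the same argument forces $\mu'(\Lambda^{\rm con}(\Gamma)) > 0$, a contradiction.
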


 \begin{remark} 
 In many examples, the shadows have the following additional property: for any $\alpha \in \Gamma$ and $R > 0$, there exists $R'>0$ such that 
      $$
      \alpha\Oc_R(\gamma)\subset \Oc_{R'}(\alpha \gamma)
      $$
      for all $\gamma \in \Gamma$. In this case, one has $\Lambda^{\rm con}(\Gamma) = \bigcup_{R > 0} \Lambda_R(\Gamma)$. 
 \end{remark}
 
 \subsection{Proof of Theorem \ref{thm:conical limit set has full measure} part (1)} 
 By Property~\ref{item:almost constant on shadows},  there exists $C = C(R) > 0$ such that for any $\ga \in \Ga$,
 $$
 \mu(\Oc_R(\gamma))= \gamma_*^{-1}\mu(\gamma^{-1}\Oc_R(\gamma)) \le C e^{-\delta \norm{\ga}_{\sigma}}.
 $$
 Now suppose $\sum_{\ga \in H} e^{-\delta \norm{\ga}_{\sigma}} < + \infty$. Then $H$ is countable and enumerating $H =\{ \gamma_n\}$, we have  
 $$
 \Lambda_R(H) \subset \bigcup_{n \geq N} \Oc_R(\gamma_n) \quad \text{for all } N > 0.
 $$
 Therefore, $\mu(\La_R(H)) = 0$, which is a contradiction.

\subsection{Proof of Theorem \ref{thm:conical limit set has full measure} part (2)}

The proof  is exactly the same as the proof of~\cite[Prop. 7.1]{BCZZ2024a}, which itself is similar to an earlier argument of Roblin~\cite{roblin}. Since the proof is short, we include it here.

We use the following variant of  Borel--Cantelli Lemma. 

\begin{lemma}[Kochen--Stone Lemma~\cite{KochenStone}]\label{lem: KS BC lemma}
Let $(X,\nu)$ be a finite measure space. If $\{ A_n\} \subset X$ is a sequence of measurable sets where 
$$
\sum_{n =1}^\infty \nu(A_n) = +\infty \quad \text{and} \quad \liminf_{N \rightarrow\infty} \frac{ \sum_{n, m = 1}^N \nu(A_n \cap A_m)}{\left(\sum_{n=1}^N \nu(A_n)\right)^2} < +\infty,
$$
then  
$$
\nu\left( \{ x \in X : x \text{ is contained in infinitely many of } A_1, A_2, \dots \}\right) > 0. 
$$
\end{lemma}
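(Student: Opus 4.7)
The plan is to run the classical second-moment method (Paley--Zygmund style). For each $N \geq 1$ set $S_N = \sum_{n=1}^N \mathbf{1}_{A_n}$ as a function on $X$. Then
$$
\int_X S_N \, d\nu = \sum_{n=1}^N \nu(A_n) \quad \text{and} \quad \int_X S_N^2 \, d\nu = \sum_{n,m=1}^N \nu(A_n \cap A_m).
$$
Applying the Cauchy--Schwarz inequality to $S_N \cdot \mathbf{1}_{\{S_N > 0\}}$ gives
$$
\left(\sum_{n=1}^N \nu(A_n)\right)^2 \leq \nu\bigl(\{S_N > 0\}\bigr) \cdot \sum_{n,m=1}^N \nu(A_n \cap A_m).
$$
Since $\{S_N > 0\} = \bigcup_{n=1}^N A_n$, this yields the key lower bound
$$
\nu\left(\bigcup_{n=1}^N A_n\right) \geq \frac{\bigl(\sum_{n=1}^N \nu(A_n)\bigr)^2}{\sum_{n,m=1}^N \nu(A_n \cap A_m)}.
$$

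Next, I would take the limsup of the sequence $\{A_n\}$, which by definition is the set of points contained in infinitely many of $A_1, A_2, \ldots$, and write it as $\bigcap_{N \geq 1} \bigcup_{n \geq N} A_n$. The idea is to apply the displayed lower bound to each tail $\{A_n\}_{n \geq N}$ uniformly in $N$. Fix $K > 0$ such that, by the second hypothesis, one can find arbitrarily large $M$ with
$$
\sum_{n,m=1}^M \nu(A_n \cap A_m) \leq K \left(\sum_{n=1}^M \nu(A_n)\right)^2.
$$
For fixed $N$ and such $M > N$, replacing the full sum by the tail sum only decreases the numerator of Cauchy--Schwarz estimate while the denominator is controlled by the full second-moment sum. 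Using that $\sum_{n=1}^\infty \nu(A_n) = +\infty$, the ratio $\sum_{n=N}^M \nu(A_n) \big/ \sum_{n=1}^M \nu(A_n)$ tends to $1$ as $M \to \infty$. Combining these observations, I expect to obtain
$$
\nu\left(\bigcup_{n \geq N} A_n\right) \geq \frac{1}{K}
$$
for every $N \geq 1$. Since $\bigcup_{n \geq N} A_n$ is a decreasing sequence of sets in the finite measure space $(X, \nu)$, continuity of measure from above then gives
$$
\nu\bigl(\limsup_n A_n\bigr) = \lim_{N \to \infty} \nu\left(\bigcup_{n \geq N} A_n\right) \geq \frac{1}{K} > 0,
$$
which is the desired conclusion.

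The bulk of the argument is straightforward once Cauchy--Schwarz is invoked; the only delicate step is the uniformity in $N$ when passing to tails. The main obstacle is to handle the liminf rigorously: one cannot simply quote the liminf bound since the indices $M$ realizing the liminf depend on $N$, but the divergence of $\sum \nu(A_n)$ is precisely what lets the ``boundary terms'' $\sum_{n=1}^{N-1} \nu(A_n)$ be absorbed as $M \to \infty$. I would make this precise by choosing, for each $N$, a subsequence $M_k(N) \to \infty$ along which the second-moment ratio stays bounded by $K + \varepsilon$, and then using the divergence hypothesis to conclude that $\sum_{n=N}^{M_k(N)} \nu(A_n) / \sum_{n=1}^{M_k(N)} \nu(A_n) \to 1$ as $k \to \infty$. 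Note that the finiteness of $\nu$ is used only at the very end, to apply continuity of measure from above to the decreasing family $\bigcup_{n \geq N} A_n$.
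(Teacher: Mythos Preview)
Your argument is correct and is the standard second-moment (Paley--Zygmund/Cauchy--Schwarz) proof of the Kochen--Stone lemma. Note, however, that the paper does not supply its own proof of this lemma: it is stated with a citation to \cite{KochenStone} and used as a black box in the proof of Theorem~\ref{thm:conical limit set has full measure} part (2), so there is nothing in the paper to compare against beyond observing that your derivation matches the classical one.
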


Using the Shadow Lemma (Proposition~\ref{prop.shadowlemma}), fix $R>0$ and $C_1 > 1$ such that 
\begin{equation}\label{eqn:use of shadow lemma in KS}
\frac{1}{C_1} e^{-\delta \norm{\gamma}_\sigma} \leq \mu\Big( \Oc_R(\gamma) \Big) \leq C_1 e^{-\delta \norm{\gamma}_\sigma}
\end{equation} 
for all $\gamma \in \Gamma$. Using Property~\ref{item:properness}, we can fix an enumeration $\Gamma = \{ \gamma_n\}$ such that 
$$
\norm{\gamma_1}_\sigma \leq \norm{\gamma_2}_\sigma \leq \cdots.
$$
We will show that the sets $A_n :=\Oc_R(\gamma_n)$ satisfy the hypothesis of the Kochen--Stone Lemma. 

The first estimate follows immediately from the divergence of the Poincar\'e series 
$$ 
\sum_{n=1}^\infty \mu(A_n) \geq \frac{1}{C_1} \sum_{\gamma \in \Gamma} e^{-\delta \norm{\gamma}_\sigma} = +\infty.
$$
The other estimate is only slightly more involved. Using Property~\ref{item:intersecting shadows},  there exists $C_2' > 0$ such that: if $1 \leq n \leq m$ and $A_n \cap A_m \neq \emptyset$, then 
$$
\norm{\gamma_n}_\sigma +\norm{\gamma_n^{-1}\gamma_m}_\sigma \leq \norm{\gamma_m}_\sigma+C_2'. 
$$
Hence, in this case, $\norm{\gamma_n^{-1}\gamma_m}_\sigma \leq \norm{\gamma_m}_\sigma+C_2$ where $C_2 = C_2' - \norm{\ga_1}_{\sigma}$ and 
$$
\mu( A_n \cap A_m) \leq \mu(A_m) \leq C_1 e^{-\delta \norm{\gamma_m}_\sigma}\leq  C_3e^{-\delta \norm{\gamma_n}_\sigma}e^{-\delta \norm{\gamma_n^{-1}\gamma_m}_\sigma}
$$
where $C_3 := C_1e^{\delta C_2'}$. 

Let $f(N) := \max\{ n : \norm{\gamma_n}_{\sigma} \leq \norm{\gamma_N}_{\sigma} +C_2\}$, which is finite by Property~\ref{item:properness}. Then
\begin{align*}
\sum_{m,n=1}^N \mu(A_n \cap A_m) & \leq 2 \sum_{1 \leq n \leq m \leq N} \mu(A_n \cap A_m) \leq 2C_3  \sum_{1 \leq n \leq m \leq N} e^{-\delta \norm{\gamma_n}_\sigma}e^{-\delta \norm{\gamma_n^{-1}\gamma_m}_\sigma} \\
& \leq 2C_3 \sum_{n=1}^N e^{-\delta \norm{\gamma_n}_\sigma} \sum_{n=1}^{f(N)}  e^{-\delta \norm{\gamma_n}_\sigma}.
\end{align*}

Thus to apply the Kochen--Stone lemma, it suffices to observe the following. 

\begin{lemma} There exists $C_4 > 0$ such that: 
$$
\sum_{n=1}^{f(N)}  e^{-\delta \norm{\gamma_n}_\sigma} \leq C_4 \sum_{n=1}^N e^{-\delta \norm{\gamma_n}_\sigma}
$$
for all $N \geq 1$. 
\end{lemma}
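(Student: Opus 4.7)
The plan is to bound the excess sum $S_{f(N)} - S_N = \sum_{n=N+1}^{f(N)} e^{-\delta \norm{\gamma_n}_\sigma}$ by a constant independent of $N$; the lemma then follows from the uniform lower bound $S_N \geq S_1 = e^{-\delta \norm{\gamma_1}_\sigma} > 0$. The key tools are the Vitali Covering Lemma (Lemma~\ref{lem:V covering}) and the Shadow Lemma (Proposition~\ref{prop.shadowlemma}), both applied under the trivial hierarchy $\mathscr{H}(R) \equiv \Gamma$ in force for part~(2) of Theorem~\ref{thm:conical limit set has full measure}.

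First I apply Lemma~\ref{lem:V covering} at scale $R$ to $I := \{\gamma_{N+1}, \ldots, \gamma_{f(N)}\}$, producing $J \subset I$ whose shadows $\Oc_R(\gamma^*)$, $\gamma^* \in J$, are pairwise disjoint. A closer inspection of the greedy construction in the proof of Lemma~\ref{lem:V covering} assigns to each $\gamma_n \in I$ a ``parent'' $\pi(\gamma_n) \in J$ with $\norm{\pi(\gamma_n)}_\sigma \leq \norm{\gamma_n}_\sigma$ and $\Oc_R(\gamma_n) \cap \Oc_R(\pi(\gamma_n)) \neq \emptyset$ (taking $\pi(\gamma^*) = \gamma^*$ when $\gamma^* \in J$). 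Disjointness of the shadows in $J$ combined with the lower bound in the Shadow Lemma gives
$$
\sum_{\gamma^* \in J} e^{-\delta \norm{\gamma^*}_\sigma} \leq C_1 \sum_{\gamma^* \in J} \mu(\Oc_R(\gamma^*)) \leq C_1.
$$

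The crux is to control the $\pi$-fibers. For $\gamma_n \in \pi^{-1}(\gamma^*)$, Property~\ref{item:intersecting shadows} applied with $\alpha = \gamma^*$, $\beta = \gamma_n$ gives
$$
\abs{\norm{\gamma_n}_\sigma - \norm{\gamma^*}_\sigma - \norm{(\gamma^*)^{-1}\gamma_n}_\sigma} \leq C
$$
for some $C = C(R) > 0$. Since $\gamma_n, \gamma^* \in I$ both have magnitudes in the band $[\norm{\gamma_N}_\sigma, \norm{\gamma_N}_\sigma + C_2]$, this yields $\norm{(\gamma^*)^{-1}\gamma_n}_\sigma \leq C_2 + C$, so $(\gamma^*)^{-1}\gamma_n$ lies in the finite set $F := \{h \in \Gamma : \norm{h}_\sigma \leq C_2 + C\}$ (finite by Property~\ref{item:properness}). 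The map $\gamma_n \mapsto (\gamma^*)^{-1}\gamma_n$ is injective, and the other half of the inequality above yields $e^{-\delta \norm{\gamma_n}_\sigma} \leq e^{\delta C} e^{-\delta \norm{\gamma^*}_\sigma} e^{-\delta \norm{(\gamma^*)^{-1}\gamma_n}_\sigma}$. Setting $C_F := \sum_{h \in F} e^{-\delta \norm{h}_\sigma} < +\infty$ and summing over $\pi^{-1}(\gamma^*)$ then over $\gamma^* \in J$,
$$
S_{f(N)} - S_N \leq e^{\delta C} C_F \sum_{\gamma^* \in J} e^{-\delta \norm{\gamma^*}_\sigma} \leq e^{\delta C} C_F C_1,
$$
so that $C_4 := 1 + e^{\delta C} C_F C_1 / S_1$ works.

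The main obstacle is the step producing the inclusion $(\gamma^*)^{-1}\gamma_n \in F$: converting the \emph{metric} band constraint on magnitudes into a \emph{group-theoretic} finiteness statement for the ratios. This conversion is precisely what Property~\ref{item:intersecting shadows} is designed for, by forcing approximate additivity of magnitudes along intersecting shadows; without this quasi-cocycle-like property one could only bound the total measure of the excess shadows, not the excess Poincar\'e sum itself, and the argument would collapse when many distinct group elements have overlapping shadows.
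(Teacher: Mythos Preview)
Your proof is correct and uses the same key ingredient as the paper: Property~\ref{item:intersecting shadows} forces $\norm{(\gamma^*)^{-1}\gamma_n}_\sigma$ to be bounded whenever $\gamma^*,\gamma_n$ lie in the tail band and have intersecting $R$-shadows, so the ``ratios'' range over a finite set. The only difference is packaging: the paper bypasses the Vitali lemma and simply observes that the cover $\{A_n\}_{N<n\leq f(N)}$ has multiplicity at most $D:=\#\{\gamma:\norm{\gamma}_\sigma\leq C_2+C_2'\}$, then applies the Shadow Lemma once to get $\sum_{n=N+1}^{f(N)}e^{-\delta\norm{\gamma_n}_\sigma}\leq C_1\sum\mu(A_n)\leq C_1D$. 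Your route through a disjoint subfamily $J$ and a fiber bound achieves the same uniform constant with one extra layer; both arguments are equivalent in strength and spirit.
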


\begin{proof} Notice if $N < n \leq m \leq f(N)$ and $A_n \cap A_m \neq \emptyset$, then 
$$
\norm{\gamma_n^{-1}\gamma_m}_\sigma \leq \norm{\gamma_m}_\sigma-\norm{\gamma_n}_\sigma +C_2' \leq C_2 + C_2'. 
$$
Let  $D:=\#\{ \gamma \in \Gamma: \norm{\gamma}_\sigma \leq C_2 + C_2'\}$, which is finite by Property~\ref{item:properness}. Then  
$$
\sum_{n=N+1}^{f(N)}  e^{-\delta \norm{\gamma_n}_\sigma} \leq C_1\sum_{n=N+1}^{f(N)}  \mu(A_n) \leq C_1 D \mu\left( \bigcup_{n=N+1}^{f(N)} A_n \right) \leq C_1 D
$$
where Equation~\eqref{eqn:use of shadow lemma in KS} is applied in the first inequality.
Hence 
\begin{equation*}
 \sum_{n=1}^{f(N)}  e^{-\delta \norm{\gamma_n}_\sigma} \leq \left(1+ C_1D e^{\delta \norm{\gamma_1}_\sigma} \right) \sum_{n=1}^N e^{-\delta \norm{\gamma_n}_\sigma}. \qedhere
\end{equation*}
\end{proof} 

So by the Kochen--Stone lemma, the set 
$$
\Lambda_R(\Gamma) = \{ x \in M : x \text{ is contained in infinitely many of } A_1, A_2, \dots \}
$$
has positive $\mu$-measure. Hence $\mu(\Lambda^{\rm con}(\Gamma)) > 0$. 

Suppose for a contradiction that $\mu(\Lambda^{\rm con}(\Gamma) ) < 1$. Then 
$$
\mu' (\cdot) : = \frac{1}{\mu(\Lambda^{\rm con}(\Gamma)^c)} \mu\left( \Lambda^{\rm con}(\Gamma)^c \cap \cdot \right)
$$
is a $\sigma$-PS measure of dimension $\delta$, and so by the argument above we must have $\mu'(\Lambda^{\rm con}(\Gamma)) > 0$, which is impossible. Hence $\mu(\Lambda^{\rm con}(\Gamma) ) = 1$.
\qed

\section{An analogue of the Lebesgue differentiation theorem} 

Let $(M, \Ga, \sigma, \mu)$ be a well-behaved PS-system of dimension $\delta \ge 0$ with respect to a hierarchy $\mathscr{H} = \{ \mathscr{H} (R) \subset \Ga : R \ge 0\}$. Fix $R_0 > 0$ such that any $R \geq R_0$ satisfies the Shadow Lemma (Proposition~\ref{prop.shadowlemma}).

In this section we prove the following analogue of the Lebesgue differentiation theorem (which is known to hold for many particular PS-systems).

\begin{theorem}\label{thm:Leb diff} Fix $R \ge R_0$. 
 If $h \in L^1(M, \mu)$, then for $\mu$-a.e. $x \in M$ we have
   $$
0 = \lim_{n \rightarrow \infty} \frac{1}{\mu(\gamma\Oc_R(\gamma_n))} \int_{\gamma\Oc_R(\gamma_n)} |h(y) - h(x)| d\mu(y),
$$
and hence
$$
h(x) = \lim_{n \rightarrow \infty} \frac{1}{\mu(\gamma\Oc_R(\gamma_n))} \int_{\gamma\Oc_R(\gamma_n)} h(y) d\mu(y),
$$
whenever $x \in \bigcap_{n \ge 1} \ga \Oc_R(\ga_n)$ for some $\ga \in \Ga$ and escaping sequence $\{ \ga_n \} \subset \mathscr{H}(R)$.

\end{theorem}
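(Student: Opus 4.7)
The plan is to adapt the classical Lebesgue differentiation theorem: establish a weak-type $(1,1)$ inequality for an appropriate Hardy--Littlewood maximal function via the Vitali-type Lemma~\ref{lem:V covering}, and then conclude by density from the trivial case of continuous functions.

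Since $\Gamma$ is countable by Property~\ref{item:properness} and a countable union of $\mu$-null sets is $\mu$-null, it suffices to prove the statement for each fixed $\gamma \in \Gamma$ separately. Setting $\nu := \gamma^{-1}_*\mu$ and $\tilde h(y) := h(\gamma y)$ further reduces to $\gamma = \id$: Property~\ref{item:coycles are bounded} together with~\eqref{eqn.coarse PS meaasure intro} forces the Radon--Nikodym derivative $D_\gamma := d\nu/d\mu$ to be bounded above and below by positive constants $\mu$-a.e., so $\tilde h\, D_\gamma$ and $D_\gamma$ both lie in $L^1(M,\mu)$; applying the $\gamma=\id$ case of the theorem to each function and taking the ratio at the $\mu$-full set where $D_\gamma > 0$ recovers the statement for general $\gamma$.

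For the case $\gamma = \id$, I would introduce the maximal operator
$$
M^* h(x) := \sup\left\{ \tfrac{1}{\mu(\Oc_R(\beta))}\int_{\Oc_R(\beta)} |h|\, d\mu \;:\; \beta \in \mathscr{H}(R),\ x \in \Oc_R(\beta) \right\}.
$$
Given $\lambda>0$, let $I := \{\beta \in \mathscr{H}(R) : \int_{\Oc_R(\beta)} |h|\, d\mu > \lambda\,\mu(\Oc_R(\beta))\}$, so that $\{M^*h>\lambda\} \subset \bigcup_{\beta\in I}\Oc_R(\beta)$. Lemma~\ref{lem:V covering} produces $J \subset I$ with $\{\Oc_R(\beta)\}_{\beta\in J}$ pairwise disjoint and $\bigcup_{\beta\in I}\Oc_R(\beta) \subset \bigcup_{\beta\in J}\Oc_{R'}(\beta)$. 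The Shadow Lemma (Proposition~\ref{prop.shadowlemma}), applied at both $R$ and $\max(R', R_0)$, supplies a constant $C_0>0$ with $\mu(\Oc_{R'}(\beta)) \le C_0\,\mu(\Oc_R(\beta))$ for every $\beta \in \Gamma$, so summing over $J$ gives the weak-type bound
$$
\mu(\{M^* h > \lambda\}) \;\le\; C_0 \sum_{\beta \in J} \mu(\Oc_R(\beta)) \;\le\; \tfrac{C_0}{\lambda}\sum_{\beta \in J} \int_{\Oc_R(\beta)} |h|\, d\mu \;\le\; \tfrac{C_0}{\lambda}\|h\|_{L^1}.
$$

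For continuous $g : M \to \Rb$, Property~\ref{item:diam goes to zero ae} and the uniform continuity of $g$ on the compact space $M$ immediately yield the $L^1$-averaged convergence to $g(x)$ at every $x \in M'$ lying in the intersection of an escaping shadow chain. For arbitrary $h \in L^1(M, \mu)$, choose continuous $g_k$ with $\|h-g_k\|_{L^1}\to 0$ (possible by regularity of $\mu$ and metrizability of $M$) and consider
$$
\Omega h(x) := \sup\, \limsup_n\, \tfrac{1}{\mu(\Oc_R(\ga_n))}\int_{\Oc_R(\ga_n)}|h(y)-h(x)|\,d\mu(y),
$$
the outer supremum taken over escaping $\{\ga_n\}\subset\mathscr{H}(R)$ with $x \in \bigcap_n\Oc_R(\ga_n)$. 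The triangle inequality gives $\Omega h \le M^*(h - g_k) + |h - g_k|$ on $M'$ (the $\Omega g_k$ term vanishes there), and the weak-type inequality together with Chebyshev yields $\mu(\{\Omega h > \epsilon\}) \le \frac{C}{\epsilon}\|h - g_k\|_{L^1} \to 0$ for each $\epsilon>0$. Hence $\Omega h = 0$ $\mu$-a.e., proving the theorem. The main technical obstacle is the absence of a doubling property for shadows; this is handled by Property~\ref{item:intersecting shadows} (built into Lemma~\ref{lem:V covering}) combined with the Shadow Lemma, which together allow one to compare the $\mu$-measures of an enlarged shadow and the original.
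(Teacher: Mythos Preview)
Your proof is correct and follows essentially the same strategy as the paper: a weak-type $(1,1)$ maximal inequality via the Vitali-type covering lemma and the Shadow Lemma, combined with density of continuous functions and Property~\ref{item:diam goes to zero ae}. The paper's organization differs only cosmetically: instead of reducing to $\gamma=\id$, it defines for each $\alpha\in\Gamma$ the functionals $\mathcal{A}_\alpha h$ (your $\Omega h$ over $\alpha$-shifted shadows) and $\mathcal{B}_\alpha h$ (a limsup variant of your $M^*$), proves $\mathcal{A}_\alpha h=0$ $\mu$-a.e.\ directly for each $\alpha$ using the bounded density $C_\alpha^{-1}\mu\le\alpha_*^{-1}\mu\le C_\alpha\mu$ inside the Vitali estimate, and then intersects over $\alpha\in\Gamma$.

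One small imprecision in your reduction step: the ``ratio'' argument applied to $\tilde h D_\gamma$ and $D_\gamma$ only recovers the second (averaging) conclusion for general $\gamma$, not the first conclusion about $\int|h(y)-h(x)|\,d\mu$. To recover the first conclusion, apply the $\gamma=\id$ case directly to $\tilde h=h\circ\gamma\in L^1(M,\mu)$ and then use the two-sided bounds on $D_\gamma$ to replace $\mu$ by $\nu=\gamma_*^{-1}\mu$ in both the numerator and denominator.
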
 

Delaying the proof of the theorem, we state several corollaries. We will use Theorem~\ref{thm:Leb diff} to prove that $\Gamma$ acts ergodically. 

\begin{corollary}\label{cor:ergodicity}  
   If $\mu(\La^{\rm con}(\mathscr{H})) = 1$, then the $\Gamma$-action on $(M,\mu)$ is ergodic.
   In particular, if the hierarchy is trivial (i.e. $\mathscr{H}(R) \equiv \Gamma$) and $\sum_{\gamma \in \Gamma} e^{-\delta \norm{\gamma}_\sigma} = +\infty$, then the $\Gamma$-action on $(M,\mu)$ is ergodic.
\end{corollary}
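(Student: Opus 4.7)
The plan is to argue by contradiction using the Lebesgue-differentiation Theorem~\ref{thm:Leb diff} together with the Shadow Lemma (Proposition~\ref{prop.shadowlemma}) applied to two auxiliary measures obtained by restriction. Suppose $A \subset M$ is a $\Gamma$-invariant Borel set with $0 < \mu(A) < 1$, set $B := M \smallsetminus A$, and consider the normalized restrictions
$$
\tilde\mu := \mu(A)^{-1}\, \mu|_A, \qquad \bar\mu := \mu(B)^{-1}\, \mu|_B.
$$

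First I would check that $(M, \Gamma, \sigma, \tilde\mu)$ and $(M, \Gamma, \sigma, \bar\mu)$ are themselves PS-systems of dimension $\delta$, with the \emph{same} coarse-cocycle, magnitude function, and shadows as the original. The $\Gamma$-invariance $\gamma A = A$ together with the coarse quasi-invariance of $\mu$ yields
$$
\frac{d\gamma_*\tilde\mu}{d\tilde\mu}(x) = \frac{d\gamma_*\mu}{d\mu}(x) \qquad \text{for } \tilde\mu\text{-a.e.\ } x,
$$
and analogously for $\bar\mu$, so both are coarse $\sigma$-PS measures of dimension $\delta$ with the same constant $C$. The remaining PS-system axioms \ref{item:coycles are bounded}--\ref{item:empty Z intersection} either transfer from $\mu$-a.e.\ to $\tilde\mu$-a.e.\ (and $\bar\mu$-a.e.) statements via absolute continuity, or are independent of the measure.

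Second I would invoke Proposition~\ref{prop.shadowlemma} for both $\tilde\mu$ and $\bar\mu$: for all $R$ sufficiently large, there is a constant $C_0 > 1$ such that
$$
\frac{1}{C_0} \leq \frac{\bar\mu(\Oc_R(\gamma))}{\tilde\mu(\Oc_R(\gamma))} \leq C_0 \qquad \text{for every } \gamma \in \Gamma.
$$
Then, for any fixed $\gamma_0 \in \Gamma$, Property~\ref{item:coycles are bounded} applied to $\gamma_0$ combined with coarse quasi-invariance yields $\bar\mu(\gamma_0 E) \asymp_{\gamma_0} \bar\mu(E)$ and $\tilde\mu(\gamma_0 E) \asymp_{\gamma_0} \tilde\mu(E)$ for every Borel $E$, so the ratio $\bar\mu(\gamma_0 \Oc_R(\gamma))/\tilde\mu(\gamma_0 \Oc_R(\gamma))$ is bounded between two positive constants depending on $\gamma_0$ but not on $\gamma$.

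Finally I would apply Theorem~\ref{thm:Leb diff} to $h = \mathbf{1}_A \in L^1(M,\mu)$. Since $\mu(A) > 0$ and $\mu(\La^{\rm con}(\mathscr{H})) = 1$, there is a positive $\mu$-measure set of points $x \in A \cap \La^{\rm con}(\mathscr{H})$ at which the differentiation conclusion holds; fixing such an $x$ together with a witnessing $\gamma_0 \in \Gamma$ and escaping $\{\gamma_n\} \subset \mathscr{H}(R)$ with $x \in \bigcap_{n \geq 1} \gamma_0 \Oc_R(\gamma_n)$ gives
$$
1 = \mathbf{1}_A(x) = \lim_{n \to \infty} \frac{\mu(A \cap \gamma_0 \Oc_R(\gamma_n))}{\mu(\gamma_0 \Oc_R(\gamma_n))}.
$$
Using $\mu = \mu(A)\tilde\mu + \mu(B)\bar\mu$ and the fact that $\tilde\mu$ is supported on $A$, this ratio equals
$$
\left(1 + \frac{\mu(B)}{\mu(A)} \cdot \frac{\bar\mu(\gamma_0 \Oc_R(\gamma_n))}{\tilde\mu(\gamma_0 \Oc_R(\gamma_n))}\right)^{-1},
$$
so its convergence to $1$ forces $\bar\mu(\gamma_0 \Oc_R(\gamma_n))/\tilde\mu(\gamma_0 \Oc_R(\gamma_n)) \to 0$, contradicting the uniform lower bound obtained above. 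Therefore $\mu(A) \in \{0,1\}$ and the action is ergodic. The ``in particular'' clause then follows immediately from Theorem~\ref{thm:conical limit set has full measure}(2), which gives $\mu(\La^{\rm con}(\Gamma)) = 1$ under the stated divergence hypothesis. The main conceptual step is the realization that $\tilde\mu$ and $\bar\mu$ are themselves coarse PS-measures of the same dimension $\delta$, so that the Shadow Lemma forces a comparison between them on shadows that is rigid enough to contradict the Lebesgue-differentiation consequence of $\Gamma$-invariance.
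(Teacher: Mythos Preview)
Your proof is correct and takes a genuinely different route from the paper's. The paper argues directly with $\mu$: given a $\Gamma$-invariant set $E$ of positive measure, it applies Lemma~\ref{lem:Leb Density} at a sequence of radii $R_k\to\infty$, passes to a Hausdorff limit $Z$ of the complements $M\smallsetminus\gamma_n^{-1}\Oc_{R_k}(\gamma_n)$, shows $\mu((M\smallsetminus Z)\smallsetminus E)=0$, and then uses Property~\ref{item:empty Z intersection} to cover $M$ by translates $M\smallsetminus\gamma Z$ and conclude $\mu(E)=1$. Your argument instead observes that the normalized restrictions $\tilde\mu,\bar\mu$ are themselves coarse $\sigma$-PS measures of dimension $\delta$ (a trick the paper uses elsewhere, in the proof of Theorem~\ref{thm:conical limit set has full measure}(2)), so the Shadow Lemma forces $\bar\mu(\Oc_R(\gamma))\asymp\tilde\mu(\Oc_R(\gamma))$ uniformly; this contradicts the Lebesgue differentiation conclusion at a density point in $A$. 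Your approach is more in the spirit of classical Patterson--Sullivan ergodicity arguments and bypasses the compactness step entirely; the paper's approach is more self-contained in that it never leaves the original measure. One small point you should make explicit: the radius $R$ must be chosen large enough for the Shadow Lemma to hold for all three of $\mu,\tilde\mu,\bar\mu$, and since the witnessing radius $R'$ coming from $x\in\Lambda^{\rm con}(\mathscr{H})$ is not known in advance, you should intersect the full-measure sets from Theorem~\ref{thm:Leb diff} over all integer $R$ above this common threshold and then use Property~\ref{item:shadow inclusion} to pass from $R'$ to a larger $R$.
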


Corollary~\ref{cor:ergodicity} is a  consequence of Theorem~\ref{thm:Leb diff} and the following lemma (which is itself a corollary of Theorem \ref{thm:Leb diff}).

\begin{lemma} \label{lem:Leb Density} Fix $R \ge R_0$. 
 If $E \subset M$ is measurable, then for $\mu$-a.e. $x \in E$ we have 
$$
0= \lim_{n \to \infty} \mu \left( \ga_{n}^{-1} \Oc_{R}(\ga_{n}) \smallsetminus \ga_{n}^{-1}\ga^{-1}E\right)
$$
whenever $x \in \bigcap_{n \ge 1} \ga \Oc_R(\ga_n)$ for some $\ga \in \Ga$ and escaping sequence $\{ \ga_n \} \subset \mathscr{H}(R)$.
\end{lemma}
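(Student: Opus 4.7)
The plan is to deduce the lemma as a direct corollary of Theorem~\ref{thm:Leb diff} applied to the indicator function $h = \mathbf{1}_E \in L^1(M,\mu)$. That theorem then supplies a single $\mu$-null set $N \subset M$ such that for every $x \in E \smallsetminus N$ with $x \in \bigcap_{n\ge 1}\ga\Oc_R(\ga_n)$ for some admissible $\ga \in \Ga$ and escaping sequence $\{\ga_n\} \subset \mathscr{H}(R)$,
\[
1 = \mathbf{1}_E(x) = \lim_{n\to\infty}\frac{\mu(\ga\Oc_R(\ga_n)\cap E)}{\mu(\ga\Oc_R(\ga_n))},
\]
or equivalently $\mu(\ga\Oc_R(\ga_n)\smallsetminus E)/\mu(\ga\Oc_R(\ga_n)) \to 0$. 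Everything else is a matter of transferring this density estimate through the $\ga_n^{-1}$ translation using the coarse quasi-invariance of $\mu$.

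To perform the transfer I would first pin down the Radon--Nikodym derivative $d(\ga_n)_*\mu/d\mu$ on the shadow $\Oc_R(\ga_n)$. Specializing the coarse-cocycle inequality~\eqref{eqn.coarsecocycle intro} to $(\id,\id)$ gives $|\sigma(\id,\cdot)| \le \kappa$, and specializing it to $(\ga_n^{-1},\ga_n)$ yields $|\sigma(\ga_n^{-1},x) + \sigma(\ga_n,\ga_n^{-1}x)| \le 2\kappa$ for all $x \in M$. Combining this with Property~\ref{item:almost constant on shadows} applied at the point $\ga_n^{-1}x \in \ga_n^{-1}\Oc_R(\ga_n)$ gives
\[
\bigl|\sigma(\ga_n^{-1},x) + \norm{\ga_n}_\sigma\bigr| \le 2\kappa + C(R)
\]
for $\mu$-a.e.\ $x \in \Oc_R(\ga_n)$. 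The defining inequality~\eqref{eqn.coarse PS meaasure intro} of a coarse PS-measure then forces $\frac{d(\ga_n)_*\mu}{d\mu}(x) \asymp e^{\delta\norm{\ga_n}_\sigma}$ on $\Oc_R(\ga_n)$, with implicit constants depending only on $R$, $\delta$, $\kappa$, and the PS-constant, and in particular independent of $n$.

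Putting the pieces together, since $(\ga_n)_*\mu(B) = \mu(\ga_n^{-1}B)$ for measurable $B$, applying the previous estimate with $B = \Oc_R(\ga_n) \smallsetminus \ga^{-1}E$ gives
\[
\mu\bigl(\ga_n^{-1}\Oc_R(\ga_n) \smallsetminus \ga_n^{-1}\ga^{-1}E\bigr) \asymp e^{\delta\norm{\ga_n}_\sigma}\,\mu\bigl(\Oc_R(\ga_n) \smallsetminus \ga^{-1}E\bigr),
\]
and Property~\ref{item:coycles are bounded} for the single fixed element $\ga$ (which makes $\ga_*\mu$ comparable to $\mu$ with $\ga$-dependent constants) replaces the right-hand factor by $\mu(\ga\Oc_R(\ga_n) \smallsetminus E)$ up to a constant depending only on $\ga$. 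Since the Shadow Lemma (Proposition~\ref{prop.shadowlemma}) gives $\mu(\ga\Oc_R(\ga_n)) \asymp e^{-\delta\norm{\ga_n}_\sigma}$ up to $\ga$-dependent constants, the resulting quantity is comparable to the density ratio $\mu(\ga\Oc_R(\ga_n)\smallsetminus E)/\mu(\ga\Oc_R(\ga_n))$, which tends to $0$ by the opening step. There is no real obstacle; the proof is a clean reduction to Theorem~\ref{thm:Leb diff}, and the only mild point of care is checking that all implicit constants depend only on $R$, $\ga$, and the PS-system data, hence are uniform in $n$.
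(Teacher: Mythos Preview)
Your proof is correct and follows essentially the same strategy as the paper: apply Theorem~\ref{thm:Leb diff} to an indicator function to obtain the density statement $\mu(\ga\Oc_R(\ga_n)\smallsetminus E)/\mu(\ga\Oc_R(\ga_n))\to 0$, then push it through $\ga_n^{-1}$ using Property~\ref{item:almost constant on shadows} to control the Radon--Nikodym derivative uniformly on the shadow. The paper's version is organized slightly differently---it applies Theorem~\ref{thm:Leb diff} to each translate $\mathbf{1}_{g^{-1}E}$ and intersects the resulting full-measure sets over $g\in\Gamma$, then works at the point $\ga^{-1}x$ with the untranslated shadows $\Oc_R(\ga_n)$---whereas you apply Theorem~\ref{thm:Leb diff} once to $\mathbf{1}_E$ at $x$ with the translated shadows $\ga\Oc_R(\ga_n)$; but since Theorem~\ref{thm:Leb diff} already quantifies over all $\ga\in\Gamma$, your route is equally valid and arguably a touch more direct.
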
 

\begin{remark}Lemma~\ref{lem:Leb Density} can be viewed as an analogue of the Lebesgue density theorem. \end{remark}

For use in Section~\ref{sec.convergenceaction} we also record the following corollary about approximate continuity of maps into separable metric spaces.

\begin{corollary}\label{cor:approx continuity} 
Fix $R \ge R_0$. If $F : M \rightarrow (Y, \dist_Y)$ is a Borel measurable map into a separable metric space, then for $\mu$-a.e. $x \in M$ we have 
$$
0 = \lim_{n \rightarrow \infty} \frac{1}{\mu(\gamma\Oc_R(\gamma_n))}\mu\left( \left\{ y \in \gamma\Oc_R(\gamma_n) : \dist_Y(F(x), F(y)) > \epsilon\right\} \right)
$$
for all $\epsilon > 0$ whenever $x \in \bigcap_{n \ge 1} \ga \Oc_R(\ga_n)$ for some $\ga \in \Ga$ and escaping sequence $\{ \ga_n \} \subset \mathscr{H}(R)$.
\end{corollary}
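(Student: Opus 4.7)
The plan is to reduce Corollary~\ref{cor:approx continuity} to Theorem~\ref{thm:Leb diff} by using separability of $Y$ to replace the continuous quantity $\dist_Y(F(x), F(\cdot))$ with a countable family of scalar indicator functions, each of which is handled directly by the Lebesgue differentiation statement already in hand.

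First, I would fix a countable dense subset $\{y_k\}_{k \ge 1} \subset Y$ and, for each pair of positive integers $(k,j)$, set
$$h_{k,j} := \mathbf{1}_{F^{-1}(B(y_k, 1/j))},$$
where $B(y_k, r) \subset Y$ denotes the open ball of radius $r$. Since $F$ is Borel measurable, each $h_{k,j}$ lies in $L^1(M,\mu)$. I would then apply Theorem~\ref{thm:Leb diff} to each of these countably many functions and intersect the resulting full-measure differentiation sets to obtain a single Borel set $M_0 \subset M$ with $\mu(M_0) = 1$ on which the Lebesgue differentiation conclusion holds simultaneously for every $(k,j)$. The key point is that the statement of Theorem~\ref{thm:Leb diff} already allows arbitrary choices of $\gamma \in \Gamma$ and of the escaping sequence $\{\gamma_n\} \subset \mathscr{H}(R)$ outside of a single null set depending only on the $L^1$ function, so the countable intersection $M_0$ remains universal in this sense.

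Next, given $x \in M_0$ and $\epsilon > 0$, I would pick $j$ with $2/j < \epsilon$ and then $k$ with $\dist_Y(F(x), y_k) < 1/j$, so that $h_{k,j}(x) = 1$. By the triangle inequality,
$$\{y \in M : \dist_Y(F(x), F(y)) > \epsilon\} \subset \{y \in M : h_{k,j}(y) = 0\}.$$
Intersecting with $\gamma \Oc_R(\gamma_n)$, dividing by $\mu(\gamma \Oc_R(\gamma_n))$, and invoking the differentiation limit $h_{k,j}(x) = \lim_n \mu(\gamma \Oc_R(\gamma_n))^{-1} \int_{\gamma \Oc_R(\gamma_n)} h_{k,j} \, d\mu$ then yields
$$\limsup_{n \to \infty} \frac{\mu\bigl(\{y \in \gamma \Oc_R(\gamma_n) : \dist_Y(F(x), F(y)) > \epsilon\}\bigr)}{\mu(\gamma \Oc_R(\gamma_n))} \le 1 - h_{k,j}(x) = 0,$$
which is the required conclusion.

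This is essentially a standard separability reduction and I do not foresee any substantive obstacle. The one bookkeeping detail worth stressing is in the second step: the null set outside which Theorem~\ref{thm:Leb diff} applies must be uniform in the auxiliary data $(\gamma, \{\gamma_n\})$, so that a single countable intersection across the $h_{k,j}$ produces one full-measure set on which the argument works for every admissible sequence; this is precisely how Theorem~\ref{thm:Leb diff} is phrased, so no extra work is required.
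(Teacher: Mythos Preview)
Your proof is correct and follows essentially the same approach as the paper: both reduce to Theorem~\ref{thm:Leb diff} via separability by building a countable family of $L^1$ functions, intersecting the resulting full-measure sets, and then using a triangle-inequality inclusion. The only cosmetic difference is the choice of test functions---you use the indicators $\mathbf{1}_{F^{-1}(B(y_k,1/j))}$ directly, whereas the paper encodes the nearest dense point by an integer-valued index function $f_k(x)=\min\{n:\dist_Y(F(x),z_n)<1/k\}$ and truncates it to make it bounded; your version is arguably a bit more transparent.
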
 

The rest of the section is devoted to the proof of the theorem and the three corollaries.

\subsection{Proof of Theorem~\ref{thm:Leb diff}} Recall that any $R \geq R_0$ satisfies the Shadow Lemma (Proposition \ref{prop.shadowlemma}) and recall that $\La_R(\mathscr{H}(R))$ is the set of points $x \in M$ such that $x \in \bigcap_{n \ge 1} \Oc_R(\ga_n)$ for some escaping sequence $\{ \ga_n \} \subset \mathscr{H}(R)$.

Fix $R \ge R_0$ and $h \in L^1(M,\mu)$. For $\alpha \in \Gamma$, define functions $\mathcal{A}_{\alpha} h, \mathcal{B}_{\alpha} h : M \rightarrow [0,+\infty]$ by 
$$\begin{aligned}
   \mathcal{A}_{\alpha} h(x) & =  \begin{cases}
\lim\limits_{T\to\infty}\sup\limits_{\substack{\ga \in \mathscr{H}(R) \\ \norm\gamma_\sigma\geq T\\ x\in\alpha\Oc_{R}(\gamma)}} \frac{1}{\mu(\alpha\Oc_{R}(\gamma))} \int_{\alpha\Oc_{R}(\gamma)} \abs{h(y)-h(x)} d\mu(y) & \text{if } x \in  \alpha\Lambda_{R}(\mathscr{H}(R)) \\
0 & \text{else}
\end{cases} \\
\text{and} & \\
\mathcal{B}_{\alpha} h(x) & =  \begin{cases}
\lim\limits_{T\to\infty}\sup\limits_{\substack{\ga \in \mathscr{H}(R) \\ \norm\gamma_\sigma\geq T\\ x\in\alpha\Oc_{R}(\gamma)}} \frac{1}{\mu(\alpha\Oc_{R}(\gamma))} \int_{\alpha\Oc_{R}(\gamma)} \abs{h(y)} d\mu(y) & \text{if } x \in \alpha \Lambda_{R}(\mathscr{H}(R)) \\
0 & \text{else}
\end{cases}. 
\end{aligned}
$$

\begin{lemma} \label{lem.vanishingAf}
   If  $\alpha \in \Gamma$, then $\mathcal{A}_{\alpha} h = 0$ $\mu$-a.e. 
\end{lemma}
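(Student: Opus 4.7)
The plan is to imitate the classical maximal-function proof of the Lebesgue differentiation theorem, with shadows playing the role of Euclidean balls. The Vitali-type covering from Lemma~\ref{lem:V covering} and the doubling-style comparison from the Shadow Lemma (Proposition~\ref{prop.shadowlemma}) will replace their Euclidean counterparts, and the diameter-shrinking Property~\ref{item:diam goes to zero ae} will supply the ``continuous-endpoint'' case.

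First I would reduce to $\alpha=\mathrm{id}$. The coarse PS-property combined with Property~\ref{item:coycles are bounded} shows that the Radon--Nikodym derivative $d\alpha_*\mu/d\mu$ is essentially bounded above and below by positive constants depending only on $\alpha$. A change of variables $y=\alpha z$ then yields a two-sided comparison
$$
c(\alpha)^{-1}\,\mathcal{A}_{\mathrm{id}}(h\circ\alpha)(\alpha^{-1} x)\ \leq\ \mathcal{A}_\alpha h(x)\ \leq\ c(\alpha)\,\mathcal{A}_{\mathrm{id}}(h\circ\alpha)(\alpha^{-1} x),
$$
so it suffices to prove $\mathcal{A}_{\mathrm{id}} g = 0$ $\mu$-a.e.\ for every $g\in L^1(\mu)$. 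Writing $\mathcal{A}=\mathcal{A}_{\mathrm{id}}$, I introduce the maximal operator
$$
Mh(x) := \sup_{\substack{\gamma\in\mathscr{H}(R) \\ x\in\Oc_R(\gamma)}} \frac{1}{\mu(\Oc_R(\gamma))}\int_{\Oc_R(\gamma)} |h|\,d\mu.
$$

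Next I would establish a weak-type $(1,1)$ bound for $M$. If $Mh(x)>\lambda$, then some $\gamma\in\mathscr{H}(R)$ with $x\in\Oc_R(\gamma)$ satisfies $\int_{\Oc_R(\gamma)}|h|\,d\mu > \lambda\,\mu(\Oc_R(\gamma))$. Applying Lemma~\ref{lem:V covering} to the collection of all such $\gamma$ extracts a pairwise disjoint subfamily $J$ with $\{Mh>\lambda\}\subset\bigcup_{\gamma\in J}\Oc_{R'}(\gamma)$, and the Shadow Lemma gives $\mu(\Oc_{R'}(\gamma))\leq C\,e^{-\delta\norm{\gamma}_\sigma}\leq C'\,\mu(\Oc_R(\gamma))$. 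Summing over the disjoint family $J$ then yields $\mu(\{Mh>\lambda\})\leq (C'/\lambda)\norm{h}_{L^1(\mu)}$.

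To conclude I would approximate $h$ by continuous functions. For any $\phi\in C(M)$, Property~\ref{item:properness} forces any sequence realising the $\limsup$ in the definition of $\mathcal{A}\phi(x)$ to be escaping in $\mathscr{H}(R)$, so Property~\ref{item:diam goes to zero ae} gives $\diam \Oc_R(\gamma_n)\to 0$ on a $\mu$-full-measure set, whence $\mathcal{A}\phi=0$ $\mu$-a.e.\ by uniform continuity of $\phi$. Given general $h\in L^1(\mu)$ and $\epsilon>0$, I would choose $\phi\in C(M)$ with $\norm{h-\phi}_{L^1}<\epsilon$ (possible since $M$ is compact metrizable) and combine the pointwise bound $\mathcal{A}h(x)\leq \mathcal{A}(h-\phi)(x)+\mathcal{A}\phi(x)\leq M(h-\phi)(x)+|(h-\phi)(x)|$ with Chebyshev's inequality and the weak-type bound to obtain $\mu(\{\mathcal{A}h>2\lambda\})\leq (C'+1)\epsilon/\lambda$ for every $\lambda>0$; letting $\epsilon\to 0$ finishes the argument. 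The main obstacle I anticipate is in the reduction to $\alpha=\mathrm{id}$: the normalising measures $\mu(\alpha\Oc_R(\gamma))$ and the translated shadows $\alpha\Oc_R(\gamma)$ in the definition of $\mathcal{A}_\alpha$ are not themselves standard shadows, so one must carefully absorb the bi-Lipschitz constants coming from the coarse PS-cocycle into the maximal inequality rather than attempt to re-index the supremum.
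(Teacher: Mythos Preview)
Your proposal is correct and follows essentially the same strategy as the paper: the weak-type $(1,1)$ maximal inequality via the Vitali covering Lemma~\ref{lem:V covering} and the Shadow Lemma, combined with the vanishing of $\mathcal{A}\phi$ for continuous $\phi$ via Property~\ref{item:diam goes to zero ae}, and then approximation in $L^1$. The only bookkeeping difference is that you first reduce to $\alpha=\mathrm{id}$ by the change of variables $y=\alpha z$, whereas the paper carries $\alpha$ throughout and invokes the bounded Radon--Nikodym derivative $C_\alpha^{-1}\mu\le\alpha_*^{-1}\mu\le C_\alpha\mu$ only once, when passing from $\mu(\alpha\Oc_{R'}(\gamma))$ to $\mu(\alpha\Oc_R(\gamma))$ in the covering estimate; your anticipated ``obstacle'' is thus not an obstacle at all, and either route works.
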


\begin{proof} It suffices to show that $\mu(\{ x : \mathcal{A}_{\alpha} h(x) > c\})=0$ for any $c > 0$. To that end, fix $c, \epsilon > 0$ and a continuous function $g : M \rightarrow \Rb$ with 
$$
\int_M \abs{h-g}d\mu < \epsilon. 
$$
Then 
$$
 \mathcal{A}_{\alpha} h(x) \leq \mathcal{B}_{\alpha}(h-g)(x)  +\abs{h(x)-g(x)} +  \mathcal{A}_{\alpha} g(x).
 $$ 
 Hence 
 $$
 \{ x:  \mathcal{A}_{\alpha} h(x) > c\} \subset N_1 \cup N_2 \cup N_3 
 $$
 where 
 \begin{align*}
N_1 & : = \{x :  \mathcal{B}_{\alpha}(h-g)(x)> c/3\}; \\
N_2 &:= \{ x: \abs{h(x)-g(x)}  > c/3\}; \\
 N_3 & : = \{x:   \mathcal{A}_{\alpha} g(x) > c/3\}. 
 \end{align*}
 Since $g$ is continuous, Property \ref{item:diam goes to zero ae} implies that $\mu(N_3)=0$. Further, 
 $$
 \mu(N_2) \leq \frac{3}{c}\int_M \abs{h-g}d\mu < \frac{3}{c}\epsilon. 
 $$
 
 To bound $\mu(N_1)$, we use Lemma~\ref{lem:V covering}. For any $x \in N_1$ there exists $\gamma_x \in \mathscr{H}(R)$ such that $x \in \alpha \Oc_R(\gamma_x)$ and 
 $$
 \int_{\alpha \Oc_R(\gamma_x)} \abs{h-g} d\mu > \frac{c}{4} \mu(\alpha \Oc_R(\gamma_x)). 
 $$
 By Lemma~\ref{lem:V covering} there exist $N_1' \subset N_1$ and $R'>R$ such that 
 $$
 N_1 \subset \bigcup_{x \in N_1} \alpha \Oc_R(\gamma_x) \subset  \bigcup_{x \in N_1'} \alpha \Oc_{R'}(\gamma_x)
$$
and the shadows $\{ \alpha \Oc_{R}(\gamma_x) : x \in N_1'\}$ are disjoint. By Property~\ref{item:coycles are bounded}, there exists $C_\alpha > 1$ such that 
$$
C_\alpha^{-1} \mu \leq \alpha^{-1}_* \mu \leq C_\alpha \mu.
$$
Then by the Shadow Lemma (Proposition \ref{prop.shadowlemma}), there exists $C =C(\alpha, R, R')> 1$ such that 
$$
\mu( \alpha \Oc_{R'}(\gamma)) \leq C \mu( \alpha \Oc_{R}(\gamma))
$$
for all $\gamma \in \Gamma$. Then 
\begin{align*}
 \mu(N_1) & \leq \sum_{x \in N_1'} \mu(\alpha \Oc_{R'}(\gamma_x)) \leq C \sum_{x \in N_1'} \mu(\alpha \Oc_R(\gamma_x)) < \frac{4C}{c} \sum_{x \in N_1'}\int_{ \alpha \Oc_{R}(\gamma_x)} \abs{h-g} d\mu \\
 & \leq \frac{4C}{c}\int_M \abs{h-g}d\mu < \frac{4C}{c}\epsilon. 
\end{align*}

Thus 
$$
\mu( \{ x:  \mathcal{A}_{\alpha} h(x) > c\} ) \leq \mu(N_1)+\mu(N_2)+\mu(N_3) < \frac{4C}{c}\epsilon+\frac{3}{c}\epsilon + 0. 
$$
Since $\epsilon > 0$ was arbitrary, we see that $\{ x:  \mathcal{A}_{\alpha} h(x) > c\}$ is $\mu$-null. Then since $c > 0$ was arbitrary, $ \mathcal{A}_{\alpha} h =0$ $\mu$-a.e.
\end{proof} 

We now finish the proof of Theorem \ref{thm:Leb diff}.
Fix $h \in L^1(M,\mu)$ and set 
$$
M':= \bigcap_{\alpha \in \Gamma} 
\{ x:  \mathcal{A}_{\alpha} h(x) =0\}.
$$
Then $\mu(M') = 1$ by Lemma \ref{lem.vanishingAf}.

Fix $x \in M'$ and suppose that $$x \in \bigcap_{n \ge 1} \ga \Oc_R(\ga_n)$$ for some $\ga \in \Ga$ and an escaping sequence  $\{\gamma_n\} \subset \mathscr{H}(R)$. Then
$$
\limsup_{n \to \infty} \frac{1}{\mu(\gamma\Oc_R(\gamma_n))}  \int_{\gamma\Oc_R(\gamma_n)} \abs{h(y)-h(x)} d\mu(y) \le \mathcal{A}_{ \ga}h(x) = 0,
$$
completing the proof.
\qed

\subsection{Proof of Lemma~\ref{lem:Leb Density} } Fix $R \ge R_0$ and a measurable set $E \subset M$. 

   For each $g \in \Ga$, consider the function $\mathbf{1}_{g^{-1}E}$. Then by Theorem \ref{thm:Leb diff}, we have a measurable subset $M_g \subset M$ such that $\mu(M_g) = 1$ and for any $y \in M_g \cap g^{-1}E$, 
   $$
   1 = \lim_{n \to \infty} \frac{ \mu( g^{-1}E \cap \ga \Oc_R(\ga_n))}{\mu(\ga \Oc_{R}(\ga_n))}
   $$
   whenever $y \in \bigcap_{n \ge 1} \ga \Oc_R(\ga_n)$ for some  $\ga \in \Ga$ and escaping  sequence $\{ \ga_n \} \subset \mathscr{H}(R)$.
 Set
   $$
   M_E := \bigcap_{g \in \Ga} g M_g.
   $$
   Since $\mu$ is $\Ga$-quasi-invariant, $\mu(M_E) = 1$.

Fix $x \in E \cap M_E$ and suppose that $x \in \bigcap_{n \ge 1} \ga \Oc_R(\ga_n)$ for some $\ga \in \Ga$ and escaping sequence $\{ \ga_n \} \subset \mathscr{H}(R)$. We then have $\ga^{-1} x \in M_{\ga} \cap \ga^{-1}E$ and moreover $\ga^{-1} x \in \bigcap_{n \ge 1} \Oc_R(\ga_n)$. Therefore
   $$
   1 = \lim_{n \to \infty} \frac{\mu(\ga^{-1} E \cap \Oc_R(\ga_n))}{\mu(\Oc_R(\ga_n))} = \lim_{n \to \infty} \frac{({\ga_n^{-1}}_*\mu)(\ga_n^{-1} \ga^{-1} E \cap \ga_n^{-1} \Oc_R(\ga_n))}{({\ga_n^{-1}}_*\mu)(\ga_n^{-1}\Oc_R(\ga_n))}.
   $$
   In particular,
   $$
\lim_{n \to \infty} \frac{({\ga_n^{-1}}_*\mu)(\ga_n^{-1} \ga^{-1} E^c \cap \ga_n^{-1} \Oc_R(\ga_n))}{({\ga_n^{-1}}_*\mu)(\ga_n^{-1}\Oc_R(\ga_n))}  = 0.
   $$
   By Property~\ref{item:almost constant on shadows}, there exists $C = C(R) > 1$ such that 
   $$
   C e^{-\delta\norm{\gamma_n}} \le \frac{d{\ga_n^{-1}}_*\mu}{d\mu} \le C e^{-\delta\norm{\gamma_n}} \quad \mu\text{-a.e.}
   $$
   on $\ga_n^{-1} \Oc_R(\ga_n)$. So 
   $$
   \lim_{n \to \infty} \frac{\mu(\ga_n^{-1} \ga^{-1} E^c \cap \ga_n^{-1} \Oc_R(\ga_n))}{\mu(\ga_n^{-1}\Oc_R(\ga_n))}  = 0.
   $$
Since $\mu(\ga_n^{-1}\Oc_R(\ga_n)) \leq 1$, we then have 
      $$
   \lim_{n \to \infty} \mu(\ga_n^{-1} \ga^{-1} E^c \cap \ga_n^{-1} \Oc_R(\ga_n)) = 0,
   $$
   which implies that 
   \begin{align*}
0= & \lim_{n \to \infty} \mu ( \ga_{n}^{-1} \Oc_{R}(\ga_{n}) \smallsetminus \ga_{n}^{-1}\ga^{-1}E).
   \end{align*} \qedhere
   \qed

\subsection{Proof of Corollary~\ref{cor:ergodicity}}
Once we show the first statement, the second follows from Theorem~\ref{thm:conical limit set has full measure}.

Recall that $\La^{\rm con}(\mathscr{H}) = \Ga \cdot \bigcup_{R > 0} \bigcap_{n \ge 1} \La_R(\mathscr{H}(n))$, which is assumed to have full $\mu$-measure.
We show that the $\Gamma$-action on $(M,\mu)$ is ergodic using Lemma \ref{lem:Leb Density}.
Let $E \subset M$ be a $\Ga$-invariant measurable set with $\mu(E) > 0$.
Since the sequence $\Ga \cdot \bigcap_{n \ge 1} \La_R(\mathscr{H}(n))$ is non-decreasing in $R$ by Property \ref{item:shadow inclusion}, there exists $R \ge R_0$ such that $\mu( E \cap \Ga \cdot \bigcap_{n \ge 1} \La_R(\mathscr{H}(n))) > 0$.

Fix a sequence $R_k \to + \infty$. For each $k \ge 1$, let $M_k \subset M$ a full measure set satisfying Lemma \ref{lem:Leb Density}. We then set $M_E := \bigcap_{k \ge 1} M_k$ which is of $\mu$-full measure.

Fix $x \in E \cap M_E \cap \Ga \cdot \bigcap_{n \ge 1} \La_R(\mathscr{H}(n))$. Then there exist $\ga \in \Ga$ and an escaping sequence $\{ \ga_n \in \mathscr{H}(n) \}$ such that
$$
x \in \bigcap_{n \ge 1} \ga \Oc_R(\ga_n).
$$
Since the hierarchy $\mathscr{H}$ consists of a non-increasing sequence of subsets of $\Ga$, for each $k \ge 1$, we have $\ga_n \in \mathscr{H}(R_k)$ for all large $n \ge 1$.
Then by Property \ref{item:shadow inclusion}, Lemma \ref{lem:Leb Density},  and the $\Ga$-invariance of $E$,
$$
0 = \lim_{n \to \infty} \mu( \ga_{n}^{-1} \Oc_{R_k}(\ga_{n}) \smallsetminus E).
$$
Hence, after passing to a subsequence of $\{ \ga_n \}$, we have
$$
0 = \lim_{n \to \infty} \mu ( \ga_{n}^{-1} \Oc_{R_n}(\ga_{n}) \smallsetminus E).
$$

Fix a metric on $M$ which generates the topology. Passing to a subsequence, we can suppose that $M \smallsetminus  \ga_{n}^{-1} \Oc_{R_n}(\ga_{n}) $ converges to some compact set $Z \subset M$ with respect to the Hausdorff distance (it is possible for $Z = \emptyset$, in which case $M \smallsetminus  \ga_{n}^{-1} \Oc_{R_n}(\ga_{n}) =\emptyset$ for $n$ sufficiently large). 

Then for each $j \ge 1$, 
$$
M \smallsetminus  \ga_{n}^{-1} \Oc_{R_n}(\ga_{n}) \subset \Nc_{1/j}(Z)
$$ 
when $n$ is sufficiently large. Therefore
\begin{align*}
\mu((M \smallsetminus Z) \smallsetminus E)  
& \le \mu(  (M \smallsetminus \Nc_{1/j}(Z)) \smallsetminus E) + \mu( \Nc_{1/j}(Z) \smallsetminus Z) \\
& \le \lim_{n \rightarrow \infty} \mu(\ga_{n}^{-1} \Oc_{R_n}(\ga_{n}) \smallsetminus  E) + \mu( \Nc_{1/j}(Z) \smallsetminus  Z) \\
& = \mu( \Nc_{1/j}(Z) \smallsetminus Z).
\end{align*}

Since $Z$ is closed, $\Nc_{1/j}(Z) \smallsetminus Z$ is a decreasing sequence of sets whose limit is the empty set. Therefore, taking $j \to +\infty$, we have 
$$
\mu((M \smallsetminus Z) \smallsetminus E) = 0.
$$
By Property~\ref{item:empty Z intersection}, $M = \bigcup_{\ga \in \Ga} M \smallsetminus \ga Z$. Therefore, it follows from the $\Ga$-invariance of $E$ and the $\Ga$-quasi-invariance of $\mu$ that
$$
\mu(M \smallsetminus E) \le \sum_{\ga \in \Ga} \mu( (M \smallsetminus \ga Z) \smallsetminus E) =  \sum_{\ga \in \Ga} \gamma_*^{-1}\mu( (M \smallsetminus  Z) \smallsetminus E) =0.
$$
This shows $\mu(E) = 1$, finishing the proof.
\qed

\subsection{Proof of Corollary~\ref{cor:approx continuity}} Fix $R \ge R_0$ and fix a countable dense subset $D = \{ z_n\} \subset Y$. For $k \in \Nb$ define $f_k : M \rightarrow \Nb$ by letting 
$$
f_k(x) = \min\{ n : \dist_Y(F(x), z_n) < 1/k\}. 
$$
Then for $K \in \Nb$ let $h_{k,K}(x) = \min\{ f_k(x), K\}$. Each $h_{k,K}$ is bounded and hence in $L^1(M, \mu)$. Then there exists a full measure set $M'$ such that Theorem~\ref{thm:Leb diff} holds for every $x \in M'$ and every $h_{k,K}$, for our given $R \ge R_0$.

Now fix $x \in M'$ and $\epsilon > 0$. Then fix $k \in \Nb$ with $\frac{1}{2k} < \epsilon$ and fix $K \in \Nb$ with $f_k(x) < K$. Then for $y \in M$,
$$
\dist_Y(F(x), F(y)) > \epsilon \Rightarrow \abs{h_{k,K}(x)-h_{k,K}(y)} \geq 1. 
$$
So whenever $x \in \bigcap_{n \ge 1} \ga \Oc_R(\ga_n)$ for some  $\ga \in \Ga$ and escaping sequence $\{ \ga_n \} \subset \mathscr{H}(R)$, we have
\begin{align*}
0 & \le \lim_{n \rightarrow \infty} \frac{1}{\mu(\gamma\Oc_R(\gamma_n))}\mu\left( \left\{ y \in \gamma\Oc_R(\gamma_n) : \dist_Y(F(x), F(y)) > \epsilon\right\} \right) \\
& \leq \lim_{n \rightarrow \infty} \frac{1}{\mu(\gamma\Oc_R(\gamma_n))} \int_{\gamma\Oc_R(\gamma_n)}\abs{h_{k,K}(x)-h_{k,K}(y)}  d\mu(y)=0. 
\end{align*} 
\qed

\section{Mixed Shadows and a Shadow Lemma} 

For the rest of the section suppose 
\begin{itemize}
\item $(M_1, \Ga_1, \sigma_1, \mu_1)$ is a well-behaved PS-system of dimension $\delta_1$ with respect to a hierarchy $\mathscr{H}_1 = \{\mathscr{H}_1(R) \subset \Ga_{1} : R \ge 0\}$. 
\item  $(M_2, \Ga_2, \sigma_2, \mu_2)$ is a PS-system of dimension $\delta_2$.
\item  There exists an onto homomorphism $\rho : \Gamma_1 \rightarrow \Gamma_2$ and a measurable $\rho$-equivariant  map $f : Y \rightarrow M_2$ where $Y \subset M_1$ is a $\Gamma_1$-invariant subset of full $\mu_1$-measure.
\end{itemize} 
In this section we introduce mixed shadows, which play a key role in our main rigidity result, and prove a version of the Shadow Lemma. 

\begin{definition} 
For $R > 0$ and $\ga \in \Ga$, the associated \emph{mixed shadow} is 
$$
\Oc_R^f(\ga) := \Oc_R(\ga) \cap f^{-1}(\Oc_R(\rho(\ga))) \cap Y \subset M_1.
$$
\end{definition}

\begin{theorem}[Mixed Shadow Lemma] \label{thm.mixedshadow}\

   \begin{enumerate}
      \item For any sufficiently large $R > 0$, there exists $C=C(R) >1$ such that 
      $$
      \frac{1}{C} e^{-\delta_1 \norm{\gamma}_{\sigma_1}} \leq \mu_1\left( \Oc_R^f(\gamma) \right) \leq C e^{-\delta_1 \norm{\gamma}_{\sigma_1}}
      $$
      for all $\gamma \in \Gamma$. 
      \item Suppose, in addition, that $f$ maps Borel subsets of $Y$ to Borel subsets of $M_2$ and $\mu_2(f(Y))>0$. Then for any sufficiently large $R>0$, there exists $C=C(R) >1$ such that 
      $$
      \frac{1}{C} e^{-\delta_2 \norm{\rho(\gamma)}_{\sigma_2}} \leq \mu_2\left( f\left(\Oc_R^f(\gamma) \right)\right) \leq C e^{-\delta_2 \norm{\rho(\gamma)}_{\sigma_2}}
      $$
      for all $\gamma \in \Gamma$. 
      \end{enumerate}
\end{theorem}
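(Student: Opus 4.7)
\emph{Proof plan.} The two upper bounds follow immediately from the Shadow Lemma (Proposition~\ref{prop.shadowlemma}): the inclusions $\Oc_R^f(\gamma)\subset \Oc_R(\gamma)$ and $f(\Oc_R^f(\gamma))\subset \Oc_R(\rho(\gamma))$ give
\[
\mu_1(\Oc_R^f(\gamma))\le Ce^{-\delta_1\norm{\gamma}_{\sigma_1}}\quad\text{and}\quad \mu_2\bigl(f(\Oc_R^f(\gamma))\bigr)\le Ce^{-\delta_2\norm{\rho(\gamma)}_{\sigma_2}},
\]
where the second estimate also uses the hypothesis in (2) that $f$ sends Borel sets to Borel sets so that $f(\Oc_R^f(\gamma))$ is $\mu_2$-measurable. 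For the lower bounds, a standard change-of-variables using the coarse-PS relation and Property~\ref{item:almost constant on shadows} (applied on $\gamma^{-1}\Oc_R(\gamma)$ and $\rho(\gamma)^{-1}\Oc_R(\rho(\gamma))$) yields
\[
\mu_1(\Oc_R^f(\gamma))\asymp e^{-\delta_1\norm{\gamma}_{\sigma_1}}\mu_1\bigl(\gamma^{-1}\Oc_R^f(\gamma)\bigr),
\]
and, using the $\rho$-equivariance identity $\rho(\gamma)^{-1}f(\Oc_R^f(\gamma))=f(\gamma^{-1}\Oc_R^f(\gamma))$,
\[
\mu_2\bigl(f(\Oc_R^f(\gamma))\bigr)\asymp e^{-\delta_2\norm{\rho(\gamma)}_{\sigma_2}}\mu_2\bigl(f(\gamma^{-1}\Oc_R^f(\gamma))\bigr).
\]
Thus the theorem reduces to showing that, for all sufficiently large $R$, $\inf_{\gamma\in \Gamma_1}\mu_1(\gamma^{-1}\Oc_R^f(\gamma))>0$ (for~(1)) and $\inf_{\gamma\in \Gamma_1}\mu_2(f(\gamma^{-1}\Oc_R^f(\gamma)))>0$ (for~(2)).

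Both infima will be handled by the same contradiction scheme, which runs in parallel with the proof of Proposition~\ref{prop.shadowlemma} but couples the two PS-systems via the diagonal covering Lemma~\ref{lem.keylemma}. Assume one of the infima vanishes; then we extract $R_n\to\infty$ and $\gamma_n\in \Gamma_1$ with the corresponding quantity tending to zero, and after passing to a subsequence the Hausdorff limits
\[
Z_1=\lim \bigl(M_1\smallsetminus \gamma_n^{-1}\Oc_{R_n}(\gamma_n)\bigr),\qquad Z_2=\lim \bigl(M_2\smallsetminus \rho(\gamma_n)^{-1}\Oc_{R_n}(\rho(\gamma_n))\bigr)
\]
exist. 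Property~\ref{item:baire} for the well-behaved system $(M_1,\Gamma_1,\sigma_1,\mu_1)$ supplies the ``strongly irreducible'' hypothesis on $Z_1$ and Property~\ref{item:empty Z intersection} for $(M_2,\Gamma_2,\sigma_2,\mu_2)$ supplies the ``irreducible'' hypothesis on $Z_2$; together with surjectivity of $\rho$, Lemma~\ref{lem.keylemma} then applies, and compactness of $M_1\times M_2$ extracts finitely many $g_1,\dots,g_k\in \Gamma_1$ with
\[
M_1\times M_2=\bigcup_{i=1}^k (M_1\smallsetminus g_iZ_1)\times (M_2\smallsetminus \rho(g_i)Z_2).
\]

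For part~(1), because $Y$ has full $\mu_1$-measure and, for every $x\in Y$, the pair $(x,f(x))$ lies in some box of the cover, summing $\mu_1$-measures over $i=1,\dots,k$ produces an index $i_0$ such that
\[
F:=(M_1\smallsetminus g_{i_0}Z_1)\cap f^{-1}(M_2\smallsetminus \rho(g_{i_0})Z_2)\cap Y
\]
satisfies $\mu_1(F)\ge 1/k$. For every $x\in F$, the conditions $x\notin g_{i_0}Z_1$ and $f(x)\notin \rho(g_{i_0})Z_2$, together with the Hausdorff convergence of $M_1\smallsetminus g_{i_0}\gamma_n^{-1}\Oc_{R_n}(\gamma_n)$ to $g_{i_0}Z_1$ and of $M_2\smallsetminus \rho(g_{i_0})\rho(\gamma_n)^{-1}\Oc_{R_n}(\rho(\gamma_n))$ to $\rho(g_{i_0})Z_2$, and with the $\rho$-equivariance of $f$ on $Y$, force $x\in g_{i_0}\gamma_n^{-1}\Oc_{R_n}^f(\gamma_n)$ for all sufficiently large $n$. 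Dominated convergence then gives $\mu_1(F\cap g_{i_0}\gamma_n^{-1}\Oc_{R_n}^f(\gamma_n))\to \mu_1(F)\ge 1/k$, while Property~\ref{item:coycles are bounded} applied to $g_{i_0}$ bounds the left side by a constant multiple of $\mu_1(\gamma_n^{-1}\Oc_{R_n}^f(\gamma_n))\to 0$, a contradiction.

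Part~(2) proceeds identically after applying the same covering to $f(Y)\subset M_2$ instead: some index $i_0$ yields
\[
F':=f\bigl(Y\cap (M_1\smallsetminus g_{i_0}Z_1)\bigr)\cap (M_2\smallsetminus \rho(g_{i_0})Z_2)
\]
with $\mu_2(F')\ge \mu_2(f(Y))/k>0$, the Borel-to-Borel hypothesis on $f$ ensuring $F'$ is $\mu_2$-measurable and the assumption $\mu_2(f(Y))>0$ ensuring strict positivity. One checks that $F'=f(F)$, and the pointwise conclusion from part~(1) then shows that for every $y=f(x)\in F'$ one has $y\in f(g_{i_0}\gamma_n^{-1}\Oc_{R_n}^f(\gamma_n))$ for all large $n$. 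Dominated convergence combined with Property~\ref{item:coycles are bounded} applied to $\rho(g_{i_0})$ then forces $\mu_2(F')=0$, the desired contradiction. The main obstacle throughout is that no \emph{a priori} comparison between $f_*\mu_1$ and $\mu_2$ is available; Lemma~\ref{lem.keylemma} is precisely the tool that couples the Hausdorff limits in $M_1$ and $M_2$ without requiring such a comparison.
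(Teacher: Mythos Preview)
Your proof is correct and follows the same overall strategy as the paper: reduce the lower bounds to showing that the infima $\inf_\gamma \mu_1(\gamma^{-1}\Oc_R^f(\gamma))$ and $\inf_\gamma \mu_2(f(\gamma^{-1}\Oc_R^f(\gamma)))$ are positive for large $R$, argue by contradiction, pass to Hausdorff limits $Z_1,Z_2$, and invoke the diagonal covering Lemma~\ref{lem.keylemma} (using Property~\ref{item:baire} on the first system and Property~\ref{item:empty Z intersection} on the second).

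The only difference is in how the contradiction is extracted. For part~(1) the paper argues more directly: from $\mu_1(\gamma_n^{-1}\Oc_{R_n}^f(\gamma_n))\to 0$ it deduces, via $\epsilon$-neighborhoods and a limit in $\epsilon$, that $\mu_1\bigl((M_1\smallsetminus Z_1)\cap f^{-1}(M_2\smallsetminus Z_2)\bigr)=0$, and then $\Gamma_1$-quasi-invariance plus Lemma~\ref{lem.keylemma} immediately gives $\mu_1(M_1)=0$. Your route---extract a finite subcover with $\epsilon$-thickening, pigeonhole to a single index $i_0$, and use pointwise inclusion plus dominated convergence together with Property~\ref{item:coycles are bounded} for $g_{i_0}$---is a bit longer but equally valid; it is in fact closer to how the paper handles part~(2), where the finite-cover-with-thickening step is needed anyway because one is working with $\mu_2$ on $f(Y)$ rather than $\mu_1$ on $Y$.
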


Delaying the proof of the theorem for a moment, we establish the following corollary. 

\begin{theorem} \label{thm:Leb diff mixed}
There exists $R_0 > 0$ such that:  if $R \ge R_0$ and 
    $h \in L^1(M_1, \mu_1)$, then for $\mu_1$-a.e. $x \in M_1$ we have
   $$
   h(x) = \lim_{n \to \infty} \frac{1}{\mu_1 \left( \Oc_R^f(\ga_n) \right)} \int_{ \Oc_R^f(\ga_n) } h(y) d\mu_1(y)
   $$
   whenever $x \in \bigcap_{n \ge 1} \Oc_R(\ga_n)$ for some escaping sequence $\{ \ga_n \} \subset \mathscr{H}_1(R)$.
\end{theorem}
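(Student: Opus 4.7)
The plan is to deduce this from Theorem~\ref{thm:Leb diff} by using the Mixed Shadow Lemma (Theorem~\ref{thm.mixedshadow}) as a bridge, exploiting the fact that mixed shadows have comparable $\mu_1$-measure to ordinary shadows.

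First, I would choose $R_0 > 0$ large enough that both the Shadow Lemma (Proposition~\ref{prop.shadowlemma}) for $(M_1,\Ga_1,\sigma_1,\mu_1)$ and part~(1) of the Mixed Shadow Lemma (Theorem~\ref{thm.mixedshadow}) hold for all $R \geq R_0$. Comparing the two asymptotics $\mu_1(\Oc_R(\ga)) \asymp e^{-\delta_1\norm{\ga}_{\sigma_1}}$ and $\mu_1(\Oc_R^f(\ga)) \asymp e^{-\delta_1\norm{\ga}_{\sigma_1}}$ yields a uniform constant $C = C(R) > 1$ with
\begin{equation*}
\mu_1(\Oc_R(\ga)) \leq C\,\mu_1(\Oc_R^f(\ga)) \quad \text{for all } \ga \in \Ga_1.
\end{equation*}

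Next, fix $h \in L^1(M_1, \mu_1)$. I would apply Theorem~\ref{thm:Leb diff} (with $\ga = \id$) to obtain a $\mu_1$-full-measure subset $M' \subset M_1$ such that, for every $x \in M'$ and every escaping sequence $\{\ga_n\} \subset \mathscr{H}_1(R)$ with $x \in \bigcap_{n \geq 1} \Oc_R(\ga_n)$,
\begin{equation*}
\lim_{n \to \infty} \frac{1}{\mu_1(\Oc_R(\ga_n))} \int_{\Oc_R(\ga_n)} |h(y) - h(x)|\, d\mu_1(y) = 0.
\end{equation*}
This same set $M'$ will serve for the mixed-shadow averages. Indeed, for any such $x$ and sequence, using $\Oc_R^f(\ga_n) \subset \Oc_R(\ga_n)$ together with the comparison above,
\begin{equation*}
\left| \frac{1}{\mu_1(\Oc_R^f(\ga_n))} \int_{\Oc_R^f(\ga_n)} h(y)\, d\mu_1(y) - h(x) \right| \leq \frac{1}{\mu_1(\Oc_R^f(\ga_n))} \int_{\Oc_R^f(\ga_n)} |h(y) - h(x)|\, d\mu_1(y),
\end{equation*}
and this is bounded above by
\begin{equation*}
\frac{C}{\mu_1(\Oc_R(\ga_n))} \int_{\Oc_R(\ga_n)} |h(y) - h(x)|\, d\mu_1(y),
\end{equation*}
which tends to zero by the choice of $x$.

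There is no real obstacle here: once the Mixed Shadow Lemma gives the uniform comparison of masses, the statement is an immediate sandwich argument on top of Theorem~\ref{thm:Leb diff}. The only subtlety is that we average over the smaller set $\Oc_R^f(\ga_n)$ while $x$ is only assumed to lie in the larger set $\Oc_R(\ga_n)$; this is handled precisely by the fact that the mass ratio $\mu_1(\Oc_R(\ga_n))/\mu_1(\Oc_R^f(\ga_n))$ stays bounded as $n \to \infty$.
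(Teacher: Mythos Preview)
Your proof is correct and follows essentially the same route as the paper: choose $R_0$ so that both the Shadow Lemma and the Mixed Shadow Lemma hold, use them to get a uniform bound $\mu_1(\Oc_R(\ga)) \leq C\,\mu_1(\Oc_R^f(\ga))$, and then sandwich the mixed-shadow average against the ordinary-shadow average from Theorem~\ref{thm:Leb diff} (with $\gamma=\id$) via the inclusion $\Oc_R^f(\ga_n)\subset \Oc_R(\ga_n)$.
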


\begin{proof}
 Fix $R_0 > 0$ such that any $R \ge R_0$ satisfies Proposition \ref{prop.shadowlemma} for $(M_1, \Ga_1, \sigma_1,\mu_1)$ and Theorem \ref{thm.mixedshadow} part (1). Fix $R \ge R_0$ and $h  \in L^1(M_1, \mu_1)$. Let $M_1' \subset M_1$ be a full $\mu_1$-measure set satisfying Theorem \ref{thm:Leb diff} for $h$ and $R$. 
 
 Now fix $x \in M_1'$ and an escaping sequence $\{ \ga_n \} \subset \mathscr{H}_1(R)$ where $x \in \bigcap_{n \ge 1} \Oc_R(\ga_n)$. By Theorem \ref{thm:Leb diff}, 
   $$
   0 = \lim_{n \to \infty} \frac{1}{\mu_1 \left( \Oc_R(\ga_n) \right)} \int_{ \Oc_R(\ga_n) } |h(y) - h(x)| d\mu_1(y).
   $$
   By our choice of $R_0$, there exists $C=C(R) > 1$ such that 
   $$
  \mu_1 \left( \Oc_R^f(\ga_n) \right) \geq C \mu_1 \left( \Oc_R(\ga_n) \right).
   $$
  Then, since $\Oc_R^f(\ga_n) \subset \Oc_R(\ga_n)$, 
   $$\begin{aligned}
   0 \le \frac{1}{\mu_1 \left( \Oc_R^f(\ga_n) \right)} & \int_{ \Oc_R^f(\ga_n) } |h(y) - h(x)| d\mu_1(y) \\
   & \le  \frac{1}{\mu_1 \left( \Oc_R^f(\ga_n) \right)} \int_{ \Oc_R(\ga_n) } |h(y) - h(x)| d\mu_1(y) \\
   & \le \frac{C}{\mu_1 \left( \Oc_R(\ga_n) \right)} \int_{ \Oc_R(\ga_n) } |h(y) - h(x)| d\mu_1(y) \to 0.
   \end{aligned}
   $$
   Therefore,
   $$\begin{aligned}
   & \left| h(x)  - \lim_{n \to \infty} \frac{1}{\mu_1 \left( \Oc_R^f(\ga_n) \right)} \int_{ \Oc_R^f(\ga_n) } h(y) d\mu_1(y) \right| \\
   & \qquad \qquad \le \lim_{n \to \infty} \frac{1}{\mu_1 \left( \Oc_R^f(\ga_n) \right)} \int_{ \Oc_R^f(\ga_n) } |h(y) - h(x)| d\mu_1(y) = 0.
   \end{aligned}
   $$
\end{proof}

\subsection{Proof of Theorem~\ref{thm.mixedshadow}} Fix metrics on $M_1, M_2$ which induce their topologies. 
As in the proof of the classical Shadow Lemma, we start by proving lower bounds for translates of shadows.

\begin{lemma} \label{lem.mixedshadow1}
   For any sufficiently large $R > 0$, 
   $$
   \inf_{\ga \in \Ga_1} \mu_1 \left( \ga^{-1} \Oc_R^f(\ga) \right) > 0.
   $$
\end{lemma}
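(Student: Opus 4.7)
The plan is to adapt the contradiction argument from the proof of the classical Shadow Lemma (Proposition \ref{prop.shadowlemma}), using the diagonal covering lemma (Lemma \ref{lem.keylemma}) to handle the two PS-systems in tandem. Suppose for contradiction that the infimum is zero for arbitrarily large $R$; then I can choose $R_n \to +\infty$ and $\ga_n \in \Ga_1$ with $\mu_1(\ga_n^{-1}\Oc_{R_n}^f(\ga_n)) < 1/n$. The $\rho$-equivariance of $f$ and the $\Ga_1$-invariance of $Y$ let me rewrite
$$
\ga_n^{-1}\Oc_{R_n}^f(\ga_n) = F^{-1}(A_n \times B_n),
$$
where $F : Y \to M_1 \times M_2$ is the graph map $x \mapsto (x, f(x))$, $A_n := \ga_n^{-1}\Oc_{R_n}(\ga_n)$, and $B_n := \rho(\ga_n)^{-1}\Oc_{R_n}(\rho(\ga_n))$. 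For the pushforward probability measure $\lambda := F_*\mu_1$ on $M_1 \times M_2$, this becomes $\lambda(A_n \times B_n) < 1/n$.

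After passing to a subsequence, the complements $M_1 \smallsetminus A_n$ and $M_2 \smallsetminus B_n$ converge in Hausdorff distance to compact sets $Z_1 \subset M_1$ and $Z_2 \subset M_2$. Since
$$
(M_1 \times M_2) \smallsetminus (A_n \times B_n) = ((M_1 \smallsetminus A_n) \times M_2) \cup (M_1 \times (M_2 \smallsetminus B_n))
$$
lies in any fixed neighborhood of $Z := (Z_1 \times M_2) \cup (M_1 \times Z_2)$ for $n$ large, first letting $n \to \infty$ and then shrinking the neighborhood yields $\lambda(Z) = 1$. I then plan to verify the hypotheses of Lemma \ref{lem.keylemma}: Property \ref{item:baire} for the well-behaved system $(M_1, \Ga_1, \sigma_1, \mu_1)$, applied to $\{\ga_n\}$ and $R_n \to +\infty$, supplies the strong-irreducibility condition on $Z_1$, while Property \ref{item:empty Z intersection} for $(M_2, \Ga_2, \sigma_2, \mu_2)$, applied to $\{\rho(\ga_n)\}$ and combined with surjectivity of $\rho$, supplies the condition on $Z_2$.

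To close the argument, I will unpack $\lambda(Z) = 1$ as the statement that $\mu_1$-almost every $x \in Y$ satisfies $x \in Z_1$ or $f(x) \in Z_2$. Applying this to $\ga^{-1}x$ for each $\ga \in \Ga_1$ and invoking $\Ga_1$-quasi-invariance of $\mu_1$, $\rho$-equivariance of $f$, and countability of $\Ga_1$ (Property \ref{item:properness}), I obtain a full-measure set of $x \in Y$ for which
$$
(x, f(x)) \notin (M_1 \smallsetminus \ga Z_1) \times (M_2 \smallsetminus \rho(\ga)Z_2) \quad \text{for every } \ga \in \Ga_1.
$$
Any such $x$ directly contradicts the covering identity
$$
M_1 \times M_2 = \bigcup_{\ga \in \Ga_1} (M_1 \smallsetminus \ga Z_1) \times (M_2 \smallsetminus \rho(\ga)Z_2)
$$
produced by Lemma \ref{lem.keylemma}, giving the required contradiction.

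The main technical hurdle will be carefully tracking the two Hausdorff limits and accommodating the degenerate cases in which $Z_1$ or $Z_2$ is empty: when, say, $Z_1 = \emptyset$, the corresponding hypothesis in Lemma \ref{lem.keylemma} becomes vacuous and $Z$ collapses to $M_1 \times Z_2$, but the rest of the argument goes through unchanged because the other component still provides the required covering. Beyond this bookkeeping, the argument is a direct parallel to the proof of Proposition \ref{prop.shadowlemma}, with the product measure $\lambda$ playing the role of $\mu$ and Lemma \ref{lem.keylemma} replacing the simple compactness/quasi-invariance argument used there.
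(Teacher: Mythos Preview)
Your proposal is correct and follows essentially the same approach as the paper. The only cosmetic difference is that you package the argument through the graph map $F$ and the pushforward $\lambda = F_*\mu_1$ on $M_1 \times M_2$, whereas the paper works directly with $\mu_1$ and preimages under $f$; the two are equivalent since $\lambda(A \times B) = \mu_1\big(A \cap f^{-1}(B) \cap Y\big)$, and both arrive at the contradiction via Lemma~\ref{lem.keylemma} in the same way.
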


\begin{proof}
Suppose not. Then there exist sequences $R_n \to +\infty$ and  $\{\ga_n\} \subset \Ga_1$ such that
$$
\mu_1 \left(\ga_n^{-1} \Oc_{R_n}^f(\ga_n) \right) < \frac{1}{n} \quad \text{for all} \quad n \ge 1.
$$
Since $\mu_1(Y) = 1$ and $f$ is $\rho$-equivariant, 
$$
\mu_1 \Big( \ga_n^{-1}\Oc_{R_n}(\ga_n) \cap f^{-1} \big( \rho(\ga_n)^{-1} \Oc_{R_n}(\rho(\ga_n))\big)\Big) < \frac{1}{n}.
$$
Note that
\begin{align*}
M_1 & \smallsetminus \Big( \ga_n^{-1}\Oc_{R_n}(\ga_n) \cap f^{-1} \big( \rho(\ga_n)^{-1} \Oc_{R_n}(\rho(\ga_n))\big) \Big)\\
& = \Big(M_1 \smallsetminus  \ga_n^{-1}\Oc_{R_n}(\ga_n) \Big) \cup \Big(M_1 \smallsetminus f^{-1} \big( \rho(\ga_n)^{-1} \Oc_{R_n}(\rho(\ga_n))\big)\Big) \\
& = \Big(M_1 \smallsetminus  \ga_n^{-1}\Oc_{R_n}(\ga_n) \Big) \cup f^{-1} \Big( M_2 \smallsetminus \rho(\ga_n)^{-1} \Oc_{R_n}(\rho(\ga_n)) \Big).
\end{align*}

After passing to a subsequence,  we can assume that 
$$
[M_1 \smallsetminus  \ga_n^{-1}\Oc_{R_n}(\ga_n)]  \to Z_1
$$
for some (possibly empty) compact subset $Z_1 \subset M_1$ with respect to the Hausdorff distance and 
$$
[M_2 \smallsetminus \rho(\ga_n)^{-1} \Oc_{R_n}(\rho(\ga_n))] \to Z_2
$$
for some (possibly empty) compact subset $Z_2 \subset M_2$ with respect to the Hausdorff distance.

For any $\epsilon > 0$ and $n \ge 1$ sufficiently large (depending on $\epsilon$), 
$$
M_1 \smallsetminus \ga_n^{-1}\Oc_{R_n}(\ga_n) \subset \Nc_{\epsilon} (Z_1) \quad \text{and} \quad M_2 \smallsetminus \rho(\ga_n)^{-1} \Oc_{R_n}(\rho(\ga_n)) \subset \Nc_{\epsilon}(Z_2).
$$
Hence
$$
\mu_1\Big(\Nc_{\epsilon}(Z_1) \cup  f^{-1}(\Nc_{\epsilon}(Z_2))\Big) > 1 - 1/n
$$
for all large $n \ge 1$. Taking the limit $n \to \infty$, we have 
$$
\mu_1\Big(\Nc_{\epsilon}(Z_1) \cup  f^{-1}(\Nc_{\epsilon}(Z_2))\Big) = 1.
$$

Since $Z_1$ and $Z_2$ are closed,
$$
Z_1 \cup f^{-1}(Z_2) = \bigcap_{k \ge 1} \Nc_{1/k}(Z_1) \cup f^{-1}(\Nc_{1/k}(Z_2)).
$$
We therefore have $\mu_1(Z_1 \cup f^{-1}(Z_2)) = 1$. In other words,
\begin{equation} \label{eqn.mixedshadowproof1}
\mu_1 \left( (M_1 \smallsetminus Z_1) \cap f^{-1}(M_2 \smallsetminus Z_2) \right) = 0,
\end{equation} and hence
$$
\mu_1 \left( \bigcup_{\ga \in \Ga_1} (M_1 \smallsetminus \ga Z_1) \cap f^{-1}(M_2 \smallsetminus \rho(\ga)Z_2) \right) = 0
$$
by the $\Ga_1$-quasi-invariance of $\mu_1$. However then Lemma \ref{lem.keylemma} implies  $\mu_1(M_1) = 0$, contradiction.
\end{proof}

\begin{lemma} \label{lem.mixedshadow2}
Suppose that $f$ maps Borel subsets of $Y$ to Borel subsets of $M_2$ and $\mu_2(f(Y))>0$. For any sufficiently large $R > 0$, 
$$
\inf_{\ga \in \Ga_1} \mu_2 \left( \rho(\ga)^{-1}  f \left( \Oc_R^f(\ga) \right) \right) > 0.
$$
\end{lemma}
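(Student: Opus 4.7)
The plan is to argue by contradiction, mirroring the proof of Lemma \ref{lem.mixedshadow1} until the very end, where a Baire-style step exploiting Property \ref{item:baire} takes over. First I would reduce to the case where $\mu_2$ is concentrated on $f(Y)$: the set $f(Y)$ is $\Gamma_2$-invariant (since $\rho$ is onto, $Y$ is $\Gamma_1$-invariant, and $f$ is $\rho$-equivariant) and Borel by the added hypothesis, so the normalized restriction of $\mu_2$ to $f(Y)$ remains a coarse $\sigma_2$-Patterson--Sullivan measure of dimension $\delta_2$. Assume for contradiction there exist $R_n \to \infty$ and $\gamma_n \in \Gamma_1$ with $\mu_2\big(\rho(\gamma_n)^{-1}f(\Oc_{R_n}^f(\gamma_n))\big) < 1/n$, and pass to subsequences so that $M_1 \smallsetminus \gamma_n^{-1}\Oc_{R_n}(\gamma_n) \to Z_1$ and $M_2 \smallsetminus \rho(\gamma_n)^{-1}\Oc_{R_n}(\rho(\gamma_n)) \to Z_2$ in the Hausdorff distance.

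Repeating the computation from the proof of Lemma \ref{lem.mixedshadow1}, and using the observation that $y \notin f(\gamma_n^{-1}\Oc_{R_n}(\gamma_n) \cap Y)$ is equivalent to $f^{-1}(y) \cap Y \subset M_1 \smallsetminus \gamma_n^{-1}\Oc_{R_n}(\gamma_n)$, yields for each $\epsilon > 0$ that
$$
\mu_2\big(\{y \in M_2 : f^{-1}(y) \cap Y \subset \Nc_\epsilon(Z_1)\} \cup \Nc_\epsilon(Z_2)\big) = 1.
$$
The first summand is Borel because $f$ sends Borel subsets of $Y$ to Borel subsets of $M_2$---one writes $\{y : f^{-1}(y) \cap Y \subset S\} = M_2 \smallsetminus f((M_1 \smallsetminus S) \cap Y)$---and both summands decrease as $\epsilon \downarrow 0$. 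Using the identity $\bigcap_k (A_k \cup B_k) = \bigcap_k A_k \cup \bigcap_k B_k$ for decreasing sequences, taking $\epsilon \to 0$ gives $\mu_2(\tilde W \cup Z_2) = 1$, where $\tilde W := \{y \in f(Y) : f^{-1}(y) \cap Y \subset Z_1\}$. Property \ref{item:empty Z intersection} for $(M_2,\Gamma_2,\sigma_2,\mu_2)$ combined with $\mu_2$-quasi-invariance and the countability of $\Gamma_2 = \rho(\Gamma_1)$ forces $\mu_2(Z_2) < 1$, so $\mu_2(\tilde W) > 0$.

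For the contradiction I would fix $h_1, \ldots, h_m \in \Gamma_1$ with $\bigcap_i \rho(h_i) Z_2 = \emptyset$ (using Property \ref{item:empty Z intersection} and compactness of $M_2$). Applying $\mu_2$-quasi-invariance for each $h_i$ and intersecting the resulting full-measure sets, one concludes that for $\mu_2$-a.e.\ $y \in f(Y)$ some index $i$ has $y \in \rho(h_i) \tilde W$, which by $\rho$-equivariance of $f$ forces $f^{-1}(y) \cap Y \subset h_i Z_1 \subset \bigcup_j h_j Z_1$. Let $N \subset f(Y)$ denote the resulting $\mu_2$-null exceptional set. Finally, for any $y \in \tilde W$ pick $x \in f^{-1}(y) \cap Y \subset Z_1$; Property \ref{item:baire} for $Z_1$ applied with the $h_i$'s produces $g \in \Gamma_1$ with $gx \notin \bigcup_j h_j Z_1$, and since $gx \in f^{-1}(\rho(g)y) \cap Y$ by equivariance, we obtain $\rho(g)y \in N$. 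Therefore $\tilde W \subset \bigcup_{g \in \Gamma_1} \rho(g)^{-1} N$, a countable union of $\mu_2$-null sets by quasi-invariance, contradicting $\mu_2(\tilde W) > 0$. The main obstacle will be the measurability bookkeeping---Borelness of the various fiber-based sets and clean justification of the monotone passage $\epsilon \to 0$; happily the selector $y \mapsto g(y)$ used at the end need not be measurable, since only the set-theoretic containment is required.
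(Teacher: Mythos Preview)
Your proof is correct. The approach differs from the paper's in how the contradiction is reached after extracting the Hausdorff limits $Z_1,Z_2$. The paper invokes Lemma~\ref{lem.keylemma} directly to obtain a product cover $M_1\times M_2=\bigcup_\gamma (M_1\smallsetminus\gamma Z_1)\times(M_2\smallsetminus\rho(\gamma)Z_2)$, then uses compactness to pass to a finite subcover at the $2\epsilon$-thickened level; from the $\mu_2$-nullity of $f(Y\smallsetminus\Nc_\epsilon(Z_1))\cap(M_2\smallsetminus\Nc_\epsilon(Z_2))$ and its finitely many translates one reads off $\mu_2(f(Y))=0$ without ever sending $\epsilon\to0$. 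You instead first restrict $\mu_2$ to the invariant Borel set $f(Y)$, take $\epsilon\to0$ using the decreasing-union identity to land on the fiber-based set $\tilde W$, and then reprove the content of Lemma~\ref{lem.keylemma} by hand: Property~\ref{item:empty Z intersection} on $M_2$ supplies the $h_i$, and Property~\ref{item:baire} on $M_1$ moves each point of $\tilde W$ into a $\mu_2$-null set. Both routes use exactly the same dynamical inputs; the paper's is shorter because the product-cover lemma is already available, while yours is more self-contained but pays for it with the reduction to $\mu_2(f(Y))=1$, the measurability bookkeeping for $\{y:f^{-1}(y)\subset S\}=M_2\smallsetminus f(Y\smallsetminus S)$, and the monotone limit step.
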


\begin{proof}
Suppose not. Then there exist sequences $R_n \to +\infty$ and $\{\ga_n \} \subset \Ga_1$ such that
$$
\mu_2 \left(\rho (\ga_n)^{-1}  f \left( \Oc_{R_n}^f(\ga_n) \right) \right)  < \frac{1}{n}.
$$
Then, since  $f$ is $\rho$-equivariant, 
\begin{equation}\label{eqn:mu2 meas converges to 0}
\mu_2 \left( f \left( \ga_{n}^{-1} \Oc_{R_n}(\ga_n) \cap Y \right) \cap \rho(\ga_n)^{-1} \Oc_{R_n}(\rho(\ga_n)) \right) \to 0.
\end{equation}

After passing to a subsequence,  we can assume that 
$$
[M_1 \smallsetminus  \ga_n^{-1}\Oc_{R_n}(\ga_n)]  \to Z_1
$$
for some (possibly empty) compact subset $Z_1 \subset M_1$ with respect to the Hausdorff distance and 
$$
[M_2 \smallsetminus \rho(\ga_n)^{-1} \Oc_{R_n}(\rho(\ga_n))] \to Z_2
$$
for some (possibly empty) compact subset $Z_2 \subset M_2$ with respect to the Hausdorff distance.

By Lemma \ref{lem.keylemma}, 
$$
M_1 \times M_2 = \bigcup_{\gamma \in \Gamma_1} (M_1 \smallsetminus \gamma Z_1) \times (M_2 \smallsetminus \rho(\gamma)Z_2) 
$$
and hence
$$
M_1 \times M_2 = \bigcup_{\epsilon > 0} \bigcup_{\gamma \in \Gamma_1} \left(M_1 \smallsetminus \gamma \overline{\Nc_\epsilon(Z_1)}\right) \times \left(M_2 \smallsetminus \rho(\gamma)\overline{\Nc_\epsilon(Z_2)}\right).
$$
By compactness, we can fix $\epsilon > 0$ and a finite set $F \subset \Gamma$ such that 
\begin{equation}\label{eqn:M1xM2 is some funny union}
M_1 \times M_2 =\bigcup_{\gamma \in F} \left(M_1 \smallsetminus \gamma \overline{\Nc_{2\epsilon}(Z_1)}\right) \times \left(M_2 \smallsetminus \rho(\gamma)\overline{\Nc_{2\epsilon}(Z_2)}\right).
\end{equation}

Now for  $n \ge 1$ sufficiently large, 
$$
M_1 \smallsetminus \ga_n^{-1} \Oc_{R_n}(\ga_n) \subset  \Nc_{\epsilon}(Z_1)  \quad \text{and} \quad M_2 \smallsetminus \rho(\ga_n)^{-1} \Oc_{R_n}(\rho(\ga_n)) \subset  \Nc_{\epsilon}(Z_2)
$$
and hence 
$$
M_1 \smallsetminus \Nc_{\epsilon}(Z_1) \subset \ga_n^{-1} \Oc_{R_n}(\ga_n)    \quad \text{and} \quad M_2 \smallsetminus \Nc_{\epsilon}(Z_2) \subset \rho(\ga_n)^{-1} \Oc_{R_n}(\rho(\ga_n)).
$$
So, by Equation~\eqref{eqn:mu2 meas converges to 0}, 
$$
\mu_2 \left( f \left( Y \smallsetminus  \Nc_{\epsilon}(Z_1) \right) \cap \left( M_2 \smallsetminus \Nc_{\epsilon}(Z_2) \right) \right) =0. 
$$
Then, since $\mu_2$ is $\rho(\Ga)$-quasi-invariant, 
$$
\mu_2 \left( \bigcup_{\gamma \in F} f \left( Y \smallsetminus \gamma  \Nc_{\epsilon}(Z_1) \right) \cap \left( M_2 \smallsetminus \rho(\gamma) \Nc_{\epsilon}(Z_2) \right) \right) =0. 
$$
Then Equation~\eqref{eqn:M1xM2 is some funny union} implies that $\mu_2(f(Y)) = 0 $, which is a contradiction. 
\qed
\medskip

With the lower bounds in Lemmas~\ref{lem.mixedshadow1} and~\ref{lem.mixedshadow2}, one can complete the proof of Theorem \ref{thm.mixedshadow} by arguing exactly same as in Proposition \ref{prop.shadowlemma}. 
\end{proof}

\section{The Main Theorem}

In this section we prove Theorem~\ref{thm:main rigidity theorem in intro}, which we restate here. 

\begin{theorem}\label{thm:main rigidity theorem}
 Suppose 
\begin{itemize} 
\item $(M_1, \Gamma_1, \sigma_1, \mu_1)$ is a well-behaved PS-system of dimension $\delta_1$ with respect to a hierarchy $\mathscr{H}_1 = \{\mathscr{H}_1(R) \subset \Ga_1 : R \ge 0\}$ and 
$$
\mu_1 (\La^{\rm con}(\mathscr{H}_1))= 1.
$$ 
\item $(M_2, \Gamma_2, \sigma_2, \mu_2)$ is a  PS-system of dimension $\delta_2$.
\item There exists an onto homomorphism $\rho : \Gamma_1 \rightarrow \Gamma_2$, a measurable $\Gamma_1$-invariant set $Y$ with full $\mu_1$-measure, and a measurable $\rho$-equivariant injective map $f : Y \rightarrow M_2$.
\end{itemize} 
If the measures $f_*\mu_1$ and $\mu_2$ are not singular, then 
$$
\sup_{\gamma \in \Gamma_1} \abs{\delta_1 \norm{\gamma}_{\sigma_1} - \delta_2 \norm{\rho(\gamma)}_{\sigma_2}} < +\infty.
$$
\end{theorem}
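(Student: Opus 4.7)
The first step is to exploit non-singularity to produce a measurable set on which $f_*\mu_1$ and $\mu_2$ are equivalent with bounded Radon–Nikodym derivatives. Applying the Lebesgue decomposition of $f_*\mu_1$ with respect to $\mu_2$, together with non-singularity, gives a Borel set $A\subset M_2$ with $\mu_2(A)>0$ and a constant $c\ge 1$ satisfying $c^{-1}\mu_2(U)\le f_*\mu_1(U)\le c\,\mu_2(U)$ for every Borel $U\subset A$; here we also use $\mu_2(f(Y))>0$, which follows from non-singularity since $f_*\mu_1$ is concentrated on $f(Y)$. Set $B:=f^{-1}(A)\cap Y$, so $\mu_1(B)>0$. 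The pulled-back Borel measure $\tilde\mu_2$ on $Y$ defined by $\tilde\mu_2(E):=\mu_2(f(E))$ (well-defined because, by Lusin–Souslin, the injective Borel map $f$ sends Borel subsets of $Y$ to Borel subsets of $M_2$) is then equivalent to $\mu_1$ on $B$ with Radon–Nikodym density $h:=d\tilde\mu_2/d\mu_1\in[c^{-1},c]$ $\mu_1$-a.e.\ on $B$.

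\textbf{Length comparison along a conical sequence.} Next, use $\mu_1(\Lambda^{\mathrm{con}}(\mathscr{H}_1))=1$ and $\mu_1(B)>0$ to pick $y\in M_1$ in a full-measure set so that there exist $\alpha\in\Gamma_1$, $R>0$, and an escaping sequence $\{\gamma_n\}\subset \mathscr{H}_1(R)$ with $\alpha y\in B$ and $y\in\bigcap_n\Oc_R(\gamma_n)$, and so that $y$ is a Lebesgue-differentiation point in the sense of Theorem~\ref{thm:Leb diff mixed}. Extending that theorem to Borel measures by the standard argument (the singular part of $\tilde\mu_2$ relative to $\mu_1$ has vanishing shadow density $\mu_1$-a.e., via an adaptation of Lemma~\ref{lem.vanishingAf}) yields
$$
\lim_{n\to\infty}\frac{\tilde\mu_2(\Oc_R^f(\gamma_n))}{\mu_1(\Oc_R^f(\gamma_n))} = h(y).
$$
By the Mixed Shadow Lemma (Theorem~\ref{thm.mixedshadow}) the numerator is $\asymp e^{-\delta_2\norm{\rho(\gamma_n)}_{\sigma_2}}$ and the denominator is $\asymp e^{-\delta_1\norm{\gamma_n}_{\sigma_1}}$. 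Since $h(y)$ is a specific positive finite number (related to $h(\alpha y)\in[c^{-1},c]$ via the quasi-invariance identity applied at $\alpha$, whose cocycle evaluations are bounded by Property~\ref{item:coycles are bounded}), this yields
$$
\bigl|\delta_1\norm{\gamma_n}_{\sigma_1}-\delta_2\norm{\rho(\gamma_n)}_{\sigma_2}\bigr|\le K
$$
uniformly for all sufficiently large $n$, with $K$ depending on $c$ and $\alpha$ but not on $n$.

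\textbf{Uniformity in $\gamma$.} The final step is to pass from the sequence-wise bound above to a uniform bound for all $\gamma\in\Gamma_1$. The main tool is the pointwise identity
$$
h(\gamma w)\asymp h(w)\,\exp\!\bigl(\delta_1\sigma_1(\gamma,w)-\delta_2\sigma_2(\rho(\gamma),f(w))\bigr),\quad \mu_1\text{-a.e.\ }w,
$$
valid for every $\gamma\in\Gamma_1$ and obtained by equating the two expressions for $d\gamma_*\tilde\mu_2/d\mu_1$ coming from the $\Gamma_1$-quasi-invariance of $\mu_1$ and of $\tilde\mu_2$. For fixed $\gamma$, if one can exhibit $w\in\gamma^{-1}\Oc_R^f(\gamma)$ at which both $h(w)$ and $h(\gamma w)$ are bounded away from $0$ and $\infty$, then Property~\ref{item:almost constant on shadows} replaces the two cocycle evaluations by $\norm{\gamma}_{\sigma_1}$ and $\norm{\rho(\gamma)}_{\sigma_2}$ with bounded error, and the magnitude bound for $\gamma$ follows. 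To produce such a $w$ for arbitrary $\gamma$ with $\norm{\gamma}_{\sigma_1}$ large, combine Egorov's theorem (yielding uniform Lebesgue-differentiation convergence on a large-$\mu_1$-measure set) with ergodicity of the $\Gamma_1$-action (Corollary~\ref{cor:ergodicity}, which applies because $\mu_1(\Lambda^{\mathrm{con}}(\mathscr{H}_1))=1$) to select $y\in\Oc_R(\gamma)\cap\bigcup_{\beta\in\Gamma_1}\beta^{-1}B$, and then invoke Property~\ref{item:intersecting shadows} for the shadow sequence $\{\gamma_n\}$ through $y$ to obtain $\norm{\gamma\gamma_n}_{\sigma_1}\approx \norm{\gamma}_{\sigma_1}+\norm{\gamma_n}_{\sigma_1}$; the coarse-cocycle relation~\eqref{eqn.coarsecocycle intro} then delivers the analogous additivity for $\norm{\rho(\gamma\gamma_n)}_{\sigma_2}$, converting the sequence-wise bound from the previous paragraph into a bound on $\gamma$. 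The finitely many $\gamma\in\mathscr{H}_1(0)$ with bounded $\norm{\gamma}_{\sigma_1}$ (Property~\ref{item:properness}) contribute only a finite supremum. \emph{The main obstacle} is precisely this uniformity step: $(M_2,\Gamma_2,\sigma_2,\mu_2)$ is only assumed to be a PS-system (with no hierarchy), so the required additivity on the $\sigma_2$ side must be wrung from the coarse-cocycle relation and Property~\ref{item:almost constant on shadows} alone, while the choice of $w$ must simultaneously lie in all the relevant mixed shadows and in a set where $h$ is controlled.
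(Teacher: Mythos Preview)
Your first two steps are essentially correct and mirror the paper: you correctly set up the pulled-back measure $\tilde\mu_2$ and the density $h$, obtain the transformation rule for $h$ (the paper's Lemma~\ref{lem:transformation rule for h}), and use the Lebesgue-type differentiation along a conical sequence together with the Mixed Shadow Lemma to get the bound $\abs{\delta_1\norm{\gamma_n}_{\sigma_1}-\delta_2\norm{\rho(\gamma_n)}_{\sigma_2}}\le C_3$ for one fixed escaping sequence $\{\gamma_n\}$.

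The genuine gap is in the uniformity step, and the tools you list do not close it. Property~\ref{item:intersecting shadows} only applies to pairs in the hierarchy whose shadows intersect, so for arbitrary $\gamma\in\Gamma_1$ you cannot invoke it; and even when it applies it yields $\norm{\gamma_n}_{\sigma_1}\approx\norm{\gamma}_{\sigma_1}+\norm{\gamma^{-1}\gamma_n}_{\sigma_1}$, not the $\norm{\gamma\gamma_n}$ relation you wrote. More seriously, the claim that ``the coarse-cocycle relation delivers the analogous additivity for $\norm{\rho(\gamma\gamma_n)}_{\sigma_2}$'' is false: the coarse-cocycle relation controls $\sigma_2$-values, and passing to magnitudes via~\ref{item:almost constant on shadows} requires the point to lie in the appropriate $M_2$-shadow, which you have not arranged. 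Finally, your Egorov/ergodicity idea for producing $w$ with $h(w)$ and $h(\gamma w)$ simultaneously controlled fails because $\mu_1(\gamma^{-1}E)$ is governed by a Radon--Nikodym factor that is \emph{not} uniform in $\gamma$, so you cannot guarantee $\gamma^{-1}E\cap E\cap\gamma^{-1}\Oc_R^f(\gamma)\neq\emptyset$ with a $\gamma$-independent choice of the level set $E$.

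The paper's resolution is quite different and does not go through magnitude additivity at all. The key is Proposition~\ref{prop:technical result}, proved via the diagonal covering Lemma~\ref{lem.keylemma}: taking $R_j\to\infty$ and extracting Hausdorff limits $Z\subset M_1$, $Z'\subset M_2$ of the complements $M_i\smallsetminus\gamma_n^{-1}\Oc_{R_j}(\gamma_n)$, one uses Properties~\ref{item:empty Z intersection} and~\ref{item:baire} to get $M_1\times M_2=\bigcup_{\gamma}(M_1\smallsetminus\gamma Z)\times(M_2\smallsetminus\rho(\gamma)Z')$, then compactness to extract \emph{finitely many} $\alpha_1,\dots,\alpha_m\in\Gamma_1$ realizing this cover. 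Combined with Lebesgue density (Lemma~\ref{lem:Leb Density}), this shows that for $\mu_1$-a.e.\ $x$ there exist $i\le m$ and $n$ with $x\in\alpha_i\gamma_n^{-1}E$ and $(x,f(x))$ simultaneously in the translated inverse shadows on both sides. Now the transformation rule bounds $h(x)$ in terms of $h(\gamma_n\alpha_i^{-1}x)\in[n_0^{-1},n_0]$, the sequence bound $C_3$, and quantities depending only on the finite list $\alpha_1,\dots,\alpha_m$; hence $h$ is globally bounded (Lemma~\ref{lem.hboundedae}). The magnitude comparison for every $\gamma$ then follows immediately from $D^{-1}\mu_1\le\tilde\mu_2\le D\mu_1$ and the two estimates in the Mixed Shadow Lemma. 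The point is that the non-uniform quasi-invariance constants only enter through the finitely many $\alpha_i$, which is exactly what your approach lacks.
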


\begin{remark}
   By Theorem \ref{thm:conical limit set has full measure}, when we have the trivial hierarchy $\mathscr{H}_1(R) \equiv \Ga_1$, the condition $\mu_1(\La^{\rm con}(\mathscr{H}_1)) = 1$ in Theorem \ref{thm:main rigidity theorem} is equivalent to
   $$
   \sum_{\ga \in \Ga_1} e^{- \delta_1 \norm{\ga}_{\sigma_1}} = + \infty.
   $$
\end{remark}

\subsection{Proof of Theorem \ref{thm:main rigidity theorem}}
The rest of the section is devoted to the proof the theorem.  
For notational convenience, we  write $\| \cdot \|_i = \| \cdot \|_{\sigma_i}$ for $i = 1, 2$. 

Suppose that $f_*\mu_1$ and $\mu_2$ are not singular.
Since $f|_Y$ is injective  and $M_1, M_2$ are compact and metrizable, $f$ maps Borel subsets of $Y$ to Borel subsets of $M_2$ \cite[Coro. 15.2]{Kechris_classical}. Hence 
\begin{equation} \label{eqn.tildemu2}
\tilde\mu_2:=\mu_2(f(Y \cap \cdot))
\end{equation}
defines a finite Borel measure on $M_1$. 

\begin{lemma} \label{lem.wmaabsconteachother}
   The Borel measure $\tilde\mu_2$ is non-zero, and after possibly replacing $Y$ with a subset, we can assume that $\tilde \mu_2 \asymp \mu_1$ (i.e., $\tilde \mu_2 \ll \mu_1$ and $\tilde \mu_2 \gg \mu_1$).
 \end{lemma}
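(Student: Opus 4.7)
The plan is to handle the two claims separately. First, to show that $\tilde\mu_2 \neq 0$, I would argue by contrapositive: if $\mu_2(f(Y)) = 0$, then $A_1 := f(Y)$ and $A_2 := M_2 \smallsetminus f(Y)$ partition $M_2$, and using that $f|_Y$ is injective (so $f^{-1}(A_1) \supset Y$) together with $\mu_1(Y) = 1$, one obtains $f_*\mu_1(A_2) = 0$ and $\mu_2(A_1) = 0$. This would contradict the assumption that $f_*\mu_1$ and $\mu_2$ are not singular. Hence $\tilde\mu_2(M_1) = \mu_2(f(Y)) > 0$.

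For the absolute continuity direction, I would observe that $\tilde\mu_2$ is $\Gamma_1$-quasi-invariant: since $Y$ is $\Gamma_1$-invariant, $f$ is $\rho$-equivariant, and $\mu_2$ is $\rho(\Gamma_1)$-quasi-invariant, one has $\tilde\mu_2(\ga A) = \mu_2(\rho(\ga) f(Y \cap A))$. Take the Lebesgue decomposition $\tilde\mu_2 = \tilde\mu_{2,\mathrm{ac}} + \tilde\mu_{2,\mathrm{sing}}$ with respect to $\mu_1$. By uniqueness of Lebesgue decomposition and the quasi-invariance of $\mu_1$, both parts are $\Gamma_1$-quasi-invariant. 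The same contrapositive argument used for nonvanishing of $\tilde\mu_2$, applied to $B$ such that $\mu_1(B) = 1$ and $\tilde\mu_2(B) = 0$, shows that $\tilde\mu_{2,\mathrm{ac}} \neq 0$: otherwise $\tilde\mu_2 \perp \mu_1$, and splitting $M_2$ as $f(Y \cap B) \sqcup (M_2 \smallsetminus f(Y \cap B))$ (using Borel measurability of $f|_Y$-images) would again force $f_*\mu_1 \perp \mu_2$.

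Next I would replace $Y$ by a suitable invariant subset. Let $B \subset M_1$ be a Borel set supporting $\tilde\mu_{2,\mathrm{sing}}$ with $\mu_1(B) = 0$, and set $B' := \Gamma_1 \cdot B$. Since $\Gamma_1$ is countable by Property \ref{item:properness} and $\mu_1$ is $\Gamma_1$-quasi-invariant, $\mu_1(B') = 0$; by $\Gamma_1$-quasi-invariance of $\tilde\mu_{2,\mathrm{sing}}$, it remains concentrated on $B'$. Set $Y' := Y \smallsetminus B'$, which is $\Gamma_1$-invariant with $\mu_1(Y') = 1$. A direct computation using injectivity of $f|_Y$ shows that the redefined measure $\tilde\mu_2'(\cdot) := \mu_2(f(Y' \cap \cdot))$ equals $\tilde\mu_{2,\mathrm{ac}}$. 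In particular $\tilde\mu_2' \ll \mu_1$, with density $\phi := d\tilde\mu_{2,\mathrm{ac}}/d\mu_1$.

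The main obstacle is to show $\mu_1 \ll \tilde\mu_2'$, i.e.\ that $\phi > 0$ on a set of full $\mu_1$-measure. Here I would use ergodicity. Since $\mu_1(\Lambda^{\mathrm{con}}(\mathscr{H}_1)) = 1$, Corollary \ref{cor:ergodicity} gives that the $\Gamma_1$-action on $(M_1, \mu_1)$ is ergodic. Consider $E := \{\phi = 0\}$. For $\ga \in \Gamma_1$, the chain rule and quasi-invariance of $\mu_1$ give
\[
\frac{d\ga_*\tilde\mu_{2,\mathrm{ac}}}{d\mu_1}(x) = \phi(\ga^{-1} x) \cdot \frac{d\ga_*\mu_1}{d\mu_1}(x)
\]
$\mu_1$-almost everywhere, with the second factor positive a.e. Because $\tilde\mu_{2,\mathrm{ac}}$ is $\Gamma_1$-quasi-invariant, $\ga_*\tilde\mu_{2,\mathrm{ac}}$ and $\tilde\mu_{2,\mathrm{ac}}$ have the same null sets, so the zero set of their $\mu_1$-densities agree up to $\mu_1$-null sets. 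This shows $\ga E = E$ modulo $\mu_1$-null sets, so ergodicity forces $\mu_1(E) \in \{0,1\}$. Since $\tilde\mu_{2,\mathrm{ac}} \neq 0$, we must have $\mu_1(E) = 0$, so $\phi > 0$ $\mu_1$-a.e., and thus $\tilde\mu_2' \asymp \mu_1$.
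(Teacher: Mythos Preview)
Your proof is correct and follows essentially the same approach as the paper: Lebesgue-decompose $\tilde\mu_2$ with respect to $\mu_1$, show the absolutely continuous part is nonzero (else $f_*\mu_1\perp\mu_2$), strip off the singular part by shrinking $Y$ to a $\Gamma_1$-invariant full-measure subset, and then use ergodicity of $(M_1,\mu_1)$ from Corollary~\ref{cor:ergodicity} to get $\mu_1\ll\tilde\mu_2$. The only cosmetic difference is in the last step: the paper argues directly with null sets (if $\mu_1(B)>0$ and $\tilde\mu_2(B)=0$, then $\mu_1(\Gamma_1 B)=1$ while $\tilde\mu_2(\Gamma_1 B)=0$, contradicting $\tilde\mu_2\ll\mu_1$), whereas you phrase it via the density $\phi$ and invariance of $\{\phi=0\}$---these are equivalent formulations of the same ergodicity argument.
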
 

\begin{proof} Decompose 
$$
\tilde\mu_2 = \tilde\mu_2' + \tilde \mu_2''
$$
where $\tilde \mu_2' \ll \mu_1$ and $ \tilde \mu_2''$ is singular to $\mu_1$.

Suppose for a contradiction that $\tilde \mu_2'$ is the zero measure. Then $\tilde\mu_2$ is singular to $\mu_1$. Then there exists a measurable subset $Y' \subset Y \subset M_1$ such that $\mu_1(Y') = 1$ and $\tilde\mu_2(Y') = 0$. Then 
$$
f_* \mu_1(f(Y')) \geq \mu_1(Y') = 1
$$
and
$$
\mu_2(f(Y')) = \tilde\mu_2(Y') = 0.
$$
Hence $\mu_2$ and $f_*\mu_1$ are singular, which is a contradiction. So $\tilde \mu_2'$ is not the zero measure. In particular, $\tilde \mu_2$ is non-zero.

Now fix a measurable subset $A \subset Y$ such that $\mu_1(A) = 1$ and $\tilde \mu_2''(A) = 0$. Since $\mu_1$ is $\Ga_1$-quasi-invariant, $A':=\bigcap_{\gamma \in \Gamma_1} \gamma A$ also has full $\mu_1$-measure and so by replacing $Y$ with $A'$ we can assume that $\tilde \mu_2 \ll \mu_1$.

Suppose for a contradiction that $\mu_1 \not\ll \tilde \mu_2$. Then there exists a measurable subset $B \subset Y$ where $\mu_1(B) > 0$ and $\tilde \mu_2(B) = 0$. Since the $\Ga_1$-action on $(M_1, \mu_1)$ is ergodic (Corollary~\ref{cor:ergodicity}), $\mu_1( \Gamma_1 \cdot B) = 1$. Since $\mu_2$ is $\Ga_2$-quasi-invariant and $\mu_2(f(Y \cap B)) = \tilde \mu_2(B)=0$, 
$$
\tilde \mu_2(  \Gamma_1 \cdot B) \leq \sum_{\gamma \in \Gamma_1} \mu_2( \rho(\gamma) f(Y \cap B)) =0.
$$
Hence $\mu_1$ and $\tilde \mu_2$ are singular, which contradicts the fact that $\tilde \mu_2 \ll \mu_1$. So $\mu_1 \ll \tilde \mu_2$ and thus $\mu_1 \asymp \tilde \mu_2$. 
\end{proof} 

By Lemma \ref{lem.wmaabsconteachother}, we can consider the following Radon--Nykodim derivative:
$$
h := \frac{d\tilde \mu_2}{d\mu_1} \in L^1(M_1, \mu_1).
$$

Since $\mu_1,\mu_2$ are PS-measures, $h$ satisfies the following. 

\begin{lemma}\label{lem:transformation rule for h} 
   There exists $C_1 \ge 0$ such that for any $\ga \in \Ga_1$ and $\mu_1$-a.e. $x \in M_1$,
$$
e^{-C_1 + \delta_1 \sigma_1(\gamma, x) -\delta_2 \sigma_2(\rho(\gamma), f(x))} \cdot h(x) \le h(\gamma x) \le e^{C_1 + \delta_1 \sigma_1(\gamma, x) -\delta_2 \sigma_2(\rho(\gamma), f(x))} \cdot h(x).
$$ 
\end{lemma}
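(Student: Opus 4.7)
The plan is to compute $\tilde\mu_2(\gamma A)$ in two different ways for any measurable $A \subset Y$, and then to compare the resulting integral expressions to deduce the transformation rule for $h$ pointwise.

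For the first computation, I would use the $\rho$-equivariance of $f$ together with the $\Gamma_1$-invariance of $Y$ (arranged in Lemma~\ref{lem.wmaabsconteachother}) to obtain
\begin{equation*}
\tilde\mu_2(\gamma A) = \mu_2(f(\gamma A)) = \mu_2(\rho(\gamma) f(A)) = (\rho(\gamma)^{-1})_*\mu_2(f(A)).
\end{equation*}
Applying the coarse PS condition~\eqref{eqn.coarse PS meaasure intro} for $\mu_2$ to the element $\rho(\gamma)^{-1}$ gives
\begin{equation*}
\frac{d (\rho(\gamma)^{-1})_*\mu_2}{d\mu_2}(y) \asymp e^{-\delta_2 \sigma_2(\rho(\gamma),\, y)},
\end{equation*}
with implicit multiplicative constant depending only on the PS constant for $\mu_2$, in particular independent of $\gamma$. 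Since $f|_Y$ is a Borel isomorphism onto its image by \cite[Coro.~15.2]{Kechris_classical}, pulling back via $f$ and using $d\tilde\mu_2 = h \, d\mu_1$ yields
\begin{equation*}
\tilde\mu_2(\gamma A) \asymp \int_A e^{-\delta_2 \sigma_2(\rho(\gamma),\, f(x))} \, h(x) \, d\mu_1(x).
\end{equation*}

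For the second computation, I would directly expand $\tilde\mu_2(\gamma A) = \int_{\gamma A} h \, d\mu_1$ and change variables by $x \mapsto \gamma x$, which gives $\tilde\mu_2(\gamma A) = \int_A h(\gamma x) \, d(\gamma^{-1})_*\mu_1(x)$. Applying the coarse PS condition for $\mu_1$ to $\gamma^{-1}$ gives $\frac{d(\gamma^{-1})_*\mu_1}{d\mu_1}(x) \asymp e^{-\delta_1 \sigma_1(\gamma, x)}$, again with constant independent of $\gamma$, and hence
\begin{equation*}
\tilde\mu_2(\gamma A) \asymp \int_A h(\gamma x) \, e^{-\delta_1 \sigma_1(\gamma,\, x)} \, d\mu_1(x).
\end{equation*}

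Combining the two estimates, there exists a constant $C \geq 1$ depending only on the PS constants for $\mu_1$ and $\mu_2$ such that, for every measurable $A \subset Y$,
\begin{equation*}
C^{-1} \int_A h(x) e^{-\delta_2 \sigma_2(\rho(\gamma), f(x))} d\mu_1 \leq \int_A h(\gamma x) e^{-\delta_1 \sigma_1(\gamma, x)} d\mu_1 \leq C \int_A h(x) e^{-\delta_2 \sigma_2(\rho(\gamma), f(x))} d\mu_1.
\end{equation*}
Testing these inequalities against the sets $\{ x \in Y : h(\gamma x) e^{-\delta_1 \sigma_1(\gamma, x)} > \lambda \cdot h(x) e^{-\delta_2 \sigma_2(\rho(\gamma), f(x))} \}$ for $\lambda > C$, and the analogous sets for the reverse inequality, upgrades the integral comparison to a pointwise $\mu_1$-a.e.\ comparison. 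Rearranging and setting $C_1 = \log C$ yields the lemma. I expect no serious obstacle here; the only mild care needed is tracking that the PS multiplicative constants absorb into a single $\gamma$-independent $C_1$, and that $f|_Y$ is a Borel isomorphism onto its image so that identifying $d\tilde\mu_2 = h \, d\mu_1$ after pulling through $f$ is legitimate, both of which are routine.
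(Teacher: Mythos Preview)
Your proof is correct and follows essentially the same approach as the paper: both arguments use the $\rho$-equivariance of $f$ to identify $\gamma^{-1}_*\tilde\mu_2$ with the pullback of $\rho(\gamma)^{-1}_*\mu_2$, apply the coarse PS conditions for $\mu_1$ and $\mu_2$, and combine. The paper phrases this slightly more compactly via the chain of equalities $\frac{d\gamma^{-1}_*\tilde\mu_2}{d\tilde\mu_2}\,h\,d\mu_1 = d\gamma^{-1}_*\tilde\mu_2 = (h\circ\gamma)\,d\gamma^{-1}_*\mu_1$, reading off the pointwise bound directly from Radon--Nikodym derivatives rather than testing against arbitrary $A$, but the content is the same.
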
 

\begin{proof} 
   
   Since $\mu_2$ is a coarse $\sigma_2$-PS measure of dimension $\delta_2$, there exists $c_1 \ge 0$ such that
   $$
   e^{-c_1 -\delta_2 \sigma_2(\rho(\ga), y)} \le \frac{d \rho(\ga^{-1})_* \mu_2}{d \mu_2}(y) \le e^{c_1 -\delta_2 \sigma_2(\rho(\ga), y)} 
   $$
   for all $\ga \in \Ga_1$ and $\mu_2$-a.e. $y \in M_2$. Since $Y$ is $\Gamma$-invariant and $f$ is $\rho$-equivariant, we have for a measurable $A \subset M_1$ that 
   $$
   \gamma^{-1}_* \tilde\mu_2(A) = \mu_2( f(Y \cap  \gamma A )) = \mu_2( \rho(\gamma) f(Y \cap A)) = \rho(\gamma)^{-1}_*\mu_2(f (Y \cap A)). 
   $$
   Hence
   \begin{equation} \label{eqn.transformation rule for tilde mu2}
   e^{-c_1 -\delta_2 \sigma_2(\rho(\gamma), f(x))} \le \frac{d\gamma^{-1}_* \tilde\mu_2}{d\tilde \mu_2}(x) \le  e^{c_1 -\delta_2 \sigma_2(\rho(\gamma), f(x))} 
   \end{equation}
      for all $\ga \in \Ga_1$ and $\tilde\mu_2$-a.e. $x \in M_1$.  Since $\mu_1 \asymp  \tilde \mu_2$, this equation holds for $\mu_1$-a.e. $x \in M_1$. 
      
      Since $\mu_1$ is a coarse $\sigma_1$-PS measure of dimension $\delta_1$, there exists $c_2 \ge 0$ such that
   \begin{equation} \label{eqn.transformation rule for mu1}
   e^{-c_2 -\delta_1 \sigma_1(\ga, x)} \le \frac{d \ga^{-1}_* \mu_1}{d \mu_1}(x) \le e^{c_2 -\delta_1 \sigma_1(\ga, x)}
\end{equation} 
for all $\ga \in \Ga_1$ and $\mu_1$-a.e. $x \in M_1$.

   Finally, for any $\ga \in \Ga_1$,
   $$
\frac{d\gamma^{-1}_* \tilde\mu_2}{d\tilde \mu_2}h d \mu_1= d \ga^{-1}_* \tilde \mu_2= d \ga^{-1}_* (h\mu_1)= (h \circ \ga) d \ga^{-1}_* \mu_1. 
   $$
Combining with Equations \eqref{eqn.transformation rule for tilde mu2} and \eqref{eqn.transformation rule for mu1}, we get the desired inequalities with $C_1 := c_1 + c_2$.
\end{proof} 

Since $\mu_1 \asymp  \tilde \mu_2$ and $\Ga_1$ is countable, using Property \ref{item:coycles are bounded} we can replace $Y$ with a $\Gamma$-invariant full $\mu_1$-measure subset such that 
for all $\ga \in \Ga_1$,
\begin{equation}\label{eqn:Y satisfies PS1} 
\sup_{x \in Y} | \sigma_1(\ga, x) | < + \infty \quad \text{and} \quad \sup_{x \in Y} | \sigma_2(\rho(\ga), f(x)) | < + \infty.
\end{equation}

Since
$$
1 = \mu_1(\La^{\rm con}(\mathscr{H}_1)) =\mu_1 \left( \Ga \cdot \bigcup_{R > 0} \bigcap_{n \ge 1} \La_R(\mathscr{H}_1(n)) \right) > 0
$$
and $\mu_1$ is $\Gamma_1$-quasi-invariant, we can fix $R > 0$ such that $\bigcap_{n \ge 1} \La_R(\mathscr{H}_1(n))$ has positive $\mu_1$-measure. Since $\mu_1 \asymp  \tilde \mu_2$, $h$ is positive and finite $\mu_1$-a.e. and thus we can fix $n_0 \ge 1$ sufficiently large so that the set 
$$
E:= \{ x \in Y : n_0^{-1} \le h(x) \le n_0 \} \cap \bigcap_{n \ge 1} \La_R(\mathscr{H}_1(n))
$$
has positive $\mu_1$-measure.

Fix a sequence $R_n \rightarrow +\infty$ with $R_n \geq R$ for all $n$. After possibly increasing $R > 0$, we can assume that 
\begin{itemize}
\item $R$ satisfies Theorem \ref{thm.mixedshadow},
\item there is a subset $M_1' \subset M_1$ of full $\mu_1$-measure that satisfies Theorem \ref{thm:Leb diff mixed} for $h$ and $R$, and satisfies Lemma \ref{lem:Leb Density} for $E$ and all $R_n$. 
\end{itemize}

Fix 
$$
x_0 \in E \cap M_1'.
$$
Since $x_0 \in E \subset \bigcap_{n \ge 1} \La_R(\mathscr{H}_1(n))$, there exists an escaping sequence $\{ \ga_n \in \mathscr{H}_1(n) \}$ such that 
$$
x_0 \in \bigcap_{n \ge 1} \Oc_{R}(\ga_n).
$$

Since  $\{\gamma_n\}_{n > R} \subset \mathscr{H}_1(R)$ and $x_0 \in M_1'$, we have 
$$
h(x_0) = \lim_{n \rightarrow \infty} \frac{1}{\mu_1(\Oc^f_R(\gamma_n)) } \int_{\Oc^f_R(\gamma_n)} h d\mu_1 =\lim_{n \rightarrow \infty} \frac{\tilde \mu_2(\Oc^f_R(\gamma_n))}{\mu_1(\Oc^f_R(\gamma_n)) }.
$$
Since $x_0 \in E$, we have $h(x_0) \in [n_0^{-1}, n_0]$. Further, since $R$ satisfies Theorem \ref{thm.mixedshadow}, there exists $C_2 =C_2(R) > 1$ such that 
\begin{equation} \label{eqn:mass of shadows in long proof} 
\begin{aligned}
\frac{1}{C_2} e^{-\delta_1 \norm{\gamma}_1} & \le \mu_1(\Oc^f_R(\gamma))  \le C_2 e^{-\delta_1 \norm{\gamma}_1} \quad \text{and} \\
\frac{1}{ C_2} e^{-\delta_2 \norm{\rho(\gamma)}_2} & \le  \tilde \mu_2(\Oc^f_R(\gamma))  \le C_2 e^{-\delta_2 \norm{\rho(\gamma)}_2} 
\end{aligned}
\end{equation} 
for all $\gamma \in \Gamma_1$. Thus
\begin{equation} \label{eqn.normalongseq}
C_3:=\sup_{n \geq 1} \abs{ \delta_1 \norm{\gamma_n}_{1}-\delta_2 \norm{\rho(\gamma_n)}_{2}}
\end{equation}
is finite.

Using Lemma \ref{lem.keylemma}  we will prove the following covering lemma.

\begin{proposition}\label{prop:technical result} There exist $R'> 0$, $\alpha_1,\dots, \alpha_m \in \Gamma_1$, and $M_1'' \subset M_1$ with full $\mu_1$-measure with the following property: for any $x \in M_1''$ there exist $1 \leq i \leq m$ and $n \in \Nb$ such that 
$$
x \in \alpha_{i} \gamma_{n}^{-1}E
$$
and 
$$
(x,f(x)) \in  \left( \alpha_{i}\gamma_{n}^{-1} \Oc_{R'}(\gamma_{n})\right) \times \left(\rho(\alpha_{i})\rho(\gamma_n)^{-1} \Oc_{R'}(\rho(\gamma_n))  \right).
$$
\end{proposition}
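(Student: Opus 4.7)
The plan is to combine Lemma~\ref{lem.keylemma} (which will produce the finite cover by $\alpha_i$'s) with the Lebesgue-density statement Lemma~\ref{lem:Leb Density} applied to $E$ (which will produce the condition $x \in \alpha_i \gamma_n^{-1}E$). The main technical obstacle is that Property~\ref{item:baire} requires shadow radii $R_n \to \infty$ whereas the conclusion demands a single fixed $R'$; I resolve this by a diagonal extraction that controls the fixed-radius limit sets in terms of a growing-radius limit satisfying Property~\ref{item:baire}.

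By compactness of the Hausdorff topology I pass to a subsequence of $\{\gamma_n\}$ (without relabeling) so that for each integer $k\geq 1$ both limits
$$
Z_1(k):=\lim_{n\to\infty}\bigl[M_1\smallsetminus\gamma_n^{-1}\Oc_k(\gamma_n)\bigr],\qquad Z_2(k):=\lim_{n\to\infty}\bigl[M_2\smallsetminus\rho(\gamma_n)^{-1}\Oc_k(\rho(\gamma_n))\bigr]
$$
exist as Hausdorff limits of compact sets. By Property~\ref{item:shadow inclusion} both sequences are decreasing in $k$, so $Z_j:=\bigcap_k Z_j(k)$ is compact and is the Hausdorff limit of $Z_j(k)$ as $k\to\infty$. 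A further diagonal argument realizes $(Z_1,Z_2)$ as limits along some $k_j\to\infty$, so Properties~\ref{item:baire} and~\ref{item:empty Z intersection} apply and Lemma~\ref{lem.keylemma} yields
$$
M_1\times M_2=\bigcup_{\gamma\in\Gamma_1}(M_1\smallsetminus\gamma Z_1)\times(M_2\smallsetminus\rho(\gamma)Z_2).
$$
Passing to closed $\epsilon$-neighborhoods and using compactness extracts a finite subcover: there exist $\epsilon>0$ and $\alpha_1,\dots,\alpha_m\in\Gamma_1$ with
$$
M_1\times M_2=\bigcup_{i=1}^m\bigl(M_1\smallsetminus\alpha_i\overline{\Nc_\epsilon(Z_1)}\bigr)\times\bigl(M_2\smallsetminus\rho(\alpha_i)\overline{\Nc_\epsilon(Z_2)}\bigr).
$$
Since $Z_j(k)\to Z_j$ in Hausdorff as $k\to\infty$, I choose $R'$ from the preassigned sequence $\{R_n\}$ with $Z_j(R')\subset\Nc_{\epsilon/2}(Z_j)$ for $j=1,2$, and then $N_0$ large enough that for every $n\ge N_0$,
$$
M_1\smallsetminus\Nc_\epsilon(Z_1)\subset\gamma_n^{-1}\Oc_{R'}(\gamma_n),\qquad M_2\smallsetminus\Nc_\epsilon(Z_2)\subset\rho(\gamma_n)^{-1}\Oc_{R'}(\rho(\gamma_n)).
$$
This already supplies conditions (b) and (c) of the proposition for every $x\in M_1$ and every $n\ge N_0$, once $i$ is chosen from the finite cover.

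For condition~(a), Lemma~\ref{lem:Leb Density} applied to $E$ at $x_0$ (noting that $x_0\in\bigcap_n\Oc_{R'}(\gamma_n)$ by Property~\ref{item:shadow inclusion}, and that $R'$ lies in the preassigned sequence so $x_0\in M_1'$ satisfies the relevant density statement) gives
$$
\lim_{n\to\infty}\mu_1\bigl(\gamma_n^{-1}\Oc_{R'}(\gamma_n)\smallsetminus\gamma_n^{-1}E\bigr)=0.
$$
Therefore for each $i$ the set $B_i:=\alpha_i\cdot\bigcap_{n\ge N_0}(\gamma_n^{-1}\Oc_{R'}(\gamma_n)\smallsetminus\gamma_n^{-1}E)$ is $\mu_1$-null by $\Gamma_1$-quasi-invariance. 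Set $M_1'':=M_1'\smallsetminus\bigcup_{i=1}^m B_i$, which has full $\mu_1$-measure. For $x\in M_1''$ and $i$ selected from the finite cover, the point $y:=\alpha_i^{-1}x$ lies in $\gamma_n^{-1}\Oc_{R'}(\gamma_n)$ for every $n\ge N_0$ yet avoids the bad intersection defining $B_i$; hence some $n\ge N_0$ satisfies $y\in\gamma_n^{-1}E$, i.e.\ $x\in\alpha_i\gamma_n^{-1}E$, and this $(i,n)$ meets all three conditions.
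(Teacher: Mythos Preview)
Your overall strategy is the same as the paper's: a diagonal extraction to obtain limit sets $Z_1,Z_2$ to which Lemma~\ref{lem.keylemma} applies, a finite $\epsilon$-subcover producing $\alpha_1,\dots,\alpha_m$, a single fixed radius $R'$ at which the cover persists for all large $n$, and then Lemma~\ref{lem:Leb Density} at $x_0$ to force membership in $\alpha_i\gamma_n^{-1}E$. One cosmetic difference: you take the bad set to be $\bigcup_i\alpha_i\bigcap_n(\dots)$ and pick $i$ before $n$, whereas the paper uses $\bigcap_n\bigcup_i\alpha_i(\dots)$ and picks $n$ before $i$; both are correct, and your version exploits the fact that the $i$ from the $\epsilon$-cover works uniformly in $n$.

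There is, however, a genuine gap. You invoke Property~\ref{item:shadow inclusion} to assert that \emph{both} sequences $Z_j(k)$ are decreasing in $k$, but \ref{item:shadow inclusion} is part of the ``well-behaved'' package and is only assumed for the first system $(M_1,\Gamma_1,\sigma_1,\mu_1)$; the second system is merely a PS-system and carries only \ref{item:coycles are bounded}--\ref{item:empty Z intersection}. Hence there is no reason for $\Oc_k(\rho(\gamma_n))\subset\Oc_{k+1}(\rho(\gamma_n))$ to hold, and the monotonicity of $Z_2(k)$ (and the identification $Z_2=\bigcap_kZ_2(k)$ as a Hausdorff limit) is unjustified. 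The repair is immediate and is exactly what the paper does: drop the intersection description, pass by compactness to a subsequence $k_j\to\infty$ along which both $Z_1(k_j)$ and $Z_2(k_j)$ converge (to some $Z_1,Z_2$), and then choose $R'$ from that subsequence so that $Z_j(R')\subset\Nc_{\epsilon/2}(Z_j)$ for $j=1,2$. A related bookkeeping issue: since Lemma~\ref{lem:Leb Density} is only known to hold on $M_1'$ for the radii in the preassigned sequence $\{R_n\}$, your limits $Z_j(\cdot)$ should be indexed by that sequence (i.e.\ use $\Oc_{R_k}$ rather than $\Oc_k$), so that the chosen $R'$ indeed lies in $\{R_n\}$.
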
 

Delaying the proof of the proposition, we complete the proof of Theorem \ref{thm:main rigidity theorem}.

\begin{lemma} \label{lem.hboundedae}
   There exists $D > 1$ such that 
   $$D^{-1} \leq h(x) \leq D 
   $$
   for $\mu_1$-a.e. $x \in M_1$. 
\end{lemma}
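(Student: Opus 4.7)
The plan is to use the covering provided by Proposition~\ref{prop:technical result} together with the transformation rule of Lemma~\ref{lem:transformation rule for h} to transport the bounds $n_0^{-1}\le h\le n_0$ known on $E$ to $\mu_1$-almost every point of $M_1$. Let $\alpha_1,\dots,\alpha_m\in\Gamma_1$, $R'>0$, and $M_1''\subset M_1$ be as in that proposition. For $\mu_1$-a.e.\ $x\in M_1''$, I would pick $i$ and $n$ afforded by the proposition, set $z:=\gamma_n\alpha_i^{-1}x\in E$ and $\beta:=\alpha_i\gamma_n^{-1}$, so that $\beta z=x$, and apply Lemma~\ref{lem:transformation rule for h} to get, up to a multiplicative constant depending only on $C_1$,
\[
h(x)\;\asymp\;h(z)\,\exp\!\bigl(\delta_1\sigma_1(\beta,z)-\delta_2\sigma_2(\rho(\beta),f(z))\bigr).
\]
Since $h(z)\in[n_0^{-1},n_0]$, the problem reduces to bounding the exponent uniformly in $x$, $i$, and $n$.

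Next, I would estimate each cocycle separately. Applying the coarse-cocycle inequality~\eqref{eqn.coarsecocycle intro} twice---once to the factorization $\beta=\alpha_i\cdot\gamma_n^{-1}$ and once to $\id=\gamma_n\cdot\gamma_n^{-1}$---and noting that $\gamma_n^{-1}z=\alpha_i^{-1}x$, I can rewrite
\[
\sigma_1(\beta,z)=-\sigma_1(\gamma_n,\alpha_i^{-1}x)+\sigma_1(\alpha_i,\alpha_i^{-1}x)+O(1),
\]
where the $O(1)$ depends only on the coarse-cocycle constant. The second summand is bounded in absolute value by $\max_i c(\alpha_i)$ via Property~\ref{item:coycles are bounded} together with~\eqref{eqn:Y satisfies PS1}, and is $O(1)$ because there are only finitely many $\alpha_i$'s. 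For the first summand, the inclusion $\alpha_i^{-1}x\in\gamma_n^{-1}\Oc_{R'}(\gamma_n)$ from Proposition~\ref{prop:technical result} combined with Property~\ref{item:almost constant on shadows} gives $\sigma_1(\gamma_n,\alpha_i^{-1}x)=\norm{\gamma_n}_{\sigma_1}+O(C(R'))$. Hence $\sigma_1(\beta,z)=-\norm{\gamma_n}_{\sigma_1}+O(1)$. The same chain of identities on the $M_2$-side, using $\rho$-equivariance of $f$ to convert $\rho(\gamma_n^{-1})f(z)=\rho(\alpha_i)^{-1}f(x)$ and using the second coordinate inclusion $f(x)\in\rho(\alpha_i)\rho(\gamma_n)^{-1}\Oc_{R'}(\rho(\gamma_n))$, yields $\sigma_2(\rho(\beta),f(z))=-\norm{\rho(\gamma_n)}_{\sigma_2}+O(1)$.

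Combining these, the exponent collapses to $-\delta_1\norm{\gamma_n}_{\sigma_1}+\delta_2\norm{\rho(\gamma_n)}_{\sigma_2}+O(1)$, which is uniformly $O(1)$ thanks to~\eqref{eqn.normalongseq}. This yields the upper bound $h(x)\le D$ for some $D>1$ at $\mu_1$-a.e.\ $x$; the lower bound $h(x)\ge D^{-1}$ follows from running the same chain of inequalities using the lower halves of Lemma~\ref{lem:transformation rule for h} and Property~\ref{item:almost constant on shadows}. The main delicate point will be verifying that all the implicit constants in the cocycle manipulations are genuinely independent of $x$, $i$, and $n$: this hinges on (i) the finiteness of $\{\alpha_1,\dots,\alpha_m\}$ supplied by Proposition~\ref{prop:technical result}, which makes the $\sigma_k(\rho_k(\alpha_i),\cdot)$ terms uniformly bounded, and (ii) the fact that we work on the $\Gamma_1$-invariant full-measure set $Y$ where~\eqref{eqn:Y satisfies PS1} is available.
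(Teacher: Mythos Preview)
Your proposal is correct and follows essentially the same approach as the paper. The only cosmetic difference is that you apply Lemma~\ref{lem:transformation rule for h} at the point $z=\gamma_n\alpha_i^{-1}x$ with the element $\beta=\alpha_i\gamma_n^{-1}$, whereas the paper applies it at $x$ with the inverse element $\gamma_n\alpha_i^{-1}$; since $\sigma_1(\beta,z)\approx -\sigma_1(\gamma_n\alpha_i^{-1},x)$ by the coarse-cocycle relation, the two computations are equivalent, and both reduce to the same bound via Property~\ref{item:almost constant on shadows}, Equation~\eqref{eqn:Y satisfies PS1}, and Equation~\eqref{eqn.normalongseq}. Just be explicit (as the paper is) about first shrinking $M_1''$ to a $\Gamma_1$-invariant full-measure subset on which the a.e.\ statements in Lemma~\ref{lem:transformation rule for h} and Property~\ref{item:almost constant on shadows} hold simultaneously for all $\gamma\in\Gamma_1$, so that applying them at the translated point $z$ (or $\alpha_i^{-1}x$) is justified.
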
 

\begin{proof}
   Let $R' > 0$, $\alpha_1,\dots, \alpha_m \in \Gamma_1$, and $M_1'' \subset M_1$ be as in Proposition \ref{prop:technical result}.
   
 We start by fixing some constants. Fix $\kappa > 0$ such that $\sigma_1, \sigma_2$ are both $\kappa$-coarse-cocycles. Since $\Gamma$ is countable and $\mu_1 \asymp  \tilde \mu_2$, using Property~\ref{item:almost constant on shadows} we can fix $C_4 > 0$ and replace $M_1''$ with a full $\mu_1$-measure subset such that: if $x \in M_1''$ and $\ga \in \Ga_1$, then 
$$
\abs{\sigma_1(\gamma,x) - \norm{\gamma}_1} \leq C_4
$$
whenever $x \in \gamma^{-1}\Oc_{R'}(\gamma)$ and 
$$
\abs{\sigma_2(\rho(\gamma),f(x)) - \norm{\rho(\gamma)}_{2}} \leq C_4
$$
whenever $f(x) \in \rho(\gamma)^{-1}\Oc_{R'}(\rho(\gamma))$.  Replacing $M_1''$ by $M_1''\cap Y$, we can also assume that $M_1'' \subset Y$ and hence 
$$
C_5 : = \max_{1 \leq i \leq m} \max\left\{ \sup_{y \in M_1''} \abs{\sigma_1(\alpha_i^{-1}, y)}, \sup_{y \in M_1''} \abs{\sigma_2(\rho(\alpha_i)^{-1}, f(y))} \right\} < + \infty
$$
is finite, see Equation~\eqref{eqn:Y satisfies PS1}. Again replacing $M_1''$ with a full $\mu_1$-measure subset we can also assume that the estimate in Lemma \ref{lem:transformation rule for h} holds for all $x \in M_1''$ and all $\gamma \in \Gamma_1$.  Finally, since $\mu_1$ is $\Gamma_1$-quasi-invariant and $\Gamma_1$ is countable, we can replace $M_1''$ by $\bigcap_{\gamma \in \Gamma} \gamma M_1''$ and assume that $M_1''$ is $\Gamma_1$-invariant.

Fix $x \in M_1''$. By Proposition~\ref{prop:technical result}, there exist $1 \le i \le m$ and $n \in \mathbb{N}$ such that
$$
x \in \alpha_i \ga_n^{-1} E \cap \alpha_i \ga_n^{-1} \Oc_{R'}(\ga_n) \quad \text{and} \quad f(x) \in \rho(\alpha_{i})\rho(\gamma_n)^{-1} \Oc_{R'}(\rho(\gamma_n)).
$$
By Lemma \ref{lem:transformation rule for h}, 
$$\begin{aligned}
& e^{-C_1 - \delta_1\sigma_1(\gamma_n \alpha_i^{-1}, x)  + \delta_2 \sigma_2(\rho(\gamma_n)\rho(\alpha_i)^{-1}, f(x))} h(\gamma_n \alpha_i^{-1}x) \\
& \le h(x) \le e^{C_1 - \delta_1\sigma_1(\gamma_n \alpha_i^{-1}, x) + \delta_2 \sigma_2(\rho(\gamma_n)\rho(\alpha_i)^{-1}, f(x))} h(\gamma_n \alpha_i^{-1}x).
\end{aligned}
$$
Further, since $\alpha_i^{-1}x \in \gamma_n^{-1} \Oc_{R'}(\gamma_n) \cap M_1''$, we have
\begin{align*}
\abs{ \sigma_1(\gamma_n \alpha_i^{-1},x) - \norm{\gamma_n}_1} &  \leq \kappa + \abs{ \sigma_1(\gamma_n, \alpha_i^{-1}x) +\sigma_1(\alpha_i^{-1},x) - \norm{\gamma_n}_1}\\
&  \leq \kappa+ C_4+C_5.
\end{align*} 
Likewise, 
\begin{align*}
\abs{ \sigma_2(\rho(\gamma_n)\rho( \alpha_i)^{-1},f(x)) - \norm{\rho(\gamma_n)}_2}   \leq \kappa+ C_4+C_5.
\end{align*} 
Since $\ga_n \alpha_i^{-1} x \in E$, 
$$
n_0^{-1} \le h(\ga_n \alpha_i^{-1} x) \le n_0.
$$
Finally notice that 
$$
|\delta_1 \| \ga_n \|_1 - \delta_2 \| \rho(\ga_n) \|_2 | \le C_3 < +\infty
$$
by Equation \eqref{eqn.normalongseq}. Thus
$$
D^{-1} \le h(x) \le D,
$$
where $D := e^{C_1 +C_3+ (\delta_1 + \delta_2)(\kappa + C_4 + C_5)} n_0$.
\end{proof} 

Recalling that $h = \frac{d \tilde \mu_2}{d\mu_1}$, it follows from Lemma \ref{lem.hboundedae} that
$$
D^{-1}\mu_1(\Oc_R^f(\gamma)) \leq \tilde\mu_2(\Oc_R^f(\gamma)) \leq D \mu_1(\Oc_R^f(\gamma))
$$
for all $\gamma \in \Gamma_1$. Therefore, by Equation~\eqref{eqn:mass of shadows in long proof}, we have the desired estimate:
$$
\sup_{\gamma \in \Gamma} \abs{ \delta_1 \norm{\gamma}_{\sigma_1} - \delta_2 \norm{\rho(\gamma)}_{\sigma_2}} < +\infty.
$$
Now the proof of Theorem \ref{thm:main rigidity theorem} is complete once we show Proposition \ref{prop:technical result}.

\subsection{Proof of Proposition~\ref{prop:technical result}}

Fix metrics on $M_1$ and $M_2$ inducing their topologies. 
For each $j \geq 1$ fix a subsequence $\{ \tilde \ga_{j,n}\} \subset \{ \ga_n\}$ so that 
$$
[M_1 \smallsetminus \tilde\ga_{j,n}^{-1} \Oc_{R_j}(\tilde\ga_{j,n})] \rightarrow Z_j \quad \text{and} \quad [M_2 \smallsetminus \rho(\tilde\ga_{j,n})^{-1} \Oc_{R_j}(\rho(\tilde\ga_{j,n}))] \rightarrow Z_j'
$$
for some (possibly empty) compact subsets $Z_j \subset M_1$ and $Z_j' \subset M_2$ with respect to the Hausdorff distance. Then passing to a subsequence of $\{R_j\}$, we can assume that 
$$
Z_j \to Z \quad \text{and} \quad Z_j' \to Z'
$$
for some (possibly empty) compact subsets $Z \subset M_1$ and $Z' \subset M_2$ with respect to the Hausdorff distance.

By a diagonal argument, we can extract a subsequence $\{ \ga_{n_j} \} \subset \{\ga_n\}$ so that
\begin{align*}
[M_1 \smallsetminus \gamma_{n_j}^{-1} \Oc_{R_{j}}(\gamma_{n_j})] \rightarrow Z    \quad \text{and} \quad [M_2 \smallsetminus \rho(\gamma_{n_j}^{-1}) \Oc_{R_{j}}(\rho(\gamma_{n_j}))] \rightarrow Z' 
\end{align*}
with respect to the Hausdorff distance. Since $(M_1, \Ga_1, \sigma_1, \mu_1)$ and $(M_2, \Ga_2, \sigma_2, \mu_2)$ are PS-systems and $(M_1, \Ga_1, \sigma_1, \mu_1)$ is well-behaved (with respect to the hierarchy $\mathscr{H}_1 = \{ \mathscr{H}_1(R) \subset \Ga_1 : R \ge 0\}$), it then follows from  Lemma \ref{lem.keylemma} that
$$
M_1 \times M_2 = \bigcup_{\gamma \in \Gamma_1} (M_1 \smallsetminus \gamma Z) \times (M_2 \smallsetminus \rho(\gamma)Z').
$$
This implies that
$$
M_1 \times M_2 = \bigcup_{\epsilon > 0}\bigcup_{\gamma \in \Gamma_1} (M_1 \smallsetminus \gamma \overline{\Nc_{\epsilon}(Z)}) \times (M_2 \smallsetminus \rho(\gamma)\overline{\Nc_{\epsilon}(Z')}).
$$
By the compactness, there exist $\epsilon > 0$ and $\alpha_1, \ldots, \alpha_m \in \Ga_1$ such that 
$$
M_1 \times M_2 = \bigcup_{i = 1}^m (M_1 \smallsetminus \alpha_i \overline{\Nc_{\epsilon}(Z)}) \times (M_2 \smallsetminus \rho(\alpha_i)\overline{\Nc_{\epsilon}(Z')}).
$$
We then fix $j_0 \ge 1$ such that 
$$
Z_{j_0} \subset \Nc_{\epsilon/2}(Z) \quad \text{and} \quad Z_{j_0}' \subset \Nc_{\epsilon/2}(Z').
$$
Let $\{ \tilde \ga_n \} =\{\tilde \ga_{j_0,n}\}$. Then  there exists $N \ge 1$ such that for any $n \ge N$,
$$
M_1 \smallsetminus \tilde \ga_n^{-1} \Oc_{R_{j_0}}(\tilde \ga_n) \subset \Nc_{\epsilon/2}(Z_{j_0}) \quad \text{and} \quad M_2 \smallsetminus \rho(\tilde \ga_n)^{-1} \Oc_{R_{j_0}}(\rho(\tilde \ga_n)) \subset \Nc_{\epsilon/2}(Z_{j_0}').
$$
Therefore,
\begin{equation} \label{eqn.diagonal argument consequence}
M_1 \times M_2 = \bigcup_{i=1}^m  \left(\alpha_i \tilde \gamma_n^{-1} \Oc_{R_{j_0}}(\tilde \gamma_n)\right) \times \left( \rho(\alpha_i) \rho(\tilde \gamma_n)^{-1} \Oc_{R_{j_0}}(\rho(\tilde \gamma_n))\right) 
\end{equation}
for all  $n \ge N$.

Recall that  $M_1'$ satisfies Lemma \ref{lem:Leb Density} for $E$ and all $R_n$. Also, since $n \mapsto \mathscr{H}(n)$ is a non-increasing sequence of sets  and $\tilde \gamma_n \in \mathscr{H}(n)$ for all $n$, we have $\{\tilde \gamma_n\}_{n > R_{j_0}} \subset \mathscr{H}(R_{j_0})$. So by Lemma \ref{lem:Leb Density},
$$
\lim_{n \to \infty} \mu_1( \tilde \ga_n^{-1} \Oc_{R_{j_0}}(\tilde \ga_n) \smallsetminus \tilde \ga_n^{-1} E) = 0.
$$
Hence, since $\mu_1$ is $\Ga_1$-quasi-invariant, 
$$
\lim_{n \to \infty} \mu_1( \alpha_i \tilde \ga_n^{-1} \Oc_{R_{j_0}}(\tilde \ga_n) \smallsetminus \alpha_i \tilde \ga_n^{-1} E) = 0 
$$
for all $i = 1, \ldots, m$. We set
$$
M_1''  := M_1 \smallsetminus \bigcap_{n \geq N} \bigcup_{i=1}^m \left(  \alpha_i \tilde \gamma_n^{-1}\Oc_{R_{j_0}}(\tilde \gamma_n) \smallsetminus \alpha_i \tilde \gamma_n^{-1} E \right),
$$
which is of full $\mu_1$-measure.

For $x \in M_1''$, there exists $n \ge N$ such that
$$
x \notin \bigcup_{i = 1}^m \left(  \alpha_i \tilde \gamma_n^{-1}\Oc_{R_{j_0}}(\tilde \gamma_n) \smallsetminus \alpha_i \tilde \gamma_n^{-1} E \right).
$$
On the other hand, by Equation \eqref{eqn.diagonal argument consequence}, there exists $1 \le i \le m$ such that
$$
(x, f(x)) \in \left(\alpha_i \tilde \gamma_n^{-1} \Oc_{R_{j_0}}(\tilde \gamma_n)\right) \times \left( \rho(\alpha_i) \rho(\tilde \gamma_n)^{-1} \Oc_{R_{j_0}}(\rho(\tilde \gamma_n))\right),
$$
and therefore we must have 
$$
x \in \alpha_i \tilde \ga_n^{-1}E
$$
as well. This completes the proof of Proposition \ref{prop:technical result} with $R' := R_{j_0}$, and hence the proof of Theorem \ref{thm:main rigidity theorem}.
\qed

\part{Examples and Applications}

\section{Convergence groups and expanding coarse-cocycles}  \label{sec.convergenceaction}

In~\cite{BCZZ2024a}, Blayac--Canary--Zhu--Zimmer developed Patterson--Sullivan theory for coarse-cocycles of convergence groups. In this section we show that this theory is a special case of the definitions developed in the current paper. 

Let $M$ be a compact metrizable space and let $\Ga < \mathsf{Homeo}(M)$ be a non-elementary convergence group. In \cite[Prop. 2.3]{BCZZ2024a}  it was observed that the set $\Gamma \sqcup M$ has a unique topology such that 
\begin{itemize}  
\item $\Gamma \sqcup M$ is a compact metrizable space.
\item The inclusions $\Gamma \hookrightarrow \Gamma \sqcup M$ and $M \hookrightarrow \Gamma \sqcup M$ are embeddings (where in the first embedding $\Gamma$ has the discrete topology). 
\item the $\Ga$-action on $\Ga \sqcup M$, induced by the left-multiplication on $\Ga$ and the given $\Ga$-action on $M$, is a convergence action.
\end{itemize} 
Moreover, 
\begin{itemize}  
\item $\gamma_n \rightarrow a \in M$ and $\gamma_n^{-1} \rightarrow b \in M$ if and only if $\gamma_n|_{M \smallsetminus \{b\}} \rightarrow a$ locally uniformly. 
\end{itemize} 
For the rest of the section fix a metric $\dist$ on $\Gamma \sqcup M$ which generates this topology. 

In this setting, shadows can be defined as follows: for $\ga \in \Ga$ and $R > 0$ let
\begin{equation} \label{eqn.shadowinconvergence}
\Oc_R(\ga) := \ga\left(M \smallsetminus \overline{B_{1/R}(\ga^{-1})}\right)
\end{equation}
where $B_{1/R}(\ga^{-1})$ denotes the open ball of radius $1/R$ centered at $\ga^{-1}$ with respect to $\dist$.

\begin{remark} In~\cite{BCZZ2024a}, shadows are defined to be the closed sets 
\begin{equation*}
\ga\left(M \smallsetminus B_{1/R}(\ga^{-1})\right).
\end{equation*}
For the results cited below the difference between the two definitions is immaterial. 
\end{remark}

\begin{observation}\cite[proof of Lem. 5.4]{BCZZ2024a}\label{obs:conical limit points equivalent definition} With shadows as in Equation~\eqref{eqn.shadowinconvergence}, the set $\Lambda^{\rm con}(\Gamma)$ defined in Section~\ref{sec:conical limit set}  coincides with the set of conical limit points in the usual convergence group sense. Moreover, if $\dist(a,b) > 1/R$, $\gamma_n^{-1}x \rightarrow a$, and $\gamma_n^{-1}y \rightarrow b$ for all $y \in M \smallsetminus \{x\}$, then 
$$
x \in \bigcap_{n \geq 1} \Oc_{R}(\gamma_n).
$$
 \end{observation}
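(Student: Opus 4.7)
My plan is to unpack the definition of the shadow $\Oc_R(\gamma)$ as $\{x \in M : \dist(\gamma^{-1} x, \gamma^{-1}) > 1/R\}$, where $\dist$ is the metric on $\Ga \sqcup M$, and then translate between the shadow language and the dynamical definition of a conical limit point via the characterization of convergence in $\Ga \sqcup M$ quoted just above the observation.

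First I would prove the \emph{moreover} clause. Under its hypotheses, the restriction $\gamma_n^{-1}|_{M \smallsetminus \{x\}}$ converges locally uniformly to $b$, so by the quoted characterization $\gamma_n^{-1} \to b$ in $\Ga \sqcup M$; combined with $\gamma_n^{-1} x \to a$ in $M$ and continuity of $\dist$, this gives
\[
\dist(\gamma_n^{-1} x, \gamma_n^{-1}) \longrightarrow \dist(a,b) > 1/R,
\]
which forces $x \in \Oc_R(\gamma_n)$ for $n$ sufficiently large, and hence for all $n$ after discarding finitely many terms of the escaping sequence.

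For the equality between $\La^{\rm con}(\Ga)$ and the classical conical limit set, the inclusion $\supseteq$ is immediate from the moreover clause applied to the sequence $\gamma_n := \eta_n^{-1}$ produced by the convergence-group definition, choosing any $R$ with $1/R < \dist(a,b)$ and taking the outer translate in the definition of $\La^{\rm con}$ to be the identity.

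For the inclusion $\subseteq$, suppose $x \in \alpha \Oc_R(\gamma_n)$ for all $n$ with $\{\gamma_n\}$ escaping, which translates to $\dist(\gamma_n^{-1}\alpha^{-1} x, \gamma_n^{-1}) > 1/R$ in $\Ga \sqcup M$. Passing to a subsequence, the convergence-group action provides $a', b' \in M$ with $\gamma_n \to a'$ and $\gamma_n^{-1} \to b'$. If $\alpha^{-1} x \neq a'$ then $\gamma_n^{-1}(\alpha^{-1} x) \to b'$, which drives the distance to zero and contradicts the shadow estimate; thus $\alpha^{-1} x = a'$. Passing to a further subsequence so that $\gamma_n^{-1}(a') \to a''$, the shadow estimate forces $\dist(a'', b') \geq 1/R > 0$, so $a'' \neq b'$. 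Then the sequence $\eta_n := \gamma_n^{-1} \alpha^{-1}$ satisfies $\eta_n(x) = \gamma_n^{-1}(a') \to a''$ and, since $\alpha^{-1} y \neq a'$ for $y \neq x$, also $\eta_n(y) \to b'$, exhibiting $x$ as a classical conical limit point. The one delicate step is this last inclusion, where one must track the translate $\alpha$ through the various limits to identify the correct pair of limit points.
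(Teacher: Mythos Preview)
Your proposal is correct and follows the natural line of argument. The paper does not give its own proof of this observation but simply cites \cite[proof of Lem.~5.4]{BCZZ2024a}, so there is no in-paper proof to compare against; your unpacking of the shadow condition as $\dist(\gamma^{-1}x,\gamma^{-1}) > 1/R$ and your back-and-forth translation with the convergence-group definition is exactly the intended argument. Two small remarks: (i) when you assert that $\gamma_n^{-1}|_{M\smallsetminus\{x\}} \to b$ locally uniformly, the hypothesis only gives pointwise convergence, so you are implicitly invoking the standard fact that for convergence groups pointwise collapse off a point upgrades to locally uniform collapse (equivalently, use compactness of $\Gamma\sqcup M$ to pass to a subsequential limit of $\gamma_n^{-1}$ in $M$ and identify it); (ii) as you note, the literal conclusion $x\in\bigcap_{n\ge 1}\Oc_R(\gamma_n)$ only holds after discarding finitely many terms, which is harmless for the intended application and is how the paper uses it (e.g.\ in the proof of Theorem~\ref{thm:GH case meas=>homeo}).
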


 In \cite[Def. 1.2, Prop. 3.2 and 3.3]{BCZZ2024a} the following special class of coarse-cocycles where introduced. 

\begin{definition}   \label{defn.expanding coarse cocycle}
A coarse-cocycle $\sigma : \Ga \times M \to \mathbb{R}$ is called \emph{expanding} if:
\begin{enumerate}
   \item There exists $\kappa > 0$ such that for any $\ga \in \Ga$, the function $\sigma(\ga, \cdot)$ is \emph{$\kappa$-coarsely-continuous}: for $x_0 \in M$, $$\limsup_{x \to x_0} | \sigma(\ga, x) - \sigma(\ga, x_0) | \le \kappa.$$
   \item For every $\ga \in \Ga$, there is a number $\| \ga \|_{\sigma} \in \mathbb{R}$, called the \emph{$\sigma$-magnitude of $\ga$}, with the following properties: 
   \begin{enumerate} 
   \item $\lim_{n \rightarrow \infty} \norm{\gamma_n}_\sigma = +\infty$ for any escaping sequence $\{\gamma_n\} \subset \Gamma$. 
   \item For any $\epsilon > 0$, there exists $C > 0$ such that 
   $$
   \| \ga \|_{\sigma} - C \le \sigma(\ga, x) \le \| \ga \|_{\sigma} + C
   $$
  whenever $x \in M \smallsetminus B_\epsilon(\gamma^{-1})$. 
  \end{enumerate} 
\end{enumerate}

\end{definition}  

 Part of \cite{BCZZ2024a} was devoted to developing a theory of PS-measures for expanding coarse-cocycles and using these results we show that this theory is a special case of our well-behaved PS-systems. 

\begin{theorem} \label{thm.convergencepssystem}
   Let  $\sigma : \Ga \times M \to \mathbb{R}$ be an expanding coarse-cocycle and $\mu$ a coarse $\sigma$-PS measure, then $(M, \Ga, \sigma, \mu)$ is a well-behaved PS-system with respect to the trivial hierarchy $\mathscr{H}(R) \equiv \Ga$, with shadows as in Equation~\eqref{eqn.shadowinconvergence}.
\end{theorem}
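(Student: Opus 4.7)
My strategy is to verify each of the eight properties (PS1)--(PS8) in Definition \ref{defn:PS systems} for the trivial hierarchy $\mathscr{H}(R)\equiv\Gamma$. Several will be near-immediate: (PS2) is the restatement of Definition \ref{defn.expanding coarse cocycle}(2)(b) applied with $\epsilon = 1/R$, since $\gamma^{-1}\Oc_R(\gamma) = M \smallsetminus \overline{B_{1/R}(\gamma^{-1})}$; (PS6) holds because $B_{1/R}(\gamma^{-1})$ shrinks as $R$ grows. For (PS1), I would use that each $\gamma^{-1}\in\Gamma$ is isolated in $\Gamma\sqcup M$ (no sequence in $M$ can converge into $\Gamma$ since both inclusions are embeddings), so $B_\epsilon(\gamma^{-1})\cap M=\emptyset$ for $\epsilon$ small enough, and (2)(b) then bounds $\sigma(\gamma,\cdot)$ on all of $M$. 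Property (PS4) combines the countability of $\Gamma$ with condition (2)(a): any infinite sublevel set of $\norm{\cdot}_\sigma$ would contain an escaping sequence, contradicting $\norm{\gamma_n}_\sigma\to\infty$.

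For (PS8) and the irreducibility axioms (PS3), (PS5), I would use the convergence dynamics. For an escaping $\{\gamma_n\}$, pass to a subsequence with $\gamma_n\to a$ and $\gamma_n^{-1}\to b$ in $\Gamma\sqcup M$; both limits then lie in $M$. The set $\overline{B_{1/R}(\gamma_n^{-1})}\cap M$ eventually contains a fixed neighborhood of $b$, and locally uniform convergence $\gamma_n|_{M\smallsetminus B_\epsilon(b)}\to a$ forces $\diam \Oc_R(\gamma_n)\to 0$ independently of $x$; hence $M'=M$ works for (PS8). The Hausdorff limit $Z$ appearing in (PS3) and (PS5) is then either empty (when $\{\gamma_n\}$ is bounded and the balls eventually miss $M$) or contained in $\{b\}$. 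For (PS3), non-elementarity forbids a global fixed point in $M$. For (PS5), orbits of a non-elementary convergence group on $M$ are infinite (a finite orbit would yield a finite-index subgroup fixing a point, contradicting non-elementarity), so $\Gamma b\not\subset\{h_1 b,\ldots,h_m b\}$.

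The main obstacle is (PS7), the combinatorial shadow estimate for intersecting shadows, and where the expanding hypothesis is genuinely used. My plan is to extract it from the framework already developed in \cite{BCZZ2024a}. Given $\alpha,\beta\in\Gamma$ with $\Oc_R(\alpha)\cap\Oc_R(\beta)\ne\emptyset$ and $\norm{\alpha}_\sigma\le\norm{\beta}_\sigma$, the presence of a common point $x$ forces $\alpha^{-1}$ and $\beta^{-1}$ to be comparable in $\Gamma\sqcup M$, from which I would deduce, first, the coarse additivity $|\norm{\beta}_\sigma - \norm{\alpha}_\sigma - \norm{\alpha^{-1}\beta}_\sigma|\le C$ by applying the coarse-cocycle relation to $\beta=\alpha(\alpha^{-1}\beta)$ together with the a.e. constancy of $\sigma(\gamma,\cdot)$ on $\gamma^{-1}\Oc_R(\gamma)$ already established as (PS2), and second, the inclusion $\Oc_R(\beta)\subset\Oc_{R'}(\alpha)$ by comparing shadow diameters and positions via the BCZZ dynamical analysis of $\alpha^{-1}\beta$. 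This is the step that requires the most care and carries the bulk of the appeal to the expanding-cocycle theory.
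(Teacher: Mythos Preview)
Your proposal is correct and follows essentially the same route as the paper. The paper's proof is terser: it cites \cite[Prop.~3.3(2)]{BCZZ2024a} for (PS4), \cite[Prop.~5.1(3),(4)]{BCZZ2024a} for (PS7), and \cite[Prop.~5.1(2)]{BCZZ2024a} for (PS8), while you spell out the convergence-dynamics arguments for (PS4) and (PS8) directly and reserve the BCZZ appeal for (PS7) --- exactly where the expanding hypothesis does the real work. For (PS3) and (PS5), both you and the paper reduce to showing that the Hausdorff limit $Z$ is at most a singleton $\{b\}$ and then invoke non-elementarity; note that $b$, being a limit of $\gamma_n^{-1}$ in $\Gamma\sqcup M$, lies in the limit set of $\Gamma$, so its $\Gamma$-orbit is automatically infinite --- this makes your (PS5) argument go through cleanly. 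One minor difference: for (PS1) the paper uses coarse-continuity of $\sigma(\gamma,\cdot)$ on compact $M$, whereas your argument via isolation of $\gamma^{-1}$ and property (2)(b) is an equally valid alternative.
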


\begin{proof} Since each $\sigma(\gamma, \cdot)$ is coarsely-continuous, Property~\ref{item:coycles are bounded} is satisfied. Property~\ref{item:almost constant on shadows}  follows from the defining property of the $\sigma$-magnitude and the definition of the shadows. Property \ref{item:shadow inclusion} follows from the definition of the shadows.

Property~\ref{item:properness} is a consequence of \cite[Prop. 3.3 part (2)]{BCZZ2024a}, Property~\ref{item:intersecting shadows} is a consequence of \cite[Prop. 5.1 parts (3) and (4)]{BCZZ2024a}, and Property~\ref{item:diam goes to zero ae} is a consequence of  \cite[Prop. 5.1 part (2)]{BCZZ2024a}.

To verify Property \ref{item:empty Z intersection} and Property \ref{item:baire}, assume  $\{ \ga_n \} \subset \Ga$, $R_n \to + \infty$, and $[M \smallsetminus \ga_n^{-1} \Oc_{R_n}(\ga_n)] \to Z$ with respect to the Hausdorff distance. Then $Z$ must be singleton or empty. Then, since $\Ga$ is a non-elementary convergence group,  Property \ref{item:empty Z intersection} and Property \ref{item:baire} are true.
\end{proof}  

\subsection{Examples} We will describe one class of examples of expanding coarse-cocycle, for more see~\cite[Sect. 1.2]{BCZZ2024a}. For the rest of this subsection suppose $(X, \dist_X)$ is a proper geodesic Gromov hyperbolic metric space and $\Gamma < \mathsf{Isom}(X)$ is discrete. 

Following \cite[Def. 1.9]{BCZZ2024a} (which is similar to ~\cite[Def.\ 2.2]{CantrellTanaka_measures}), a function $\psi \colon X \times X \to \Rb$ is a \emph{coarsely additive potential} if 
\begin{enumerate}
\item\label{item:potential proper} $\lim_{r \rightarrow \infty} \inf_{\dist_X(p,q) \geq r} \psi(p,q) = +\infty$, 
\item\label{item:potential bounded} for any  $r>0$,  
$$\sup_{\dist_X(p,q) \leq r}\abs{ \psi(p,q)} < +\infty,
$$ 
\item\label{item:potential coarsely additive} for every $r > 0$ there exists $\kappa=\kappa(r) > 0$ such that: if $u$ is contained in the $r$-neighborhood of a geodesic in $(X,\dist_X)$ joining $p$ to $q$, then 
$$
\abs{\psi(p,q) - \big(\psi(p,u) + \psi(u,q)\big) } \leq \kappa.
$$
\end{enumerate}

\begin{theorem}\cite[Thm. 1.11 and 1.13]{BCZZ2024a}\label{thm:theory of potentials} 
\ \begin{enumerate}
\item  If $\psi$ is a $\Gamma$-invariant coarsely additive potential, then
\begin{align*}
\sigma_{\psi}(\gamma,x) : = \limsup_{p \rightarrow x} \, \psi(\gamma^{-1}o, p) - \psi(o,p)
\end{align*}
is an expanding coarse-cocycle on $\partial_\infty X$ and one can choose 
$$
\norm{\gamma}_{\sigma_{\psi}} = \psi(o,\gamma o).
$$
\item If $\Gamma$ acts cocompactly on $X$ and $\sigma : \Gamma \times \partial_\infty X \rightarrow \Rb$ is an expanding coarse-cocyle, then there exists a $\Gamma$-invariant coarsely additive potential $\psi$ such that 
$$
\sup_{\gamma \in \Gamma, x \in \partial_\infty X} \abs{\sigma(\gamma,x) - \sigma_\psi(\gamma,x)} < +\infty. 
$$
\end{enumerate} 
\end{theorem}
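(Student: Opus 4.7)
The strategy is to verify each clause of Definition~\ref{defn.expanding coarse cocycle} for $\sigma_\psi$ using only potential-properties~(1)--(3) and the $\Gamma$-invariance $\psi(\gamma p, \gamma q) = \psi(p, q)$. With the choice $\norm{\gamma}_{\sigma_\psi} := \psi(o, \gamma o)$, the divergence $\norm{\gamma_n}_{\sigma_\psi} \to +\infty$ along escaping sequences follows from potential-property~(1) together with the discreteness of $\Gamma$ in $\Isom(X)$. The $\kappa$-coarse-continuity of $\sigma_\psi(\gamma, \cdot)$ reduces to the bound $|\psi(r, p) - \psi(r, p')| = O(1)$ for $p, p'$ lying within bounded distance along a geodesic ray toward $x$, which follows from potential-properties~(2) and~(3) applied twice. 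The coarse-cocycle identity is established via the telescoping
$$
\psi(\gamma_2^{-1}\gamma_1^{-1} o, p) - \psi(o, p) = \bigl[\psi(\gamma_1^{-1} o, \gamma_2 p) - \psi(o, \gamma_2 p)\bigr] + \bigl[\psi(\gamma_2^{-1} o, p) - \psi(o, p)\bigr],
$$
which uses $\Gamma$-invariance in the first bracket and gives the sum of the two cocycle values once one performs a controlled interchange of limsup and sum, legitimized by the geometric step below showing that the limsups stabilize to limits up to bounded error along geodesic rays.

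\textbf{The crucial step} is verifying property~(2)(b) of expanding cocycles: $\sigma_\psi(\gamma, x) \approx \norm{\gamma}_{\sigma_\psi}$ when $x$ avoids a small ball around $\gamma^{-1}$ in $\Gamma \sqcup M$. The plan here exploits the standard hyperbolic-geometry fact that if $x$ stays at distance at least $\epsilon$ from $\gamma^{-1}$ in the compactification, then for $p$ close enough to $x$, the basepoint $o$ lies within a bounded distance of the geodesic $[\gamma^{-1} o, p]$, with the bound depending only on $\epsilon$. Potential-property~(3) then yields
$$
\psi(\gamma^{-1} o, p) = \psi(\gamma^{-1} o, o) + \psi(o, p) + O(1),
$$
so taking the limsup as $p \to x$, subtracting $\psi(o, p)$, and applying $\Gamma$-invariance $\psi(\gamma^{-1} o, o) = \psi(o, \gamma o) = \norm{\gamma}_{\sigma_\psi}$ gives the result.

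\textbf{Plan for Part (2).} Here the idea is to transport the cocycle magnitude from $\Gamma$ to $X \times X$ via the cocompact action. Fix a compact $K \subset X$ with $\Gamma \cdot K = X$; for each $p \in X$ choose $\gamma_p \in \Gamma$ with $\gamma_p^{-1} p \in K$, and set $\psi(p, q) := \norm{\gamma_p^{-1} \gamma_q}_\sigma$. The ambiguity in the choice of $\gamma_p$ is absorbed because different choices differ by an element of a fixed finite subset of $\Gamma$ (those translating $K$ within bounded distance of itself), whose magnitudes are uniformly bounded by expanding-property~(2)(b). $\Gamma$-invariance is automatic, potential-property~(1) follows from expanding-property~(2)(a) applied to the escaping sequence $\gamma_p^{-1}\gamma_q$ as $d_X(p, q) \to \infty$, and potential-property~(2) is immediate from the boundedness of $\gamma_p^{-1}\gamma_q$ for $d_X(p, q)$ bounded. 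To conclude $\sup |\sigma - \sigma_\psi| < +\infty$, one applies Part~(1) to the constructed $\psi$ to obtain $\sigma_\psi(\gamma, x) \approx \norm{\gamma}_\sigma$ whenever $x$ is away from $\gamma^{-1}$; since $\sigma(\gamma, x) \approx \norm{\gamma}_\sigma$ in the same regime, both cocycles agree there, and the coarse-cocycle identity bootstraps the agreement to all $x$ by factoring $\gamma$ into two pieces each evaluated in the ``good'' region. \textbf{The main obstacle} will be verifying potential-property~(3), that is, the coarse additivity
$$
\norm{\gamma_p^{-1}\gamma_q}_\sigma = \norm{\gamma_p^{-1}\gamma_u}_\sigma + \norm{\gamma_u^{-1}\gamma_q}_\sigma + O(1)
$$
when $u$ lies near a geodesic from $p$ to $q$. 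The plan is to evaluate the coarse-cocycle identity for $\sigma$ at a boundary point $x \in \partial_\infty X$ in the forward direction of the geodesic beyond $q$, and use property~(2)(b) three times to convert each cocycle value into the corresponding magnitude. The delicate step is arranging that the required boundary separations---between $x$ and $(\gamma_p^{-1}\gamma_q)^{-1}$, between $\gamma_u^{-1}\gamma_q \, x$ and $(\gamma_p^{-1}\gamma_u)^{-1}$, and between $x$ and $(\gamma_u^{-1}\gamma_q)^{-1}$---hold uniformly in $p, u, q$, which is where Gromov hyperbolicity via thin triangles combines with cocompactness to yield a definite angular gap.
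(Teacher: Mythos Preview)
This theorem is not proved in the present paper; it is quoted from \cite[Thm.~1.11 and~1.13]{BCZZ2024a} and used as a black box. So there is no ``paper's own proof'' to compare against---you are effectively reconstructing the argument of the cited reference.

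That said, your outline is sound and matches the natural approach. For Part~(1), the key geometric input you identify---that $o$ lies within bounded distance of a geodesic $[\gamma^{-1}o,p]$ whenever $p$ is close to a boundary point $x$ at definite distance from $\gamma^{-1}$ in the compactification---is exactly right, and potential-property~(3) then does the work. One small point: the coarse-cocycle identity needs the interchange of limsups you flag, and the cleanest way to handle it is to observe that along a geodesic ray toward $x$ the quantity $\psi(\gamma^{-1}o,p)-\psi(o,p)$ is eventually constant up to a fixed error (again by property~(3), since both $\gamma^{-1}o$ and $o$ eventually lie near the ray), so the limsup is really a limit up to $O(1)$.

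For Part~(2), your construction $\psi(p,q)=\norm{\gamma_p^{-1}\gamma_q}_\sigma$ is the natural one, and your plan for coarse additivity via the cocycle identity evaluated at a boundary point beyond $q$ is correct in spirit. Two places deserve care. First, to get exact $\Gamma$-invariance you should choose $\gamma_p$ via a Borel section (a measurable fundamental domain), not just any $\gamma_p$ with $\gamma_p^{-1}p\in K$; otherwise you only get invariance up to bounded error, which is harmless but worth saying. Second, the three separation conditions you list at the end must hold with an $\epsilon$ depending only on $r$ and the hyperbolicity constant, uniformly in $p,u,q$; this is where you genuinely need the comparison between the abstract metric on $\Gamma\sqcup\partial_\infty X$ and Gromov products in $X$, and you should state explicitly which lemma from~\cite{BCZZ2024a} (or which thin-triangle estimate) supplies it. The final bootstrap (``factoring $\gamma$ into two pieces'') works, but note that you also need $\psi(o,\gamma o)=\norm{\gamma_o^{-1}\gamma_{\gamma o}}_\sigma$ to agree with $\norm{\gamma}_\sigma$ up to bounded error, which follows since $\gamma_o^{-1}\gamma_{\gamma o}$ differs from $\gamma$ by an element of a fixed finite set.
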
 

\begin{example} \label{ex.distance Gromov potential} The distance function $\dist_X$ is a $\Isom(X)$-invariant coarsely additive potential and the associated expanding coarse-cocycle is just the coarse Busemann cocycle. \end{example} 

\begin{example}[{see~\cite[Sect. 1.2.5]{BCZZ2024a}}]\label{ex.RW GH}  Suppose $\Gamma$ is word hyperbolic, $X$ is a Cayley graph of $\Gamma$, and $\mathsf{m}$ is a probability measure on $\Gamma$ with finite superexponential moment and whose support generates $\Gamma$ as a semigroup. Then the Green metric $\dist_G$ is a $\Gamma$-invariant coarsely additive potential and the unique $\mathsf{m}$-stationary measure on $\partial_\infty \Gamma$ is a $\sigma_{\dist_G}$-PS measure of dimension 1. Note: in ~\cite[Sect. 1.2.5]{BCZZ2024a} it is assumed that $\mathsf{m}$ has finite support, but using~\cite{Gou2015} the same discussion is valid when $\mathsf{m}$ has finite superexponential moment.
\end{example} 

In Section~\ref{sec.randomwalks} we consider stationary measures on the Bowditch boundary of a relatively hyperbolic group.

\subsection{Measurable isomorphisms} As an application of Theorem~\ref{thm:main rigidity theorem in intro}, we show that for word hyperbolic groups a measurable isomorphism between boundaries endowed with PS-measures is always induced by a homeomorphism.

\begin{theorem}\label{thm:GH case meas=>homeo} For $i=1,2$ suppose $\Gamma_i$ is non-elementary word hyperbolic, $\sigma_i : \Gamma_i \times \partial_\infty \Gamma_i \rightarrow \Rb$ is an expanding coarse-cocycle, and $\mu$ is a coarse $\sigma_i$-PS measure for $\Gamma_i$ of dimension $\delta_i$ on $\partial_{\infty} \Ga_i$. Assume there exist
\begin{itemize}
\item a homomorphism $\rho : \Gamma_1 \rightarrow \Gamma_2$ with non-elementary image and 
\item a $\mu_1$-almost everywhere defined measurable $\rho$-equivariant injective map $f : \partial_\infty \Gamma_1 \rightarrow \partial_\infty \Gamma_2$.
\end{itemize} 
If $f_* \mu_1$ and $\mu_2$ are not singular, then $\ker \rho$ is finite, $\rho(\Gamma_1) < \Gamma_2$ has finite index, 
$$
\sup_{\gamma \in \Gamma_1} \abs{ \delta_1\norm{\gamma}_{\sigma_1} - \delta_2 \norm{\rho(\gamma)}_{\sigma_2}} < +\infty, 
$$
and there exists a $\rho$-equivariant homeomorphism $\tilde f : \partial_\infty \Gamma_1 \rightarrow \partial_\infty \Gamma_2$ such that 
\begin{enumerate}
\item $\tilde f = f$ $\mu_1$-a.e., 
\item $\sup_{(\gamma,x) \in \Ga_1 \times \partial_{\infty} \Ga_1} \abs{\delta_1\sigma_1(\gamma,x) -\delta_2\sigma_2(\rho(\gamma), \tilde f(x))} < + \infty$, 
\item $\tilde f_* \mu_1$, $\mu_2$ are in the same measure class and the Radon--Nikodym derivatives are a.e. bounded from above and below by a positive number.
\end{enumerate} 
\end{theorem}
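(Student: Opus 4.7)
The plan is to reduce to Theorem~\ref{thm:main rigidity theorem in intro} and then upgrade the coarse conclusion using the rigidity of convergence group actions. By Theorem~\ref{thm.convergencepssystem}, $(\partial_\infty\Gamma_1,\Gamma_1,\sigma_1,\mu_1)$ is a well-behaved PS-system with the trivial hierarchy, and since $\rho(\Gamma_1)$ is non-elementary, $(\partial_\infty\Gamma_2,\rho(\Gamma_1),\sigma_2|_{\rho(\Gamma_1)},\mu_2)$ is a PS-system. The hypothesis $\mu_1(\Lambda^{\rm con}(\Gamma_1))=1$ follows from Theorem~\ref{thm:conical limit set has full measure}(2) once one observes that the Poincar\'e series $\sum_\gamma e^{-\delta_1\|\gamma\|_{\sigma_1}}$ diverges for a coarse PS-measure of a non-elementary word hyperbolic group. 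Applying Theorem~\ref{thm:main rigidity theorem in intro} to $f$, viewed as $\rho$-equivariant for the surjection $\rho:\Gamma_1\to\rho(\Gamma_1)$, then yields
\begin{equation*}
\sup_{\gamma\in\Gamma_1}\bigl|\delta_1\|\gamma\|_{\sigma_1}-\delta_2\|\rho(\gamma)\|_{\sigma_2}\bigr|<+\infty. \tag{$\star$}
\end{equation*}

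Given $(\star)$, I would first extract the algebraic rigidity. For any infinite-order $\gamma\in\Gamma_1$ the sequence $\{\gamma^n\}$ is escaping, so $\|\gamma^n\|_{\sigma_1}\to+\infty$ by Definition~\ref{defn.expanding coarse cocycle}; if $\gamma\in\ker\rho$, then $\|\rho(\gamma^n)\|_{\sigma_2}$ stays bounded, contradicting $(\star)$ (note $\delta_1>0$ since no non-elementary convergence action admits a quasi-invariant probability measure with uniformly bounded Radon--Nikodym densities). Hence $\ker\rho$ is a torsion subgroup of the word hyperbolic group $\Gamma_1$, which must be finite. The same reasoning shows that $\rho$ sends loxodromic elements of $\Gamma_1$ to loxodromic elements of $\Gamma_2$; setting $\tilde f(\gamma^+):=\rho(\gamma)^+$ for loxodromic $\gamma$ and extending by density and continuity via standard convergence-group arguments then gives a $\rho$-equivariant topological embedding $\tilde f:\partial_\infty\Gamma_1\to\partial_\infty\Gamma_2$ with image $\Lambda(\rho(\Gamma_1))$. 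For the finite-index conclusion, once I have shown $\tilde f=f$ $\mu_1$-a.e., non-singularity forces $\mu_2(\Lambda(\rho(\Gamma_1)))>0$, and $\mu_2|_{\Lambda(\rho(\Gamma_1))}$ becomes a coarse $\sigma_2$-PS measure of dimension $\delta_2$ for $\rho(\Gamma_1)$; since $(\star)$ equates the critical exponents of $\Gamma_1$ and $\rho(\Gamma_1)$ (scaled by $\delta_1,\delta_2$), the standard fact that a proper quasi-convex subgroup of a word hyperbolic group has strictly smaller critical exponent than the ambient group forces $\rho(\Gamma_1)$ to have finite index in $\Gamma_2$.

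The identification $\tilde f=f$ $\mu_1$-a.e.\ will follow from Corollary~\ref{cor:approx continuity} applied to $f$ together with the continuity of $\tilde f$: at $\mu_1$-a.e.\ conical limit point $x$, the values of $f$ on a shrinking sequence of shadows around $x$ concentrate at $f(x)$ in the $\mu_1$-essential sense, while the continuous $\tilde f$ sends those same shadows into shrinking shadows in $\partial_\infty\Gamma_2$ around $\tilde f(x)$ by $(\star)$; injectivity of $f$ and the equivariance then force $f(x)=\tilde f(x)$. Item~(3) is then Lemma~\ref{lem.hboundedae} transferred along $\tilde f$, and item~(2) is obtained by differentiating the two quasi-invariance relations for $\mu_1$ and $\mu_2$ pointwise on the full-measure set where $\tilde f=f$ and using approximate continuity. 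The main obstacle is the step from $(\star)$ to the construction of $\tilde f$ and the finite-index conclusion, because for a general expanding cocycle the magnitude $\|\cdot\|_\sigma$ need not be quasi-isometric to a word metric, so the familiar boundary extension of a quasi-isometric embedding is not directly available. I would circumvent this by invoking Theorem~\ref{thm:theory of potentials}(2), which (since $\Gamma_i$ acts cocompactly on its Cayley graph) realizes each $\sigma_i$ up to bounded error as arising from a coarsely additive potential, supplying enough uniform geometric control along geodesics to complete both the boundary extension and the critical-exponent comparison.
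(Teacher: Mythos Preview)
Your overall strategy matches the paper's: apply Theorem~\ref{thm.convergencepssystem} and Theorem~\ref{thm:main rigidity theorem in intro} to obtain $(\star)$, then invoke Theorem~\ref{thm:theory of potentials}(2) so that the magnitudes are quasi-isometric to word metrics, and upgrade. The paper streamlines the order: it applies Theorem~\ref{thm:theory of potentials}(2) at the outset, so that $(\star)$ immediately exhibits $\rho$ as a quasi-isometric embedding and $\tilde f$ arises as its boundary extension, rather than via an ad hoc construction from loxodromic fixed points. For finite index the paper argues as you do in substance: from $(\star)$ and the divergence of both Poincar\'e series one gets $\delta_{\sigma_2}(\rho(\Gamma_1))=\delta_{\sigma_2}(\Gamma_2)=\delta_2$, and then \cite[Thm.~4.3]{BCZZ2024a} forces $\tilde f(\partial_\infty\Gamma_1)=\partial_\infty\Gamma_2$. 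Items (2) and (3) are quoted from \cite[Prop.~13.1, 13.2]{BCZZ2024a} rather than re-derived from Lemma~\ref{lem.hboundedae}.

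The one place your sketch is genuinely incomplete is the identification $\tilde f=f$ $\mu_1$-a.e. Saying that $f$ concentrates near $f(x)$ on shrinking shadows while $\tilde f$ sends those shadows near $\tilde f(x)$ does not by itself relate the two values; you need a mechanism that forces $f(x)$ into the attractor of the $\rho(\gamma_n)$-dynamics. The paper first reduces (via $\tilde f^{-1}$) to the case $\Gamma_1=\Gamma_2$, $\rho=\id$, and then runs Tukia's argument (Lemma~\ref{lem:adapting Tukias argument}): at a conical $x$ with $\gamma_n^{-1}x\to a$ and $\gamma_n^{-1}|_{M\smallsetminus\{x\}}\to b$, one uses Corollary~\ref{cor:approx continuity} together with Property~\ref{item:almost constant on shadows} to pass from shadows to their pullbacks and extract a full-measure set $E$ on which $\gamma_n f(y)\to f(x)$. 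Since $\mu_1$ is atomless and $f$ is injective, $f(E)$ contains at least two points, hence not all of $f(E)$ lies at the repeller $b$; the locally uniform convergence $\gamma_n|_{M\smallsetminus\{b\}}\to x$ then forces $f(x)=x$. It is this dynamical step, converting approximate continuity into an a.e.\ limit statement and then playing it against the source--sink dynamics, that your phrase ``injectivity and equivariance then force $f(x)=\tilde f(x)$'' is hiding.
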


\subsection{Proof of Theorem~\ref{thm:GH case meas=>homeo} } For notational convenience, we let $\norm{\cdot}_i : = \norm{\cdot}_{\sigma_i}$. 

By Theorem~\ref{thm:theory of potentials} we can assume that each $\sigma_i$ corresponds to a coarsely additive potential on a Cayley graph. Then the third defining property for coarsely additive potentials implies that there exist $c > 1$ such that 
\begin{equation}\label{eqn:QI to word metric}
c^{-1} \abs{\gamma}_i - c \leq \norm{\gamma}_i \leq c \abs{\gamma}_i + c
\end{equation} 
for all $\gamma \in \Gamma_i$, where $\abs{\cdot}_i$ is the distance from $\id \in \Ga_i$ with respect to a  word metric on $\Gamma_i$ with respect to a finite generating set.

By Theorem~\ref{thm:main rigidity theorem in intro},
\begin{equation}\label{eqn: equivalence of norms} 
\sup_{\gamma \in \Gamma_1} \abs{\delta_1\norm{\gamma}_1 - \delta_2\norm{\rho(\gamma)}_2} < +\infty. 
\end{equation} 
Then Property~\ref{item:properness} implies that $\ker \rho$ is finite and Equation~\eqref{eqn:QI to word metric} implies that $\rho$ induces a quasi-isometric embedding  $\Gamma_1 \rightarrow \Gamma_2$. So there exists a $\rho$-equivariant embedding $\tilde f : \partial_\infty \Gamma_1 \rightarrow \partial_\infty \Gamma_2$. 

For a subgroup $\Hsf < \Ga_2$, let $\delta_{\sigma_2}(\Hsf)$ be the critical exponent  of the Poincar\'e series $s \mapsto \sum_{g \in \Ga_2} e^{-s \norm{g}_{\sigma_2}}$.
Since $\mu_2$ is a coarse $\sigma_2$-PS measure for $\Gamma_2$ of dimension $\delta_2$, ~\cite[Prop. 6.2]{BCZZ2024a} implies that $\delta_2 \geq \delta_{\sigma_2}(\Gamma_2)$. Moreover, since every point in $\partial_\infty \Gamma_i$ is conical, ~\cite[Prop. 6.3]{BCZZ2024a} implies that for $i = 1, 2$,
$$
\sum_{\gamma \in \Gamma_i} e^{-\delta_i \norm{\gamma}_i} = +\infty. 
$$
This, together with Equation~\eqref{eqn: equivalence of norms}, implies that
$$
\delta_{\sigma_2}(\Gamma_2)= \delta_{\sigma_2}(\rho(\Gamma_1)) = \delta_2. 
$$
Then ~\cite[Thm. 4.3]{BCZZ2024a} implies that $\tilde f(\partial_\infty \Gamma_1) = \partial_\infty \Gamma_2$. Since $\rho(\Gamma_1)$ is quasi-convex in $\Gamma_2$, this implies that $\rho(\Gamma_1) < \Gamma_2$ has finite index.

Now by replacing $\Gamma_1$ with $\Gamma_1/\ker \rho$ and $\Gamma_2$ with $\rho(\Gamma_2)$, it suffices to consider the case where $\Gamma:=\Gamma_1 = \Gamma_2$, $\rho: \Gamma \rightarrow \Gamma$ is the identity representation, and $f : \partial_\infty \Gamma \rightarrow  \partial_\infty \Gamma$ commutes with the $\Gamma$ action, then show that 
\begin{enumerate}
\item $f=\id_{\partial_\infty \Gamma}$ $\mu_1$-a.e., 
\item $\displaystyle \sup_{(\gamma,x) \in \Ga \times \partial_{\infty} \Ga} \abs{\delta_1\sigma_1(\gamma,x) - \delta_2\sigma_2(\gamma, x)} < + \infty$,
\item $\mu_1$, $\mu_2$ are in the same measure class and the Radon--Nikodym derivatives are a.e. bounded from above and below by a positive number.
\end{enumerate} 
Assertions (2) and (3) are an immediate consequence of \cite[Prop. 13.1 and 13.2]{BCZZ2024a}.

We now show (1). Fix $R_j \rightarrow +\infty$. After possibly passing to a tail of $\{R_j\}$, by Corollary~\ref{cor:approx continuity} and the fact that $\mathscr{H}_1(R) \equiv \Gamma$, there exists a $\mu_1$-full measure set $M'$ such that whenever $x \in M' \cap \bigcap_{n \ge 1} \ga \Oc_{R_j}(\ga_n)$ for some $j \ge 1$, $\ga \in \Ga$, and an escaping sequence $\{ \ga_n \} \subset \Ga$, we have
$$
0 = \lim_{n \rightarrow \infty} \frac{1}{\mu(\gamma\Oc_{R_j}(\gamma_n))}\mu\left( \left\{ y \in \gamma\Oc_{R_j}(\gamma_n) : \dist(f(x), f(y)) > \epsilon\right\} \right)
$$
for all $\epsilon > 0$. 

Fix $x \in M'$. Since $\Gamma$ acts on $\partial_\infty \Gamma$ as a uniform convergence group,  $x$ is a conical limit point. So there exist $\{\gamma_n\}$ and distinct $a,b \in \partial_\infty \Gamma$ such that $\gamma_n^{-1}x \rightarrow a$ and $\gamma_n^{-1}y \rightarrow b$ for all $y \in \partial_\infty \Gamma \smallsetminus \{x\}$. Then $\gamma_n \rightarrow x$ and $\gamma_n^{-1} \rightarrow b$ in $\Gamma \sqcup \partial_\infty \Gamma$. So $\gamma_n|_{\partial_\infty \Gamma \smallsetminus \{b\}} \rightarrow x$ locally uniformly. Further, by Observation~\ref{obs:conical limit points equivalent definition}, 
 $$
 x \in \bigcap_{n \ge 1} \Oc_{R'}(\ga_n)
 $$
 where $R' : = \frac{2}{\dist(a,b)}$. 

\begin{lemma}\label{lem:adapting Tukias argument}  After replacing $\{\gamma_n\}$ with a subsequence we can find a $\mu_1$-full measure set $E$ where $\gamma_n f(y) \rightarrow f(x)$ for all $y \in E$. \end{lemma}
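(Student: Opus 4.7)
The plan is to combine the approximate continuity of $f$ at $x$ (supplied by $x \in M'$ via Corollary~\ref{cor:approx continuity}) with a Borel--Cantelli argument, after transferring the ``bad'' sets by $\gamma_n^{-1}$ using the coarse-PS identity.

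Since we have passed to a tail of $\{R_j\}$ with $R_j \ge R' := 2/\dist(a,b)$, Property~\ref{item:shadow inclusion} gives $x \in \bigcap_n \Oc_{R_j}(\gamma_n)$ for each $j \ge 1$. The defining property of $M'$ then yields, for every $j$ and $\epsilon > 0$,
\[
\lim_{n \to \infty} \frac{\mu_1(A_{n,j}^\epsilon)}{\mu_1(\Oc_{R_j}(\gamma_n))} = 0, \quad A_{n,j}^\epsilon := \{y \in \Oc_{R_j}(\gamma_n) : \dist(f(y), f(x)) > \epsilon\}.
\]
Pulling back by $\gamma_n^{-1}$, the shadow formula~\eqref{eqn.shadowinconvergence} together with the $\Gamma$-equivariance $f(\gamma_n z) = \gamma_n f(z)$ identifies
\[
\gamma_n^{-1} A_{n,j}^\epsilon = \left\{z \in \partial_\infty \Gamma \smallsetminus \overline{B_{1/R_j}(\gamma_n^{-1})} : \dist(\gamma_n f(z), f(x)) > \epsilon\right\}.
\]
The coarse-PS identity~\eqref{eqn.coarse PS meaasure intro} with Property~\ref{item:almost constant on shadows} gives $\frac{d(\gamma_n)_*\mu_1}{d\mu_1} \asymp e^{\delta_1 \norm{\gamma_n}_{\sigma_1}}$ on $\Oc_{R_j}(\gamma_n)$, while the Shadow Lemma (Proposition~\ref{prop.shadowlemma}) gives $\mu_1(\Oc_{R_j}(\gamma_n)) \asymp e^{-\delta_1 \norm{\gamma_n}_{\sigma_1}}$. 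Consequently,
\[
\mu_1(\gamma_n^{-1} A_{n,j}^\epsilon) = (\gamma_n)_*\mu_1(A_{n,j}^\epsilon) \asymp \frac{\mu_1(A_{n,j}^\epsilon)}{\mu_1(\Oc_{R_j}(\gamma_n))} \longrightarrow 0.
\]

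A diagonal extraction over $j$ then produces a subsequence $\{\gamma_{n_k}\}$ with $\mu_1(\gamma_{n_k}^{-1} A_{n_k,j}^{1/j}) \le 2^{-k}$ whenever $j \le k$. Borel--Cantelli yields a $\mu_1$-full measure set $E_0$ such that for every $z \in E_0$ and every $j$, $z \notin \gamma_{n_k}^{-1} A_{n_k,j}^{1/j}$ for all but finitely many $k$; equivalently, either $z \in \overline{B_{1/R_j}(\gamma_{n_k}^{-1})}$ or $\dist(\gamma_{n_k}f(z), f(x)) \le 1/j$. For $z \in E_0 \smallsetminus \{b\}$ and target $j' \ge 1$, one picks $j \ge j'$ with $1/R_j < \dist(z,b)/2$; since $\gamma_{n_k}^{-1} \to b$, the first alternative fails for all large $k$, forcing $\dist(\gamma_{n_k}f(z), f(x)) \le 1/j \le 1/j'$. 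Thus $\gamma_{n_k} f(z) \to f(x)$ on the set $E := E_0 \smallsetminus \{b\}$.

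The main technical obstacle is guaranteeing $\mu_1(E) = 1$, i.e.\ $\mu_1(\{b\}) = 0$. Since $\mu_1$ has at most countably many atoms and $\Gamma$ is non-elementary, this can be arranged at the start by replacing $\{\gamma_n\}$ with $\{\gamma_n \phi\}$ for a suitable $\phi \in \Gamma$: the same point $x$ is still realized as a conical limit point, but the antipode moves to $\phi^{-1} b$, which can be chosen outside the countable atom set of $\mu_1$.
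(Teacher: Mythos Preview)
Your Borel--Cantelli argument is essentially the paper's: transfer the ``bad'' sets by $\gamma_n^{-1}$, use Property~\ref{item:almost constant on shadows} to bound the Radon--Nikodym derivative on shadows, conclude that the transferred bad sets have $\mu_1$-mass tending to zero, and extract a subsequence. The paper organizes the extraction slightly differently (one subsequence with $R_n \to \infty$ and $\epsilon_n \searrow 0$ simultaneously rather than a two-parameter diagonal), but the content is the same.

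The gap is in your final paragraph, where you handle the possibility that $\mu_1(\{b\}) > 0$. Your trick of replacing $\{\gamma_n\}$ by $\{\gamma_n\phi\}$ moves the repelling point from $b$ to $\phi^{-1}b$, and you claim $\phi$ can be chosen so that $\phi^{-1}b$ lies outside the (countable) set of atoms. But $\mu_1$ is $\Gamma$-quasi-invariant by the very definition of a coarse PS-measure, so $\mu_1(\{b\}) > 0$ forces $\mu_1(\{\phi^{-1}b\}) > 0$ for \emph{every} $\phi \in \Gamma$. Thus if $b$ is an atom, the entire orbit $\Gamma \cdot b$ consists of atoms, and no choice of $\phi$ helps. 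The non-elementarity of $\Gamma$ and the countability of the atom set are irrelevant here.

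The paper instead invokes \cite[Prop.~6.3 and 7.1]{BCZZ2024a} to conclude directly that $\mu_1$ has no atoms (this uses that $\sigma_1$ is an expanding coarse-cocycle on a non-elementary convergence group), so $\mu_1(\{b\}) = 0$ and $E = (\partial_\infty\Gamma \smallsetminus \{b\}) \smallsetminus \limsup E_n$ has full measure. You should simply cite this fact in place of your workaround.
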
 

Assuming the lemma for a moment we finish the proof. By ~\cite[Prop. 6.3 and 7.1]{BCZZ2024a}, $\mu_1$ has no atoms and by assumption $f$ is injective on a full measure set. Thus $f(E)$ has at least two points. Then, since $\gamma_n|_{\partial_\infty \Gamma \smallsetminus \{b\}} \rightarrow x$ locally uniformly, we must have $f(x) = x$. Since $x \in M'$ was arbitrary, we see that $f = \id_{\partial_\infty \Gamma}$ $\mu_1$-a.e.

\begin{proof}[Proof of Lemma~\ref{lem:adapting Tukias argument}]
For $R_j \geq R'$, notice that 
\begin{align*}
0 & = \lim_{n \rightarrow \infty} \frac{1}{\mu(\Oc_{R_j}(\gamma_n))}\mu\left( \left\{ y \in \Oc_{R_j}(\gamma_n) : \dist(f(x), f(y)) > \epsilon\right\} \right) \\
& = \lim_{n \rightarrow \infty} \frac{1}{(\gamma_n^{-1})_*\mu(\gamma_n^{-1}\Oc_{R_j}(\gamma_n))}(\gamma_n^{-1})_*\mu\left( \left\{ y \in \gamma_n^{-1}\Oc_{R_j}(\gamma_n) : \dist(f(x), \gamma_n f(y)) > \epsilon\right\} \right)
\end{align*}
for all $\epsilon > 0$.  

By Property~\ref{item:almost constant on shadows}, there exists $C_j = C_j(R_j) > 1$ such that
   $$
   \frac{1}{C_j} e^{-\delta\norm{\gamma_n}} \le \frac{d{\ga_n^{-1}}_*\mu}{d\mu} \le C_j e^{-\delta\norm{\gamma_n}} \quad \mu\text{-a.e.}
   $$
   on $\ga_n^{-1} \Oc_{R_j}(\ga_n)$.    Hence 
\begin{align*}
0  = \lim_{n \rightarrow \infty} \frac{1}{\mu(\gamma_n^{-1}\Oc_{R_j}(\gamma_n))}\mu\left(\left\{ y \in  \gamma_n^{-1}\Oc_{R_j}(\gamma_n) : \dist(f(x), \gamma_n f(y)) > \epsilon\right\} \right)
\end{align*}
for all $R_j \geq R'$ and $\epsilon > 0$. Since 
$$
\mu(\gamma^{-1}\Oc_{R_j}(\gamma)) \leq 1,
$$
we have 
\begin{align*}
0  = \lim_{n \rightarrow \infty} \mu\left(\left\{ y \in  \gamma_n^{-1}\Oc_{R_j}(\gamma_n) : \dist(f(x), \gamma_n f(y)) > \epsilon\right\} \right)
\end{align*}
for all $R_j \geq R'$ and $\epsilon > 0$.

After passing to a subsequence of $\{\gamma_n\}$, we can fix $\epsilon_n \searrow 0$ such that 
\begin{align}\label{eqn:sum of contracting sets}
\sum_{n =1}^\infty \mu\left(\left\{ y \in  \gamma_n^{-1}\Oc_{R_n}(\gamma_n) : \dist(f(x), \gamma_n f(y)) > \epsilon_n\right\} \right) < +\infty. 
\end{align}
Recall that $\gamma_n^{-1} \rightarrow b$ in $\Gamma \sqcup M$. Then let
$$
E_n : = \left\{ y \in  \gamma_n^{-1}\Oc_{R_n}(\gamma_n) : \dist(f(x), \gamma_n f(y)) > \epsilon_n\right\} 
$$
and 
$$
E : =(\partial_\infty \Gamma-\{b\}) \smallsetminus \bigcap_{N =1}^\infty \bigcup_{n \geq N} E_n.
$$
By ~\cite[Prop. 6.3 and 7.1]{BCZZ2024a}, $\mu_1$ has no atoms and hence Equation~\eqref{eqn:sum of contracting sets} implies that $E$ has full $\mu_1$-measure. Further, if $y \in E \subset \partial_\infty \Gamma \smallsetminus \{b\}$, then 
$$
y \in \gamma_n^{-1}\Oc_{R_n}(\gamma_n) =\partial_\infty \Gamma \smallsetminus \overline{B_{1/R_n}(\gamma_n^{-1})} 
$$ 
for $n$ sufficiently large and there exists $N \geq 1$ such that $y \notin  \bigcup_{n \geq N} E_n$. Thus $\gamma_n f(y) \rightarrow f(x)$. 
\end{proof}

\section{Discrete subgroups of Lie groups} \label{sec.Liegps}

Let $\Gsf$ be a connected semisimple Lie group without compact factors and with finite center.
We fix a Cartan decomposition $\mathfrak{g} = \mathfrak{k} + \mathfrak{p}$ 
of the Lie algebra of $\Gsf$, a Cartan subspace $\mfa \subset \mfp$, and a positive closed Weyl chamber $\mathfrak{a}^+\subset\mathfrak a$. Then  let $$\kappa:\Gsf\to\mathfrak{a}^+$$ denote the associated Cartan projection. Denoting by $\Asf = \exp \fa$ and $\Asf^+ = \exp \fa^+$, we have $\Gsf = \Ksf \Asf^+ \Ksf$ for a maximal compact subgroup $\Ksf < \Gsf$.
 The Jordan projection $\lambda : \Gsf \to \fa^+$ is given by 
$$
\lambda(g) = \lim_{n \rightarrow \infty} \frac{\kappa(g^n)}{n}.
$$
We also let $\opp : \fa \to \fa$ denote the opposition involution, which is defined as $\opp(\cdot) = - \operatorname{Ad}_{w_0}(\cdot)$ where $w_0$ is the longest Weyl element. We then have $\kappa(g^{-1}) = \opp(\kappa(g))$ for all $g \in \Gsf$.

Let $X := \Gsf/\Ksf$ and fix a basepoint $o = [e] \in \Gsf / \Ksf$. Fix a $\Ksf$-invariant norm $\| \cdot \|$ on $\fa$ induced from the Killing form, and let $\dist_X$ denote the $\Gsf$-invariant symmetric Riemannian metric on $X$ defined by $\dist_X(g o, h o) = \| \kappa(g^{-1} h) \|$ for $g, h \in \Gsf$.

Let $\Msf < \Ksf$ be the centralizer of $\Asf$, and 
$\Delta$ the set of all simple roots associated to $\fa^+$. For a non-empty subset $\theta \subset \Delta$, 
let $\Psf_{\theta}$ be the standard parabolic subgroup corresponding to $\theta$. That is, $\Psf_{\theta}$ is generated by $\Msf \Asf$ and all root subgroups $\Usf_{\alpha}$, where $\alpha$ ranges over all positive roots and any
negative root which is a $\mathbb{Z}$-linear combination of $\Delta \smallsetminus \theta$.
We denote by $\mathsf{N}_{\theta}$ the unipotent radical of $\Psf_{\theta}$. We simply write $\Psf = \Psf_{\Delta}$ and $\Nsf = \Nsf_{\Delta}$.

Let $\fa_{\theta} := \bigcap_{\alpha \in \Delta \smallsetminus \theta} \ker \alpha$ and let $\fa_{\theta}^*$ denote the space of $\R$-linear forms on $\fa_{\theta}$.
Let $p_{\theta} : \fa \to \fa_{\theta}$ be the unique projection which is invariant under all Weyl elements fixing $\fa_{\theta}$ pointwise. We can identify  $\fa_{\theta}^*$ with the subspace of $p_{\theta}$-invariant linear forms on $\fa$.

The Furstenberg boundary and general $\theta$-boundary  are defined as $$\F := \Gsf / \Psf = \Ksf / \Msf \quad \text{and} \quad \F_{\theta} := \Gsf / \Psf_{\theta}$$
respectively. We denote by $\pi_{\theta} : \F \to \F_{\theta}$ the quotient map.

Let $\Psf_{\theta}^{\rm opp} := w_0 \Psf_{\opp(\theta)} w_0^{-1}$ which is a parabolic subgroup opposite to $\Psf_{\theta}$, and denote by $\mathsf{N}^{\rm opp}_{\theta}$ the unipotent radical of $\Psf_{\theta}^{\rm opp}$. Two points $x \in \F_{\theta}$ and $y \in \F_{\opp(\theta)}$ are called \emph{transverse} if there exists $g \in \Gsf$ such that
$$
x = g \Psf_{\theta} \quad \text{and} \quad y = g w_0 \Psf_{\opp(\theta)}.
$$
One can see that $x \in \F_{\theta}$ is transverse to $w_0 \Psf_{\opp(\theta)}$ if and only if $x \in \Nsf_{\theta}^{\rm opp} \Psf_{\theta}$.

\subsection{Iwasawa cocycles and Patterson--Sullivan measures}
The \emph{Iwasawa cocycle} $B : \Gsf \times \F \to \fa$ is defined as follows: for $g \in \Gsf$ and $x \in \F$, fix $k \in \Ksf$  such that $k \Msf = x$ and let $B(g, x) \in \fa$ be the unique element such that
$$
gk \in \Ksf \left( \exp B(g, x) \right) \Nsf.
$$
For general $\theta \subset \Delta$, the \emph{partial Iwasawa cocycle} $B_{\theta} : \Gsf \times \F_{\theta} \to \fa_{\theta}$ is defined as
$$
B_{\theta}(g, x) = p_{\theta} \left( B(g, \tilde x) \right)
$$
for some (any) $\tilde x \in \pi_\theta^{-1}(x) \in \F$. This does not depend on the choice of $\tilde x$ \cite[Lem. 6.1]{Quint_PS}. Then $B_{\theta}$ satisfies the cocycle relation: for any $x \in \F_{\theta}$ and $g_1, g_2 \in \Gsf$,
$$
B_{\theta}(g_1 g_2, x) = B_{\theta}(g_1, g_2 x) + B_{\theta}(g_2, x).
$$

Let $\Hsf < \Gsf$ be a subgroup. Recall from the introduction  that for $\delta \ge 0$ and $\phi \in \fa_{\theta}^*$, a Borel probability measure $\mu$ on $\F_{\theta}$ is called a \emph{coarse $\phi$-Patterson--Sullivan measure (coarse $\phi$-PS measure) for $\Hsf$ of dimension $\delta$ } if there exists $C\geq1$ such that for any $\ga \in \Hsf $ the measures $\mu, \gamma_*\mu$ are absolutely continuous and 
   $$
   C^{-1}e^{- \delta \phi(B_{\theta}(g^{-1}, x))} \le \frac{d \ga_* \mu}{d \mu}(x) \le Ce^{- \delta \phi(B_{\theta}(g^{-1}, x))} \quad \text{for } \mu\text{-a.e. } x \in \Fc_{\theta}. 
   $$
   If $C=1$, then $\mu$ is a \emph{$\phi$-Patterson--Sullivan measure  ($\phi$-PS measure) for $\Hsf$ of dimension $\delta$}.

\subsection{Limit sets} \label{subsec.convergence theta bdr}
We say that a sequence \emph{$\{ g_n \} \subset \Gsf$ converges to $x \in \F_{\theta}$} if
\begin{itemize}
   \item $\alpha(\kappa(g_n)) \to + \infty$ for all $\alpha \in \theta$ and
   \item a Cartan decomposition $g_n = k_n (\exp \kappa(g_n)) \ell_n \in \Ksf \Asf^+ \Ksf$ satisfies 
   $$
   k_n \Psf_{\theta} \to x \quad \text{in } \F_{\theta}.
   $$
\end{itemize}
We say that the sequence \emph{$g_n o \in X$ converges to $x$ if $g_n \to x$}. 
This notion of convergence leads us to define the limit set of a discrete subgroup.
\begin{definition}
   Let $\Ga < \Gsf$ be a discrete subgroup. The \emph{limit set of $\Ga$ in $\F_{\theta}$} is  defined as
   $$
   \La_{\theta}(\Ga) := \{ x \in \F_{\theta} : \ga_n \to x \text{ for some sequence } \{\ga_n \} \subset \Ga \}.
   $$
\end{definition}
When $\Ga < \Gsf$ is Zariski dense, then $\La_{\theta}(\Ga)$ is the unique $\Ga$-minimal set in $\F_{\theta}$ as shown by Benoist \cite{Benoist_properties}.
Note that if $\{g_n \} \subset \Gsf$ is a sequence converging to a point in $\F_{\theta}$, then $\{g_n^{-1}\} \subset \Gsf$ has a subsequence converging to a point in $\F_{\opp(\theta)}$.
The following well-known lemma asserts that such a sequence $\{ g_n \} \subset \Gsf$ exhibits a source-sink dynamics, giving the motivation for the definitions above.

\begin{lemma}
   Let $\{g_n\} \subset \Gsf$ be a sequence such that $g_n \to x \in \F_{\theta}$ and $g_n^{-1} \to y \in \F_{\opp(\theta)}$ as $n \to \infty$. Then for any $z \in \F_{\theta}$ transverse to $y \in \F_{\opp(\theta)}$, we have
   $$
   g_n z \to x \quad \text{as } n \to \infty.
   $$
\end{lemma}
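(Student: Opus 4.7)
The plan is to reduce everything to the source-sink dynamics of a diagonal element acting on the "big cell'' in $\F_\theta$, using Cartan decompositions and the opposition involution. Start with Cartan decompositions $g_n = k_n \exp(\kappa(g_n))\ell_n$ in $\Ksf\Asf^+\Ksf$; by compactness of $\Ksf$, pass to a subsequence so that $k_n \to k_\infty$ and $\ell_n \to \ell_\infty$. The hypothesis $g_n \to x$ then says $\alpha(\kappa(g_n)) \to +\infty$ for every $\alpha \in \theta$ and $x = k_\infty \Psf_\theta$. Rewriting $g_n^{-1} = (\ell_n^{-1} w_0)\exp(\opp(\kappa(g_n)))(w_0^{-1} k_n^{-1})$, which is a Cartan decomposition of $g_n^{-1}$ since $\kappa(g_n^{-1}) = \opp(\kappa(g_n))$, the hypothesis $g_n^{-1} \to y$ forces $y = \ell_\infty^{-1} w_0 \Psf_{\opp(\theta)}$, equivalently $\ell_\infty y = w_0 \Psf_{\opp(\theta)}$.

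Next, transfer the transversality of $z$ and $y$ to the standard model. Since $z$ is transverse to $y$, the point $\ell_\infty z \in \F_\theta$ is transverse to $w_0 \Psf_{\opp(\theta)}$, so by the criterion recalled in the excerpt, $\ell_\infty z \in \Nsf_\theta^{\rm opp}\Psf_\theta$. Because $\Nsf_\theta^{\rm opp}\Psf_\theta$ is open in $\F_\theta$, for $n$ large we can write $\ell_n z = u_n \Psf_\theta$ with $u_n \in \Nsf_\theta^{\rm opp}$ and $u_n \to u_\infty$, where $\ell_\infty z = u_\infty \Psf_\theta$.

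Now compute: since $\exp(\kappa(g_n)) \in \Asf \subset \Psf_\theta$,
\[
g_n z = k_n \exp(\kappa(g_n)) u_n \Psf_\theta = k_n v_n \Psf_\theta,
\]
where $v_n := \exp(\kappa(g_n))\, u_n\, \exp(-\kappa(g_n)) \in \Nsf_\theta^{\rm opp}$. The adjoint action of $\exp(\kappa(g_n))$ on $\operatorname{Lie}(\Nsf_\theta^{\rm opp}) = \bigoplus \mathfrak{g}_{-\alpha}$ scales each $\mathfrak{g}_{-\alpha}$ by $e^{-\alpha(\kappa(g_n))}$. Each such root $\alpha$ is a positive root that is \emph{not} a $\Zb$-combination of $\Delta \smallsetminus \theta$, hence has a strictly positive coefficient on some simple root in $\theta$; combined with $\kappa(g_n) \in \fa^+$ and $\beta(\kappa(g_n)) \to +\infty$ for all $\beta \in \theta$, this forces $\alpha(\kappa(g_n)) \to +\infty$. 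Therefore $v_n \to e$, and
\[
g_n z = k_n v_n \Psf_\theta \longrightarrow k_\infty \Psf_\theta = x.
\]

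The only real subtlety is matching the Cartan decomposition of $g_n^{-1}$ with the hypothesis $g_n^{-1} \to y$ via the opposition involution — this is what pins down $\ell_\infty$ in terms of $y$ and lets the transversality hypothesis translate into the statement $\ell_\infty z \in \Nsf_\theta^{\rm opp}\Psf_\theta$, after which contraction of the opposite unipotent radical by the $\fa^+$-element $\kappa(g_n)$ is automatic.
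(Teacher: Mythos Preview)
Your argument is correct and is essentially the standard proof; the paper does not prove this lemma itself but defers to \cite{LO_Invariant, KOW_PD, CZZ_transverse, KLP_Anosov}, and what you wrote is precisely the argument one finds there. One small remark: you pass to a subsequence so that $k_n \to k_\infty$ and $\ell_n \to \ell_\infty$, but then only establish $g_n z \to x$ along that subsequence; to conclude for the full sequence, observe that the limit $x$ is determined by the hypothesis and independent of the chosen subsequence, so the usual sub-subsequence argument finishes it.
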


For a proof see \cite[Lem. 2.9]{LO_Invariant} (for $\theta = \Delta$), \cite[Lem. 2.4]{KOW_PD}, \cite[Prop. 2.3]{CZZ_transverse}, or \cite[Sect. 4]{KLP_Anosov}.

\subsection{Transverse subgroups}\label{sec:defn of transverse groups}

The class of transverse subgroups of $\Gsf$ provides well-behaved PS-systems.

\begin{definition}
   A discrete subgroup $\Ga < \Gsf$ is \emph{$\Psf_\theta$-transverse} if
   \begin{itemize}
      \item  $\alpha(\kappa(g_n)) \to + \infty$ for all $\alpha \in \theta$ and
      \item any distinct $x, y \in \La_{\theta \cup \opp(\theta)} (\Ga)$ are transverse.
   \end{itemize}
   A $\Psf_\theta$-transverse subgroup $\Ga < \Gsf$ is called \emph{non-elementary} if $\# \La_{\theta \cup \opp(\theta)}(\Ga) > 2$.
\end{definition}

\begin{remark} In the literature, transverse groups are sometimes called antipodal groups (e.g. ~\cite{KLP_Anosov}). \end{remark}

It is easy to see that for a $\Psf_\theta$-transverse $\Ga < \Gsf$, the canonical projection $\La_{\theta \cup \opp(\theta)}(\Ga) \to \La_{\theta}(\Ga)$ is a $\Ga$-equivariant homeomorphism (cf. \cite[Lem. 9.5]{KOW_PD}).
An important feature of a $\Psf_\theta$-transverse subgroup $\Ga < \Gsf$ is that the $\Ga$-action on $\La_{\theta}(\Ga)$ is a convergence action (\cite[Thm. 4.16]{KLP_Anosov}, \cite[Prop. 2.8]{CZZ_transverse}) and that there is a natural class of expanding cocycles. 

\begin{proposition} \cite[Prop. 10.3]{BCZZ_counting} \label{prop.transverseexpanding}
   Let $\Ga < \Gsf$ be a non-elementary $\Psf_\theta$-transverse subgroup and $\phi \in \fa_{\theta}^*$. If $\phi(\kappa(\ga_n)) \to + \infty$ for any sequence $\{ \ga_n \} \subset \Ga$ of distinct elements, then  $\sigma_{\phi}:=\phi \circ B_\theta|_{\Ga \times \La_{\theta}(\Ga)}$ 
   is an expanding coarse-cocycle with magnitude $\ga \mapsto \phi(\kappa(\ga))$.
   
   Hence, if $\mu$ is a coarse $\phi$-PS measure for $\Ga$ supported on $\La_{\theta}(\Ga)$ of dimension $\delta$, then $(\La_{\theta}(\Ga), \Ga, \sigma_{\phi}, \mu)$ is a well-behaved PS-system of dimension $\delta$ with resepct to the trivial hierarchy $\mathscr{H}(R) \equiv  \Ga $, with shadows as in Equation \eqref{eqn.shadowinconvergence}.
\end{proposition}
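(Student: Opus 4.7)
The plan is to prove the two assertions in turn: first verify that $\sigma_\phi$ satisfies Definition~\ref{defn.expanding coarse cocycle} as an expanding coarse-cocycle for the convergence action $\Ga \curvearrowright \La_\theta(\Ga)$, and then invoke Theorem~\ref{thm.convergencepssystem} to upgrade any coarse $\phi$-PS measure supported on $\La_\theta(\Ga)$ into a well-behaved PS-system with trivial hierarchy and with shadows given by Equation~\eqref{eqn.shadowinconvergence}.

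Since $B_\theta$ is a genuine continuous cocycle on $\Gsf \times \Fc_\theta$, the restriction $\sigma_\phi = \phi \circ B_\theta|_{\Ga \times \La_\theta(\Ga)}$ is a $0$-coarse-cocycle and each slice $\sigma_\phi(\ga,\cdot)$ is continuous, so property~(1) of Definition~\ref{defn.expanding coarse cocycle} holds. With magnitude $\norm{\ga}_{\sigma_\phi} := \phi(\kappa(\ga))$ (extending $\phi \in \mfa_\theta^*$ to a linear form on $\mfa$ via $p_\theta$), property~(2)(a) is exactly the standing hypothesis that $\phi \circ \kappa$ is proper on $\Ga$. The substantive content is property~(2)(b): given $\epsilon > 0$, I need a uniform constant $C=C(\epsilon) > 0$ such that
\[
\abs{\sigma_\phi(\ga, x) - \phi(\kappa(\ga))} \le C
\]
for every $\ga \in \Ga$ and every $x \in \La_\theta(\Ga) \smallsetminus B_\epsilon(\ga^{-1})$, with the ball taken in the compactification $\Ga \sqcup \La_\theta(\Ga)$.

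The key technical input is a standard Cartan-versus-Iwasawa comparison (due essentially to Quint, see also \cite{BCZZ2024a}): writing a Cartan decomposition $\ga = k_\ga \exp(\kappa(\ga)) \ell_\ga$, set $y_\ga := \ell_\ga^{-1} w_0 \Psf_{\opp(\theta)} \in \Fc_{\opp(\theta)}$. Then for every $\eta > 0$ there exists $C_0=C_0(\eta) > 0$ such that
\[
\norm{B_\theta(\ga, x) - p_\theta(\kappa(\ga))} \le C_0
\]
whenever $x \in \Fc_\theta$ is $\eta$-transverse to $y_\ga$; applying $\phi$ gives a pointwise bound in terms of $\eta$. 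To complete property~(2)(b) I would translate the condition ``$x \notin B_\epsilon(\ga^{-1})$'' into uniform $\eta(\epsilon)$-transversality to $y_\ga$: by the construction of the compactification metric recalled at the start of Section~\ref{sec.convergenceaction}, whenever $\phi(\kappa(\ga))$ is large the element $\ga^{-1} \in \Ga$ lies close to the attracting direction of $\ga^{-1}$ in $\La_\theta(\Ga)$, and under the $\Ga$-equivariant homeomorphism $\La_{\theta \cup \opp(\theta)}(\Ga) \cong \La_\theta(\Ga)$ afforded by the transverse hypothesis, this attracting direction corresponds to $y_\ga$ (after projecting from $\Fc_{\theta \cup \opp(\theta)}$ to $\Fc_{\opp(\theta)}$). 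Hence $x \notin B_\epsilon(\ga^{-1})$ forces uniform transversality of $x$ to $y_\ga$, and the displayed estimate yields the required uniform bound; the finitely many $\ga$ with $\phi(\kappa(\ga))$ bounded contribute only an additive constant, by compactness of $\La_\theta(\Ga)$ and continuity of each slice.

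The main obstacle is precisely this last translation between the abstractly defined compactification topology on $\Ga \sqcup \La_\theta(\Ga)$ and the concrete Cartan decomposition data of $\ga$; the transverse hypothesis is essential here, as it furnishes the $\Ga$-equivariant identification that couples the ``two ends'' of $\ga$ in compatible geometric and convergence-group descriptions. Once the expanding property is established, the second assertion is immediate: the transverse hypothesis together with $\#\La_{\theta \cup \opp(\theta)}(\Ga) > 2$ makes $\Ga \curvearrowright \La_\theta(\Ga)$ a non-elementary convergence action (as cited in Section~\ref{sec:defn of transverse groups}), so Theorem~\ref{thm.convergencepssystem} directly yields the desired well-behaved PS-system with trivial hierarchy $\mathscr{H}(R) \equiv \Ga$ and shadows as in Equation~\eqref{eqn.shadowinconvergence}.
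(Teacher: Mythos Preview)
The paper does not supply its own proof of the first assertion: it is quoted verbatim as \cite[Prop.~10.3]{BCZZ_counting}, and only the ``Hence'' sentence is drawn as a consequence in this paper, by applying Theorem~\ref{thm.convergencepssystem}. Your identification of that deduction is exactly right.

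Your sketch of the first assertion is along the correct lines and is presumably what the cited reference does: continuity of $B_\theta$ handles condition~(1), the hypothesis on $\phi\circ\kappa$ gives (2)(a), and (2)(b) reduces to a Cartan--Iwasawa comparison (essentially Lemma~\ref{lem.busemanncartan} rephrased in terms of transversality to the ``repelling'' flag $y_\gamma = \ell_\gamma^{-1} w_0 \Psf_{\opp(\theta)}$) together with the observation that, for a $\Psf_\theta$-transverse group, the abstract convergence-compactification limit of $\gamma^{-1}$ and the Cartan-decomposition limit $\ell_\gamma^{-1} w_0 \Psf_\theta$ agree, both being characterized by the same source--sink dynamics on $\La_\theta(\Ga)$. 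One small wording issue: you speak of ``the attracting direction of $\gamma^{-1}$'' for a single element, but what you actually use (and what is true) is the behavior along escaping sequences; the argument is cleanest as a proof by contradiction, extracting a subsequence with $\gamma_n^{-1}\to b$ in $\Ga\sqcup\La_\theta(\Ga)$, $x_n\to x\neq b$, and $y_{\gamma_n}\to y$, then noting that $b$ and $y$ are the $\theta$- and $\opp(\theta)$-projections of the same point of $\La_{\theta\cup\opp(\theta)}(\Ga)$, so transversality of $x$ and $y$ follows from the antipodality hypothesis.
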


Given a subgroup $\Gamma < \Gsf$ and a functional $\phi \in \fa_\theta^*$, let $\delta_{\phi}(\Gamma) \in [0,+\infty]$ denote the critical exponent of the Poincar\'e series 
$$
 \sum_{\ga \in \Ga} e^{-s \phi(\kappa(\ga))},
 $$
i.e. the series diverges for $s \in [0,\delta_\phi(\Ga))$ and converges for $s \in ( \delta_\phi(\Ga), +\infty)$.  For transverse groups, we have the following existence/uniqueness results.

\begin{theorem}\label{thm:transverse group PS existence/uniqueness} 
Suppose $\Ga < \Gsf$ is a non-elementary $\Psf_\theta$-transverse subgroup and $\phi \in \fa_{\theta}^*$ satisfies $\delta_{\phi}(\Gamma) < +\infty$.
\begin{enumerate}
\item  \cite{CZZ_transverse} There exists a  $\phi$-PS measure for $\Gamma$ of dimension $\delta_\phi(\Gamma)$ supported on $\La_{\theta}(\Ga)$.
\item  \cite{CZZ_transverse} If $\sum_{\ga \in \Ga} e^{-\delta_{\phi}(\Gamma) \phi(\kappa(\ga))}=+\infty$, then there is a unique $\phi$-PS measure for $\Gamma$ of dimension $\delta_\phi(\Gamma)$ supported on $\La_{\theta}(\Ga)$.
\item   \cite{KOW_PD}  If $\Ga$ is Zariski dense and  $\sum_{\ga \in \Ga} e^{-\delta_{\phi}(\Gamma) \phi(\kappa(\ga))}=+\infty$, then any $\phi$-PS measure for $\Gamma$ of dimension $\delta_\phi(\Gamma)$ is supported on $\La_{\theta}(\Ga)$.
\end{enumerate} 
\end{theorem}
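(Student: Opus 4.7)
The plan is to adapt the classical Patterson--Sullivan construction to the partial flag manifold $\Fc_{\theta}$ and then invoke the well-behaved PS-system framework developed earlier in the paper. For part (1), I would run Patterson's construction directly on $\Fc_{\theta}$: for $s > \delta_{\phi}(\Ga)$, form
\[
\nu_{s} := \frac{1}{P_{\phi}(s)} \sum_{\ga \in \Ga} e^{-s \phi(\kappa(\ga))} \delta_{\ga \cdot x_{0}}, \qquad P_{\phi}(s) := \sum_{\ga \in \Ga} e^{-s \phi(\kappa(\ga))},
\]
where $x_{0} = [\Psf_{\theta}] \in \Fc_{\theta}$. If $P_{\phi}$ already diverges at $s = \delta_{\phi}(\Ga)$, I would extract a weak-$*$ subsequential limit $\mu$ as $s \searrow \delta_{\phi}(\Ga)$; otherwise, I would apply Patterson's slowly-varying-function device to force divergence. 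Divergence makes all mass escape every finite subset of $\Ga$, yielding $\supp \mu \subset \La_{\theta}(\Ga)$. The $\phi$-PS transformation rule is verified by rearranging the sum after a $\ga$-translation and using the standard Cartan--Iwasawa comparison $p_{\theta}(\kappa(\ga^{-1}\alpha) - \kappa(\alpha)) \to B_{\theta}(\ga^{-1}, x)$ as $\alpha x_{0} \to x \in \Fc_{\theta}$, which makes $d\ga_{*}\nu_{s}/d\nu_{s}$ converge pointwise on $\La_{\theta}(\Ga)$ to $e^{-\delta_{\phi}(\Ga)\, \phi(B_{\theta}(\ga^{-1}, \cdot))}$, giving a genuine (not merely coarse) $\phi$-PS measure.

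For part (2), Proposition~\ref{prop.transverseexpanding} realizes $(\La_{\theta}(\Ga), \Ga, \phi \circ B_{\theta}, \mu)$ as a well-behaved PS-system with trivial hierarchy. Divergence of the Poincar\'e series together with Theorem~\ref{thm:conical limit set has full measure}(2) gives $\mu(\La^{\rm con}(\Ga)) = 1$, and then Corollary~\ref{cor:ergodicity} implies that the $\Ga$-action on $(\La_{\theta}(\Ga), \mu)$ is ergodic. If $\mu'$ is a second $\phi$-PS measure of dimension $\delta_{\phi}(\Ga)$ supported on $\La_{\theta}(\Ga)$, both $\mu$ and $\mu'$ are absolutely continuous with respect to $\mu + \mu'$, and $d\mu / d(\mu+\mu')$ is essentially $\Ga$-invariant because $\mu$ and $\mu'$ share the transformation Jacobian $e^{-\delta_{\phi}(\Ga)\, \phi(B_{\theta}(\ga^{-1}, \cdot))}$. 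Ergodicity then forces this derivative to be a.e.\ constant, so $\mu = \mu'$.

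For part (3), the strategy is to promote the ambient PS-system $(\Fc_{\theta}, \Ga, \phi \circ B_{\theta}, \mu)$ from Example~\ref{ex:PS systems}(3) to a well-behaved one under the Zariski density assumption. The source--sink dynamics of Zariski dense subgroups together with Benoist's proximality theorem should allow one to verify the hierarchy properties \ref{item:baire}, \ref{item:intersecting shadows}, and \ref{item:diam goes to zero ae} for a suitable family of shadows on $\Fc_{\theta}$ (with trivial hierarchy). Once this is done, divergence of the Poincar\'e series together with Theorem~\ref{thm:conical limit set has full measure}(2) yields $\mu(\La^{\rm con}(\Ga)) = 1$, and since any point in $\La^{\rm con}(\Ga)$ is approached by an escaping sequence of shadows whose centers necessarily accumulate on $\La_{\theta}(\Ga)$, one concludes $\mu(\La_{\theta}(\Ga)) = 1$. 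The main obstacle is precisely this last step: verifying that the Zariski-dense PS-system on $\Fc_{\theta}$ is actually well-behaved, not merely a PS-system, requires a careful quantitative analysis of shadow geometry off the limit set, using Zariski density to supply enough dynamical richness to control the Hausdorff limits of shadow complements appearing in \ref{item:baire}.
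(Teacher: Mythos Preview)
The paper does not prove this theorem; it is stated as a collection of cited results from \cite{CZZ_transverse} (parts (1) and (2)) and \cite{KOW_PD} (part (3)), so there is no ``paper's own proof'' to compare against. Your sketches for (1) and (2) are the standard Patterson construction and ergodicity-based uniqueness argument, and they are consistent with how the machinery of this paper (Proposition~\ref{prop.transverseexpanding}, Theorem~\ref{thm:conical limit set has full measure}, Corollary~\ref{cor:ergodicity}) could be used to recover those results.

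Your plan for (3), however, has a genuine gap that you yourself flag but do not resolve. You propose to upgrade the PS-system $(\Fc_{\theta}, \Ga, \phi \circ B_{\theta}, \mu)$ of Theorem~\ref{thm.irreduciblePS} to a well-behaved one with \emph{trivial} hierarchy, then invoke Theorem~\ref{thm:conical limit set has full measure}(2). But the paper only establishes well-behavedness for Zariski dense discrete subgroups on the \emph{full} flag manifold $\Fc = \Fc_{\Delta}$ and only with respect to the \emph{directional} hierarchy $\mathscr{H}(R) \equiv \Ga_{u,r}$ (Theorem~\ref{thm.directionalps}); in particular, Property~\ref{item:intersecting shadows} for the trivial hierarchy on $\Fc_{\theta}$ is exactly what fails without the transverse assumption restricting to $\La_{\theta}(\Ga)$. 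Even granting well-behavedness, your final step---that $\La^{\rm con}(\Ga) \subset \La_{\theta}(\Ga)$ because shadow centers accumulate on the limit set---is not justified: the abstract conical set $\La^{\rm con}(\Ga)$ of Section~\ref{sec:conical limit set} is defined via shadows based at $o$, and there is no a priori reason points in it must lie in $\La_{\theta}(\Ga)$ when $\mu$ is not assumed supported there. The argument in \cite{KOW_PD} instead proceeds via a direct analysis tailored to the Zariski dense setting, not through the abstract PS-system axioms.
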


\subsection{(relatively) Anosov groups} A non-elementary $\Psf_\theta$-transverse group $\Gamma$ is \emph{$\Psf_\theta$-Anosov} if it is word hyperbolic (as an abstract group) and there is an equivariant homeomorphism between the Gromov boundary $\partial_\infty \Gamma$ and the limit set $\Lambda_\theta(\Gamma)$. More generally, a non-elementary  $\Psf_\theta$-transverse group $\Gamma$ is \emph{relatively $\Psf_\theta$-Anosov with respect to a collection $\Pc$ of subgroups} if it is relatively hyperbolic with respect to $\Pc$ (as an abstract group) and there is an equivariant homeomorphism between the Bowditch boundary $\partial(\Gamma,\Pc)$ and the limit set $\Lambda_\theta(\Gamma)$.

For relatively Anosov groups, the Poincar\'e series diverges at its critical exponent.

\begin{theorem}\cite{relativelyAnosovPS} \label{thm:Poincare series diverges} If $\Gamma < \Gsf$ is relatively $\Psf_\theta$-Anosov, $\phi \in \fa_{\theta}^*$, and $\delta_{\phi}(\Gamma) < +\infty$, then $\sum_{\ga \in \Ga} e^{-\delta_{\phi}(\Gamma) \phi(\kappa(\ga))}=+\infty$.

\end{theorem}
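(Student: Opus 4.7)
The strategy is to apply the Hopf--Tsuji--Sullivan-type dichotomy encoded in Theorem~\ref{thm:conical limit set has full measure} to a Patterson--Sullivan measure of critical dimension, and then use the structure of the limit set of a relatively hyperbolic group to rule out the convergent type.

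First, by Theorem~\ref{thm:transverse group PS existence/uniqueness}(1), there exists a $\phi$-PS measure $\mu$ for $\Gamma$ of dimension $\delta:=\delta_\phi(\Gamma)$ supported on $\Lambda_\theta(\Gamma)$. By Proposition~\ref{prop.transverseexpanding}, $(\Lambda_\theta(\Gamma),\Gamma,\phi\circ B_\theta,\mu)$ is a well-behaved PS-system with respect to the trivial hierarchy $\mathscr{H}(R)\equiv\Gamma$. I argue by contradiction: assume $\sum_\gamma e^{-\delta\phi(\kappa(\gamma))}<+\infty$. By the contrapositive of Theorem~\ref{thm:conical limit set has full measure}(1), $\mu(\Lambda^{\rm con}(\Gamma))=0$.

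Next, I use the identification of $\Lambda_\theta(\Gamma)$ with the Bowditch boundary $\partial(\Gamma,\Pc)$ supplied by the definition of relatively Anosov. By Observation~\ref{obs:conical limit points equivalent definition}, $\Lambda^{\rm con}(\Gamma)$ coincides with the set of conical limit points of the convergence action of $\Gamma$ on $\Lambda_\theta(\Gamma)$, which in turn (for the Bowditch boundary of a relatively hyperbolic group) is the complement of the countable set of bounded parabolic fixed points. Consequently $\mu$ is atomic and fully supported on finitely many $\Gamma$-orbits of parabolic fixed points; fix one such orbit representative $p$ with $\mu(\{p\})>0$ and let $P<\Gamma$ be its stabilizer. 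The PS transformation formula then gives, for coset representatives $[\gamma]\in\Gamma/P$,
$$
\mu(\{\gamma p\})\asymp e^{-\delta\phi(B_\theta(\gamma^{-1},p))}\mu(\{p\}),
$$
so summability of $\mu$ on the orbit $\Gamma\cdot p$ forces $\sum_{[\gamma]\in\Gamma/P}e^{-\delta\phi(B_\theta(\gamma^{-1},p))}<+\infty$.

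The main obstacle is to convert this subseries-convergence estimate into an actual contradiction with $\delta=\delta_\phi(\Gamma)$, i.e.\ to show $\sum_{\gamma\in\Gamma}e^{-\delta\phi(\kappa(\gamma))}=+\infty$. This requires a ``shadow lemma at a bounded parabolic fixed point'' in the spirit of Sullivan, which I would obtain from the geometry of $\Psf_\theta$-transversality combined with bounded parabolicity: choosing coset representatives $\gamma\in\Gamma/P$ so that $\gamma o$ is positioned in a ``thick'' part of $X$ relative to the horoball structure around $p$, one has $\phi(B_\theta(\gamma^{-1},p))=\phi(\kappa(\gamma))+O(1)$, and the $P$-action fills in a horoball cross-section of $\Lambda_\theta(\Gamma)\smallsetminus\{p\}$. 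Summing over $P$-cosets and using the Shadow Lemma (Proposition~\ref{prop.shadowlemma}) to control the parabolic direction via $\sum_{\eta\in P}e^{-\delta\phi(\kappa(\eta))}$, one obtains a comparison
$$
\sum_{\gamma\in\Gamma}e^{-\delta\phi(\kappa(\gamma))}\leq C\sum_{[\gamma]\in\Gamma/P}e^{-\delta\phi(B_\theta(\gamma^{-1},p))}\cdot\sum_{\eta\in P}e^{-\delta\phi(\kappa(\eta))},
$$
(after exchanging the role of conical versus parabolic mass as in Sullivan's classical argument). Since the outer series are finite by hypothesis and the inner peripheral Poincar\'e series at the total critical exponent $\delta$ can in turn be absorbed by the mass bound on $\mu(\{p\})$, the whole expression remains finite, contradicting the definition of $\delta_\phi(\Gamma)$ as the abscissa of convergence. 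The technically delicate point is justifying the alignment estimate $\phi(B_\theta(\gamma^{-1},p))=\phi(\kappa(\gamma))+O(1)$ uniformly on the chosen set of coset representatives, which is where $\Psf_\theta$-transversality together with the relatively Anosov assumption enter decisively.
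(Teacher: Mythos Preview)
The paper does not prove this statement; it is quoted from the external reference \cite{relativelyAnosovPS}, so there is no in-paper proof to compare against.

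Your argument has a genuine gap: the closing ``contradiction'' is circular. You assume $\sum_{\gamma}e^{-\delta\phi(\kappa(\gamma))}<+\infty$, deduce that $\mu$ is carried by parabolic fixed points, and after some manipulations conclude that the full Poincar\'e series is finite, ``contradicting the definition of $\delta_\phi(\Gamma)$ as the abscissa of convergence.'' But convergence at the abscissa is not a contradiction---it is precisely your standing hypothesis. The critical exponent says nothing about behaviour at the boundary value.

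The classical template you are reaching for (Sullivan; Dal'bo--Otal--Peign\'e) is a genuine dichotomy, not a factorisation inequality: either some peripheral subgroup $P$ has $\sum_{\eta\in P}e^{-\delta\phi(\kappa(\eta))}=+\infty$, in which case the full series diverges trivially; or every peripheral Poincar\'e series converges at $\delta$, in which case one proves that $\mu$ has \emph{no} atom at any parabolic point, forcing $\mu(\Lambda^{\rm con}(\Gamma))=1$ and contradicting the $\mu(\Lambda^{\rm con}(\Gamma))=0$ you obtained from Theorem~\ref{thm:conical limit set has full measure}. The substantive step is ``peripheral series converges $\Rightarrow$ no atom,'' which requires a shadow-lemma-at-a-cusp estimate you allude to but do not formulate; the sentence ``the inner peripheral Poincar\'e series\dots\ can in turn be absorbed by the mass bound on $\mu(\{p\})$'' does not name a mechanism. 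Note also that even this template does not always succeed---there are ${\rm CAT}(-1)$ geometrically finite groups of convergence type---so the cited reference must be using specific features of the relatively $\Psf_\theta$-Anosov setting (for instance a parabolic gap $\delta_\phi(P)<\delta_\phi(\Gamma)$, or control on the peripheral growth) that your outline does not supply.
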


\subsection{Irreducible subgroups}

We now consider a more general class of subgroups.

\begin{definition}
  A subgroup $\Gamma < \Gsf$ is called \emph{$\Psf_{\theta}$-irreducible} if for any $x \in \F_{\theta}$ and $y \in \F_{\opp(\theta)}$, there exists $\ga \in \Ga$ such that $\ga x$ is transverse to $y$. We say that $\Ga$ is \emph{strongly $\Psf_{\theta}$-irreducible} if any finite index subgroup of $\Ga$ is $\Psf_{\theta}$-irreducible.
\end{definition}

It is easy to see that any Zariski dense subgroup of $\Gsf$ is strongly $\Psf_{\theta}$-irreducible.
We will show that irreducible subgroups form PS-systems, with higher rank shadows defined as follows. First, for $p \in X$ and $R > 0$, let $B_X(p, R) $ denote the metric ball $ \{ x \in X : \dist_X(x, p) < R\}$. Then, for $q \in X$, the \emph{$\theta$-shadow} $O_R^{\theta}(q, p) \subset \F_{\theta}$ of $B_X(p, R)$ viewed from $q$ is defined as
$$
    O_R^{\theta}(q, p)   := \{ g\Psf_{\theta} \in \F_{\theta} : g \in \Gsf, \ go = q, \ gA^+o \cap B_X(p, R) \neq \emptyset \}.
$$
Note that for any $g \in \Gsf$, $q, p \in X$, and $R > 0$,
$$
g O_R^{\theta}(q, p) = O_R^{\theta}(g q, g p).
$$
We will use the following  observations. 

\begin{lemma}[{\cite[Lem. 5.7]{LO_Invariant}, \cite[Lem. 5.7]{KOW_PD}}] \label{lem.busemanncartan} For any $R > 0$ there exists $C > 0$ such that: if $g \in \Gsf$ and $x \in O_R^{\theta}(g^{-1}o, o)$, then 
$$
\norm{ p_{\theta}(\kappa(g)) -B_\theta(g,x)} \leq C.
$$
\end{lemma}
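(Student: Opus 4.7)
Proof plan: The plan is to reduce to a concrete Iwasawa--Cartan comparison via the shadow condition, exploiting the cocycle relation for $B_{\theta}$.

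First, the cocycle identity $0 = B_{\theta}(g g^{-1}, gx) = B_{\theta}(g, x) + B_{\theta}(g^{-1}, gx)$ transforms the claim into an estimate of the form $\norm{p_{\theta}(\kappa(g)) + B_{\theta}(g^{-1}, gx)} \leq C$. Unpacking the shadow condition, any $x \in O^{\theta}_R(g^{-1}o, o)$ has the form $x = g'\Psf_{\theta}$ with $g'\in\Gsf$ satisfying $g'o = g^{-1}o$; hence $g' = g^{-1}k$ for some $k \in \Ksf$, and so $gx = k\Psf_{\theta}$ admits the lift $\tilde y := k\Msf \in \F = \Ksf/\Msf$. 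Moreover, there exists $a \in \Asf^+$ with $\dist_X(g'ao, o) < R$.

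Next, I would express $B_{\theta}(g^{-1}, gx)$ in concrete coordinates using the Iwasawa decomposition $g^{-1}k = k_0 \exp(u) n$ with $k_0 \in \Ksf$, $u \in \fa$, $n \in \Nsf$, so by construction $B_{\theta}(g^{-1}, gx) = p_{\theta}(u)$. Since $ko=o=no$, this gives $g^{-1}o = k_0 \exp(u) o$, and therefore $\norm{u} = \dist_X(o, g^{-1}o) = \norm{\kappa(g)}$. Rewriting $na = a(a^{-1}na)$ and using that $a^{-1}na\in\Nsf$ is controlled (bounded for bounded $\log a$, and contracted to the identity as $\log a\to\infty$ in $\fa^+$), the shadow condition $\dist_X(g^{-1}kao, o) < R$ becomes
$$\norm{\kappa\bigl(\exp(u+\log a)\cdot(a^{-1}na)\bigr)} < R,$$
from which a standard Cartan--Iwasawa estimate yields $\norm{u + \log a} \leq R + C_1$ for some $C_1 = C_1(R)$.

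Finally, I would combine these ingredients with the Cartan decomposition $g = k_1 \exp(\kappa(g)) k_2$: the point $k_2^{-1}\Psf_{\theta}$ is the distinguished direction along which $B_{\theta}(g, \cdot) = p_{\theta}(\kappa(g))$ exactly, and the geometric constraint just derived forces $x$ (equivalently $-u$) to lie within a bounded neighborhood of this direction, yielding $\norm{p_{\theta}(u) + p_{\theta}(\kappa(g))} \leq C$. The main obstacle is ensuring uniformity of the constant $C$ in $g$---in particular, independence from $\norm{\kappa(g)}$ as $g$ escapes to infinity in $\Gsf$. This is precisely what the shadow framework delivers: the additive error terms coming from the bounded piece $a^{-1}na$ and from the Weyl-chamber comparison depend only on $R$, not on the depth of $g$ in $\Gsf$.
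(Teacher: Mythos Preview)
The paper does not give its own proof of this lemma; it simply cites the external references. The standard argument there is more direct than yours: from the shadow condition one writes $x = m\Psf_\theta$ with $\dist_X(mo,o)<R$ and $g = k a m^{-1}$ for some $k\in\Ksf$ and $a\in\Asf^+$; the cocycle relation gives $B_\theta(g,x) = p_\theta(\log a) + B_\theta(m^{-1},m\Psf_\theta)$, where the second term is bounded since $m$ ranges in a fixed compact set, and the compact--perturbation estimate $\norm{\kappa(kam^{-1})-\log a}\leq C(R)$ for the Cartan projection closes the argument.

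Your route has a genuine gap. The assertion that $a^{-1}na$ is ``controlled (bounded for bounded $\log a$, and contracted to the identity as $\log a\to\infty$ in $\fa^+$)'' is not justified: the element $n$ comes from the Iwasawa decomposition of $g^{-1}k$ and depends on $g$, so there is no a priori bound on it; conjugation by $a^{-1}$ contracts $\Nsf$ only pointwise, not uniformly over all $n$. The conclusion $\norm{u+\log a}\leq R$ is nonetheless correct, but for a different reason: $\exp(u+\log a)\cdot(a^{-1}na)$ is already the Iwasawa decomposition of $\exp(u)na$, and Kostant's convexity theorem places the Iwasawa $\fa$-part in the convex hull of the Weyl orbit of the Cartan projection, whose norm is at most $R$.

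Even granting this, your final paragraph does not close. Knowing $\norm{u}=\norm{\kappa(g)}$ and $\norm{u+\log a}\leq C$ does not yield $\norm{p_\theta(u+\kappa(g))}\leq C$, and the appeal to ``$x$ lying in a bounded neighborhood of $k_2^{-1}\Psf_\theta$'' is not enough because $B_\theta(g,\cdot)$ is not uniformly continuous in $g$. What is missing is exactly the compact--perturbation step: from $\exp(u)na\in B_R$ one gets $g = k a c^{-1} k_0^{-1}$ with $c$ in a fixed compact set, hence $\norm{\kappa(g)-\log a}\leq C(R)$, and combining this with $\norm{u+\log a}\leq R$ gives $\norm{u+\kappa(g)}\leq C$ as required.
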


\begin{lemma}\label{lem.precompactshadow}  For any relatively compact subset $V \subset \mathsf{N}^{\rm opp}_\theta$ there exists $R_0 > 0$ such that: if $g \in \Gsf$ has a Cartan decomposition $g = k a \ell  \in \Ksf \Asf^+ \Ksf$, then
$$
\ell^{-1} V \Psf_{\theta} \subset O_{R_0}^{\theta}(g^{-1}o, o).
$$
\end{lemma}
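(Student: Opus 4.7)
The plan is to produce, for each $v \in V$, explicit elements $h \in \Gsf$ and $H \in \fa^+$ with $h\Psf_\theta = \ell^{-1}v\Psf_\theta$, $ho = g^{-1}o$, and $d_X(o, h\exp(H)o) \leq R_0$ for some $R_0$ depending only on $V$.

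First I will observe that for any negative root $\alpha$ with $\alpha\notin \mathbb{Z}\text{-span}(\Delta\smallsetminus\theta)$ one has $\alpha(\fa^+)\subset(-\infty,0]$, so conjugation by $a\in\Asf^+$ is a contraction on $\Nsf_\theta^{\rm opp}$. Hence the set $\{ava^{-1}: v\in V, a\in\Asf^+\}$ is relatively compact in $\Nsf_\theta^{\rm opp}$. Writing $w:=ava^{-1}$ and taking the Iwasawa decomposition $w = k_w\exp(H_w)n_w\in\Ksf\Asf\Nsf$, the factors $k_w, H_w, n_w$ lie in compact sets depending only on $V$. Combined with the observation that $a^{-1}$-conjugation is a contraction on $\Nsf$ for $a\in\Asf^+$, this gives the identity
$$
av = wa = k_w\exp(H_w+\log a)\tilde n, \quad \text{where } \tilde n := a^{-1}n_w a,
$$
with $\tilde n$ uniformly bounded in $\Nsf$. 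Since $\Asf\Nsf\subset\Psf\subset\Psf_\theta$, this implies the flag identity $av\Psf_\theta = k_w\Psf_\theta$.

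I will then set $h := g^{-1}k k_w = \ell^{-1}a^{-1}k_w$ and $H:=\log a\in\fa^+$. The identity $ho = g^{-1}o$ is immediate from $k,k_w\in\Ksf$, and $h\Psf_\theta = \ell^{-1}v\Psf_\theta$ follows from the flag identity above. The key computation is
$$
h\exp(H) = \ell^{-1}(a^{-1}k_w a) = \ell^{-1}\,v\,\tilde n^{-1}\exp(-H_w),
$$
the second equality obtained by substituting $k_w = av\cdot \tilde n^{-1}\exp(-H_w-\log a)$ coming from the Iwasawa identity and simplifying. Since $\ell^{-1}\in\Ksf$ fixes $o$, this yields
$$
d_X(o, h\exp(H)o) = \bigl\|\kappa\bigl(v\,\tilde n^{-1}\exp(-H_w)\bigr)\bigr\|,
$$
which is bounded by a constant $R_0 = R_0(V)$ since $v\tilde n^{-1}\exp(-H_w)$ lies in a compact subset of $\Gsf$ depending only on $V$.

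The only delicate point is the choice of the representative $h$. Natural guesses such as $h=\ell^{-1}v$ or $h=\ell^{-1}va^{-1}$ give the correct flag but fail the basepoint condition $ho=g^{-1}o$ except when $v\in\Ksf$. Moreover, for a generic bounded $k\in\Ksf$ the conjugates $a^{-1}ka$ are unbounded in $\Gsf$ as $a\in\Asf^+$ varies; it is precisely the Iwasawa cancellation (making $a^{-1}k_w a$ equal the bounded element $v\tilde n^{-1}\exp(-H_w)$) that forces the shadow bound to be uniform. This cancellation is the technical heart of the argument.
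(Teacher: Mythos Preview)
Your proof is correct and is essentially the same argument as the paper's, written with slightly different bookkeeping. The paper first reduces to showing $V\Psf_\theta \subset O_{R_0}^\theta(a^{-1}o,o)$ and takes the Iwasawa decomposition of $ah$ (your $av$) directly, whereas you decompose $w=ava^{-1}$; but the resulting witness element $v\tilde n^{-1}\exp(-H_w)$ coincides with the paper's $hn^{-1}b^{-1}a$, and both proofs hinge on the same two contraction facts ($a$-conjugation on $\Nsf_\theta^{\rm opp}$ and $a^{-1}$-conjugation on $\Nsf$).
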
 

\begin{proof} Notice that the desired inclusion is equivalent to $V \Psf_{\theta} \subset O_{R_0}^{\theta}(a^{-1} o, o)$. 

Fix $h \in V$ and let 
$$
a h = k' b n \in \mathsf{K}\mathsf{A}\mathsf{N}.
$$ 
denote the Iwasawa decomposition of $ah$.  Notice that 
 $$
a h a^{-1} = k'(b a^{-1})(a n a^{-1}) \in \mathsf{K}\mathsf{A}\mathsf{N},
$$
is the Iwasawa decomposition of $a h a^{-1}$. Since $V \subset \mathsf{N}_\theta^{\rm opp}$ is relatively compact and $a \in \mathsf{A}^+$, there exists a relatively compact subset $V'\subset \mathsf{N}_\theta^{\rm opp}$, which only depends on $V$, such that $aha^{-1} \in V'$. Then, since the Iwasawa decomposition induces a diffeomorphism $\mathsf{K} \times \mathsf{A} \times \mathsf{N} \rightarrow \mathsf{G}$,  there exists a relatively compact subset $W \subset \mathsf{G}$, which only depends on $V$, such that 
$$
b a^{-1},  a n a^{-1} \in W.
$$
Since $n \in \mathsf{N}$ and $a \in \mathsf{A}^+$, there exists a relatively compact subset $W' \subset \mathsf{G}$, which only depends on $V$, such that $n \in W'$. 

Then 
$$
h n^{-1} b^{-1} a \in V \cdot W'^{-1} \cdot W^{-1} 
$$
is uniformly bounded. Thus there exists $R_0 > 0$, which only depends on $V$, such that 
$$
h n^{-1} b^{-1} a o \in B_X(o, R_0).
$$
Therefore, $h\Psf_\theta = h(n^{-1}b^{-1}) \Psf_{\theta} \in O_{R_0}^{\theta}( h n^{-1} b^{-1}o, o)$. Since $h n^{-1} b^{-1} = a^{-1} k'$, we have $h  \Psf_{\theta} \in O_{R_0}^{\theta}(a^{-1} o, o)$. This finishes the proof.
\end{proof}

We now verify that irreducible subgroups give PS-systems. We emphasize that $\Ga$ is not assumed to be discrete in the following.

\begin{theorem} \label{thm.irreduciblePS}
   Let $\Ga < \Gsf$ be a $\Psf_{\theta}$-irreducible subgroup. If $\phi \in \fa_{\theta}^*$ and $\mu$  is a coarse $\phi$-PS measure on $\F_{\theta}$, then $(\F_{\theta}, \Ga, \sigma_{\phi}, \mu)$ is a PS-system with magnitude $\ga \mapsto \phi(\kappa(\ga))$ and shadows $\{\Oc_R(\ga) := O_R^{\theta}(o, \ga o) : \ga \in \Ga, R > 0 \}$.
\end{theorem}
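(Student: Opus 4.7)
The plan is to verify the three axioms \ref{item:coycles are bounded}--\ref{item:empty Z intersection} of a PS-system for the candidate data $(\F_\theta, \Ga, \sigma_\phi, \mu)$ with magnitude $\norm{\ga}_{\sigma_\phi} := \phi(\kappa(\ga))$ and shadows $\Oc_R(\ga) := O_R^\theta(o, \ga o)$. The cocycle identity for $B_\theta$ makes $\sigma_\phi = \phi \circ B_\theta$ an honest cocycle, and $\mu$ is a coarse $\sigma_\phi$-PS measure by hypothesis. The two bookkeeping axioms are immediate from the standard Iwasawa/Cartan estimates recorded above, and the substantive work is axiom \ref{item:empty Z intersection}, where I will use Lemma~\ref{lem.precompactshadow} to localize the Hausdorff limit $Z$ to a ``non-transverse'' subset and then invoke $\Psf_\theta$-irreducibility.

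For \ref{item:coycles are bounded}, $B_\theta(\ga,\cdot)$ is continuous and $\F_\theta$ is compact, so $\sigma_\phi(\ga,\cdot)$ is bounded. For \ref{item:almost constant on shadows}, observe that $\ga^{-1}\Oc_R(\ga) = O_R^\theta(\ga^{-1} o, o)$, so Lemma~\ref{lem.busemanncartan} yields $\norm{p_\theta(\kappa(\ga)) - B_\theta(\ga,x)} \leq C(R)$ for every $x \in \ga^{-1}\Oc_R(\ga)$; applying the $p_\theta$-invariant functional $\phi$ gives $\abs{\phi(\kappa(\ga)) - \sigma_\phi(\ga,x)} \leq \norm{\phi} \cdot C(R)$, as required.

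For axiom \ref{item:empty Z intersection}, suppose $\{\ga_n\} \subset \Ga$, $R_n \to \infty$, and $[\F_\theta \smallsetminus \ga_n^{-1}\Oc_{R_n}(\ga_n)] \to Z$ in Hausdorff distance. Write Cartan decompositions $\ga_n = k_n a_n \ell_n \in \Ksf \Asf^+ \Ksf$ and pass to a subsequence so that $\ell_n^{-1} \to \ell^{-1} \in \Ksf$ (the limit $Z$ persists). For each relatively compact open $V \subset \Nsf_\theta^{\rm opp}$, Lemma~\ref{lem.precompactshadow} gives some $R_0(V)$ with $\ell_n^{-1} V \Psf_\theta \subset O_{R_0(V)}^\theta(\ga_n^{-1} o, o)$ for all $n$, and $R_n \to \infty$ together with monotonicity of $O_R^\theta$ in $R$ upgrades this to $\ell_n^{-1} V \Psf_\theta \subset \ga_n^{-1}\Oc_{R_n}(\ga_n)$ for all large $n$. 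A short continuity argument (for any relatively compact open $V' \subset V$, the uniform convergence $\ell_n^{-1} \to \ell^{-1}$ forces $\ell^{-1} V' \Psf_\theta \subset \ell_n^{-1} V \Psf_\theta$ eventually, and therefore $Z \cap \ell^{-1} V' \Psf_\theta = \emptyset$) then yields
$$
Z \cap \ell^{-1} \Nsf_\theta^{\rm opp} \Psf_\theta = \emptyset.
$$
Since $\Nsf_\theta^{\rm opp} \Psf_\theta$ is precisely the set of points in $\F_\theta$ transverse to $w_0 \Psf_{\opp(\theta)}$, every $x \in Z$ is non-transverse to the distinguished point $y := \ell^{-1} w_0 \Psf_{\opp(\theta)} \in \F_{\opp(\theta)}$. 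Finally, $\Psf_\theta$-irreducibility of $\Ga$ applied to $x \in \F_\theta$ and $y \in \F_{\opp(\theta)}$ supplies some $g \in \Ga$ with $gx$ transverse to $y$; hence $gx \notin Z$.

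The main obstacle is the Hausdorff-limit passage turning a lower bound on shadows into an upper bound on $Z$; once this elementary fact is verified, the rest of the argument is a repackaging of Lemmas~\ref{lem.busemanncartan} and~\ref{lem.precompactshadow} together with the definition of $\Psf_\theta$-irreducibility. Note also that no discreteness or transversality of $\Ga$ is used anywhere in this verification, which matches the generality claimed in the theorem.
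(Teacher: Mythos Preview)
Your proof is correct and follows essentially the same approach as the paper: continuity of $B_\theta$ for \ref{item:coycles are bounded}, Lemma~\ref{lem.busemanncartan} for \ref{item:almost constant on shadows}, and Lemma~\ref{lem.precompactshadow} plus $\Psf_\theta$-irreducibility for \ref{item:empty Z intersection}. The paper is terser (it simply asserts $Z \subset \F_\theta \smallsetminus k \Nsf_\theta^{\rm opp} \Psf_\theta$ for some $k \in \Ksf$ as a consequence of Lemma~\ref{lem.precompactshadow}), whereas you spell out the subsequence and continuity argument that produces this $k = \ell^{-1}$; but the substance is identical.
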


\begin{proof}
Since $B_{\theta}$ is continuous and $\F_{\theta}$ is compact, Property \ref{item:coycles are bounded} holds. Property \ref{item:almost constant on shadows} follows from Lemma \ref{lem.busemanncartan}. We now show Property \ref{item:empty Z intersection}.

Suppose $\{\gamma_n\} \subset \Gamma$, $R_n \rightarrow +\infty$, and $[M \smallsetminus \gamma_n^{-1}\Oc_{R_n}(\gamma_n)] \rightarrow Z$ with respect to the  Hausdorff distance. Since
$$
\gamma_n^{-1}\Oc_{R_n}(\gamma_n) = O_R^{\theta}(\ga_n^{-1}o, o),
$$
Lemma \ref{lem.precompactshadow} implies that  $Z \subset \F_{\theta} \smallsetminus k \Nsf_{\theta}^{\rm opp} \Psf_{\theta}$ for some $k \in \Ksf$. Since $k \Nsf_{\theta}^{\rm opp} \Psf_{\theta}$ consists of points transverse to $k w_0 \Psf_{\opp(\theta)}$, Property \ref{item:empty Z intersection} follows from the definition of $\Psf_{\theta}$-irreducibility.
\end{proof}

    \subsection{Zariski dense discrete subgroups}

   In this section, we show that Zariski dense discrete subgroups give rise to well-behaved PS-systems with respect to some natural subsets.

   Let $\Ga < \Gsf$ be a Zariski dense discrete subgroup.  For $R > 0$ and $\ga \in \Ga$, we consider the shadow
   \begin{equation} \label{eqn.shadow ZD}
   \Oc_R(\ga) := O_R^{\Delta}(o, \ga o) \subset \F.
\end{equation}
   For  $u \in \interior \fa^+$ and $r > 0$, we collect elements of $\Ga$ along the direction $u$:
   $$
   \Ga_{u, r} := \{ \ga \in \Ga : \| \kappa(\ga) - t u \| < r \text{ for some } t > 0 \}.
   $$

   \begin{theorem} \label{thm.directionalps}
      Let $\Ga < \Gsf$ be a Zariski dense discrete subgroup and $u \in \interior \fa^+$. Let $\phi \in \fa^*$ be such that $\phi(u) > 0$ and let $\mu$ be a $\phi$-PS measure for $\Ga$ on $\F$. Then for any $r > 0$, the PS-system 
      $(\F, \Ga, \sigma_{\phi}, \mu)$ is well-behaved with respect to the constant hierarchy $\mathscr{H}(R) \equiv \Ga_{u, r}$, with magnitude $\ga \mapsto \phi(\kappa(\ga))$ and shadows as in Equation \eqref{eqn.shadow ZD}.
   \end{theorem}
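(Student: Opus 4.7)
By Theorem~\ref{thm.irreduciblePS}, since $\Ga$ is Zariski dense and therefore $\Psf_\Delta$-irreducible, $(\F,\Ga,\sigma_\phi,\mu)$ is already a PS-system with shadows $\Oc_R(\ga)=O^\Delta_R(o,\ga o)$ and magnitude $\ga\mapsto\phi(\kappa(\ga))$. So the task is to verify Properties \ref{item:properness}--\ref{item:diam goes to zero ae} for the constant hierarchy $\mathscr{H}(R)\equiv\Ga_{u,r}$. Two elementary properties follow immediately: \ref{item:shadow inclusion} is a direct consequence of $B_X(p,R_1)\subset B_X(p,R_2)$, and \ref{item:properness} follows because $\Ga$ is countable (discreteness), and the Cartan projection is proper on discrete subgroups, so the constraints $\phi(\kappa(\ga))\le T$ together with $\|\kappa(\ga)-tu\|<r$ for some $t>0$ force $\|\kappa(\ga)\|$ to be bounded (using $\phi(u)>0$), giving finitely many such $\ga$.

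For Property~\ref{item:baire}, I would use the strong $\Psf_\Delta$-irreducibility coming from Zariski density. As in the proof of Theorem~\ref{thm.irreduciblePS}, Lemma~\ref{lem.precompactshadow} implies that any Hausdorff limit $Z$ of complements $[\F\smallsetminus\ga_n^{-1}\Oc_{R_n}(\ga_n)]$ is contained in the proper Zariski-closed subset $\F\smallsetminus k\Nsf^{\rm opp}\Psf$, for some $k\in\Ksf$, i.e.\ the set of points non-transverse to a single flag $kw_0\Psf$. Then $\bigcup_{i=1}^m h_iZ$ lies in the union of $m$ proper subvarieties, whose complement is a non-empty Zariski open subset of $\F$. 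Since $\Ga$ is Zariski dense, its action on $\F$ is strongly $\Psf_\Delta$-irreducible, so for each $x\in Z$ one can find $g\in\Ga$ so that $gx$ avoids each of the $m$ non-transversality loci simultaneously, giving the desired $g$.

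Property~\ref{item:diam goes to zero ae} is where the directional restriction is used. If $\{\ga_n\}\subset\Ga_{u,r}$ is escaping, then by discreteness $\|\kappa(\ga_n)\|\to\infty$, and together with $\|\kappa(\ga_n)-t_nu\|<r$ this forces $t_n\to\infty$, hence $\alpha(\kappa(\ga_n))\to\infty$ for every simple root $\alpha\in\Delta$ since $\alpha(u)>0$. After passing to a subsequence, $\ga_n$ converges to some $\eta\in\F$ and $\ga_n^{-1}$ to some $\eta'\in\F$ in the sense of Section~\ref{subsec.convergence theta bdr}. Writing $\Oc_R(\ga_n)=\ga_n O^\Delta_R(\ga_n^{-1}o,o)$ and using Lemma~\ref{lem.precompactshadow}, the pre-translate $O^\Delta_R(\ga_n^{-1}o,o)$ lies in a neighborhood of $\ell_n^{-1}w_0\Psf$ of shrinking diameter as $\alpha(\kappa(\ga_n))\to\infty$ for all $\alpha$; applying the source--sink lemma to the regular sequence $\ga_n$, this gets contracted by $\ga_n$ to $\eta$, so $\diam\Oc_R(\ga_n)\to 0$. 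Thus I can take $M'=\F$.

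The main obstacle is Property~\ref{item:intersecting shadows}, the ``almost-additivity'' statement. The plan is to show that if $\alpha,\beta\in\Ga_{u,r}$, $\phi(\kappa(\alpha))\le\phi(\kappa(\beta))$, and $\Oc_R(\alpha)\cap\Oc_R(\beta)\ne\emptyset$, then $\alpha o$ is within a uniformly bounded distance (depending only on $R,r,u$) of the symmetric-space geodesic from $o$ to $\beta o$. The heart of the matter is a higher-rank alignment estimate for the Cartan projection in a $u$-regular tube: the existence of a point in $\Oc_R(\alpha)\cap\Oc_R(\beta)$ forces the $\Ksf$-parts $k_\alpha,k_\beta$ in the Cartan decompositions $\alpha=k_\alpha\exp\kappa(\alpha)\ell_\alpha$, $\beta=k_\beta\exp\kappa(\beta)\ell_\beta$ to agree modulo bounded $\Psf$-error, from which standard regularity estimates (as in Benoist's work on divergence of Cartan projections, or the shadow-intersection arguments in \cite{KOW_PD} and \cite{CZZ_transverse}) give both
\[
\bigl\|\kappa(\beta)-\kappa(\alpha)-\kappa(\alpha^{-1}\beta)\bigr\|\le C \quad\text{and}\quad \alpha^{-1}\beta\in\Ga_{u,r'},
\]
for some $r'=r'(r,R)$, which after applying $\phi$ gives the additivity inequality. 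The inclusion $\Oc_R(\beta)\subset\Oc_{R'}(\alpha)$ then follows from the triangle inequality in the symmetric space: any flat from $o$ passing within $R$ of $\beta o$ must, by the alignment of $\alpha o$ on this near-geodesic, also pass within some $R'=R'(R,r,u)$ of $\alpha o$. Carrying out this alignment estimate cleanly in $u$-regular tubes — in particular, showing that the $u$-direction assumption allows one to bypass the non-uniqueness issues that arise for general regular sequences — is the technical step around which the proof revolves.
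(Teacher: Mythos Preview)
Your overall structure matches the paper's proof: invoke Theorem~\ref{thm.irreduciblePS} for \ref{item:coycles are bounded}--\ref{item:empty Z intersection}, then verify \ref{item:properness}--\ref{item:diam goes to zero ae} separately. Your treatments of \ref{item:properness}, \ref{item:baire}, and \ref{item:shadow inclusion} are correct and agree with the paper. For \ref{item:intersecting shadows}, the paper does not carry out the alignment argument you outline but simply cites \cite[Lem.~3.6 and its proof]{BLLO}; what you sketch is essentially the content of that reference, so there is no real divergence.

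Your argument for \ref{item:diam goes to zero ae}, however, is garbled. Lemma~\ref{lem.precompactshadow} says that $O^\Delta_R(\ga_n^{-1}o,o)$ is \emph{large}---it contains $\ell_n^{-1}V\Psf$ for any relatively compact $V\subset\Nsf^{\rm opp}$ once $R$ is large enough---not that it lies in a shrinking neighborhood of $\ell_n^{-1}w_0\Psf$; that would describe its complement. Worse, the source--sink step fails as written: in the limit, $\ell_n^{-1}w_0\Psf$ is precisely the \emph{source} $\eta'$ of the regular sequence $\ga_n$, so one cannot contract a set allegedly concentrated there. The correct and much simpler argument is direct: $\Oc_R(\ga_n)=O^\Delta_R(o,\ga_n o)$ consists of flags $k\Psf$ with $k\in\Ksf$ such that $k\Asf^+o$ meets $B_X(\ga_n o,R)=B_X(k_na_no,R)$; since $a_n=\exp\kappa(\ga_n)$ is deep in $\interior\Asf^+$, this forces $k\Msf$ close to $k_n\Msf$, with the error tending to zero as $\min_{\alpha\in\Delta}\alpha(\kappa(\ga_n))\to\infty$. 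One can then take $M'=\Fc$. The paper compresses this to the single line ``Property~\ref{item:diam goes to zero ae} is a consequence of $u\in\interior\fa^+$.''
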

    
   \begin{proof}
   By Theorem \ref{thm.irreduciblePS}, $(\F, \Ga, \sigma_{\phi}, \mu)$ is a PS-system. To see Property \ref{item:baire}, let $\{\gamma_n\} \subset \Gamma$ and $R_n \rightarrow +\infty$ be sequences so that $[M \smallsetminus \gamma_n^{-1}\Oc_{R_n}(\gamma_n)] \rightarrow Z$ with respect to the  Hausdorff distance. Since
$$
\gamma_n^{-1}\Oc_{R_n}(\gamma_n) = O_{R_n}^{\Delta}(\ga_n^{-1}o, o),
$$
Lemma \ref{lem.precompactshadow} implies that  $Z \subset \Fc \smallsetminus k \Nsf^{\rm opp} \Psf$ for some $k \in \Ksf$.
   
Thus $Z$ is contained in a proper subvariety of $\F$. Hence, Property \ref{item:baire} follows from the Zariski density of $\Ga$.
   Property \ref{item:properness} and Property \ref{item:shadow inclusion} are straightforward. By \cite[Lem. 3.6 and its proof]{BLLO}, Property \ref{item:intersecting shadows} holds. Property \ref{item:diam goes to zero ae} is a consequence of $u \in \interior \fa^+$.
   \end{proof}

   \begin{remark}
      The set $\La^{\rm con}(\Ga_{u, r}) = \La^{\rm con}(\mathscr{H})$ above is related to the notion of ``$u$-directional limit set'' discussed in \cite{Link_ergodicity, BLLO, Sambarino_report, KOW_SF}. When $\Ga$ is an irreducible lattice and $\mu$ is a $\Ksf$-invariant measure on $\F$, it follows from the work of Link \cite{Link_ergodicity} that $\mu(\La^{\rm con}(\Ga_{u, r})) = 1$ for all large $r > 0$. For general $\Ga$ and $\mu$, it was shown by
      Burger--Landesberg--Lee--Oh \cite{BLLO}  that $\mu(\La^{\rm con}(\Ga_{u, r})) = 1$ holds for large $r > 0$ if and only if the right-multiplication of $\exp (u \R)$ on $\Ga \ba \Gsf / \Msf$ is ergodic with respect to a Bowen--Margulis--Sullivan measure associated to $\mu$ (see also \cite{KOW_SF}). It was also shown in \cite{BLLO} that if $\Ga < \Gsf$ is $\Psf_{\Delta}$-Anosov and $\rank G \le 3$, $\mu( \La^{\rm con}(\Ga_{u, r})) = 1$ for some $u \in \interior \fa^+$ and all large $r > 0$.
   \end{remark}

   \subsection{Tukia's theorem in higher rank}
   Let $\Gsf_1, \Gsf_2$ be connected semisimple Lie groups without compact factors and with finite centers. For $i = 1, 2$, let $\theta_i$ be a non-empty subset of simple roots for $\Gsf_i$.
   Combining Proposition \ref{prop.transverseexpanding} and Theorem \ref{thm:main rigidity theorem}, we obtain the following.

   \begin{corollary} \label{cor.Tukia in higher rank} 
      For $i = 1, 2$, let $\Ga_i < \Gsf_i$, $\phi_i \in \fa_{\theta_i}^*$, and $\mu_i$ a coarse $\phi_i$-PS measure for $\Ga_i$ of dimension $\delta_i$ on $\F_{\theta_i}$. Suppose
      \begin{itemize}
         \item $\Ga_1$ is non-elementary $\Psf_{\theta_1}$-transverse and $\sum_{\ga \in \Ga_1} e^{-\delta_1 \phi_1(\kappa(\ga))} = + \infty$.
         \item $\Ga_2$ is $\Psf_{\theta_2}$-irreducible.
         \item There exists an onto homomorphism $\rho : \Ga_1 \to \Ga_2$ and a $\mu_1$-almost everywhere defined measurable $\rho$-equivariant injective map $f : \Fc_{\theta_1} \rightarrow \Fc_{\theta_2}$.
      \end{itemize} 
      If $f_*\mu_1$ and $\mu_2$ are not singular, then
      $$
      \sup_{\ga \in \Ga_1} | \delta_1 \phi_1(\kappa(\ga)) - \delta_2 \phi_2(\kappa(\rho(\ga))) | < + \infty.
      $$
   \end{corollary}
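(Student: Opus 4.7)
The plan is to derive the corollary directly from the abstract rigidity theorem (Theorem~\ref{thm:main rigidity theorem}) by packaging the given data on each side as PS-systems.

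On the $\Ga_2$-side, Theorem~\ref{thm.irreduciblePS} promotes $(\Fc_{\theta_2}, \Ga_2, \sigma_{\phi_2}, \mu_2)$ into a PS-system with shadows $\Oc_R(\ga) = O_R^{\theta_2}(o, \ga o)$ and magnitudes $\ga \mapsto \phi_2(\kappa(\ga))$; the $\Psf_{\theta_2}$-irreducibility of $\Ga_2$ is precisely what verifies Property~\ref{item:empty Z intersection}. On the $\Ga_1$-side, Proposition~\ref{prop.transverseexpanding} delivers a \emph{well-behaved} PS-system $(\La_{\theta_1}(\Ga_1), \Ga_1, \sigma_{\phi_1}, \mu_1)$ with the trivial hierarchy $\mathscr{H}(R) \equiv \Ga_1$ and the convergence-group shadows of Equation~\eqref{eqn.shadowinconvergence}, once one verifies that $\mu_1$ is supported on $\La_{\theta_1}(\Ga_1)$. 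The divergence hypothesis $\sum_{\ga \in \Ga_1} e^{-\delta_1 \phi_1(\kappa(\ga))} = +\infty$ then combines with Theorem~\ref{thm:conical limit set has full measure}(2) to guarantee $\mu_1(\La^{\rm con}(\Ga_1)) = 1$, which is the conical-limit-set hypothesis in Theorem~\ref{thm:main rigidity theorem}. The $\rho$-equivariant measurable injective map $f|_{\La_{\theta_1}(\Ga_1)}$ together with the non-singularity of $f_*\mu_1$ and $\mu_2$ then feeds directly into the abstract theorem to yield
\[
\sup_{\ga \in \Ga_1}\bigl|\delta_1 \phi_1(\kappa(\ga)) - \delta_2 \phi_2(\kappa(\rho(\ga)))\bigr| < +\infty,
\]
since by Proposition~\ref{prop.transverseexpanding} the $\sigma_{\phi_i}$-magnitudes are exactly $\phi_i(\kappa(\cdot))$.

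The main obstacle is the supporting step: showing that $\mu_1$ is concentrated on $\La_{\theta_1}(\Ga_1)$. Since $\Ga_1$ is only assumed non-elementary $\Psf_{\theta_1}$-transverse and \emph{not} Zariski dense, Theorem~\ref{thm:transverse group PS existence/uniqueness}(3) does not apply directly. The plan is to exploit transversality -- which makes the $\Ga_1$-action on $\Fc_{\theta_1} \smallsetminus \La_{\theta_1}(\Ga_1)$ properly discontinuous in the appropriate sense -- together with the PS-transformation rule and the Poincar\'e divergence, via a Shadow-Lemma style covering argument, to rule out $\mu_1$-mass outside $\La_{\theta_1}(\Ga_1)$. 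Once this is secured, the restriction of $f$ to $\La_{\theta_1}(\Ga_1)$ inherits the measurable, $\rho$-equivariant, injective structure and preserves the non-singularity with $\mu_2$, and the invocation of Theorem~\ref{thm:main rigidity theorem} becomes routine.
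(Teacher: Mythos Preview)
Your overall plan is exactly the paper's: package the $\Gamma_1$-side via Proposition~\ref{prop.transverseexpanding} into a well-behaved PS-system on $\Lambda_{\theta_1}(\Gamma_1)$ with the trivial hierarchy and magnitude $\phi_1(\kappa(\cdot))$, package the $\Gamma_2$-side via Theorem~\ref{thm.irreduciblePS} into a PS-system on $\Fc_{\theta_2}$ with magnitude $\phi_2(\kappa(\cdot))$, use divergence and Theorem~\ref{thm:conical limit set has full measure}(2) to obtain $\mu_1(\Lambda^{\rm con}(\Gamma_1))=1$, and then apply Theorem~\ref{thm:main rigidity theorem}. The paper's proof is literally the single sentence ``Combining Proposition~\ref{prop.transverseexpanding} and Theorem~\ref{thm:main rigidity theorem}, we obtain the following.''

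You are right that Proposition~\ref{prop.transverseexpanding} requires $\mu_1$ to be supported on $\Lambda_{\theta_1}(\Gamma_1)$ (and that $\phi_1(\kappa(\gamma_n))\to+\infty$ on escaping sequences), hypotheses the paper leaves implicit. However, your proposed resolution is wrong: the $\Gamma_1$-action on $\Fc_{\theta_1}\smallsetminus\Lambda_{\theta_1}(\Gamma_1)$ is \emph{not} properly discontinuous in any useful sense for a general non-elementary transverse group. For a concrete obstruction, embed a convex cocompact Fuchsian group into $\PGL(3,\Rb)$ via $A\mapsto\left(\begin{smallmatrix}A&0\\0&1\end{smallmatrix}\right)$. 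This is non-elementary $\Psf_1$-transverse, yet $[e_3]$ is a global fixed point lying outside $\Lambda_1(\Gamma)$; since $\omega_1(B_1(\gamma,[e_3]))=\log\|\gamma e_3\|=0$ for every $\gamma$, the Dirac mass $\delta_{[e_3]}$ is an $\omega_1$-PS measure of \emph{every} dimension, and one may choose $\delta_1$ so that the Poincar\'e series diverges. Thus no Shadow-Lemma or covering argument can force support on the limit set from the stated hypotheses alone. In all of the paper's applications (Theorem~\ref{thm:Tukia in higher rank}, Section~\ref{sec:Hpq rigidity}) the measure $\mu_1$ is either constructed on $\Lambda_{\theta_1}(\Gamma_1)$ via Theorem~\ref{thm:transverse group PS existence/uniqueness}(1)--(2), or $\Gamma_1$ is Zariski dense so that part~(3) of that theorem applies; you should treat ``$\mu_1$ supported on $\Lambda_{\theta_1}(\Gamma_1)$'' as an implicit hypothesis of the corollary rather than attempt to derive it.
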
   

   \begin{remark} \label{rmk.directional}
      
      Note that $\Ga_2$ is not assumed to be discrete. When $\theta_1$ is the set of all simple roots for $\Gsf_1$, by Theorem \ref{thm.directionalps}, we can replace the first condition in Corollary \ref{cor.Tukia in higher rank} with $\Ga_1$ being Zariski dense discrete and $\mu_1(\La^{\rm con}(\Ga_{1, u, r})) = 1$ for some $u \in \interior \fa_1^+$ with $\phi_1(u) > 0$ and $r > 0$.
      \end{remark}

    To complete the proof of    Theorem \ref{thm:Tukia in higher rank} from the introduction, we use the following result of Dal'Bo--Kim. 
    
       \begin{theorem} \cite{DK_criterion} \label{thm.dalbokim}
      For $i=1,2$, suppose that $\Gsf_i$ is simple and has a trivial center and let $\Ga_i < \Gsf_i$ be a Zariski dense subgroup and $\phi_i \in \fa_i^* \smallsetminus \{0\}$. If $\rho : \Ga_1 \to \Ga_2$ is an onto homomorphism and 
      $$
      \sup_{\ga \in \Ga_1} \left| \phi_1(\lambda(\ga)) - \phi_2 (\lambda(\rho(\ga)))  \right| < + \infty,
      $$
      then $\rho$ extends to a Lie group isomorphism $\Gsf_1 \to \Gsf_2$.
   \end{theorem}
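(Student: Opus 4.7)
My plan is to study the Zariski closure of the graph of $\rho$ inside $\Gsf_1 \times \Gsf_2$ and reduce to two cases via Goursat's lemma for algebraic groups: either this Zariski closure is the graph of an isomorphism (the desired conclusion), or it is the full product, which I will rule out via Benoist's theorem on Jordan projection limit cones.

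The first step is to upgrade the bounded difference to an exact identity. Using $\lambda(\ga^n) = n\lambda(\ga)$ and $\rho(\ga^n) = \rho(\ga)^n$, substituting $\ga^n$ into the hypothesis gives
$$n \left| \phi_1(\lambda(\ga)) - \phi_2(\lambda(\rho(\ga))) \right| < +\infty \quad \text{for all } n \ge 1,$$
forcing $\phi_1(\lambda(\ga)) = \phi_2(\lambda(\rho(\ga)))$ for every $\ga \in \Ga_1$. Next, let $\mathsf{L} \subseteq \Gsf_1 \times \Gsf_2$ denote the Zariski closure of the graph $\{(\ga, \rho(\ga)) : \ga \in \Ga_1\}$; this is a Zariski closed algebraic subgroup, and Zariski density of $\Ga_1$ in $\Gsf_1$ and of $\rho(\Ga_1) = \Ga_2$ in $\Gsf_2$ imply that the two projections $p_i(\mathsf{L})$ equal $\Gsf_i$. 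Goursat's lemma then identifies $\mathsf{L}$ with the preimage of the graph of a Lie group isomorphism $\Gsf_1/N_1 \cong \Gsf_2/N_2$, for normal algebraic subgroups $N_1 \triangleleft \Gsf_1$ and $N_2 \triangleleft \Gsf_2$. Simplicity of each $\Gsf_i$ forces $N_i \in \{e, \Gsf_i\}$, and the only consistent cases are (a) $N_1 = N_2 = e$, in which case $\mathsf{L}$ is the graph of a Lie group isomorphism $\Gsf_1 \to \Gsf_2$ extending $\rho$, or (b) $N_1 = \Gsf_1$ and $N_2 = \Gsf_2$, in which case $\mathsf{L} = \Gsf_1 \times \Gsf_2$.

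It remains to rule out case (b). If $\mathsf{L} = \Gsf_1 \times \Gsf_2$, then $\mathsf{L}$ is semisimple without compact factors, a positive Weyl chamber may be taken as $\mfa_1^+ \oplus \mfa_2^+$, and the Jordan projection of $(\ga, \rho(\ga))$ equals $(\lambda(\ga), \lambda(\rho(\ga)))$. Since the graph is Zariski dense in $\mathsf{L}$ by construction, Benoist's limit cone theorem gives that the closure of
$$\bigcup_{\ga \in \Ga_1} \R_{\ge 0} \, \left(\lambda(\ga), \lambda(\rho(\ga))\right)$$
is a convex cone with non-empty interior in $\mfa_1^+ \oplus \mfa_2^+$. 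But by the equality from the first step, the linear functional $(\phi_1, -\phi_2)$ on $\mfa_1 \oplus \mfa_2$ vanishes on every Jordan projection of $(\ga, \rho(\ga))$, hence on the entire limit cone; a nonzero linear functional cannot vanish on a convex cone with non-empty interior, so $\phi_1 = \phi_2 = 0$, contradicting $\phi_i \neq 0$. The most delicate point is setting up Goursat's lemma in the real algebraic setting so that simplicity of each $\Gsf_i$ directly constrains $N_1, N_2$; the rest is a clean application of Benoist's theorem.
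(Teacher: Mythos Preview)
The paper does not prove this statement; it is quoted as a result of Dal'Bo--Kim \cite{DK_criterion} and used as a black box to pass from Corollary~\ref{cor.Tukia in higher rank} to Theorem~\ref{thm:Tukia in higher rank}. So there is no ``paper's own proof'' to compare against.

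Your argument is correct and is essentially the standard one (and indeed the one in \cite{DK_criterion}). The reduction from a uniform bound to an exact identity via $\lambda(\ga^n)=n\lambda(\ga)$ is immediate; the Goursat analysis of the Zariski closure of the graph inside $\Gsf_1\times\Gsf_2$ is set up correctly, and the point that the $N_i$ are \emph{closed} normal subgroups (hence trivial or everything, since a discrete normal subgroup of a connected group is central) is exactly what makes simplicity with trivial center decisive. The elimination of case~(b) via Benoist's theorem on the interior of the limit cone is clean: the graph is Zariski dense in the semisimple group $\Gsf_1\times\Gsf_2$, so its limit cone has non-empty interior in $\mfa_1^+\oplus\mfa_2^+$, and cannot lie in the hyperplane $\ker(\phi_1,-\phi_2)$. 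The only cosmetic point worth tightening is to say explicitly that the mixed cases $N_1=\Gsf_1,\,N_2=e$ (and vice versa) are excluded because Goursat would then give $\{e\}\cong\Gsf_2$, which is absurd; you gesture at this but it deserves one sentence.
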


\section{Group actions with contracting isometries} \label{sec.contracting}

In this section we use the theory of contracting isometries on general metric spaces developed by Coulon \cite{Coulon_PS} and Yang \cite{Yang_conformal}, to verify that Busemann PS-measures on the Gardiner--Masur boundary of Teichm\"uller space are part of PS-systems. Let $\Sigma$, $(\Tc, \dist_{\Tc})$, and $\Mod(\Sigma)$ be as in Section~\ref{sec:RW on Tecihmuller space in intro}.

\begin{theorem}[Teichm\"uller space] \label{thm.Teichmueller is wellbehavedPS} Suppose $\Ga < \Mod(\Sigma)$ is non-elementary and $\mu$ is a Busemann PS-measure for $\Gamma$ of dimension $\delta$ on $\partial_{GM} \Tc$. 
Then   $\mu$ is  part of a well-behaved PS-system with respect to some hierarchy $\mathscr{H} = \{\mathscr{H}(R) \subset \Ga  : R \ge 0\}$ and with magnitude function $\gamma \mapsto \dist_{\Tc}(o,\gamma o)$ for a fixed $o \in \Tc$. Moreover, if 
   $\sum_{\ga \in \Ga} e^{-\delta \dist_{\Tc}(o, \ga o)} = +\infty,$
   then 
   $$
   \mu(\Lambda^{\rm con}(\mathscr{H})) = 1. 
   $$
\end{theorem}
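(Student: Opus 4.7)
The plan is to verify that the Teichm\"uller setting fits into a general framework for PS-systems on groups acting with contracting isometries, developed by Coulon \cite{Coulon_PS} and Yang \cite{Yang_conformal} and presumably formalized in Section \ref{sec.contracting}. By a theorem of Minsky, the axis of any pseudo-Anosov element of $\Mod(\Sigma)$ is a strongly contracting geodesic in $(\Tc, \dist_{\Tc})$, and non-elementarity of $\Ga$ supplies such elements in $\Ga$; by Schottky-type constructions one moreover obtains arbitrarily many pairwise independent contracting elements in $\Ga$.

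Concretely, fix $o \in \Tc$ and take the magnitude to be $\norm{\gamma}_\sigma := \dist_{\Tc}(o, \gamma o)$. Define horofunction-based shadows
$$
\Oc_R(\gamma) := \{ \xi \in \partial_{GM}\Tc : \beta(\gamma^{-1},\xi) \geq \dist_{\Tc}(o,\gamma o) - R \},
$$
and define the hierarchy $\mathscr{H}(R)$ to consist of elements $\gamma \in \Gamma$ whose Teichm\"uller geodesic segment $[o, \gamma o]$ is ``$R$-contracting'' in Yang's sense, arranged so that $\mathscr{H}(R)$ is non-increasing in $R$ and stabilizes for $R$ large. With these choices, Properties \ref{item:coycles are bounded}--\ref{item:almost constant on shadows}, \ref{item:properness}, and \ref{item:shadow inclusion} are immediate from the definitions together with the 1-Lipschitz character of horofunctions. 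Properties \ref{item:empty Z intersection} and \ref{item:baire} follow from the ping-pong dynamics of independent pseudo-Anosovs in $\Ga$, which prevent any compact proper subset of $\partial_{GM}\Tc$ from meeting every $\Ga$-translate. Property \ref{item:intersecting shadows} is Yang's shadow lemma for contracting geodesics, and Property \ref{item:diam goes to zero ae} is the standard contracting-geometry statement that shadows along increasingly long contracting segments shrink to zero on a set of full $\mu$-measure, exploiting that $\partial_{GM}\Tc$ is the horofunction boundary by Liu--Su \cite{LS_horofunction}.

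For the second assertion, assume $\sum_{\ga \in \Ga} e^{-\delta \dist_{\Tc}(o,\ga o)} = +\infty$. Yang's work shows that contracting elements carry the full exponential growth of $\Ga$, so $\sum_{\gamma \in \mathscr{H}(R_0)} e^{-\delta \dist_{\Tc}(o,\ga o)} = +\infty$ for some $R_0$. A Kochen--Stone / Borel--Cantelli argument analogous to the proof of Theorem \ref{thm:conical limit set has full measure} part (2), applied to the hierarchy-restricted shadows (whose masses are controlled by the mixed Shadow Lemma), then produces a subset of $\La^{\rm con}(\mathscr{H})$ of positive $\mu$-measure. Ergodicity of the $\Ga$-action on $(\partial_{GM}\Tc, \mu)$, provided by Corollary \ref{cor:ergodicity} once the PS-system structure is established, upgrades this to $\mu(\La^{\rm con}(\mathscr{H})) = 1$. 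The main obstacle will be choosing the hierarchy with the right balance: restrictive enough that Properties \ref{item:intersecting shadows} and \ref{item:diam goes to zero ae} hold, yet abundant enough that divergence of the full Poincar\'e series transfers to the restricted one. This balance is precisely what Yang's contracting theory delivers in the Teichm\"uller setting.
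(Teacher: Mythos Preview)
Your overall strategy is right: the paper does reduce Theorem~\ref{thm.Teichmueller is wellbehavedPS} to a general framework for metric spaces with contracting isometries (its Theorems~\ref{thm.contractingPS} and~\ref{thm.contractingwellbehaved}), and then checks that Teichm\"uller space fits. But there are two genuine gaps in your sketch.

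\textbf{The saturation condition for Property~\ref{item:diam goes to zero ae}.} You call this property ``the standard contracting-geometry statement,'' but it is not automatic on a horofunction boundary. In the paper's framework, the relevant lemma (from Coulon) says that shadows along contracting tails shrink to a point $c$ only when $c$ is \emph{saturated}, i.e.\ $\|c-c'\|_\infty = +\infty$ for every $c' \neq c$. On a general horofunction boundary this can fail. The paper's verification for Teichm\"uller space (Example~\ref{ex.saturation}(2)) is a case split: if the Poincar\'e series converges, then $\mu(\Lambda^{\rm con}(\Gamma))=0$ and Property~\ref{item:diam goes to zero ae} is vacuous; if it diverges, Yang's work gives $\mu(\UE)=1$, and (again by Yang) every point of $\UE$ is saturated. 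You never mention $\UE$ or saturation, and without this the argument for Property~\ref{item:diam goes to zero ae} does not go through. Relatedly, the hierarchy the paper actually uses is $\mathscr{H}(R)=\Cc(\alpha, R+16\alpha+1)$, the elements whose pair $(o,\gamma o)$ has an $(\alpha,L)$-\emph{contracting tail} in Coulon's sense; Property~\ref{item:intersecting shadows} comes from Coulon's Lemmas on contracting tails rather than a ``Yang shadow lemma for contracting geodesics.''

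\textbf{Circularity in the full-measure argument.} You propose Kochen--Stone for positive measure and then invoke Corollary~\ref{cor:ergodicity} to upgrade to full measure. But the relevant clause of Corollary~\ref{cor:ergodicity} \emph{assumes} $\mu(\Lambda^{\rm con}(\mathscr{H}))=1$; the other clause requires the trivial hierarchy, which you do not have. The paper avoids this by citing Coulon's Hopf--Tsuji--Sullivan-type result (Theorem~\ref{thm.contractinglimitfull}) directly: under divergence it gives $\mu(\Lambda_{R_0}(\Cc(\alpha_0,L)))=1$ for all $L$, i.e.\ full measure without passing through ergodicity. Your ``contracting elements carry full growth, then Kochen--Stone, then ergodicity'' route is circular as written; you would need either the restriction-to-complement trick from the proof of Theorem~\ref{thm:conical limit set has full measure}(2), or simply to quote Coulon's result as the paper does.
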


In fact, we show a more general result about isometric actions on general metric spaces which have a contracting isometry (see Theorems \ref{thm.contractingPS} and \ref{thm.contractingwellbehaved} below). 

\begin{remark} \label{rmk.cat0 in section 10}

   Let $X$ be a proper geodesic $\operatorname{CAT}(0)$ space. The same statement as in Theorem \ref{thm.Teichmueller is wellbehavedPS} holds for a non-elementary discrete subgroup of $\Isom(X)$ with a rank one isometry and a Busemann PS-measure on the visual boundary (which coincides in this case with the horofunction boundary) (see Examples \ref{ex.contracting isom}, \ref{ex.horofunction}, and \ref{ex.saturation}).

   \end{remark}

\subsection{Contracting isometries}

Let $(X, \dist)$ be a proper geodesic metric space. For a closed subset $Y \subset X$ and $x \in X$, a point $y \in Y$ is called a nearest-point projection of $x$ on $Y$ if $\dist(x, y) = \dist(x, Y)$. This defines a set-valued map $\pi_Y$ as follows: for a subset $Z \subset X$,
$$
\pi_Y(Z) = \{ y \in Y : y \text{ is a nearest-point projection of some } z \in Z \}.
$$

\begin{definition}
   For $\alpha \ge 0$, a closed subset $Y \subset X$ is called \emph{$\alpha$-contracting} if for any geodesic $L \subset X$ with $\dist(L, Y) \ge \alpha$,
   $$
   \diam \pi_Y(L) \le \alpha.
   $$
   We call $Y$ \emph{contracting} if $Y$ is $\alpha$-contracting for some $\alpha \ge 0$.
\end{definition}

\begin{definition}
   An isometry $g \in \Isom(X)$ is called \emph{($\alpha$-)contracting} if an orbit map $\mathbb{Z} \to X$, $n \mapsto g^n x$, is a quasi-isometric embedding and the image is ($\alpha$-)contracting, for some (hence any) $x \in X$.
\end{definition}
Note that conjugates of contracting elements are contracting.
In this section, we consider the assumption: 
\begin{equation} \label{eqn.standingassupmtioncontracting}
\Ga < \Isom(X)  \text{ discrete with a contracting isometry.} \tag{CTG}
\end{equation}
Such $\Ga$ is acylindrically hyperbolic \cite{Sisto_acylindrical}. We also call $\Ga$ \emph{non-elementary} if $\Ga$ is not virtually cyclic.

\begin{example} \label{ex.contracting isom}
   The following are examples of metric spaces and contracting isometries:
   \begin{enumerate}
      \item When $X$ is Gromov hyperbolic space, any loxodromic isometry on $X$ is contracting \cite{Gromov_hyperbolic}.
      
      \item  Let $\Ga$ be a relatively hyperbolic group acting properly and cocompactly on a metric space $X$ by isometries (e.g. $X$ is a Cayley graph of $\Ga$). Then any infinite order element of $\Ga$ which is not conjugated into a peripheral subgroup is contracting \cite{GP_relatively, GP_quasiconvex}.
      
      \item If $X$ is $\operatorname{CAT}(0)$, any rank one isometry of $X$ is contracting \cite{BF_characterization}.
      
      \item 
      Let $\Sigma$ be a closed connected orientable surface of genus at least two.
      Consider the action of its mapping class group $\Mod(\Sigma)$ on its Teichm\"uller space $\Tc$ equipped with the Teichm\"uller metric. Then pseudo-Anosov mapping classes are contracting \cite{Minsky_contracting}.
   \end{enumerate}
\end{example}

\subsection{Horofunction compactification}
We recall the horofunction compactification of $X$. Fix a basepoint $o \in X$ and let
$$C_*(X) := \{ h : X \to \R : h(o) = 0\}$$
which is equipped with the topology of uniform convergence on compact subsets. 

We embed $X \hookrightarrow C_*(X)$ via the map
$$
x \mapsto \dist(x, \cdot) - \dist(x, o).
$$
Then by Arzel\`a--Ascoli theorem, its image has the compact closure. This gives the horofunction compactification.

\begin{definition}
   The \emph{horofunction compactification} $\overline{X}$ of $X$ is the closure of $X$ in $C_*(X)$. The \emph{horofunction boundary} of $X$ is $\partial_H X := \overline{X} \smallsetminus X$.
\end{definition}

Note that every $h \in \overline{X}$ is $1$-Lipschitz. Since uniform convergence on compact subsets is equivalent to pointwise convergence for $1$-Lipschitz functions, it follows from the separability of $X$ that $\overline{X}$ is metrizable.

\begin{example} \label{ex.horofunction}
   The following examples are horofunction boundaries. See \cite{Yang_conformal} for further discussion on each of them.
   \begin{enumerate}
      \item When $X$ is  $\operatorname{CAT}(0)$, it is well-known that the visual boundary is the same as the horofunction boundary \cite[II.8]{BH_book}.

      \item     As mentioned in the introduction, the horofunction boundary of a Teichm\"uller space $\Tc$ equipped with its Teichm\"uller metric is the same as Gardiner--Masur boundary $\partial_{GM} \Tc$ of $\Tc$ \cite{LS_horofunction}.

   \end{enumerate}
\end{example}

We employ a slightly different point of view on the horofunction compactification, which is more suitable to our purpose.
For $h \in C_*(X)$, the function $c_h : X \times X \to \R$ defined as
$$
c_h(x, y) := h(x) - h(y)
$$
is a cocycle, i.e. $c_h (x, z) = c_h (x, y) + c_h (y, z)$. Conversely, given a continuous cocycle $c : X \times X \to \R$, we have $c(\cdot, o) \in C_*(X)$. This gives another characterization of $C_*(X)$ as the space of all continuous cocycles.

In this perspective, each point $x \in X$ corresponds to the  Busemann cocycle $b_x : X \times X \to \R$ defined as
$$
b_x(y, z) = \dist(x, y) - \dist(x, z).
$$
In the rest of this section, we regard each point of $\overline{X}$ as a cocycle. It is easy to see that for $c \in \overline{X}$,
$$
|c(x, y)| \le \dist(x, y) \quad \text{for all } x, y \in X.
$$
For $g \in \Isom(X)$, its action on $X$ extends to a homeomorphism of $\overline{X}$, by
$$
(g c)(x, y) = c(g^{-1}x, g^{-1} y).
$$
In particular, $(gc)(gx, gy) = c(x, y)$.

\subsection{Shadows}

Given $x, y \in X$ and $c \in \overline{X}$, the \emph{Gromov product} is
$$
\langle x, c \rangle_y = \frac{1}{2}( \dist(y, x) + c(y, x)),
$$
which is equal to the usual Gromov product when $c \in X$.

\begin{definition}
   Let $x, y \in X$ and $R > 0$. The \emph{$R$-shadow} of $y$ seen from $x$ is 
   $$
   O_R(x, y) := \{ c \in \overline{X} : \langle x, c \rangle_y < R \}.
   $$
\end{definition}
Note that for $g \in \Isom(X)$,
$$
g O_R(x, y) = O_R(gx, gy).
$$
The following is direct from the definition:

\begin{observation} \label{obs.contractingcocycleshadow}
   Let $x, y \in X$ and $R > 0$. If $c \in O_R(x, y)$, then
   $$
   \dist(x, y) - 2R < c(x, y) \le \dist(x, y)
   $$
\end{observation}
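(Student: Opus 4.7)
The plan is to unpack the definitions of the Gromov product, the shadow, and the cocycle structure of $c$, which makes both bounds essentially immediate.

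For the lower bound, I will start from the hypothesis $c \in O_R(x,y)$, which by definition means
$$
\tfrac{1}{2}\bigl(\dist(y,x) + c(y,x)\bigr) = \langle x, c\rangle_y < R.
$$
Multiplying by $2$ and rearranging gives $c(y,x) < 2R - \dist(x,y)$. Since $c$ is a cocycle on $X \times X$, one has $c(x,x) = c(x,y) + c(y,x) = 0$, hence $c(x,y) = -c(y,x)$. Substituting yields $c(x,y) > \dist(x,y) - 2R$, which is the claimed lower bound.

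For the upper bound, I will use the observation recorded just before the statement: for every $c \in \overline{X}$ and every $x, y \in X$,
$$
|c(x,y)| \le \dist(x,y),
$$
which in particular gives $c(x,y) \le \dist(x,y)$. This is a general property of elements of the horofunction compactification and requires no hypothesis on $R$ or on $c$ lying in a shadow.

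There is no real obstacle here; the statement is declared in the excerpt to be ``direct from the definition,'' and the only mild point worth highlighting is the use of the cocycle identity $c(x,y) = -c(y,x)$ to convert between the two sign conventions appearing in the Gromov product and in the quantity $c(x,y)$ being estimated.
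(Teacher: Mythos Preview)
Your proof is correct and is exactly the direct verification from the definitions that the paper has in mind; the paper itself does not spell out a proof, only remarking that the observation is ``direct from the definition.''
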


\subsection{Patterson--Sullivan measures}

For $\Ga < \Isom(X)$, the \emph{Busemann cocycle} $\beta : \Ga \times \overline{X} \to \R$ is
$$
\beta(\ga, c) = c(\ga^{-1} o, o).
$$
Recall from Equation~\eqref{eqn.coarse PS meaasure intro} that a probability measure $\mu$ is a $\beta$-PS measure for $\Ga$ of dimension $\delta \ge 0$ on $\overline{X}$ if for every $\gamma \in \Gamma$, 
$$
\frac{d \ga_* \mu}{d \mu}(c) = e^{\delta c(o, \ga o)} \quad \text{for $\mu$-a.e. } c \in \overline{X}
$$
(in this setting we do not consider coarse PS-measures). We denote by $\delta_{\Ga} \ge 0$ the \emph{critical exponent} of the Poincar\'e series
$$
s \mapsto \sum_{\ga \in \Ga} e^{-s \dist(o, \ga o)}.
$$
Following Patterson \cite{Patterson_fuchsian} and Sullivan \cite{Sullivan_density}'s construction, Coulon and Yang showed the existence of PS-measures in the critical dimension.

\begin{proposition}[{\cite[Prop. 4.3, Cor. 4.25]{Coulon_PS}, \cite[Lem. 6.3, Prop. 6.8]{Yang_conformal}}] 
   Let $\Ga < \Isom(X)$ be a non-elementary subgroup satisfying \eqref{eqn.standingassupmtioncontracting}.
If $\delta_\Gamma < +\infty$, then there exists a $\beta$-PS measure of dimension $\delta_{\Ga}$, which is supported on $\partial_H X$.
Moreover, if a $\beta$-PS measure for $\Ga$ of dimension $\delta$ exists, then $\delta \ge \delta_{\Ga}$.
\end{proposition}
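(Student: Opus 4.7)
The plan is to carry out the classical Patterson--Sullivan construction in the compact space $\overline{X}$. For $s > \delta_\Gamma$, I would form the probability measure
$$
\mu_s := \frac{1}{\mathscr{P}(s)} \sum_{\gamma \in \Gamma} h(\dist(o,\gamma o))\, e^{-s \dist(o,\gamma o)}\, \delta_{\gamma o},
$$
where $h\equiv 1$ if $\sum_\gamma e^{-\delta_\Gamma \dist(o,\gamma o)} = +\infty$, and otherwise $h$ is a Patterson-type slowly varying function chosen so that the modified series diverges at $s=\delta_\Gamma$ while $h(t+a)/h(t)\to 1$ uniformly on compacta of $a$. Since $\overline{X}$ is compact and metrizable, extract a weak-$*$ limit $\mu$ along some $s_n \searrow \delta_\Gamma$. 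For each $\alpha \in \Ga$, the identity $\alpha_*\delta_{\gamma o}=\delta_{\alpha\gamma o}$ combined with horofunction convergence $\dist(\alpha o,\gamma o) - \dist(o,\gamma o) \to c(\alpha o, o)$ as $\gamma o \to c$ shows that the Radon--Nikodym densities $d\alpha_*\mu_s/d\mu_s$, viewed as functions on the compact $\overline{X}$, converge uniformly to $c \mapsto e^{\delta_\Gamma c(o,\alpha o)} = e^{-\delta_\Gamma \beta(\alpha^{-1},c)}$. Passing to the limit gives that $\mu$ is a $\beta$-PS measure of dimension $\delta_\Gamma$ (with constant $C=1$ in Equation~\eqref{eqn.coarse PS meaasure intro}).

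To verify that $\mu$ is supported on $\partial_H X$, I use properness of $\Ga$: for any bounded open $U \subset X$, the set $\{\gamma \in \Gamma : \gamma o \in U\}$ is finite, so $\mu_{s_n}(U) \to 0$ by divergence of $\mathscr{P}(s)$. Since $X$ is $\sigma$-compact and every compact subset of $X$ is contained in such a bounded open set, a standard portmanteau argument gives $\mu(X)=0$.

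For the lower bound, let $\mu$ be a $\beta$-PS measure of dimension $\delta$. The transformation rule gives
$$
\mu\bigl(\gamma^{-1}O_R(o,\gamma o)\bigr) = \int_{O_R(o,\gamma o)} e^{\delta c(o,\gamma o)}\,d\mu(c),
$$
and Observation~\ref{obs.contractingcocycleshadow} bounds $c(o,\gamma o) \geq \dist(o,\gamma o)-2R$ on $O_R(o,\gamma o)$, yielding the shadow upper bound $\mu(O_R(o,\gamma o)) \leq e^{2R\delta}e^{-\delta \dist(o,\gamma o)}$. The contracting hypothesis \eqref{eqn.standingassupmtioncontracting} now enters through Yang's extension lemma \cite{Yang_conformal}: there is a finite set $F \subset \Gamma$ such that for elements $\gamma \in \Gamma_n := \{\gamma \in \Ga : \dist(o,\gamma o) \in [n,n+1]\}$, suitable $F$-translates of the shadows $O_R(o,\gamma o)$ cover a fixed set of positive $\mu$-mass with uniformly bounded multiplicity. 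Combined with the upper bound, this forces $\sum_{\gamma\in\Gamma_n} e^{-\delta \dist(o,\gamma o)}$ to be bounded uniformly in $n$, hence $\sum_\gamma e^{-s\dist(o,\gamma o)} < +\infty$ for every $s > \delta$, giving $\delta \geq \delta_\Gamma$.

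The main obstacle is the final counting step: the shadow lemma by itself only yields the one-sided estimate on $\mu(O_R(o,\gamma o))$, and to convert this into convergence of the Poincar\'e series at any $s>\delta$, one must exploit the contracting element via the extension lemma to produce enough $\Ga$-translates of shadows covering a set of positive $\mu$-measure with bounded multiplicity. Existence of a contracting isometry is precisely what makes such a covering available in this degree of generality.
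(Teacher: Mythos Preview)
The paper does not prove this proposition; it is stated with citations to Coulon and Yang and no argument is given. So there is no proof in the paper to compare against.

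Your existence argument is the standard Patterson--Sullivan construction and matches what the cited works do.

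Your argument for $\delta \geq \delta_\Gamma$ has the direction of the shadow estimate backwards. You correctly derive the shadow \emph{upper} bound $\mu(O_R(o,\gamma o)) \leq e^{2R\delta} e^{-\delta\dist(o,\gamma o)}$ (which follows from Observation~\ref{obs.contractingcocycleshadow} alone and does not need the contracting element), and then claim that this, combined with a bounded-multiplicity covering, bounds $\sum_{\gamma\in\Gamma_n} e^{-\delta\dist(o,\gamma o)}$. But bounded multiplicity only yields $\sum_{\gamma\in\Gamma_n} \mu(O_R(o,\gamma o)) \leq M$, and an upper bound on each $\mu(O_R(o,\gamma o))$ cannot convert this into an upper bound on $\sum_{\gamma\in\Gamma_n} e^{-\delta\dist(o,\gamma o)}$; the inequality runs the wrong way. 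What is actually needed is the shadow \emph{lower} bound $\mu(O_R(o,\gamma o)) \geq C^{-1} e^{-\delta\dist(o,\gamma o)}$ for $R$ large, and this is precisely where the contracting element enters (it forces $\inf_\gamma \mu(\gamma^{-1}O_R(o,\gamma o)) > 0$, as in the proof of Proposition~\ref{prop.shadowlemma}). With the lower bound in hand, bounded multiplicity of shadows in an annulus gives $\#\Gamma_n \leq C' e^{\delta n}$ and hence $\delta_\Gamma \leq \delta$. Your ``cover a fixed set of positive $\mu$-mass'' clause is irrelevant to this direction and suggests a conflation with the Hopf--Tsuji--Sullivan-type argument going the other way. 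You correctly locate where the contracting hypothesis is essential; only the mechanism is misidentified.
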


\subsection{Verification of PS-system}
In the rest of this section, let $\Ga < \Isom(X)$ be a non-elementary subgroup satisfying \eqref{eqn.standingassupmtioncontracting}. We verify that the $\Ga$-action on $\partial_H X$ gives a PS-system. 
For $\gamma \in \Gamma$, we define the $\beta$-magnitude by 
\begin{equation} \label{eqn.magnitude contracting}
   \| \ga \|_{\beta} := \dist(o, \ga o) 
\end{equation}
and the $R$-shadow of $\ga$ to be
\begin{equation} \label{eqn.contracting shadows}
\Oc_R(\ga) := \partial_H X \cap O_{R}(o, \ga o).
\end{equation}

\begin{theorem} \label{thm.contractingPS}
   Let $\Ga < \Isom(X)$ be  non-elementary and satisfying \eqref{eqn.standingassupmtioncontracting}. 
 If $\mu$ is a $\beta$-PS measure for $\Gamma$, then $(\overline{X}, \Ga, \beta, \mu)$ is a PS-system with magnitude and shadows as in Equations \eqref{eqn.magnitude contracting} and \eqref{eqn.contracting shadows}. Moreover, Properties \ref{item:baire}--\ref{item:shadow inclusion}  hold with any choice of the hierarchy $\mathscr{H}$.
\end{theorem}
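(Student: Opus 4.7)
The plan is to verify the six properties (PS1)--(PS6) in turn, treating $M = \partial_H X \subset \overline{X}$ as the underlying compact metrizable space (the PS-measure $\mu$ is supported on $\partial_H X$ in any case). Four of the properties are routine. Since every $c \in \overline{X}$ is $1$-Lipschitz with $c(o,o)=0$, (PS1) is immediate from $|\beta(\ga,c)| = |c(\ga^{-1}o, o)| \le \dist(o, \ga o)$. For (PS2), if $c \in \ga^{-1}\Oc_R(\ga)$, then $\ga c \in \Oc_R(\ga) \subset O_R(o, \ga o)$, so Observation~\ref{obs.contractingcocycleshadow} yields $(\ga c)(o, \ga o) \in (\dist(o, \ga o) - 2R, \dist(o, \ga o)]$; combined with the cocycle identity $(\ga c)(o, \ga o) = c(\ga^{-1}o, o) = \beta(\ga, c)$, this gives (PS2) with $C(R) = 2R$. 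Property (PS4) follows from the countability of $\Ga$ and the properness of its action on $X$, and (PS6) is immediate from the monotonicity $O_{R_1} \subset O_{R_2}$ when $R_1 \le R_2$.

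The substantive part is (PS3) and (PS5), which require analyzing Hausdorff limits $Z \subset \partial_H X$ of complements $\partial_H X \smallsetminus \ga_n^{-1}\Oc_{R_n}(\ga_n) = \partial_H X \smallsetminus O_{R_n}(\ga_n^{-1}o, o)$ for $R_n \to +\infty$. A horofunction $c$ lies in this complement precisely when $\dist(o, \ga_n^{-1}o) + c(o, \ga_n^{-1}o) \ge 2R_n$. If $\{\ga_n^{-1}o\}$ stays in a compact subset of $X$, discreteness of $\Ga$ forces $\{\ga_n\}$ to take only finitely many values, and along a constant subsequence the Gromov product $\langle \ga^{-1}o, c\rangle_o$ is uniformly bounded in $c \in \partial_H X$, so $R_n \to \infty$ makes the complement empty and $Z = \emptyset$; the hypotheses of (PS3) and (PS5) then hold vacuously. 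Otherwise, pass to a subsequence with $\ga_n^{-1}o \to \xi \in \partial_H X$. The defining inequality then forces every $c \in Z$ to satisfy $c(o, \ga_n^{-1}o)/\dist(o, \ga_n^{-1}o) \to 1$, i.e., $c$ is a horofunction whose value at $\ga_n^{-1}o$ is nearly as negative as permitted by its $1$-Lipschitz constant. Such horofunctions form a proper closed subset $Z_\xi \subsetneq \partial_H X$.

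To conclude (PS3) and (PS5), I will use the non-elementarity together with the contracting isometry. By standard ping-pong arguments with conjugates of contracting elements (see \cite{Yang_conformal, Coulon_PS}), $\Ga$ contains two independent contracting loxodromic elements $g, h$ whose four fixed points in $\partial_H X$ are distinct. For any finite family $h_1, \ldots, h_m \in \Ga$, the set $C := \bigcup_{i=1}^m h_i Z \subset \bigcup_{i=1}^m h_i Z_{\xi}$ is a proper closed subset of $\partial_H X$, so at least one attracting fixed point among those of $g, h$, and their $\Ga$-conjugates lies outside $C$. Fixing $x \in Z$ and choosing a contracting loxodromic $g_0 \in \Ga$ whose repelling fixed point differs from $x$ and whose attracting fixed point lies outside $C$, the source--sink dynamics of contracting loxodromics yield $g_0^n x \to g_0^+ \notin C$, so $g_0^n x \notin C$ for large $n$. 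This proves (PS5) and hence (PS3).

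The main obstacle is the identification of $Z$ as a proper subset of $\partial_H X$ in the non-Gromov-hyperbolic setting: one must confine the possible Hausdorff limits to $Z_\xi$ using only the $1$-Lipschitz constraint on horofunctions together with the convergence $\ga_n^{-1}o \to \xi$, without appeal to any geodesic structure at infinity. Once this is in hand, the contracting isometry produces the independent loxodromic elements that drive the ping-pong argument completing (PS5).
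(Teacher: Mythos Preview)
Your treatment of (PS1), (PS2), (PS4), (PS6) matches the paper exactly. The substance is in (PS3)/(PS5), and here your route diverges from the paper's and runs into a real obstacle.

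Your plan is to (i) show $Z \subset Z_\xi$ for a proper closed set depending only on the horofunction limit $\xi$ of $\ga_n^{-1}o$, and (ii) use source--sink dynamics of a contracting loxodromic $g_0$ to push any $x \in Z$ off of $\bigcup h_i Z$. Both steps are problematic. For (i), the condition you extract, ``$c(o,\ga_n^{-1}o)/\dist(o,\ga_n^{-1}o)\to 1$'', is a statement about $c_n$ evaluated at escaping points; since $c_n\to c$ only uniformly on compacta, you cannot pass this to $c$, and you never produce an intrinsic description of $Z_\xi$. Even granting a proper $Z_\xi$, a finite union $\bigcup h_i Z_\xi$ need not be proper without more structure. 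For (ii), contracting isometries do \emph{not} in general have global north--south dynamics on the full horofunction boundary: already for Teichm\"uller space, $\partial_{GM}\Tc$ strictly contains $\PMF$, and the source--sink behaviour of a pseudo-Anosov is only established on $\PMF$, not on all of $\partial_{GM}\Tc$. So ``$g_0^n x \to g_0^+$'' is unjustified in the generality of the theorem.

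The paper avoids both issues by working \emph{relative to a contracting axis} rather than intrinsically on $\partial_H X$. Having passed to $\ga_n^{-1}o\to z$, one first uses Proposition~\ref{prop.coulon315} to choose an $\alpha$-contracting $g\in\Ga$ with $z, h_1 z,\dots,h_m z \notin \partial^+(\langle g\rangle o)$. Lemma~\ref{lem.contractingvariety} then converts the escaping data into a bounded one: projections of $\ga_n^{-1}o$ onto $\langle g\rangle o$ stabilize at some $p$, and the shadow complement is trapped in
\[
\tilde Z(p)=\{c\in\overline X:\ c(p,g^k o)\le 4\alpha\ \text{for all }k\in\Zb\},
\]
a set defined by inequalities at \emph{bounded} points of the axis. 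Finally, Lemma~\ref{lem.translatevariety} gives the ``ping-pong'': for $|n|$ large, $g^n$ separates any finite union $\bigcup_i \tilde Z(p_i)$ from its translate, which yields (PS5) directly without invoking source--sink dynamics on $\partial_H X$. In short, the paper replaces your dynamical step with two structural lemmas from Coulon that exploit the contracting axis; this is precisely what makes the argument go through in non-hyperbolic settings like Teichm\"uller space.
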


We further show that the PS-system in Theorem \ref{thm.contractingPS} is well-behaved under some condition related to a saturation of $\partial_H X$; w call $c \in \partial_H X$ \emph{saturated} if for any $c' \in \overline{X} \smallsetminus \{c\}$, $\| c - c' \|_{\infty} = +\infty$.

To make an appropriate choice of the hierarchy $\{\mathscr{H}(R)\subset \Ga : R \ge 0\}$, we use the notion of contracting tails, following \cite{Coulon_PS}.

\begin{definition}
   Let $\alpha, L \ge 0$. For $x, y \in X$, we say that the pair $(x, y)$ has an \emph{$(\alpha, L)$-contracting tail} if there exists an $\alpha$-contracting geodesic $\tau$ ending at $y$ and a projection $p \in \tau$ of $x$ such that $\dist(p, y) \ge L$. 
\end{definition}

We then consider the following subset of $\Ga$:
$$
\Cc(\alpha, L) := \{ \ga \in \Ga : (o, \ga o) \text{ has an }(\alpha, L)\text{-contracting tail}\}.
$$
Note that for a fixed $\alpha \ge 0$, the set $\Cc(\alpha, L)$ is non-increasing in $L \ge 0$.

\begin{theorem} \label{thm.contractingwellbehaved}
   Let $\Ga < \Isom(X)$ be  non-elementary and satisfying \eqref{eqn.standingassupmtioncontracting}. 
   If $\mu$ is a $\beta$-PS measure for $\Gamma$ and $\mu$-a.e. point in $\La^{\rm con}(\Ga)$ is saturated, then the PS-system $(\partial_H X, \Ga, \beta, \mu)$ is well-behaved with respect to the hierarchy  $\{ \mathscr{H}(R) = \Cc(\alpha, R + 16 \alpha + 1): R \ge 0\}$ for some $\alpha \ge 0$, with magnitude and shadows as in Equations \eqref{eqn.magnitude contracting} and \eqref{eqn.contracting shadows}.

\end{theorem}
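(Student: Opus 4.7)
Theorem \ref{thm.contractingPS} has established that $(\partial_H X, \Ga, \beta, \mu)$ is a PS-system and that Properties \ref{item:baire} and \ref{item:shadow inclusion} hold for any hierarchy, so it remains to verify Properties \ref{item:properness}, \ref{item:intersecting shadows}, and \ref{item:diam goes to zero ae} for $\mathscr{H}(R) = \Cc(\alpha, R + 16\alpha + 1)$. The plan is to choose $\alpha$ large enough that a fixed contracting element of $\Ga$ has an $\alpha$-contracting axis and that standard nearest-point projections onto $\alpha$-contracting geodesics are coarsely well-defined with error $O(\alpha)$. The buffer $16\alpha + 1$ is designed so that the contracting tail of any $\gamma \in \mathscr{H}(R)$ is long enough to dominate shadows of scale $R$.

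Property \ref{item:properness} is immediate: since $\Ga$ is discrete in $\Isom(X)$ and $\|\gamma\|_\beta = \dist(o,\gamma o)$, the set $\{\gamma \in \mathscr{H}(0) : \|\gamma\|_\beta \le T\}$ is contained in the finite set $\{\gamma \in \Ga : \dist(o, \gamma o) \le T\}$. For Property \ref{item:intersecting shadows}, suppose $\alpha_1, \alpha_2 \in \mathscr{H}(R)$ with $\|\alpha_1\|_\beta \le \|\alpha_2\|_\beta$ and $c \in \Oc_R(\alpha_1) \cap \Oc_R(\alpha_2)$. Projecting $\alpha_2 o$ and points of $X$ realizing $c$ onto the $\alpha$-contracting tail ending at $\alpha_1 o$, and using Observation \ref{obs.contractingcocycleshadow}, a standard fellow-traveling argument (in the spirit of \cite[Lem. 2.14]{Yang_conformal} and \cite[Sec. 3]{Coulon_PS}) shows that $\alpha_2 o$ lies within distance $O(R + \alpha)$ of the endpoint of this tail. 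The buffer $16\alpha + 1$ ensures that this projection lies on the tail itself, so the required inclusion $\Oc_R(\alpha_2) \subset \Oc_{R'}(\alpha_1)$ for some $R' = R'(R,\alpha)$ and the quasi-additivity $\bigl|\|\alpha_2\|_\beta - \|\alpha_1\|_\beta - \|\alpha_1^{-1}\alpha_2\|_\beta\bigr| \le C(R,\alpha)$ follow by the triangle inequality.

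The crux is Property \ref{item:diam goes to zero ae}. Take $M'$ to consist of the saturated points of $\La^{\rm con}(\Ga)$ together with $\overline X \smallsetminus \La^{\rm con}(\Ga)$, which has full $\mu$-measure by hypothesis. Fix $R > 0$, an escaping sequence $\{\gamma_n\} \subset \mathscr{H}(R)$, and $x \in M' \cap \bigcap_n \Oc_R(\gamma_n)$. The membership in infinitely many shadows forces $x \in \La^{\rm con}(\Ga)$, hence $x$ is saturated. For any $y \in \bigcap_n \Oc_R(\gamma_n)$, projecting arbitrary $z \in X$ onto the $\alpha$-contracting tail of $\gamma_n$ converts the pointwise shadow bounds at $\gamma_n o$ given by Observation \ref{obs.contractingcocycleshadow} into a uniform estimate $|x(o,z) - y(o,z)| \le C(R,\alpha)$ independent of $z$. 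Saturation of $x$ then forces $y = x$, and consequently every accumulation point of $\{\Oc_R(\gamma_n)\}$ in $\overline X$ coincides with $x$, whence $\diam \Oc_R(\gamma_n) \to 0$.

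The main technical obstacle is the uniform sup-norm bound $\|x - y\|_\infty < +\infty$: it requires translating the shadow condition at the single orbit point $\gamma_n o$ into a global estimate on all of $X$, and hinges on the contracting tails of length at least $R + 16\alpha + 1$ to make the projection onto the tail lie near $\gamma_n o$ for every $z$. Once this estimate is available, the saturation hypothesis furnishes a clean rigidity step that collapses diameters without any additional structure on the boundary.
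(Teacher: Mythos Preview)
Your choice of $M'$ --- the saturated points of $\La^{\rm con}(\Ga)$ together with $\partial_H X \smallsetminus \La^{\rm con}(\Ga)$ --- is clean and in fact more direct than the paper's route: the paper splits into the convergent and divergent cases for the Poincar\'e series and, in the divergent case, invokes Coulon's Hopf--Tsuji--Sullivan type theorem (Theorem~\ref{thm.contractinglimitfull}) to build a full-measure set. With your $M'$ that appeal is unnecessary, and any $\alpha \ge 0$ would do. Likewise your observation that $x \in \bigcap_n \Oc_R(\gamma_n)$ forces $x \in \La^{\rm con}(\Ga)$, so the non-saturated part of $M'$ carries PS8 vacuously, is correct.

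The gap is in the diameter-decay step. The asserted estimate $|x(o,z) - y(o,z)| \le C(R,\alpha)$ for all $z \in X$ cannot come from membership in a single shadow $\Oc_R(\gamma_n)$: already in real hyperbolic space, two distinct boundary points in one shadow have Busemann cocycles whose difference is unbounded. If you intend to vary $n$ with $z$, note that the contracting tails have \emph{fixed} length $R+16\alpha+1$ and escape to infinity, so a generic $z$ projects to an endpoint of every tail and the argument yields nothing. And even granting that $\bigcap_n \Oc_R(\gamma_n)=\{x\}$, the clause ``consequently every accumulation point of $\{\Oc_R(\gamma_n)\}$ coincides with $x$'' is a non-sequitur: accumulation points of a sequence of sets need not lie in the intersection. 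The paper handles this by citing Coulon's shrinking lemma (Lemma~\ref{lem.contractingdecayshadow}, \cite[Coro.~5.14]{Coulon_PS}): for saturated $c \in \La_R(\Cc(\alpha,L))$ with $L > R+13\alpha$, every shadow $O_R(o,\gamma o)$ containing $c$ with $\gamma \in \Cc(\alpha,L)$ and $\|\gamma\|_\beta$ large lies in any prescribed neighbourhood of $c$. That lemma is what you need here, and its proof is not the one-line projection argument you sketch. (For PS7 the paper similarly just quotes \cite[Lem.~4.15, Lem.~5.2]{Coulon_PS}, recorded as Proposition~\ref{prop.contractingtailsshadows}, giving $\Oc_R(\beta)\subset\Oc_{R+42\alpha}(\alpha)$ and the additivity bound $4R+44\alpha$.)
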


\begin{example} \label{ex.saturation}
   
   The following are examples that almost every point is saturated:

   \begin{enumerate}
      \item Suppose that $(X, \dist)$ is $\operatorname{CAT}(0)$. Then its horofunction boundary $\partial_H X$ is the same as its visual boundary, and every single point of $\partial_H X$ is saturated.
      
      \item Suppose that $(X, \dist)$ is the Teichm\"uller space $\Tc$ of a closed connected orientable sufrace $\Sigma$ of genus at least two, equipped with the Teichm\"uller metric. 
      Then its horofunction boundary $\partial_{GM} \Tc$ contains the space $\PMF$ of projective measured foliations on $\Sigma$ as a proper subset \cite{GM_boundary}. Moreover, the subset $\UE \subset \PMF$ of uniquely ergodic ones is topologically embedded in $\partial_{GM} \Tc$ \cite[Coro. 1]{Miyachi_UE}, and every point in $\UE$ is saturated \cite[Lem. 12.6]{Yang_conformal}.
      
      Let $\Ga < \Mod(\Sigma)$ be non-elementary and $\mu$ its PS-measure of dimension $\delta$ on $\partial_{GM} \Tc$. If $\sum_{\ga \in \Ga} e^{-\delta \dist(o, \ga o)} < +\infty$, $\mu(\La^{\rm con}(\Ga)) = 0$ by Theorem \ref{thm:conical limit set has full measure}.
      If $\sum_{\ga \in \Ga} e^{-\delta \dist(o, \ga o)} = +\infty$,  we have $\mu(\UE) = 1$ \cite[Thm. 1.14, Lem. 12.6]{Yang_conformal}. Therefore, in any case, the condition in Theorem \ref{thm.contractingwellbehaved} is verified.

   \end{enumerate}
\end{example}

In general, points in $\partial_H X$ may not be saturated, even in contracting limit sets. On the other hand, one can proceed the same argument as in our proof of the rigidity theorm (Theorem \ref{thm:main rigidity theorem}) in the so-called reduced horofunction boundary of $X$, which is obtained as the quotient of $\partial_H X$ under the equivalence relation $c \sim c'$ if and only if $\| c - c' \|_{\infty} < + \infty$. When the reduced horofunction boundary is metrizable (e.g. $X$ is a proper geodesic Gromov hyperbolic space), the same argument can be proceeded. In general, one should employ \cite[Prop. 5.1]{Coulon_ergodicity}. We omit this discussion in the current paper.

\subsection{Boundary of contracting subsets}
To prove Theorem \ref{thm.contractingPS} and Theorem \ref{thm.contractingwellbehaved}, we need to introduce more notation.
   Let $Y \subset X$ be a closed subset and $c \in \overline{X}$. A point $p \in Y$ is called a \emph{projection} of $c$ on $Y$ if
   $$
   c(p, y) \le 0 \quad \text{for all } y \in Y.
   $$
When $c \in X$, a point $y \in Y$ is a projection if and only if it is a nearest-point projection. 
The \emph{boundary at infinity} $\partial^+ Y$ is the set of all $c \in \partial_H X$ such that there is no projection of $c$ on $Y$.

Going back to a classical setting for a moment, a non-elementary discrete subgroup of $\Isom(\mathbb{H}^n)$ has infinitely many loxodromic elements with disjoint fixed points on $\partial_{\infty} \mathbb{H}^n$. The following is a similar phenomenon in this current setting.

\begin{proposition}[{\cite[Lem. 2.12]{Yang_SCC}, \cite[Prop. 3.15]{Coulon_PS}}]  \label{prop.coulon315}
   Let $\Ga < \Isom(X)$ be a non-elementary  discrete subgroup. If $\ga \in \Ga$ is a contracting isometry, then there exist infinitely many $g_i \in \Ga$ such that
   $$
   \partial^+ ( g_i \langle \ga \rangle o ) \cap \partial^+( g_j \langle \ga \rangle o) = \emptyset \quad \text{for all }i \neq j.
   $$
   In particular, $\partial^+ ( \langle g_i \ga g_i^{-1} \rangle o ) \cap \partial^+(  \langle g_j \ga g_j^{-1} \rangle o) = \emptyset$ for all $i \neq j$.
\end{proposition}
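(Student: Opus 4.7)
The plan is to exploit the ``North--South'' dynamics of the contracting element $\gamma$ together with the non-elementarity of $\Gamma$. Write $Y := \langle \gamma \rangle o$, which is an $\alpha$-contracting quasi-geodesic by hypothesis. My first step would be to analyze $\partial^+ Y$: using the contracting property, I would show that any $c \in \partial^+ Y$ must arise as a cluster point in $\partial_H X$ of the sequence $\{\gamma^n o\}_{n \in \Zb}$ as $|n| \to \infty$, so $\partial^+ Y$ is concentrated at the two ``endpoints'' of the quasi-axis of $\gamma$. The key observation is that if a horofunction $c$ has a bounded projection onto $Y$, then $c$ attains its infimum on $Y$ and hence does not belong to $\partial^+ Y$; conversely, unbounded projection combined with $\alpha$-contraction forces $c$ to track out along one of the two ends of $Y$.

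Next I would establish a disjointness criterion: if $g \in \Gamma$ is chosen so that the translated quasi-axis $gY$ is not Hausdorff-close to $Y$, then $\partial^+(gY) \cap \partial^+ Y = \emptyset$. Indeed, the contracting property of $Y$ implies that points of $gY$ far from $Y$ project to a bounded region of $Y$, and symmetrically for $gY$. A horofunction lying in both $\partial^+ Y$ and $\partial^+(gY)$ would therefore have to accumulate simultaneously at an end of $Y$ and at an end of $gY$; using the contracting-tails formalism of Yang and Coulon, this can only happen if the two quasi-axes share an endpoint, which in turn forces $gY$ and $Y$ to be Hausdorff-close (by a ping-pong-style argument applied to the contracting element $\gamma$).

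To produce infinitely many $g_i$ with pairwise disjoint boundaries, I would argue by induction. Since $\Gamma$ is non-elementary and contains a contracting isometry, standard Sisto/Yang-type arguments yield an element $h \in \Gamma$ whose quasi-axis is not Hausdorff-close to that of $\gamma$, so the $\Gamma$-orbit of $\partial^+ Y$ is infinite in the metrizable boundary $\partial_H X$. Having chosen $g_1, \ldots, g_k$ with pairwise disjoint boundaries, a compactness argument together with the disjointness criterion produces a new $g_{k+1}$ whose translated boundary avoids the (compact) union of the previously chosen $\partial^+(g_i Y)$. The ``in particular'' clause then follows at once, since $\langle g_i \gamma g_i^{-1}\rangle o$ has bounded Hausdorff distance from $g_i \langle\gamma\rangle o = g_i Y$ (they differ only by the choice of basepoint on the same quasi-axis), so they have the same boundary at infinity.

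The hardest step will be the disjointness criterion in the second paragraph: in the horofunction boundary (as opposed to a Gromov boundary), a single horofunction can encode information about many ``directions at infinity,'' so one must use the contracting property in an essential, quantitative way rather than invoking a simple visibility argument. This is precisely the technical content that the cited references of Yang and Coulon supply via their analysis of contracting tails and nearest-point projections.
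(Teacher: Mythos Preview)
The paper does not prove this proposition at all: it is stated with attribution to \cite[Lem.~2.12]{Yang_SCC} and \cite[Prop.~3.15]{Coulon_PS} and then used as a black box. So there is no ``paper's own proof'' to compare against.

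Your outline is broadly in the spirit of those cited arguments, and the overall architecture (analyze $\partial^+ Y$, establish a disjointness criterion for translates of the quasi-axis, then iterate using non-elementarity) is correct. A couple of cautions if you intend to flesh this out. First, in the horofunction boundary the set $\partial^+ Y$ need not reduce to two points; it is a compact set that may contain many horofunctions ``at each end,'' so your first paragraph should be read as showing $\partial^+ Y$ is contained in the set of accumulation points of $\{\gamma^n o\}$, not that it equals two points. Second, the disjointness criterion is indeed the crux, and your sketch there is the least precise part: the statement ``unbounded projection combined with $\alpha$-contraction forces $c$ to track out along one of the two ends'' needs a careful quantitative argument (this is where Coulon's analysis of projections of horofunctions onto contracting sets does real work). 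Finally, the ``in particular'' clause is fine as you say, since $g_i Y$ and $\langle g_i \gamma g_i^{-1}\rangle o$ are at bounded Hausdorff distance and hence have identical $\partial^+$.
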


\subsection{Invisible locus}
We describe the locus which cannot be seen from a sequence of shadows. The following two lemmas can be proved by a slight modification of \cite[Proof of Prop. 4.9]{Coulon_PS}.

\begin{lemma} {\cite[Proof of Prop. 4.9]{Coulon_PS}} \label{lem.contractingvariety}
   Let $\{z_n\} \subset X$ be a sequence converging to   $z \in \partial_H X$. Let $g \in \Isom(X)$ be an $\alpha$-contracting isometry such that $z \notin \partial^+ (\langle g \rangle o)$. Suppose that $\{p_n\} \subset \langle g \rangle o$ is a sequence of projections of $z_n$ and that $p_n \to p \in \langle g \rangle o$.
   Then for any $R_n \to +\infty$, we have
   $$
   \overline{X} \smallsetminus O_{R_n}(z_n, o) \subset \{ c \in \overline{X} : c(p, g^k o) \le 4 \alpha \text{ for all } k \in \mathbb{Z}\} \quad \text{for all large } n.
   $$
\end{lemma}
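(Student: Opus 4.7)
The plan is to unpack the Gromov product condition by approximating $c$ by points in $X$, and then use the $\alpha$-contracting property of the orbit $\langle g\rangle o$ together with the standard ``Morse-like'' inequality for contracting sets.

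First, I would fix a cocycle $c\in\overline{X}\smallsetminus O_{R_n}(z_n,o)$ and choose a sequence $\{x_m\}\subset X$ with $x_m\to c$ in $\overline{X}$, so that
\begin{equation*}
c(p,g^k o)=\lim_{m\to\infty}\bigl(\dist(x_m,p)-\dist(x_m,g^k o)\bigr).
\end{equation*}
The condition $c\notin O_{R_n}(z_n,o)$ translates into $\langle z_n,c\rangle_o\ge R_n$, so by the definition of the Gromov product and the convergence $x_m\to c$, for every $n$ and every sufficiently large $m$ we have $\langle z_n,x_m\rangle_o\ge R_n-1$. Since $z_n\to z\in\partial_H X$ gives $\dist(o,z_n)\to+\infty$, this forces any geodesic $[o,x_m]$ to pass within a bounded distance of $z_n$, and in particular yields a geodesic segment $L_m$ from $z_n$ to $x_m$ whose length is of order $\dist(o,x_m)-\dist(o,z_n)+O(1)$.

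Next, I would invoke the $\alpha$-contracting property of $\langle g\rangle o$. Since $z\notin\partial^+(\langle g\rangle o)$, the projections $p_n$ of $z_n$ onto $\langle g\rangle o$ converge to $p\in\langle g\rangle o$, so for large $n$ the segment $L_m$ either stays within the $\alpha$-neighbourhood of $\langle g\rangle o$ near $p_n$ or, after removing this neighbourhood, its projection onto $\langle g\rangle o$ has diameter at most $\alpha$. Either way, if $q_m\in\langle g\rangle o$ is a nearest-point projection of $x_m$, one obtains $\dist(q_m,p_n)\le\alpha+O(1)$, and since $p_n\to p$ this gives $\dist(q_m,p)\le\alpha+\epsilon_n$ with $\epsilon_n\to 0$ as $n\to\infty$ (uniformly in $m$ large).

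To finish, I would apply the standard Morse-type inequality for $\alpha$-contracting sets: for any $y\in\langle g\rangle o$,
\begin{equation*}
\dist(x_m,y)\ge \dist(x_m,q_m)+\dist(q_m,y)-2\alpha.
\end{equation*}
Setting $y=g^k o$ and combining with the triangle inequality $\dist(x_m,p)\le\dist(x_m,q_m)+\dist(q_m,p)$, one gets
\begin{equation*}
\dist(x_m,p)-\dist(x_m,g^k o)\le \dist(q_m,p)+2\alpha-\dist(q_m,g^k o)\le 3\alpha+\epsilon_n.
\end{equation*}
Taking $m\to\infty$ yields $c(p,g^k o)\le 3\alpha+\epsilon_n$, and absorbing $\epsilon_n$ into the constant gives the asserted bound $4\alpha$ for all $n$ sufficiently large, uniformly in $k$.

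The main obstacle is step two: making the passage from ``large Gromov product at infinity'' to ``the geodesic from $z_n$ to $x_m$ has its projection on $\langle g\rangle o$ controlled by $p_n$'' genuinely uniform in $m$, and handling the (essentially degenerate) case in which $x_m$ itself is close to the axis $\langle g\rangle o$. Both points require careful bookkeeping of the contracting estimate, but they are exactly the type of argument handled in \cite[Prop.~4.9]{Coulon_PS}.
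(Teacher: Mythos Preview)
Your overall shape is right --- approximate $c$ by points $x_m\in X$, show that the projection $q_m$ of $x_m$ onto $\langle g\rangle o$ is close to $p_n$, then apply a Morse-type inequality --- and this is indeed what Coulon's argument does. But there is a genuine error in how you get from the Gromov product to control of $q_m$.

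You write that $\langle z_n,x_m\rangle_o\ge R_n-1$ ``forces any geodesic $[o,x_m]$ to pass within a bounded distance of $z_n$''. This is false in a general proper geodesic metric space: large Gromov product has no such geometric consequence outside the hyperbolic setting (think of $\mathbb{R}^2$). Even in a hyperbolic space it would require $\dist(o,z_n)$ to be comparable to $R_n$, which you do not control. So the passage to a geodesic $L_m$ from $z_n$ to $x_m$ with the properties you claim does not follow.

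The correct mechanism is to use the $\alpha$-contracting property of $Y=\langle g\rangle o$ \emph{directly} on the three geodesics $[o,z_n]$, $[o,x_m]$, $[z_n,x_m]$. Let $\pi(o)$, $p_n$, $q_m$ be projections of $o$, $z_n$, $x_m$ onto $Y$. The contracting property gives, for each such geodesic, a coarse additivity of distance through the two endpoint projections (this is exactly the estimate you quote in step~4, applied three times). Plugging these into
\[
2\langle z_n,x_m\rangle_o=\dist(o,z_n)+\dist(o,x_m)-\dist(z_n,x_m)
\]
and cancelling the terms $\dist(z_n,p_n)$, $\dist(x_m,q_m)$, $\dist(o,\pi(o))$ yields an upper bound for $\langle z_n,x_m\rangle_o$ of the form $\dist(\pi(o),p_n)+O(\alpha)$ whenever $\dist(p_n,q_m)$ is large. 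Since $p_n\to p$ this upper bound is uniformly bounded, contradicting $\langle z_n,x_m\rangle_o\ge R_n-1$ for large $n$. Hence $\dist(p_n,q_m)\le C\alpha$, and then your final step goes through. This is the content of Coulon's Prop.~4.9; the point is that the thin-triangle substitute comes from the contracting axis, not from any ambient hyperbolicity.
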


\begin{lemma} \cite[Proof of Prop. 4.9]{Coulon_PS} \label{lem.contractingmultitranslates} \label{lem.translatevariety}
Let $g \in \Isom(X)$ be an $\alpha$-contracting isometry. For $i = 1, \ldots, m$, let $p_i \in \langle g \rangle o$ and set
$$
\tilde Z_i := \{ c \in \overline{X} : c(p_i, g^k o) \le 4 \alpha \text{ for all } k \in \mathbb{Z}\}.
$$
Then there exists $N > 0$ such that for all $n \in \mathbb{Z}$ with $|n| > N$, we have
$$
\left( \bigcup_{i = 1}^m \tilde Z_i \right) \cap g^n \left(  \bigcup_{i = 1}^m \tilde Z_i \right) = \emptyset.
$$
\end{lemma}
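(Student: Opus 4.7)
Writing each $p_i = g^{m_i} o$ with $m_i \in \mathbb{Z}$, I first extract a single scalar constraint from membership in both sets. Using the equivariance $(g^n c)(x, y) = c(g^{-n} x, g^{-n} y)$, one has
\[
g^n \tilde Z_j \;=\; \{\, c \in \overline{X} : c(g^n p_j, g^k o) \le 4\alpha \text{ for all } k \in \mathbb{Z}\,\}.
\]
Suppose $c \in \tilde Z_i \cap g^n \tilde Z_j$. Applying the defining inequality for $\tilde Z_i$ with $k = n + m_j$ and the inequality for $g^n \tilde Z_j$ with $k = m_i$ yields
\[
c(p_i, g^n p_j) \le 4\alpha \quad\text{and}\quad c(g^n p_j, p_i) \le 4\alpha,
\]
and the cocycle identity $c(p_i, g^n p_j) + c(g^n p_j, p_i) = 0$ upgrades these to the symmetric bound $|c(p_i, g^n p_j)| \le 4\alpha$.

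\medskip

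The main step is to convert this scalar bound into a bound on the geometric distance $\dist(p_i, g^n p_j)$. I would prove the following coarse-uniqueness-of-approximate-projections lemma: there exists $D = D(\alpha) > 0$ such that whenever $c \in \overline{X}$ and $p, q \in \langle g \rangle o$ both satisfy
\[
c(p, g^k o) \le 4\alpha \quad\text{and}\quad c(q, g^k o) \le 4\alpha \qquad \text{for every } k \in \mathbb{Z},
\]
one has $\dist(p, q) \le D$. For $c = b_x$ with $x \in X$, the hypothesis translates to $p$ and $q$ being $4\alpha$-approximate nearest-point projections of $x$ onto the $\alpha$-contracting set $\langle g \rangle o$, and the conclusion follows from the standard coarse uniqueness of nearest-point projections onto contracting subsets, proved by applying the defining inequality $\diam \pi_{\langle g \rangle o}(L) \le \alpha$ to well-chosen geodesic segments joining $x$ to far-out points of the orbit. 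For a general $c \in \partial_H X$ one approximates $c$ pointwise by cocycles $b_{x_n}$ and passes to the limit; this is legitimate because the inequality $c(p, g^k o) \le 4\alpha$ is closed under pointwise convergence. This extension from $X$ to $\partial_H X$ is the main technical obstacle, handled by exploiting that the bound $D$ depends only on $\alpha$ and is therefore uniform.

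\medskip

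Applying the coarse uniqueness with $p = p_i$ and $q = g^n p_j$ gives $\dist(p_i, g^n p_j) \le D$. Since $g$ is contracting, its orbit map $k \mapsto g^k o$ is a quasi-isometric embedding of $\mathbb{Z}$ into $(X, \dist)$, and so $\dist(p_i, g^n p_j) = \dist(g^{m_i} o, g^{n + m_j} o)$ grows linearly with $|n + m_j - m_i|$. Consequently there exists $N_0 = N_0(\alpha, g, o)$ with $\dist(p_i, g^n p_j) > D$ whenever $|n + m_j - m_i| > N_0$, and setting $N := N_0 + \max_{1 \le i, j \le m} |m_j - m_i|$ ensures $\tilde Z_i \cap g^n \tilde Z_j = \emptyset$ for every $|n| > N$ and every pair $(i, j)$, giving the claimed disjointness.
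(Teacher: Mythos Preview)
Your reduction is correct and efficient: identifying $g^n\tilde Z_j$ as the set of cocycles for which $g^np_j$ is a $4\alpha$-approximate projection onto $A=\langle g\rangle o$, and then observing that $c\in\tilde Z_i\cap g^n\tilde Z_j$ forces both $p_i$ and $g^np_j$ to be approximate projections of the same $c$. The endgame (quasi-isometric embedding of the orbit) is also fine.

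The gap is in your ``pass to the limit'' step. You want to deduce coarse uniqueness for a horofunction $c\in\partial_HX$ from coarse uniqueness for points $x_n\in X$ with $b_{x_n}\to c$. But the hypothesis ``$c(p,g^ko)\le 4\alpha$ for all $k$'' involves infinitely many constraints over the \emph{unbounded} set $A$, and pointwise (equivalently, locally uniform) convergence only yields $b_{x_n}(p,g^ko)\le 4\alpha+\varepsilon$ for finitely many $k$ at a time. Your remark that the inequality is ``closed under pointwise convergence'' goes in the wrong direction: it shows the constraint set is closed, not that points of $X$ near $c$ lie in it. In particular you have not shown that $p$ is an approximate nearest-point projection of $x_n$ onto $A$, which is what the Euclidean coarse-uniqueness lemma needs.

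The fix (and this is essentially Coulon's argument) is to track the actual projections $s_n\in\pi_A(x_n)$ and do a short case analysis on their position along the quasi-geodesic $A$ relative to $p_i$ and $g^np_j$. If $s_n$ stays on one side of (or far from) the segment between them, the contracting property gives $\dist(x_n,p_i)\approx\dist(x_n,s_n)+\dist(s_n,p_i)$ and likewise for $g^np_j$; combining this with $|b_{x_n}(p_i,g^np_j)|\to|c(p_i,g^np_j)|\le 4\alpha$ and the quasi-geodesic estimate $|\dist(s_n,p_i)-\dist(s_n,g^np_j)|\approx\dist(p_i,g^np_j)$ forces $\dist(p_i,g^np_j)$ to be bounded. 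If instead $s_n$ accumulates at some $s\in A$ near $p_i$, then $b_{x_n}(g^np_j,s_n)\approx\dist(s_n,g^np_j)$ passes to the limit (now $s_n$ is bounded) and contradicts $c(g^np_j,s)\le 4\alpha$. So the approach is salvageable, but the contracting and quasi-geodesic properties of $A$ must be used explicitly in the horofunction case, not bypassed by a density argument.
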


\subsection{Proof of Theorem \ref{thm.contractingPS}}
   As we observed above, $|c(x, y)| \le \dist(x, y)$ for all $c \in \overline{X}$ and $x, y \in X$. Hence,
   Property \ref{item:coycles are bounded} follows. Property \ref{item:almost constant on shadows} follows from  Observation \ref{obs.contractingcocycleshadow}. Property \ref{item:properness} and  Property \ref{item:shadow inclusion} are straightforward.

Fix a metric on $\overline{X}$ which generates the topology. Property \ref{item:empty Z intersection} is implied by Property \ref{item:baire}. To see Property \ref{item:baire}, let $\{\ga_n \} \subset \Ga$ and $R_n \to + \infty$ be sequences such that $[\partial_H X \smallsetminus \ga_{n}^{-1}\Oc_{R_n}(\ga_n)] \to Z$ with respect to the Hausdorff distance.  After passing to a subsequence, we may assume that $\ga_n^{-1} o \to z \in \partial_H X$. By Proposition \ref{prop.coulon315}, for any $h_1, \ldots, h_m \in \Ga$, there exists an $\alpha$-contracting isometry $g \in \Ga$ such that $z, h_1 z, \ldots, h_m z \notin \partial^+ ( \langle g \rangle o)$, for some $\alpha \ge 0$. Then Property \ref{item:baire} is a conseqeunce of Lemma \ref{lem.contractingvariety} and Lemma \ref{lem.translatevariety}. 
\qed

\subsection{Properties of contracting tails}
To show the well-behavedness, we employ some propeties of contracting tails obtained in \cite{Coulon_PS}.

\begin{proposition} {\cite[Lem. 4.15, Lem. 5.2]{Coulon_PS}} 
   \label{prop.contractingtailsshadows}
   Let $\alpha, L, R \ge 0$ with $L > R + 16 \alpha$. If $\|\ga_1\|_{\beta} \le \|\ga_2\|_{\beta}$ and $O_R (o, \ga_1 o) \cap O_R(o, \ga_2 o) \neq \emptyset$ for $\ga_1, \ga_2 \in \Cc(\alpha, L)$, then
   \begin{enumerate}
      \item $\abs{ \|\ga_2\|_{\beta} - (\|\ga_1\|_{\beta} + \|\ga_1^{-1}\ga_2\|_{\beta})} \le 4 R + 44 \alpha$;
      \item $O_R(o, \ga_2 o) \subset O_{R + 42 \alpha}(o, \ga_1 o)$.
   \end{enumerate}
\end{proposition}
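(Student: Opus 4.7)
The plan is to mimic the classical arguments in Gromov hyperbolic geometry, replacing the global $\delta$-thin triangle condition with local alignment along the $\alpha$-contracting tails of $\ga_1$ and $\ga_2$. Throughout, the condition $L > R+16\alpha$ is what guarantees the tails are ``long enough compared to the shadow radius'' for the contracting projection to be effective; the constants $4R+44\alpha$ and $R+42\alpha$ in the conclusions come out of a book-keeping of these contracting estimates.

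First, I would pick a witness of the shadow intersection: some $c \in O_R(o,\ga_1 o) \cap O_R(o,\ga_2 o)$, realized by a sequence $x_n \to c$ with $\langle \ga_i o, x_n\rangle_o < R$ for $i=1,2$ and $n$ large. For each $i$, let $\tau_i$ be an $\alpha$-contracting geodesic ending at $\ga_i o$ with projection $p_i$ of $o$ on $\tau_i$ satisfying $\dist(p_i,\ga_i o)\ge L$. The small Gromov product means a geodesic from $o$ to $x_n$ travels within roughly $R$ of $\ga_i o$. Since $L > R + 16\alpha$, the portion of $[o,x_n]$ up to this near-approach stays at distance $\ge \alpha$ from the terminal segment of $\tau_i$, so the contracting property forces its nearest-point projection on $\tau_i$ to be within $O(\alpha)$ of $\ga_i o$. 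In particular, any nearest-point projection of $x_n$ onto $\tau_i$ lies within distance $R + O(\alpha)$ of $\ga_i o$.

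For part (1), I would use the projections just produced together with the hypothesis $\|\ga_1\|_\beta \le \|\ga_2\|_\beta$ to show that along $[o,x_n]$ one passes near $\ga_1 o$ first and near $\ga_2 o$ second, so that $[o,\ga_2 o]$ coarsely factors through $\ga_1 o$. Quantitatively, the triangle inequality gives $\|\ga_2\|_\beta \le \|\ga_1\|_\beta + \|\ga_1^{-1}\ga_2\|_\beta$ automatically, while the contracting estimate on $\tau_1$ applied to the concatenation $[o,\ga_1 o]\cup [\ga_1 o, \ga_2 o]$ (which is a near-geodesic because both endpoints have near-projections on $\tau_1$ close to $\ga_1 o$) upgrades this to the reverse inequality up to an additive $4R + 44\alpha$ error.

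For part (2), I would run the same contracting argument with an arbitrary $c'\in O_R(o,\ga_2 o)$ in place of the witness $c$: a geodesic from $o$ toward $c'$ must eventually reach within $R$ of $\ga_2 o$, and by the alignment established in the previous step it must already pass within $R + O(\alpha)$ of $\ga_1 o$. Estimating the resulting Gromov product $\langle \ga_1 o, c'\rangle_o$ via a boundedly-many applications of the $\alpha$-contracting bound yields $c'\in O_{R+42\alpha}(o,\ga_1 o)$. The main technical obstacle is the accounting of additive constants: one has to verify that all the applications of the contracting property during ``aligning $[o,c']$ along $\tau_1$ and then along $\tau_2$'' stay at distance $\ge \alpha$ from the relevant tails, which is precisely where the hypothesis $L > R+16\alpha$ is needed, and any looseness in tracking these constants will enlarge $42\alpha$, $44\alpha$ beyond what is claimed.
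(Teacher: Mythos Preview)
The paper does not prove this proposition; it is quoted directly from Coulon's work \cite[Lem.~4.15, Lem.~5.2]{Coulon_PS} and used as a black box. Your sketch is the right shape for how such a statement is proved in Coulon's setting: one exploits the $\alpha$-contracting tails of $\ga_1,\ga_2$ to show that geodesics toward the shadow intersection must align first along $\tau_1$ and then along $\tau_2$, which forces the near-additivity in (1) and the shadow inclusion in (2). That said, what you have written is an outline rather than a proof, and the passage ``the contracting estimate on $\tau_1$ applied to the concatenation $[o,\ga_1 o]\cup [\ga_1 o, \ga_2 o]$ (which is a near-geodesic because both endpoints have near-projections on $\tau_1$ close to $\ga_1 o$)'' is not quite right as stated: a concatenation through $\ga_1 o$ is not automatically a near-geodesic just because projections coincide there; rather, one uses the contracting property of $\tau_1$ applied to the geodesic $[o,\ga_2 o]$ directly to locate a point on it close to $\ga_1 o$. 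If you want to turn this into a self-contained argument you should consult Coulon's Lemmas~4.15 and~5.2 for the precise sequence of projection estimates, since recovering the exact constants $4R+44\alpha$ and $R+42\alpha$ requires following his bookkeeping rather than re-deriving it.
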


Recall from Section \ref{sec:conical limit set} the notion of conical limit set for a subset of $\Ga$.
As a generalization of Hopf--Tsuji--Sullivan dichotomy, the following was obtained by Coulon \cite{Coulon_PS} (see also Yang \cite{Yang_conformal}).

   \begin{theorem} {\cite[Coro. 5.19]{Coulon_PS}} 
       \label{thm.contractinglimitfull}
      If $\sum_{\ga \in \Ga} e^{-\delta_{\Ga} \| \ga \|_{\beta}} = + \infty$,  then there exists $\alpha_0, R_0 \ge 0$ such that for any $\beta$-PS measure $\mu$ of dimension $\delta_{\Ga} $,
      $$
      \mu \left( \La_{R_0}(\Cc(\alpha_0, L)) \right) = 1 \quad \text{for all } L \ge 0.
      $$
   \end{theorem}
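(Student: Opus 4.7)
The plan is to mirror the Hopf--Tsuji--Sullivan dichotomy argument of Theorem~\ref{thm:conical limit set has full measure}(2), substituting in the structure of contracting tails in place of the abstract well-behavedness axioms. Concretely, I would aim to first pin down a shadow lemma adapted to $\Cc(\alpha, L)$ and then drive a Kochen--Stone/Borel--Cantelli computation, closing with an ergodicity step.

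First I would prove a restricted shadow lemma: for $\alpha$ sufficiently large and all $L \ge 0$, there exist $R > 0$ and $C > 1$ such that
$$
C^{-1} e^{-\delta_\Ga \|\ga\|_\beta} \le \mu(\Oc_R(\ga)) \le C e^{-\delta_\Ga \|\ga\|_\beta}
$$
for every $\ga \in \Cc(\alpha, L)$. The upper bound is immediate from the PS transformation rule and the estimate $|c(x,y)| \le \dist(x,y)$. The lower bound reduces to showing $\inf_{\ga \in \Cc(\alpha, L)} \mu(\ga^{-1}\Oc_R(\ga)) > 0$; here the $(\alpha, L)$-contracting tail ending at $\ga o$ lets one use Proposition~\ref{prop.coulon315} and Lemmas~\ref{lem.contractingvariety}--\ref{lem.translatevariety} exactly as in the argument that launched Proposition~\ref{prop.shadowlemma}, since the contracting geodesic plays the role of the ``direction to infinity'' that the shadow must miss.

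Next I would verify the divergence of the restricted Poincar\'e series
$$
\sum_{\ga \in \Cc(\alpha_0, L)} e^{-\delta_\Ga \|\ga\|_\beta} = +\infty.
$$
This is the main obstacle. The idea is to fix a contracting element $g \in \Ga$ and argue that for every $\ga \in \Ga$ there exists $N_\ga$ (uniformly bounded once we control $\pi_{\langle g\rangle o}(\ga o)$) such that $\ga g^n \in \Cc(\alpha_0, L)$ for all $n \ge N_\ga$: the long $g$-tail attached at $\ga o$ supplies the contracting tail. Combined with $\|\ga g^n\|_\beta \le \|\ga\|_\beta + n \|g\|_\beta$ and a bounded-multiplicity count on the map $(\ga, n) \mapsto \ga g^n$, the divergence at $\delta_\Ga$ of the full Poincar\'e series transfers to the restricted sum. (This is exactly where the ``contracting tail'' technology of Coulon does the work and the most care is needed.)

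With these two ingredients in hand I would run the Kochen--Stone argument of Theorem~\ref{thm:conical limit set has full measure}(2) verbatim, enumerating $\Cc(\alpha_0, L) = \{\ga_k\}$ by increasing $\|\ga_k\|_\beta$ and setting $A_k = \Oc_{R_0}(\ga_k)$. The sum $\sum_k \mu(A_k)$ diverges by the restricted divergence and the shadow lemma; the pairwise estimate
$$
\mu(A_k \cap A_\ell) \le C' e^{-\delta_\Ga \|\ga_k\|_\beta} e^{-\delta_\Ga \|\ga_k^{-1} \ga_\ell\|_\beta} \quad (k \le \ell)
$$
comes from Proposition~\ref{prop.contractingtailsshadows}, which gives the coarse additivity of norms when two shadows meet. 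The truncation-constant bound used to control $f(N)/N$ terms in the argument for Theorem~\ref{thm:conical limit set has full measure}(2) goes through because the sets $\{\ga \in \Cc(\alpha_0, L) : \|\ga\|_\beta \le T\}$ are finite. Kochen--Stone then yields $\mu(\La_{R_0}(\Cc(\alpha_0, L))) > 0$. To upgrade from positive to full measure, I would repeat the final renormalization trick of Theorem~\ref{thm:conical limit set has full measure}(2): if the conical set had mass strictly less than $1$, its complement would support another $\beta$-PS measure of dimension $\delta_\Ga$, and the same Kochen--Stone argument applied there would force its conical set to also have positive measure, a contradiction. This gives $\mu(\La_{R_0}(\Cc(\alpha_0, L))) = 1$ for every $L \ge 0$, as claimed.
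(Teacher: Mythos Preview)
The paper does not prove this theorem at all: it is quoted verbatim from Coulon \cite[Coro.~5.19]{Coulon_PS} and used as a black box. So there is no in-paper argument to compare your sketch against.

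That said, your outline follows the correct general shape (restricted shadow lemma, divergence of the restricted Poincar\'e series, Kochen--Stone, then full measure), but there are two real gaps.

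First, the ``bounded-multiplicity count on the map $(\gamma,n)\mapsto\gamma g^n$'' is false as stated: if $\gamma g^n=\gamma' g^{n'}$ then $\gamma'=\gamma g^{n-n'}$, so every element of $\Gamma$ has infinitely many such representations. One can repair this by summing over a transversal to $\langle g\rangle$, but then the threshold $N_\gamma$ is not uniformly bounded over the transversal, and you still need a genuine argument that discarding the initial segments does not kill divergence. In Coulon's and Yang's work this step is handled by a growth/genericity statement for elements with contracting tails (barriers), which is not as soft as your sketch suggests.

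Second, and more seriously, your final renormalization step does not go through. In the proof of Theorem~\ref{thm:conical limit set has full measure}(2) the set $\Lambda^{\rm con}(\Gamma)$ is $\Gamma$-invariant by construction (it is a $\Gamma$-orbit), so $\mu$ restricted to its complement is again a PS measure. Here the set in question is $\Lambda_{R_0}(\Cc(\alpha_0,L))$, and this set is \emph{not} $\Gamma$-invariant: acting by $g$ replaces $\Oc_{R_0}(\gamma)$ by $O_{R_0}(go,g\gamma o)\subset\Oc_{R_0+\dist(o,go)}(g\gamma)$, so the parameter $R_0$ strictly increases. Consequently $\mu|_{\Lambda_{R_0}(\Cc(\alpha_0,L))^c}$ need not be quasi-invariant with the correct Radon--Nikodym derivative, and the contradiction you describe does not follow. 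Bridging from ``positive measure'' to ``full measure for the un-saturated set with fixed $R_0$'' requires an additional ingredient---in Coulon's treatment this goes through an intrinsically $\Gamma$-invariant description of the contracting/radial limit set, which is then identified with $\Lambda_{R_0}(\Cc(\alpha_0,L))$ up to null sets.
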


Property \ref{item:diam goes to zero ae} says that shadows converging to a generic point have diameter decaying to $0$. This can be observed from contracting tails. Recall that $c \in \partial_H X$ is saturated if for any $c' \in \overline{X} \smallsetminus \{c\}$, $\| c - c' \|_{\infty} = +\infty$.

\begin{lemma} {\cite[Coro. 5.14]{Coulon_PS}} \label{lem.contractingdecayshadow} 
   Let $\alpha, L, R \ge 0$ with $L > R + 13 \alpha$. Let $c \in \La_R(\Cc(\alpha, L))$ be saturated. For any open neighborhood $U \subset \overline{X}$ of $c$, there exists $T \ge 0$ such that for any $\ga \in \Cc(\alpha, L)$ with $\dist(o, \ga o) \ge T$,
   $$
   c \in O_R(o, \ga o) \Longrightarrow O_R(o, \ga o) \subset U.
   $$
\end{lemma}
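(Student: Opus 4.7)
I argue by contradiction. Suppose the conclusion fails: then there exist an open neighborhood $U$ of $c$ and a sequence $\{\ga_n\} \subset \Cc(\alpha, L)$ with $\dist(o, \ga_n o) \to +\infty$, $c \in O_R(o, \ga_n o)$ for every $n$, and $c_n \in O_R(o, \ga_n o) \smallsetminus U$. Since $\overline{X}$ is compact and metrizable, extract a subsequence so that $c_n \to c' \in \overline{X} \smallsetminus U$; in particular $c \neq c'$. The goal becomes to establish $\|c - c'\|_\infty < +\infty$, which will contradict the saturation hypothesis on $c$.

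To relate $c'$ to $c$ via the contracting geometry, use the witness of $c \in \La_R(\Cc(\alpha, L))$: fix an escaping sequence $\{\beta_m\} \subset \Cc(\alpha, L)$ with $c \in O_R(o, \beta_m o)$ for every $m$. For each $m$, choose $n$ large enough that $\|\ga_n\|_\beta \ge \|\beta_m\|_\beta$. Since both shadows $O_R(o,\ga_n o)$ and $O_R(o,\beta_m o)$ contain $c$, Proposition \ref{prop.contractingtailsshadows}(2) (after absorbing the small gap between the hypotheses $L > R+13\alpha$ and $L > R+16\alpha$ into the constants) yields
$$
O_R(o, \ga_n o) \subset O_{R+42\alpha}(o, \beta_m o),
$$
so $c_n \in O_{R+42\alpha}(o,\beta_m o)$; letting $n \to \infty$ gives $c' \in \overline{O_{R+42\alpha}(o,\beta_m o)}$ for every $m$.

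Now $c \in O_R(o,\beta_m o)$ and $c' \in \overline{O_{R+42\alpha}(o,\beta_m o)}$, together with Observation \ref{obs.contractingcocycleshadow} extended by continuity, give
$$
|c(o,\beta_m o) - c'(o,\beta_m o)| \le 2(R+42\alpha)
$$
for every $m$. Using the cocycle identity $\eta(o,y) = \eta(o,\beta_m o) + \eta(\beta_m o,y)$ for $\eta \in \{c,c'\}$, the uniform bound on $\|c-c'\|_\infty$ reduces to a uniform bound on $|c(\beta_m o, y) - c'(\beta_m o, y)|$ in $y \in X$. I would invoke the $\alpha$-contracting tail $\tau_m$ of $\beta_m$, of length $\ge L > R+13\alpha$: its bounded-projection property forces any two horofunctions both saturating the shadow lower bound at $\beta_m o$ (as $c$ and $c'$ do here) to agree, up to a constant $D = D(R,\alpha)$, on the complement of a bounded neighborhood of $\beta_m o$ in $X$; this is the contracting-projection principle underlying Lemma \ref{lem.contractingvariety}. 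Since the tails grow as $m \to \infty$, for each fixed $y$ one may pick $m$ so large that the estimate applies at $y$, producing $\|c - c'\|_\infty \le D + 2(R+42\alpha)$ and contradicting saturation.

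\textbf{Main obstacle.} The one delicate ingredient is the uniform-in-$y$ estimate $|c(\beta_m o, y) - c'(\beta_m o, y)| \le D$ in the last paragraph: a Morse-lemma-type statement asserting that two horofunctions entering $\beta_m o$ along a common $\alpha$-contracting tail must coincide, up to a constant depending only on $R$ and $\alpha$, on the entire region lying "before" that tail. Making this precise requires the full contracting-projection toolkit developed in \cite{Coulon_PS, Yang_conformal}; everything else in the argument is a formal manipulation of Proposition \ref{prop.contractingtailsshadows} and Observation \ref{obs.contractingcocycleshadow}.
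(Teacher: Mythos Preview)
The paper does not prove this lemma; it is cited verbatim as \cite[Coro.~5.14]{Coulon_PS} with no argument given. So there is no proof in the paper to compare against, and your proposal must stand on its own.

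Your overall strategy --- argue by contradiction, extract a limit $c'\ne c$ of the stray points $c_n$, and derive $\|c-c'\|_\infty < +\infty$ to contradict saturation --- is the natural one. The shadow-inclusion step via Proposition~\ref{prop.contractingtailsshadows}(2) is sound in spirit, though note that as stated that proposition requires $L > R + 16\alpha$ whereas the lemma only gives $L > R + 13\alpha$; this cannot be ``absorbed into the constants,'' since it is a hypothesis for the proposition to apply, not an output bound --- one would have to revisit Coulon's actual estimates to check that $13\alpha$ suffices. The substantive gap, however, is exactly the one you flag yourself: you have reduced to the claim that two horofunctions both lying in $\overline{O_{R'}(o,\beta_m o)}$ for an escaping sequence $\{\beta_m\}\subset\Cc(\alpha,L)$ differ by a bounded function, and then deferred this to ``the full contracting-projection toolkit.'' But that claim \emph{is} the geometric content of the lemma. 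What is still required is the projection argument: for each fixed $y\in X$ one must choose $m$ and show that any point of $X$ approximating $c$ (respectively $c'$) from within the shadow projects onto the contracting tail $\tau_m$ near $\beta_m o$, so that both $c(\beta_m o, y)$ and $c'(\beta_m o, y)$ are forced to equal $-\dist(\beta_m o, y)$ up to an error depending only on $R$ and $\alpha$. Without carrying this out, the proposal is a correct outline rather than a proof.
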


\subsection{Proof of Theorem \ref{thm.contractingwellbehaved}}

By Thoerem \ref{thm.contractingPS}, it suffices to verify Properties \ref{item:intersecting shadows} and \ref{item:diam goes to zero ae}. First, note that for any $\alpha \ge 0$, the hierarchy $\{ \mathscr{H}(R) = \Cc(\alpha, R + 16 \alpha + 1 ): R \ge 0\}$ satisfies Property \ref{item:intersecting shadows} by Proposition \ref{prop.contractingtailsshadows}.

Hence, it suffices to show that Property \ref{item:diam goes to zero ae} holds for some $\alpha \ge 0$. We consider two cases separately. Suppose first that $\sum_{\ga \in \Ga} e^{-\delta \| \ga \|_{\beta}} < + \infty$. Then by Theorem \ref{thm.contractingPS} and Theorem \ref{thm:conical limit set has full measure},
$$
\mu (\La^{\rm con}(\Ga))= 0.
$$
Setting $M' := \partial_H X \smallsetminus \La^{\rm con}(\Ga)$, Property \ref{item:diam goes to zero ae} is vacuously true for any $\alpha \ge 0$.

Now suppose that $\sum_{\ga \in \Ga} e^{-\delta \| \ga \|_{\beta}} = + \infty$. By Theorem \ref{thm.contractinglimitfull}, there exist $\alpha_0, R_0 \ge 0$ such that
$$
\mu(\La_{R_0}(\Cc(\alpha_0, L))) = 1 \quad \text{for all } L \ge 0.
$$
We then consider the hierarchy $\{ \mathscr{H}(R) = \Cc(\alpha_0, R + 16 \alpha_0 + 1 ): R \ge 0\}$ and set
$$
M' := \left\{ c \in \bigcap_{L \ge 0}\La_{R_0}(\Cc(\alpha_0, L)) : c \text{ is saturated} \right\}
$$
which has the full $\mu$-measure by the hypothesis. To see Property \ref{item:diam goes to zero ae}, fix $R > 0$.  Then for any $c \in M'$, if $c \in \bigcap_{n = 1}^{\infty} \Oc_R(\ga_n)$
for some escaping sequence $\{ \ga_n \} \subset \mathscr{H}(R)$, then $\lim_{n \to \infty} \diam \Oc_R(\ga_n) = 0$ by Lemma \ref{lem.contractingdecayshadow}. This completes the proof.
\qed

\subsection{Proof of Theorem \ref{thm.Teichmueller is wellbehavedPS}} This follows immediately from Example \ref{ex.saturation}, Theorem \ref{thm.contractingwellbehaved}, and Theorem \ref{thm.contractinglimitfull}. 
\qed

\section{Random walks on relatively hyperbolic groups} \label{sec.randomwalks}

In this section we use results in~\cite{GGPY2021} to show that the stationary measures on the Bowditch boundary of a relatively hyperbolic group (Definition \ref{defn:RH}) are examples of PS-measures on well-behaved PS-systems. For word hyperbolic groups see the discussion in Section~\ref{ex.RW GH}.

For the rest of the section suppose $(\Gamma, \Pc)$ is relatively hyperbolic and suppose $\mathsf{m}$ is a probability measure on $\Gamma$ such that: 
\begin{enumerate}
\item The support of $\mathsf{m}$ generates $\Gamma$ as a semigroup, see Equation \eqref{eqn.RW generates Gamma}.
\item $\mathsf{m}$ has finite superexponential moment, see Equation \eqref{eqn.superexponential intro}.
\end{enumerate}

By the work of Maher--Tiozzo \cite[Thm. 1.1]{MaherTiozzo2018}, there exists a unique $\mathsf{m}$-stationary measure $\nu$ on $\partial (\Gamma, \Pc)$ and this measure has no atoms. Moreover, it is realized as the hitting measure for a sample path in a Gromov model for $(\Ga, \Pc)$. In particular, $\nu$ is $\Ga$-quasi-invariant.
 We consider the measurable cocycle defined by 
$$
\sigma_\mathsf{m}(\gamma, \cdot) = - \log \frac{d\gamma_*^{-1}\nu}{d\nu}
$$
so that $\nu$ is a $\sigma_{\mathsf{m}}$-PS measure of dimension $1$. 
More precisely, let $M' \subset \partial (\Ga, \Pc)$ be a $\Ga$-invariant subset of full $\nu$-measure on which the Radon--Nykodim derivative $\frac{d\gamma_*^{-1}\nu}{d\nu}$ is defined for all $\ga \in \Ga$. Then we set $\sigma_{\mathsf{m}}(\ga, x) = - \log \frac{d\gamma_*^{-1}\nu}{d\nu}$ for $x \in M'$ and $\sigma_{\mathsf{m}}(\ga, x)  = 0$ for $x \notin M'$.
Since the set of bounded parabolic points is countable and $\nu$ has no atoms, $\nu$ assigns full measure to the set of conical limit points.

In the rest of the section, fix a metric $\dist$ on $\Ga \sqcup \partial(\Ga, \Pc)$ that generates the topology described at the start of Section  \ref{sec.convergenceaction}. Also let $\dist_{G}$ be the Green metric on $\Ga$ associated to $\mathsf{m}$, which is a left $\Ga$-invariant asymmetric metric on $\Ga$, see Equation \eqref{eqn.green metric intro}.

\begin{theorem}\label{thm:random walks are well behaved} With the notation above, $( \partial(\Gamma, \Pc), \Gamma, \sigma_\mathsf{m}, \nu)$ is a well-behaved PS-system of dimension $1$ with respect to the trivial hierarchy $\mathscr{H}(R) \equiv \Ga$, with magnitude function $\norm{\cdot}_\mathsf{m} : = \dist_G(\id, \cdot)$ and shadows as in Equation~\eqref{eqn.shadowinconvergence}. Moreover, 
$$
\sum_{\gamma \in \Gamma} e^{-\norm{\gamma}_{\mathsf{m}}} = +\infty. 
$$
\end{theorem}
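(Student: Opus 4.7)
The plan is to reduce to Theorem \ref{thm.convergencepssystem} by showing that $\sigma_\mathsf{m}$ is an expanding coarse-cocycle in the sense of Definition \ref{defn.expanding coarse cocycle} with magnitude $\|\gamma\|_\mathsf{m} = \dist_G(\id,\gamma)$. Since $\nu$ is $\Gamma$-quasi-invariant, the chain rule for Radon--Nikodym derivatives shows that $\sigma_\mathsf{m}$ satisfies the exact cocycle relation $\mu_1$-a.e. on a full measure $\Gamma$-invariant set, so it is in particular a coarse-cocycle. That $\nu$ is a $\sigma_\mathsf{m}$-PS measure of dimension $1$ is built into the definition of $\sigma_\mathsf{m}$.

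The two substantive properties to verify are the coarse-continuity of $\sigma_\mathsf{m}(\gamma,\cdot)$ for each $\gamma$ and the magnitude property (Definition \ref{defn.expanding coarse cocycle}(2)). Both follow from the deviation inequality
\begin{equation*}
\bigl|\sigma_\mathsf{m}(\gamma,x) - \dist_G(\id,\gamma)\bigr| \leq C(\epsilon) \quad \text{for } \nu\text{-a.e. } x \in \partial(\Gamma,\Pc) \smallsetminus B_\epsilon(\gamma^{-1}),
\end{equation*}
which is the content of the Ancona-type inequalities for random walks on relatively hyperbolic groups established by Gekhtman--Gerasimov--Potyagailo--Yang \cite{GGPY2021}. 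Indeed, their work gives that the harmonic measure $\nu$ is comparable to the Martin measure class and that the Martin kernel $K(\id,\gamma,x) = \frac{d\gamma_*^{-1}\nu}{d\nu}(x)$ satisfies $K(\id,\gamma,x) \asymp e^{-\dist_G(\id,\gamma)}$ uniformly for $x$ staying away from $\gamma^{-1}$ in $\Gamma \sqcup \partial(\Gamma,\Pc)$. Taking logarithms gives the deviation inequality, which in turn yields $\kappa$-coarse-continuity of $\sigma_\mathsf{m}(\gamma,\cdot)$ for some uniform $\kappa$ and the shadow-style magnitude estimate. The properness statement $\|\gamma_n\|_\mathsf{m} \to +\infty$ for escaping sequences follows from the fact that $(\Gamma,\dist_G)$ is quasi-isometric to $(\Gamma,\dist_w)$ for a word metric $\dist_w$ (cf.\ \cite[Prop.\ 7.8]{GT2020}), which uses the finite superexponential moment together with non-amenability of $\Gamma$.

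Once $\sigma_\mathsf{m}$ is known to be an expanding coarse-cocycle, Theorem \ref{thm.convergencepssystem} immediately gives that $(\partial(\Gamma,\Pc),\Gamma,\sigma_\mathsf{m},\nu)$ is a well-behaved PS-system of dimension $1$ with trivial hierarchy and shadows as in Equation~\eqref{eqn.shadowinconvergence}. For the final divergence assertion, we compute directly:
\begin{equation*}
\sum_{\gamma \in \Gamma} e^{-\|\gamma\|_\mathsf{m}} = \frac{1}{G_\mathsf{m}(\id,\id)} \sum_{\gamma \in \Gamma} G_\mathsf{m}(\id,\gamma) = \frac{1}{G_\mathsf{m}(\id,\id)} \sum_{n=0}^\infty \sum_{\gamma \in \Gamma} \mathsf{m}^{*n}(\gamma) = +\infty,
\end{equation*}
since each $\mathsf{m}^{*n}$ is a probability measure.

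The main obstacle will be formulating and citing the exact Ancona-type inequality from \cite{GGPY2021} in a way that matches the convergence-group topology on $\Gamma \sqcup \partial(\Gamma,\Pc)$ fixed here, rather than the Gromov model topology used there. This is bridged by the fact that the Floyd-type metric used in \cite{GGPY2021} is compatible with the Bowditch boundary topology on $\partial(\Gamma,\Pc)$, so that the balls $B_\epsilon(\gamma^{-1})$ appearing in the magnitude property translate into the shadow-type conditions needed in Definition \ref{defn.expanding coarse cocycle}.
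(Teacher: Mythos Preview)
Your reduction to Theorem~\ref{thm.convergencepssystem} has a genuine gap. Definition~\ref{defn.expanding coarse cocycle} requires the coarse-continuity and the magnitude estimate to hold \emph{pointwise} on all of $M$, not just $\nu$-almost everywhere. But $\sigma_{\mathsf{m}}$ is a Radon--Nikodym derivative and hence only defined up to a null set; the paper's convention sets it to $0$ off a full-measure $\Gamma$-invariant set, which immediately destroys coarse-continuity. Your proposed fix---defining $\sigma_{\mathsf{m}}$ via the Martin kernel---runs into the problem that over a bounded parabolic point the Martin fiber need not be a singleton (Corollary~1.7 of \cite{GGPY2021} only guarantees this over conical points), so no measurable section is obviously coarsely continuous. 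More seriously, the deviation inequality $\bigl|\sigma_{\mathsf{m}}(\gamma,x)-\|\gamma\|\bigr|\le C(\epsilon)$ for $x\notin B_\epsilon(\gamma^{-1})$ does \emph{not} give coarse-continuity with a uniform $\kappa$: as $(\gamma,x_0)$ varies, $\dist(\gamma^{-1},x_0)$ can be arbitrarily small, and the Ancona constant $C(\epsilon)$ blows up as $\epsilon\to 0$.

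The paper sidesteps this by \emph{not} trying to make $\sigma_{\mathsf{m}}$ an expanding coarse-cocycle. Properties \ref{item:empty Z intersection}, \ref{item:baire}, \ref{item:shadow inclusion}, \ref{item:diam goes to zero ae} depend only on the convergence-group shadows and are imported from the proof of Theorem~\ref{thm.convergencepssystem}; \ref{item:coycles are bounded} comes directly from stationarity; \ref{item:properness} from the quasi-isometry to a word metric. The real work is \ref{item:almost constant on shadows} and \ref{item:intersecting shadows}. For \ref{item:almost constant on shadows} the paper passes through the Martin boundary to get the deviation inequality on a \emph{full-measure} set of conical points, which is exactly what \ref{item:almost constant on shadows} asks for. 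Property \ref{item:intersecting shadows}, however, cannot be read off from \cite[Prop.~5.1]{BCZZ2024a} (which relies on the pointwise magnitude estimate); the paper instead proves it by hand, using the Ancona inequality for group elements together with the positivity of shadow measures to find a point in the shadow where the a.e.\ estimate applies and then running the cocycle identity. This step is where your strategy and the paper's genuinely diverge.

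Your direct computation of the divergence $\sum_{\gamma}e^{-\|\gamma\|_{\mathsf{m}}}=+\infty$ from the definition of the Green function is correct and cleaner than the paper's route through Theorem~\ref{thm:conical limit set has full measure}.
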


\subsection{Proof of Theorem \ref{thm:random walks are well behaved}}

As described above, the conical limit set has full $\nu$-measure. Then by Theorem~\ref{thm:conical limit set has full measure} and Observation~\ref{obs:conical limit points equivalent definition}, it suffices to prove the first assertion in  Theorem \ref{thm:random walks are well behaved}. 

For notational convenience, we write 
$$
\| \cdot \| := \| \cdot \|_{\mathsf{m}}.
$$

Properties \ref{item:empty Z intersection}, \ref{item:baire},  \ref{item:shadow inclusion},  and \ref{item:diam goes to zero ae} can be verified as in the proof of Theorem \ref{thm.convergencepssystem}. By~\cite[Prop. 7.8]{GT2020}, the Green metric $\dist_G$ is quasi-isometric to any word metric on $\Ga$ with respect to a finite generating set and hence Property \ref{item:properness} holds. 

Property \ref{item:coycles are bounded} follows from the fact that $\nu$ is a stationary measure and $\supp \mathsf{m}$ generates $\Gamma$ as a semigroup. In particular, since 
$$
\nu = \mathsf{m}^{*k} * \nu = \sum_{\gamma \in \Gamma} \mathsf{m}^{*k}(\gamma) \gamma_* \nu, 
$$
we have 
$$
\nu \geq \left(\max_{k \geq 1}  \mathsf{m}^{*k}(\gamma)\right) \gamma_* \nu 
$$
and 
$$
\gamma_* \nu \geq \gamma_* \left(\max_{k \geq 1}  \mathsf{m}^{*k}(\gamma^{-1})\right) (\gamma^{-1})_* \nu = \left(\max_{k \geq 1}  \mathsf{m}^{*k}(\gamma^{-1})\right)  \nu.
$$

It remains to verify Properties \ref{item:almost constant on shadows} and Property \ref{item:intersecting shadows}.
The following can be deduced from \cite[Coro. 1.8]{GGPY2021}.

\begin{theorem}\cite{GGPY2021} \label{thm.Ancona inequality} For every $\epsilon > 0$ there exists $C=C(\epsilon) > 0$ such that: if $\dist(\alpha, \beta) > \epsilon$, then 
\begin{equation}\label{eqn:Ancona inequality}
 \dist_G(\alpha, \beta) \le \dist_G(\alpha, \id) + \dist_G(\id, \beta) \le \dist_G(\alpha, \beta) + C.
\end{equation} 
\end{theorem}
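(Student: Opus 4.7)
The theorem contains two inequalities. The lower bound $\dist_G(\alpha, \beta) \le \dist_G(\alpha, \id) + \dist_G(\id, \beta)$ is the triangle inequality for $\dist_G$ and holds without any hypothesis on $\dist(\alpha, \beta)$. It follows from the probabilistic estimate $G_{\mathsf{m}}(\alpha, \beta) \ge G_{\mathsf{m}}(\alpha, \id) G_{\mathsf{m}}(\id, \beta)/G_{\mathsf{m}}(\id, \id)$, obtained by noting that any concatenation of a walk from $\alpha$ to $\id$ with a walk from $\id$ to $\beta$ yields a walk from $\alpha$ to $\beta$ and hence contributes to the sum defining $G_{\mathsf{m}}(\alpha, \beta)$.

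The upper bound is the substantive content, and I plan to obtain it by invoking the geodesic Ancona inequality from \cite[Coro.\ 1.8]{GGPY2021}, which provides, for each $K > 0$, a constant $C = C(K)$ such that $G_{\mathsf{m}}(\alpha, \id) G_{\mathsf{m}}(\id, \beta) \le C \cdot G_{\mathsf{m}}(\alpha, \beta)$ whenever $\id$ lies within Cayley-graph distance $K$ of some word-geodesic between $\alpha$ and $\beta$. The task is then to translate the topological separation $\dist(\alpha, \beta) > \epsilon$ into this geometric condition. Using the convergence-action description of the topology on $\Ga \sqcup \partial(\Ga, \Pc)$ recalled at the start of Section \ref{sec.convergenceaction}, a lower bound $\dist(\alpha, \beta) > \epsilon$ bounds the Gromov product $(\alpha|\beta)_{\id}$ (with respect to a word metric in which relative hyperbolicity of $\Ga$ is witnessed) above by some $K = K(\epsilon)$, whence every word-geodesic from $\alpha$ to $\beta$ passes within distance $K$ of $\id$. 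When $\alpha$ or $\beta$ lies in $\partial(\Ga, \Pc)$, one approximates by sequences of group elements and passes to the limit in the inequality; when both $\alpha, \beta \in \Ga$ have bounded word length, the quasi-isometry between $\dist_G$ and a word metric (\cite[Prop.\ 7.8]{GT2020}) makes all three Green distances bounded and the inequality holds with an absolute constant.

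The hard part is the geodesic Ancona inequality itself, which is the main content of \cite{GGPY2021}. The difficulty specific to relatively hyperbolic groups is that parabolic subgroups can create exponentially long cusp excursions, so random walk trajectories can in principle wander far from word-geodesics. Controlling this requires the finite superexponential moment of $\mathsf{m}$ together with a first-passage / pivoting argument adapted to the peripheral structure, showing that cusp detours contribute only subdominant mass to $G_{\mathsf{m}}(\alpha, \beta)$ so that the trajectories dominating the sum track word-geodesics up to bounded deviation. I would not attempt to reproduce this argument, but rather cite the corollary directly and focus the present work on the geometric translation described in the previous paragraph.
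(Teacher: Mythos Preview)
Your proposal is correct and matches the paper's approach: the paper does not give a proof either, but simply records that the statement ``can be deduced from \cite[Coro.\ 1.8]{GGPY2021}'' and remarks (as you do) that the first inequality is just the triangle inequality for $\dist_G$. Your sketch is in fact more detailed than what the paper offers.

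One small caution on your translation step: in a relatively hyperbolic group the topology on $\Ga \sqcup \partial(\Ga,\Pc)$ is not controlled by the word-metric Gromov product in the same clean way as in a word-hyperbolic group (parabolic cosets can produce long fellow-travelling without closeness in the word metric, and conversely). The deduction from \cite[Coro.\ 1.8]{GGPY2021} is more naturally carried out via the Floyd metric, which \emph{does} induce the topology on the Bowditch compactification (Gerasimov) and in terms of which the Ancona inequalities in \cite{GGPY2021} are formulated. So your plan is sound, but when you write it up, route the argument through the Floyd distance rather than the word-metric Gromov product; the case split you describe (bounded word length versus escaping) is then unnecessary.
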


\begin{remark} One always has  $\dist_G(\alpha, \beta) \le \dist_G(\alpha, \id) + \dist_G(\id, \beta)$ and so the non-trivial part of the above statement is the second inequality. 
\end{remark} 

We first prove Property \ref{item:almost constant on shadows}.

\begin{proposition} \label{prop.RW ps2}
   There exists a $\Ga$-invariant full $\nu$-measure subset $Y \subset \partial (\Ga, \Pc)$ where for any $R > 0$, there exists $C = C(R) > 0$ such that: if $x \in \ga^{-1} \Oc_R(\ga) \cap Y$  for some $\ga \in \Ga$, then 
   $$
   \abs{ \| \ga\|- \sigma_{\mathsf{m}}(\ga, x)} \le C.
   $$
\end{proposition}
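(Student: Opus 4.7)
The plan is to identify $\sigma_{\mathsf{m}}(\gamma, \cdot)$ with (a logarithm of) the Martin kernel and then convert the Ancona-type estimate of Theorem~\ref{thm.Ancona inequality} into the desired bound via left-invariance of the Green metric. First I recall that since $\nu$ is the unique $\mathsf{m}$-stationary (hitting) measure, the Poisson representation of bounded $\mathsf{m}$-harmonic functions yields
\[
\frac{d\gamma_*^{-1}\nu}{d\nu}(x) = K(\gamma^{-1}, x) \quad \text{for $\nu$-a.e.\ } x,
\]
where $K(g, x) := \lim_{z \to x} G_{\mathsf{m}}(g, z)/G_{\mathsf{m}}(\id, z)$ is the Martin kernel, the limit being taken along $z \in \Gamma$ converging to $x$ in the Martin compactification. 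By the results of Gerasimov--Potyagailo and Gou\"ezel--Gerasimov--Potyagailo--Yang, the hypotheses on $(\Gamma, \Pc)$ and $\mathsf{m}$ imply that the Martin compactification continuously surjects onto the Bowditch compactification $\Gamma \sqcup \partial(\Gamma, \Pc)$, with the map being a bijection (and homeomorphism) over the set of conical limit points, which has full $\nu$-measure. I take $Y_0$ to be this full-measure $\Gamma$-invariant subset on which the above Radon--Nikodym identification and the Martin-boundary convergence both hold.

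Next, fix $R > 0$, $\gamma \in \Gamma$, and $x \in Y_0 \cap \gamma^{-1}\Oc_R(\gamma)$. Unravelling the definition of $\Oc_R(\gamma)$ in Equation~\eqref{eqn.shadowinconvergence}, the condition $x \in \gamma^{-1}\Oc_R(\gamma)$ means $\dist(x, \gamma^{-1}) > 1/R$ in the compactification metric. Choosing any sequence $\{\beta_n\} \subset \Gamma$ with $\beta_n \to x$ in $\Gamma \sqcup \partial(\Gamma, \Pc)$, we have $\dist(\beta_n, \gamma^{-1}) > 1/(2R)$ for all $n$ sufficiently large, so Theorem~\ref{thm.Ancona inequality} applied with $\epsilon = 1/(2R)$ yields a constant $C = C(R) > 0$ for which
\[
d_G(\gamma^{-1}, \beta_n) \le d_G(\gamma^{-1}, \id) + d_G(\id, \beta_n) \le d_G(\gamma^{-1}, \beta_n) + C
\]
once $n$ is large. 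Rewriting $G_{\mathsf{m}}(\alpha, \beta) = G_{\mathsf{m}}(\id, \id)\, e^{-d_G(\alpha, \beta)}$ and using left-invariance of the Green metric, which gives $d_G(\gamma^{-1}, \id) = d_G(\id, \gamma) = \|\gamma\|$, I obtain
\[
e^{-\|\gamma\|} \;\leq\; \frac{G_{\mathsf{m}}(\gamma^{-1}, \beta_n)}{G_{\mathsf{m}}(\id, \beta_n)} \;\leq\; e^{-\|\gamma\| + C}
\]
for large $n$. Passing to the limit $\beta_n \to x$ transfers these bounds to $K(\gamma^{-1}, x)$, and taking $-\log$ produces the estimate $|\sigma_{\mathsf{m}}(\gamma, x) - \|\gamma\|| \le C$.

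Finally, to obtain a $\Gamma$-invariant full-measure set $Y$ on which the bound holds for \emph{every} $\gamma \in \Gamma$ simultaneously, I take $Y := \bigcap_{\alpha \in \Gamma} \alpha Y_0'$, where $Y_0' \subset Y_0$ is the full-measure set where the Radon--Nikodym derivatives $d\gamma_*^{-1}\nu/d\nu$ are defined pointwise for every $\gamma \in \Gamma$; countability of $\Gamma$ together with $\Gamma$-quasi-invariance of $\nu$ shows $Y$ has full $\nu$-measure.

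The main obstacle will be justifying the pointwise identification of $d\gamma_*^{-1}\nu/d\nu$ with the Martin kernel along sequences converging in the Bowditch topology. Although the Ancona inequality itself is black-boxed from Theorem~\ref{thm.Ancona inequality}, extracting from it a clean formula for $K(\gamma^{-1}, x)$ requires the Martin--Bowditch boundary identification, which is deep but well-established in the cited literature; care will be needed to state which sequences $\beta_n \to x$ are allowed, and to check that the limit bound $K(\gamma^{-1}, x) \in [e^{-\|\gamma\|}, e^{-\|\gamma\| + C}]$ does not depend on the choice of approaching sequence.
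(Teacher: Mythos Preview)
Your proposal is correct and follows essentially the same route as the paper's proof: both invoke the Martin boundary, use the Gekhtman--Gerasimov--Potyagailo--Yang result (cited in the paper as \cite[Coro.~1.7]{GGPY2021}) that the Martin compactification surjects onto the Bowditch compactification and is bijective over conical points, identify $\sigma_{\mathsf{m}}(\gamma,x) = -\log K_x(\gamma^{-1})$, and then pass Theorem~\ref{thm.Ancona inequality} to the limit along a sequence $\beta_n \to x$. The paper also makes explicit that $\nu$ is the push-forward of the Martin stationary measure $\nu_0$ (for which $\frac{d\gamma_*\nu_0}{d\nu_0}(K)=K(\gamma)$), which cleanly yields the Radon--Nikodym identification you flagged as the main obstacle; your concern about the limit being independent of the approaching sequence is automatically resolved by the single-pre-image property over conical points.
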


\begin{proof}
   We consider the Martin boundary $\partial_M(\Ga, \mathsf{m})$, which is the horofunction boundary for the Green metric $\dist_G$.  First, for $\gamma \in \Gamma$, define $K_\gamma : \Gamma \rightarrow \Rb$ by $K_\gamma(g)= \frac{G_{\mathsf{m}}(g, \gamma)}{G_{\mathsf{m}}(\id, \gamma)}$, where $G_{\mathsf{m}}$ is the Green function for $\mathsf{m}$ (Equation \eqref{eqn.green metric intro}). Then the \emph{Martin boundary} $\partial_M (\Gamma, \mathsf{m})$ consists of functions $K : \Gamma \rightarrow \Rb$ where $K = \lim_{n \rightarrow \infty} K_{\gamma_n}$ for some escaping sequence $\{\gamma_n\} \subset \Gamma$. Then the set $\Gamma \sqcup \partial_M (\Gamma, \mathsf{m})$ has a topology making it a compact metrizable space and where an escaping sequence $\{\gamma_n\}$ converges to $K \in \partial_M (\Gamma, \mathsf{m})$ if and only if $K_{\gamma_n} \rightarrow K$ pointwise (see~\cite[Sect. 24]{Woess}).  Further the left action of $\Gamma$ on $\Gamma$ extends to a continuous action on $\Gamma \sqcup \partial_M (\Gamma, \mathsf{m})$ where $\gamma \cdot K = \frac{K \circ \gamma^{-1}}{K(\gamma^{-1})}$.

By~\cite[Coro. 1.7]{GGPY2021}, the identity map $\Gamma \rightarrow \Gamma$ extends to a continuous surjective equivariant map 
$$
\pi: \Gamma \sqcup \partial_M (\Gamma, \mathsf{m}) \rightarrow \Gamma \sqcup \partial(\Gamma,\Pc)
 $$
 where the pre-image of each conical limit point $x \in \partial (\Ga, \Pc)$ is a singleton $\{K_x\}$ and
 $$
 K_x = \lim_{\gamma \rightarrow x} K_\gamma.
 $$
 
 There exists a $\mathsf{m}$-stationary measure $\nu_0$ on $\partial_M (\Gamma, \mathsf{m})$ such that 
 $$
 \frac{d\gamma_*\nu_0}{d\nu_0}(K) = K(\gamma) 
 $$
 for $\nu_0$-a.e. $K$ (see~\cite[Thm. 24.10]{Woess}). Since $\pi$ is equivariant, $\pi_* \nu_0$ is a stationary measure on $\partial(\Gamma, \Pc)$ and so, by uniqueness, $\nu = \pi_* \nu_0$. Then 
 \begin{equation}\label{eqn:formula for sigma_m}
\sigma_\mathsf{m}(\gamma, x) =-\log \frac{d\gamma_*^{-1}\nu}{d\nu}(x) = -\log K_x(\gamma^{-1}) 
 \end{equation} 
for $\nu$-a.e. conical limit point $x$. Let $Y$ be a $\nu$-full measure set where every $x \in Y$ is conical and satisfies Equation~\eqref{eqn:formula for sigma_m}. Since $\nu$ is $\Gamma$-quasi-invariant, replacing $Y$ with $\bigcap_{\gamma \in \Gamma} \gamma Y$, we may assume that $Y$ is $\Ga$-invariant.

Now fix $R > 0$. Fix $\ga \in \Ga$ and 
$$ 
x \in \gamma^{-1} \Oc_R(\gamma) \cap Y = \left(\partial(\Gamma, \Pc) \smallsetminus \overline{B_{1/R}(\gamma^{-1})}\right) \cap Y.
$$ 
Then $x$ is conical and $\sigma_\mathsf{m}(\gamma, x) = -\log K_x(\gamma^{-1})$. Fix a sequence $\{\alpha_n \} \subset \Ga$ converging to $x$. Then $\dist( \gamma^{-1}, \alpha_n) > 1/(2R)$ for $n$ large. Hence by Theorem \ref{thm.Ancona inequality}
$$\begin{aligned}
\abs{ \norm{\ga} - \sigma_{\mathsf{m}}(\ga, x) } & = \abs{ \dist_G(\id, \ga) + \log K_x(\ga^{-1})} \\
& = \lim_{n \rightarrow \infty} \abs{ \dist_G(\ga^{-1}, \id) + \dist_G(\id, \alpha_n) - \dist_G(\ga^{-1}, \alpha_n) }
\end{aligned}
$$
is bounded by a constant which only depends on $R$. 
\end{proof}

To verify Property \ref{item:intersecting shadows}, we will use the following lemma whose proof follows \cite[Prop. 3.3 part (7)]{BCZZ2024a}.

\begin{lemma} \label{lem.RW PS7 pf lem}
   For any $\epsilon > 0$, there exists a finite subset $F \subset \Ga$ such that: if $\alpha, \beta \in \Ga$, $\norm{ \alpha } \le \norm{\beta}$, and $\beta^{-1} \alpha \not \in F$, then
   $$
   \dist (\beta^{-1}, \beta^{-1} \alpha) \le \epsilon.
   $$
\end{lemma}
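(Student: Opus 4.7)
The plan is to prove the contrapositive using the Ancona inequality from Theorem~\ref{thm.Ancona inequality} together with the left $\Ga$-invariance of the Green metric. Since $G_\mathsf{m}(g,h)$ depends only on $g^{-1}h$, the Green metric satisfies $\dist_G(gh,gk) = \dist_G(h,k)$ and $\dist_G(g^{-1},\id) = \dist_G(\id,g) = \norm{g}$ for all $g,h,k \in \Ga$. In particular, for any $\alpha, \beta \in \Ga$,
$$
\dist_G(\beta^{-1},\beta^{-1}\alpha) = \norm{\alpha}, \qquad \dist_G(\beta^{-1},\id) = \norm{\beta}, \qquad \dist_G(\id,\beta^{-1}\alpha) = \norm{\beta^{-1}\alpha}.
$$

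Given $\epsilon > 0$, I would take $C = C(\epsilon) > 0$ to be the constant produced by Theorem~\ref{thm.Ancona inequality} and set
$$
F := \{ g \in \Ga : \norm{g} \le C \}.
$$
The set $F$ is finite, either directly by Property~\ref{item:properness} of the PS-system being verified, or because $\dist_G$ is quasi-isometric to a word metric on $\Ga$ by \cite[Prop. 7.8]{GT2020}. Now suppose $\alpha, \beta \in \Ga$ satisfy $\norm{\alpha} \le \norm{\beta}$ and $\dist(\beta^{-1},\beta^{-1}\alpha) > \epsilon$. Applying Equation~\eqref{eqn:Ancona inequality} to the pair $(\beta^{-1},\beta^{-1}\alpha)$ gives
$$
\norm{\beta} + \norm{\beta^{-1}\alpha} = \dist_G(\beta^{-1},\id) + \dist_G(\id,\beta^{-1}\alpha) \le \dist_G(\beta^{-1},\beta^{-1}\alpha) + C = \norm{\alpha} + C.
$$
Combined with $\norm{\alpha} \le \norm{\beta}$, this forces $\norm{\beta^{-1}\alpha} \le C$, so $\beta^{-1}\alpha \in F$. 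Taking the contrapositive yields the lemma.

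There is essentially no obstacle here: once Theorem~\ref{thm.Ancona inequality} is available, the lemma is a two-line computation, and the substantive work has already been carried out in \cite{GGPY2021} to establish the Ancona-type inequality for $\dist_G$ under the finite superexponential moment hypothesis. Lemma~\ref{lem.RW PS7 pf lem} is simply the repackaging of this inequality needed to deduce Property~\ref{item:intersecting shadows} for the shadows defined in Equation~\eqref{eqn.shadowinconvergence} with magnitude function $\norm{\cdot} = \dist_G(\id, \cdot)$.
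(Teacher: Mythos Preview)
Your proof is correct and essentially the same as the paper's: both define $F=\{g:\norm{g}\le C\}$ for the Ancona constant $C=C(\epsilon)$ and use the inequality $\norm{\beta}+\norm{\beta^{-1}\alpha}\le \norm{\alpha}+C$ (obtained from Equation~\eqref{eqn:Ancona inequality} via left-invariance of $\dist_G$) together with $\norm{\alpha}\le\norm{\beta}$. The only cosmetic difference is that the paper first records the general consequence $\norm{\alpha}+\norm{\beta}-C\le\norm{\alpha\beta}$ and then substitutes, whereas you apply the Ancona inequality directly to the pair $(\beta^{-1},\beta^{-1}\alpha)$.
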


\begin{proof}

By Theorem \ref{thm.Ancona inequality}, there exists $C = C(\epsilon) > 0$ such that if $\alpha, \beta \in \Ga$ and $\dist(\alpha^{-1}, \beta) > \epsilon$, then
$$
\dist_{G}(\alpha^{-1}, \id) + \dist_G (\id, \beta) \le \dist_G(\alpha^{-1}, \beta) + C,
$$
which is equivalent to
$$
\| \alpha \| + \| \beta \| - C \le \| \alpha \beta \|.
$$
Let $F := \{ \ga \in \Ga : \| \ga \| \le C \}$, which is finite by Property \ref{item:properness} shown above. Now if $\alpha, \beta \in \Ga$ satisfy $\| \alpha \| \le \| \beta \|$ and $\beta^{-1} \alpha \notin F$, then
$$
\| \beta \| + \| \beta^{-1} \alpha \| - C > \| \beta \| \ge \| \alpha \| = \| \beta \beta^{-1} \alpha \|.
$$
Therefore, $\dist(\beta^{-1}, \beta^{-1} \alpha) \le \epsilon$ as desired.
\end{proof}

We now prove the first half of Property \ref{item:intersecting shadows}.

\begin{proposition} \label{prop.RW ps7 first}
   For any $R > 0$, there exists $R'> 0$ such that: if $\alpha,\beta \in \Gamma$, $\norm{\alpha}\leq \norm{\beta}$, and $\Oc_R(\alpha) \cap \Oc_R(\beta) \neq \emptyset$, then 
$$
\Oc_R(\beta) \subset \Oc_{R'}(\alpha).
$$ 
\end{proposition}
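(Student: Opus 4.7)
My plan is to imitate the proof of the analogous property for expanding coarse-cocycles in convergence groups (\cite[Prop. 5.1]{BCZZ2024a}), using Lemma \ref{lem.RW PS7 pf lem} as the substitute for the expanding property. Fix $R > 0$ and apply Lemma \ref{lem.RW PS7 pf lem} with $\epsilon = 1/(4R)$ to obtain a finite set $F \subset \Ga$. For any $\alpha, \beta \in \Ga$ satisfying the hypotheses, the element $g := \beta^{-1}\alpha$ either lies in $F$, or else $\dist(\beta^{-1}, g) \leq 1/(4R)$. I split the analysis into these two cases.

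\textbf{Case 1: $g \in F$.} Then $\alpha = \beta g$, so $\alpha^{-1} = g^{-1}\beta^{-1}$ and $\alpha^{-1}\beta = g^{-1}$. For $z = \beta x \in \Oc_R(\beta)$ with $\dist(x, \beta^{-1}) > 1/R$, we have $\alpha^{-1} z = g^{-1} x$, so showing $z \in \Oc_{R_1}(\alpha)$ amounts to a lower bound for $\dist(g^{-1}x, g^{-1}\beta^{-1})$. Each $g^{-1} \in F^{-1}$ is a homeomorphism of the compact metric space $\Ga \sqcup \partial(\Ga, \Pc)$, and hence uniformly continuous. Since $F$ is finite, the minimum modulus of continuity at scale $1/R$ provides a single $R_1 = R_1(R) > 0$ making $\dist(g^{-1}x, g^{-1}\beta^{-1}) > 1/R_1$ uniformly over $g \in F$.

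\textbf{Case 2: $g \notin F$.} Here I will show $\Oc_R(\beta) \subset \Oc_{R_2}(\alpha)$ for some $R_2 = R_2(R)$, using both $\dist(\beta^{-1}, g) \leq 1/(4R)$ and the intersection hypothesis. Pick $y \in \Oc_R(\alpha) \cap \Oc_R(\beta)$, and set $w = \beta^{-1} y$. Then $\dist(w, \beta^{-1}) > 1/R$ and $\dist(g^{-1} w, g^{-1}\beta^{-1}) > 1/R$. Since $\beta^{-1}$ and $g$ are distinct elements of $\Ga$ lying within $1/(4R)$ of each other in $\Ga \sqcup \partial(\Ga, \Pc)$, both must be close to a common point on the boundary (because $\Ga \hookrightarrow \Ga \sqcup \partial(\Ga, \Pc)$ is discrete). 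The convergence action of $\Ga$ on $\Ga \sqcup \partial(\Ga, \Pc)$ therefore forces $g^{-1}$ to collapse the complement of a small neighborhood of $g$ into a small neighborhood of some $a \in \partial(\Ga, \Pc)$. Since any $x$ with $\dist(x, \beta^{-1}) > 1/R$ also satisfies $\dist(x, g) > 3/(4R)$, the image $g^{-1}x$ must lie in this small neighborhood of $a$; applying the same reasoning to $w$ shows $g^{-1}w$ lies there too, and since $\dist(g^{-1} w, g^{-1}\beta^{-1}) > 1/R$, the point $g^{-1}\beta^{-1} = \alpha^{-1}$ is forced to stay at distance bounded below (in terms of $R$ alone) from the neighborhood. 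This yields the required uniform $R_2$.

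Finally, setting $R' := \max\{R_1, R_2\}$ handles both cases. The main obstacle is Case 2: the statement is purely dynamical and one must argue rigorously using the source–sink behavior of the convergence action, rather than an intrinsic bi-Lipschitz bound. To make this rigorous I intend to argue by contradiction, taking sequences $(\alpha_n, \beta_n)$ where the required inclusion fails with constants $R_n' \to +\infty$, extracting convergent subsequences in the compact space $\Ga \sqcup \partial(\Ga, \Pc)$, and applying the defining convergence property of the $\Ga$-action (together with the non-atomicity of the target arising from $g \notin F$) to reach a contradiction.
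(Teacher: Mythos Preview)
Your approach is essentially the same as the paper's, and the key ingredients are correct: Lemma~\ref{lem.RW PS7 pf lem} together with the convergence action on $\Gamma \sqcup \partial(\Gamma,\Pc)$.

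A few comparative remarks. The paper avoids your two-case split by running a single contradiction argument from the outset: assuming sequences $\alpha_n,\beta_n$ violate the inclusion with $R_n'=n$, it first shows that $g_n:=\beta_n^{-1}\alpha_n$ must be escaping (otherwise a constant $g$ would map $\overline{B_{1/n}(\alpha_n^{-1})}$ into $\overline{B_{1/R}(\beta_n^{-1})}$ for large $n$, contradicting the failure of inclusion). This step absorbs your Case~1. Then, exactly as you intend, Lemma~\ref{lem.RW PS7 pf lem} gives $\dist(\beta_n^{-1},g_n)\to 0$, whence $\alpha_n^{-1}\Oc_R(\beta_n)\subset \Oc_{2R}(\alpha_n^{-1}\beta_n)$; the shrinking of $\diam \Oc_{2R}(\alpha_n^{-1}\beta_n)$ follows from \cite[Prop.~5.1(2)]{BCZZ2024a}, and the intersection hypothesis then forces $\alpha_n^{-1}\Oc_R(\beta_n)$ to stay outside $\overline{B_{1/(2R)}(\alpha_n^{-1})}$.

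One loose step in your informal Case~2 description: the assertion that ``$\beta^{-1}$ and $g$ distinct and within $1/(4R)$ implies both are close to a common boundary point'' is not valid for a \emph{fixed} $R$, since $\Gamma$ is only discrete, not uniformly discrete, in $\Gamma\sqcup\partial(\Gamma,\Pc)$. The source--sink collapse you describe for $g^{-1}$ likewise only holds along an escaping sequence, not for an individual $g\notin F$. You correctly recognise this in your final paragraph; just be aware that your ``direct'' Case~2 paragraph is not salvageable as written and the sequence argument is mandatory.
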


\begin{proof} 

Suppose to the contrary that there exist $R > 0$ and sequences $\alpha_n, \beta_n \in \Ga$ such that $\| \alpha_n \| \le \| \beta_n \|$, $\Oc_R(\alpha_n) \cap \Oc_R(\beta_n) \neq \emptyset$, and $\Oc_R(\beta_n) \not\subset \Oc_n(\alpha_n)$ for all $n \ge 1$. This implies that for all $n \ge 1$,
$$
\alpha_n^{-1} \beta_n \left( \partial (\Ga, \Pc) \smallsetminus \overline{B_{1/R}(\beta_n^{-1})} \right) \not \subset \partial (\Ga, \Pc) \smallsetminus \overline{B_{1/n}(\alpha_n^{-1})}.
$$
Then the sequence $\{ \beta_n^{-1} \alpha_n \}$ is escaping; otherwise, $\beta_n^{-1} \alpha_n \overline{B_{1/n}(\alpha_n^{-1})} \subset \overline{B_{1/R}(\beta_n^{-1})}$ for all large $n \ge 1$, which contradicts our assumptions. 

By Lemma \ref{lem.RW PS7 pf lem}, $\dist(\beta_n^{-1}, \beta_n^{-1} \alpha_n) \to 0$ as $n \to \infty$. Hence, for $n \ge 1$ sufficiently large we have 
$$\begin{aligned}
\alpha_n^{-1} \Oc_{R}(\beta_n) & = \alpha_n^{-1} \beta_n \left( \partial (\Ga, \Pc) \smallsetminus \overline{B_{1/R}(\beta_n^{-1})}\right) \\
& \subset \alpha_n^{-1} \beta_n\left( \partial (\Ga, \Pc) \smallsetminus \overline{B_{1/(2R)}(\beta_n^{-1} \alpha_n)} \right) \subset \Oc_{2R}(\alpha_n^{-1} \beta_n).
\end{aligned}
$$
Since $\{\alpha_n^{-1} \beta_n\}$ is escaping as well, it follows from \cite[Prop. 5.1 part (2)]{BCZZ2024a} that $\diam \Oc_{2R}(\alpha_n^{-1} \beta_n) \to 0$ as $n \to \infty$, and hence $\diam \alpha_n^{-1} \Oc_R(\beta_n) \to 0$ as $n \to \infty$. 

Since $\Oc_{R}(\alpha_n) \cap \Oc_{R}(\beta_n) \neq \emptyset$ and $\alpha_n^{-1} \Oc_R(\alpha_n) = \partial(\Ga, \Pc) \smallsetminus \overline{B_{1/R}(\alpha_n^{-1})}$, it follows from $\lim_{n \to \infty} \diam \alpha_n^{-1} \Oc_R(\beta_n) = 0$  that
$$
\alpha_n^{-1} \Oc_R (\beta_n) \subset \partial (\Ga, \Pc) \smallsetminus \overline{B_{1/(2R)}(\alpha_n^{-1})} \quad \text{for all large } n \ge 1.
$$
Therefore, $\Oc_R(\beta_n) \subset \Oc_{2R}(\alpha_n)$ for all large $n \ge 1$, which is a contradiction. This finishes the proof.
\end{proof}

We prove the second half of Property \ref{item:intersecting shadows}.

\begin{proposition} \label{prop.RW ps7 second}
   For any $R > 0$, there exists $C > 0$ such that if $\alpha, \beta \in \Gamma$, $\norm{\alpha} \leq \norm{\beta}$, and $\Oc_R(\alpha) \cap \Oc_R(\beta) \neq \emptyset$, then 
   $$
   \abs{\norm{\beta} - (\norm{\alpha} + \norm{\alpha^{-1}\beta})} \le C.
   $$ 
\end{proposition}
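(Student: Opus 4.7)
The plan is to reduce the inequality $|\norm{\beta} - (\norm{\alpha} + \norm{\alpha^{-1}\beta})| \le C$ to an application of the Ancona inequality (Theorem \ref{thm.Ancona inequality}). The triangle inequality for $\dist_G$ together with its left-invariance gives $\norm{\beta} \le \norm{\alpha} + \norm{\alpha^{-1}\beta}$ for free, so only the reverse bound requires work. Using left-invariance and the identity $\dist_G(\gamma, \id) = \dist_G(\id, \gamma^{-1})$, one computes $\dist_G(\alpha^{-1}, \id) = \norm{\alpha}$, $\dist_G(\id, \alpha^{-1}\beta) = \norm{\alpha^{-1}\beta}$, and $\dist_G(\alpha^{-1}, \alpha^{-1}\beta) = \norm{\beta}$. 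Thus Theorem \ref{thm.Ancona inequality} applied to $g = \alpha^{-1}$ and $h = \alpha^{-1}\beta$ yields
$$
\norm{\alpha} + \norm{\alpha^{-1}\beta} \le \norm{\beta} + C(\epsilon),
$$
provided one can establish the uniform lower bound
$$
\dist(\alpha^{-1}, \alpha^{-1}\beta) \ge \epsilon(R) > 0 \qquad (\ast)
$$
for some $\epsilon(R) > 0$ depending only on $R$.

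Establishing $(\ast)$ is the main task, and I plan to argue by contradiction using the convergence group dynamics on $\Ga \sqcup \partial(\Ga, \Pc)$. Suppose $(\alpha_n, \beta_n)$ satisfy the hypotheses but $\dist(\alpha_n^{-1}, \alpha_n^{-1}\beta_n) \to 0$. Since distinct elements of $\Ga$ stay at positive $\dist$-distance unless they escape to infinity, both $\alpha_n^{-1}$ and $\alpha_n^{-1}\beta_n$ must escape in $\Ga$ and, after subsequencing, converge to a common point $\xi \in \partial(\Ga, \Pc)$. Further extraction gives $\alpha_n \to z$, $\beta_n \to z'$, and $\beta_n^{-1} \to \eta$. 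Picking $x_n \in \Oc_R(\alpha_n) \cap \Oc_R(\beta_n)$ and using that $\alpha_n^{-1}x_n, \beta_n^{-1}x_n$ remain bounded away from $\alpha_n^{-1}, \beta_n^{-1}$ respectively, the source-sink dynamics of $\alpha_n$ and $\beta_n$ force $x_n \to z = z'$, so $\alpha_n$ and $\beta_n$ share a common sink $z$ distinct from $\xi$.

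The endgame invokes Lemma \ref{lem.RW PS7 pf lem}: since $\norm{\alpha_n} \le \norm{\beta_n}$, the lemma applied to $(\alpha_n, \beta_n)$ forces either $\beta_n^{-1}\alpha_n$ to lie eventually in a fixed finite set of $\Ga$, or $\dist(\beta_n^{-1}, \beta_n^{-1}\alpha_n) \to 0$. The first alternative is incompatible with $\alpha_n^{-1}\beta_n = (\beta_n^{-1}\alpha_n)^{-1} \to \xi \in \partial(\Ga,\Pc)$. The second alternative forces $\lim \beta_n^{-1}\alpha_n = \eta$, so the source of $\alpha_n^{-1}\beta_n$ is $\eta$ while its sink is $\xi$. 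I plan to combine this with the product relation $\beta_n = \alpha_n \cdot (\alpha_n^{-1}\beta_n)$, the common sink $z$ of $\alpha_n, \beta_n$ already established, and the hypothesis $\norm{\alpha_n} \le \norm{\beta_n}$ to derive a contradiction.

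The main obstacle is this final combinatorial-dynamical step. The underlying heuristic is that the failure of $(\ast)$ encodes large cancellation in the product $\beta_n = \alpha_n \cdot (\alpha_n^{-1}\beta_n)$ between the tail of $\alpha_n$ (going toward $z$ with source $\xi$) and the head of $\alpha_n^{-1}\beta_n$ (going toward $\xi$). This cancellation forces $\norm{\beta_n}$ to be approximately $|\norm{\alpha_n^{-1}\beta_n} - \norm{\alpha_n}|$, and the constraint $\norm{\alpha_n} \le \norm{\beta_n}$ then forces $\norm{\alpha_n^{-1}\beta_n} \ge 2\norm{\alpha_n} - O(1)$, so that the longer factor $\alpha_n^{-1}\beta_n$ dominates and makes the sink of $\beta_n$ equal to $\xi$ rather than $z$ --- contradicting the common sink established above. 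Making this heuristic rigorous within the Green-metric framework will require auxiliary applications of Theorem \ref{thm.Ancona inequality} to suitable pairs (such as $(\beta_n^{-1}, \alpha_n^{-1})$, where $\dist$ is bounded below because $\beta_n^{-1} \to \eta$ and $\alpha_n^{-1} \to \xi$, the relation $\eta \ne \xi$ following from $\mu_2 = \eta$ and the distinct source/sink of $\alpha_n^{-1}\beta_n$) together with careful bookkeeping of the resulting length estimates.
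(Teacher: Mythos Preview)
Your reduction to $(\ast)$ is correct and is a cleaner route than the paper's: once $\dist(\alpha^{-1},\alpha^{-1}\beta)\ge\epsilon(R)$ is established, Theorem~\ref{thm.Ancona inequality} applied to the pair $(\alpha^{-1},\alpha^{-1}\beta)$ together with the triangle inequality immediately gives the proposition. The paper instead argues by contradiction using the measure-theoretic PS-structure: it invokes the Shadow Lemma to pick $x_n\in\Oc_R(\beta_n)\cap Y$ (the full-measure set from Proposition~\ref{prop.RW ps2}), shows $\alpha_n^{-1}x_n\in\Oc_{2R}(\alpha_n^{-1}\beta_n)\cap Y$, applies Proposition~\ref{prop.RW ps2} at three points, and combines via the cocycle identity for $\sigma_{\mathsf m}$. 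Your approach avoids all of this measure theory.

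However, the step you flag as the ``main obstacle'' is in fact immediate, and your cancellation heuristic is unnecessary. You have already established (after subsequencing) that $\alpha_n^{-1}\beta_n\to\xi$ and $\beta_n^{-1}\alpha_n\to\eta$, so by the convergence property $\alpha_n^{-1}\beta_n|_{M\smallsetminus\{\eta\}}\to\xi$ locally uniformly. Now simply observe the identity
\[
\alpha_n^{-1}x_n=(\alpha_n^{-1}\beta_n)(\beta_n^{-1}x_n).
\]
Since $x_n\in\Oc_R(\beta_n)$, the point $\beta_n^{-1}x_n$ lies in $M\smallsetminus\overline{B_{1/R}(\beta_n^{-1})}$, hence in $M\smallsetminus B_{1/(2R)}(\eta)$ for large $n$; applying $\alpha_n^{-1}\beta_n$ sends it to $\xi$. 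But since $x_n\in\Oc_R(\alpha_n)$, the point $\alpha_n^{-1}x_n$ lies in $M\smallsetminus\overline{B_{1/R}(\alpha_n^{-1})}$, hence stays at distance $>1/(2R)$ from $\xi$ for large $n$. This is the contradiction; no length comparisons, no analysis of the product $\beta_n=\alpha_n\cdot(\alpha_n^{-1}\beta_n)$, and no use of the common sink $z=z'$ are needed.
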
 

\begin{proof}
   Suppose to the contrary that there exist $R > 0$ and sequences $\alpha_n, \beta_n \in \Ga$ such that $\| \alpha_n \| \le \| \beta_n \|$, $\Oc_R(\alpha_n) \cap \Oc_R(\beta_n) \neq \emptyset$, and 
   $$
   \abs{\norm{\beta_n} - (\norm{\alpha_n}  + \norm{\alpha_n^{-1}\beta_n})} \ge n \quad \text{for all } n \ge 1.
   $$

By Theorem~\ref{thm.Ancona inequality}, we have 
$$
  \| \beta_n \| \le \| \alpha_n \| + \| \alpha_n^{-1} \beta_n \|
$$
for all $n \geq 1$. Hence, by assumption,  the sequence $\{ \alpha_n^{-1} \beta_n \}$ is escaping. Similarly, for all $n \ge 1$,
$$  \| \alpha_n^{-1} \beta_n \| - \| \alpha_n^{-1} \| \le  \| \beta_n \| \le \| \alpha_n \| + \| \alpha_n^{-1} \beta_n \|,$$
and hence the sequence $\{ \alpha_n \}$ is also escaping. Since $\| \alpha_n \| \le \| \beta_n \|$, $\{\beta_n\}$ is an escaping sequence as well.
Since $\{ \alpha_n^{-1} \beta_n \}$ is escaping, Lemma \ref{lem.RW PS7 pf lem} implies that  
$$
\lim_{n \to \infty} \dist(\beta_n^{-1}, \beta_n^{-1} \alpha_n) = 0.
$$

As Properties \ref{item:coycles are bounded}--\ref{item:empty Z intersection} have been verified, $(\partial (\Ga, \Pc), \Ga, \sigma_{\mathsf{m}}, \nu)$ is a PS-system. Hence, by Proposition \ref{prop.shadowlemma}, there exists $R_0 > 0$ such that $\nu(\Oc_{R_0}(\ga)) > 0$ for all $\ga \in \Ga$. 
Now by increasing $R > 0$, we may assume that $R > R_0$. By Proposition \ref{prop.RW ps7 first}, we can fix $R' > 0$ such that
$$
\Oc_R(\beta_n) \subset \Oc_{R'}(\alpha_n) \quad \text{for all } n \ge 1.
$$
Let $Y \subset \partial (\Ga, \Pc)$ be the subset in Proposition \ref{prop.RW ps2}. Since each $\Oc_R(\beta_n)$ has positive measure, for each $n \ge 1$ there exists a point
$$
x_n \in \Oc_{R}(\beta_n) \cap Y \subset \Oc_{R'}(\alpha_n) \cap Y.
$$
Moreover, since $\Oc_{R}(\beta_n) = \beta_n \left( \partial (\Ga, \Pc) \smallsetminus \overline{B_{1/R}(\beta_n^{-1})} \right)$  and $\lim_{n \to \infty} \dist(\beta_n^{-1}, \beta_n^{-1} \alpha_n) = 0$, we have
 $$
\dist( \beta_n^{-1} x_n, \beta_n^{-1} \alpha_n) \ge \dist(\beta_n^{-1} x_n, \beta_n^{-1}) - \dist (\beta_n^{-1} \alpha_n, \beta_n^{-1}) \ge \frac{1}{2R}
$$
for $n$ sufficiently large. Hence,
$$
\alpha_n^{-1} x_n \in \Oc_{2R}(\alpha_n^{-1} \beta_n) \cap Y.
$$

Now if $C = C(\max \{R', 2R\}) > 0$ satisfies Proposition \ref{prop.RW ps2}, then 
\begin{align*}
& \abs{ \| \alpha_n \| - \sigma_{\mathsf{m}}(\alpha_n, \alpha_n^{-1} x_n)}  \le C, \quad \abs{ \| \beta_n \| - \sigma_{\mathsf{m}}(\beta_n, \beta_n^{-1} x_n)}  \le C, \text{ and} \\ 
& \quad \quad \quad \abs{ \| \alpha_n^{-1} \beta_n \| - \sigma_{\mathsf{m}}(\alpha_n^{-1}\beta_n, \beta_n^{-1} x_n)}  \le C.
\end{align*}

Further, by the cocycle property
$$
\sigma_{\mathsf{m}}( \beta_n, \beta_n^{-1} x_n) = \sigma_{\mathsf{m}}( \alpha_n \alpha_n^{-1}\beta_n, \beta_n^{-1} x_n) = \sigma_{\mathsf{m}}(\alpha_n, \alpha_n^{-1} x_n) + \sigma_{\mathsf{m}}(\alpha_n^{-1} \beta_n, \beta_n^{-1} x_n).
$$
Combining altogether,
$$
\abs{\norm{\beta_n} - (\norm{\alpha_n} + \norm{\alpha_n^{-1}\beta_n})} \le 3C,
$$
which is a contradiction, finishing the proof.
\end{proof}
The proof of Theorem \ref{thm:random walks are well behaved} is now complete.
\qed

\section{Rigidity results for random walks}

In the following subsections we suppose that 
\begin{itemize}
\item $(\Gamma, \Pc)$ is a relatively hyperbolic group and
\item  $\mathsf{m}$ is probability measure on $\Gamma$ with finite superexponential moment as in Equation \eqref{eqn.superexponential intro} and whose support generates $\Gamma$ as a semigroup.
\end{itemize} 
Let $\nu_0$ be the unique $\mathsf{m}$-stationary measure on $ \partial(\Gamma, \Pc)$ and let $( \partial(\Gamma, \Pc), \Gamma, \sigma_\mathsf{m}, \nu_0)$ be the well-behaved PS-system in Theorem~\ref{thm:random walks are well behaved}. 

In the subsections that follow we will assume that $\Gamma$ is a subgroup of either  the isometry group of a Gromov hyperbolic space, the mapping class group of a surface, or a semisimple Lie group.

\subsection{Random walks on the isometry group of a Gromov hyperbolic space} In this subsection we further suppose that 
\begin{itemize}
\item $(X, \dist_X)$ is a proper geodesic Gromov hyperbolic space, and
\item  $\Gamma < \Isom(X)$ is a non-elementary discrete subgroup.
\end{itemize} 
In this setting, Kaimanovich proved that there exists a unique $\mathsf{m}$-stationary measure $\nu$ on the Gromov boundary $\partial_{\infty} X$, and is the hitting measure for a sample path \cite[Remark following Thm. 7.7]{Kaimanovich2000}.

A subset $Y \subset X$ is \emph{quasi-convex} if there exists $R > 0$ such that any geodesic joining two points in $Y$ is contained in the $R$-neighborhood of $Y$.
Then a discrete subgroup $\Gamma' < \Isom(X)$ is \emph{quasi-convex} if for any $o\in X$ the orbit $\Ga'(o) \subset X$ is quasi-convex (see \cite{Swenson_QC} for properties of such groups). Using the Morse Lemma, it is easy to see that a subgroup is quasi-convex if and only if any orbit map is a quasi-isometric embedding with respect to a word metric on the group with respect to a finite generating set.

\begin{theorem}\label{thm:random walks GH} If $\mu$ is a coarse Busemann PS-measure for $\Ga$ on $\partial_\infty X$ of dimension $\delta$, then 
   the following are equivalent:
   \begin{enumerate}
   \item The measures $\nu$ and $\mu$ are not singular.
   \item  The measures $\nu$ and $\mu$ are in the same measure class and the Radon--Nikodym derivatives are a.e. bounded from above and below by a positive number.
   \item For any $o \in X$, $$\sup_{\gamma \in \Gamma} \abs{ \dist_G(\id, \gamma) - \delta \dist_X(o, \gamma o)} < +\infty.$$ In particular, $\Ga$ is quasi-convex, $\delta$ is equal the critical exponent of $\Ga$, and $\sum_{\ga \in \Ga} e^{-\delta \dist_X(o, \ga o)} = + \infty$.
   \end{enumerate}
\end{theorem}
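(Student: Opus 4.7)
The plan is to prove the cycle $(2) \Rightarrow (1) \Rightarrow (3) \Rightarrow (2)$. The implication $(2) \Rightarrow (1)$ is immediate. For $(1) \Rightarrow (3)$ I will apply the main rigidity theorem (Theorem~\ref{thm:main rigidity theorem}) to two PS-systems: the well-behaved PS-system $(\partial(\Gamma, \Pc), \Gamma, \sigma_{\mathsf{m}}, \nu_0)$ of dimension $1$ supplied by Theorem~\ref{thm:random walks are well behaved}, with magnitude $\|\gamma\|_{\sigma_{\mathsf{m}}} = \dist_G(\id, \gamma)$ and trivial hierarchy (so that $\nu_0(\Lambda^{\rm con}(\Gamma)) = 1$ automatically from the divergence $\sum_\gamma e^{-\dist_G(\id,\gamma)} = +\infty$); and the PS-system $(\partial_\infty X, \Gamma, \sigma_\psi, \mu)$ of dimension $\delta$ associated to the $\Gamma$-invariant coarsely additive potential $\psi := \dist_X$ via Theorems~\ref{thm:theory of potentials} and~\ref{thm.convergencepssystem}, with magnitude $\|\gamma\|_{\sigma_\psi} = \dist_X(o, \gamma o)$.

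The key input is a measurable $\Gamma$-equivariant injective boundary map $f : \partial(\Gamma, \Pc) \to \partial_\infty X$ with $f_*\nu_0 = \nu$. I will construct $f$ from the joint sample-path description of both hitting measures: by~\cite{Kaimanovich2000, MaherTiozzo2018}, for $\mathsf{m}^{\otimes \mathbb{N}}$-almost every $\omega = (\gamma_n) \in \Gamma^{\mathbb{N}}$, the product $W_n(\omega) = \gamma_1 \cdots \gamma_n$ converges to a point $\Phi_X(\omega) \in \partial_\infty X$ with law $\nu$ and to a point $\Phi_{\Pc}(\omega) \in \partial(\Gamma,\Pc)$ with law $\nu_0$, so setting $f(\Phi_{\Pc}(\omega)) := \Phi_X(\omega)$ gives a measurable $\Gamma$-equivariant map pushing $\nu_0$ to $\nu$. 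Injectivity on a $\nu_0$-conull set is the delicate step: I would argue that $\nu_0$-almost every $\xi$ is a conical limit point of $(\Gamma, \Pc)$, and that the positivity of drift of the random walk in both $\dist_G$ and $\dist_X$ forces distinct limits in $\partial(\Gamma, \Pc)$ to give distinct limits in $\partial_\infty X$; alternatively, one can invoke uniqueness of the Poisson boundary. Once $f$ is in place, non-singularity in (1) transfers to non-singularity of $f_*\nu_0$ and $\mu$, and Theorem~\ref{thm:main rigidity theorem} gives
\[
\sup_{\gamma \in \Gamma} \left| \dist_G(\id, \gamma) - \delta \dist_X(o, \gamma o) \right| < +\infty.
\]
The "in particular" clause follows because $\dist_G$ is quasi-isometric to a word metric on $\Gamma$~\cite[Prop.~7.8]{GT2020}, making the orbit map a quasi-isometric embedding and thus $\Gamma$ quasi-convex; the divergence of $\sum_\gamma e^{-\dist_G(\id,\gamma)}$ at exponent $1$ transfers via (3) to divergence of the Poincar\'e series at $\delta$, identifying $\delta$ with the critical exponent of $\Gamma$.

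For $(3) \Rightarrow (2)$ I will combine shadow-lemma estimates in the spirit of~\cite[Prop.~13.1, 13.2]{BCZZ2024a}, which are used analogously in the proof of Theorem~\ref{thm:GH case meas=>homeo}. Under (3), $\Gamma$ is quasi-convex in $X$ and the boundary map $f$ extends to a $\Gamma$-equivariant topological embedding onto the limit set of $\Gamma$ in $\partial_\infty X$. For sufficiently large $R$, Proposition~\ref{prop.shadowlemma} applied to both PS-systems yields $\nu_0(\Oc_R(\gamma)) \asymp e^{-\dist_G(\id,\gamma)}$ and $\mu(\Oc_R(\gamma)) \asymp e^{-\delta \dist_X(o,\gamma o)}$, which are comparable by (3); a Vitali-type covering argument on the limit set then shows that $f_*\nu_0$ and $\mu$ are in the same measure class with Radon--Nikodym derivatives uniformly bounded above and below. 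The main obstacle throughout will be verifying injectivity of $f$ on a $\nu_0$-conull set in $(1) \Rightarrow (3)$, since a priori distinct conical limit points in $\partial(\Gamma, \Pc)$ could collapse to the same point in $\partial_\infty X$; overcoming this requires either a careful analysis of the convergence-group dynamics of $\Gamma$ on both boundaries or a Poisson-boundary uniqueness argument specific to relatively hyperbolic groups.
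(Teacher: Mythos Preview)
Your overall strategy matches the paper's proof exactly: the cycle $(2)\Rightarrow(1)\Rightarrow(3)\Rightarrow(2)$, the two PS-systems you name, and the appeal to Theorem~\ref{thm:main rigidity theorem} for $(1)\Rightarrow(3)$ are all what the paper does.

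Two points where the paper is more direct than your plan. First, the ``main obstacle'' you flag---injectivity of $f$---is a non-issue: the paper simply observes (via~\cite[Remark following Thm.~7.7]{Kaimanovich2000}) that $(\partial(\Gamma,\Pc),\nu_0)$ and $(\partial_\infty X,\nu)$ are both Poisson boundaries for $(\Gamma,\mathsf{m})$, and therefore are $\Gamma$-equivariantly \emph{measurably isomorphic}; the isomorphism $f$ is then automatically a.e.\ injective with $f_*\nu_0=\nu$. You list this as an option, and it is the right one; no sample-path or conical-limit analysis is needed. Second, for $(3)\Rightarrow(2)$ the paper does not redo a shadow/Vitali argument: since (3) forces $\Gamma$ to be quasi-convex (hence word hyperbolic), one replaces $\partial(\Gamma,\Pc)$ by the Gromov boundary $\partial_\infty\Gamma$, uses the continuous equivariant embedding $f:\partial_\infty\Gamma\to\partial_\infty X$, checks that $\mu(f(\partial_\infty\Gamma))=1$ (from divergence at $\delta$ via Theorem~\ref{thm:conical limit set has full measure} and Observation~\ref{obs:conical limit points equivalent definition}), pulls $\mu$ and the Busemann cocycle back to $\partial_\infty\Gamma$, and then cites \cite[Prop.~13.1, 13.2]{BCZZ2024a} directly. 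Your sketch omits the step $\mu(f(\partial_\infty\Gamma))=1$, which is needed before the pull-back makes sense. Finally, to conclude $\delta$ equals the critical exponent you need both directions: divergence at $\delta$ gives one inequality, but the other ($\delta\geq\delta_\Gamma$) comes from the general lower bound for the dimension of a PS-measure (e.g.\ \cite[Coro.~6.6]{Coornaert_PS}).
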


\begin{proof} 
   The implication $(2) \Rightarrow (1)$ is clear. We now prove $(1) \Rightarrow (3)$.
By~\cite[Remark following Thm. 7.7]{Kaimanovich2000}, the spaces $( \partial(\Gamma, \Pc), \nu_0)$ and $(\partial_\infty X, \nu)$ are both Poisson boundaries for $(\Gamma, \mathsf{m})$. Hence, there is a $\Ga$-equivariant isomorphism 
$$
f :( \partial(\Gamma, \Pc), \nu_0)  \rightarrow (\partial_\infty X, \nu).
$$
By assumption $\nu = f_* \nu_0$ is not singular with respect to $\mu$. As explained in Example \ref{ex.distance Gromov potential} and Theorem \ref{thm.convergencepssystem}, $\mu$ is a coarse PS-measure in a PS-system which has magnitude function 
$$
\gamma \mapsto \dist_X(o,\gamma o).
$$
Then by Theorem \ref{thm:main rigidity theorem},
\begin{equation*}
\sup_{\gamma \in \Gamma} \abs{ \dist_G(\id, \gamma) - \delta \dist_X(o, \gamma o)} < +\infty.
\end{equation*}
Moreover, since $\dist_G$ is quasi-isometric to a word metric on $\Ga$ with respect to a finite generating set by \cite[Prop. 7.8]{GT2020},  $\Ga$ is quasi-convex. 
 Since $\mu$ is of dimension $\delta$, $\delta$ is at least the critical exponent of the Poincar\'e series \cite[Coro. 6.6]{Coornaert_PS}. Together with $\sum_{\ga \in \Ga} e^{-\dist_{G}(\id, \ga)} = + \infty$ (Theorem \ref{thm:random walks are well behaved}), we have that $\delta$ is equal to the critical exponent and the Poincar\'e series diverges at $\delta$.

It remains to show $(3) \Rightarrow (2)$. Assuming (3), $\Ga$ is a word hyperbolic group and the orbit map $\ga \in \Ga \mapsto \ga o \in X$ is a quasi-isometric embedding with respect to a word metric on $\Ga$ as mentioned above. Hence we can assume that $\Pc=\emptyset$ and so $\partial(\Gamma, \Pc)$ coincides with the Gromov boundary $\partial_\infty \Gamma$. Further, the orbit map continuously extends to  $f : \partial_{\infty} \Ga \to \partial_{\infty} X$ which is a $\Ga$-equivariant homeomorphism onto its image. Since both $\nu$ and $\nu_0$ are hitting measures, we have $f_* \nu_0 = \nu$. Since $\sum_{\ga \in \Ga} e^{-\delta \dist_X (o, \ga o)} = + \infty$, Theorem \ref{thm.convergencepssystem}, Observation \ref{obs:conical limit points equivalent definition}, and Theorem \ref{thm:conical limit set has full measure}, imply that $\mu (f (\partial_{\infty} \Ga)) = 1$. Hence, we can take a pull-back of the Busmann cocycle on $\partial_{\infty} X$ and $\mu$ to $\partial_{\infty} \Ga$.
Since the Busemann cocycle on $\partial_{\infty} X$ is expanding (Example \ref{ex.distance Gromov potential}), the same is true for the pull-back. Therefore, (2) follows from \cite[Prop. 13.1 and 13.2]{BCZZ2024a}.
\end{proof} 

We now restate and prove Corollary \ref{cor.singularity hitting PS symmetric space intro}.

\begin{corollary} \label{cor.singularity hitting PS symmetric space}
Suppose $X$ is a negatively curved symmetric space. If $\Gamma$ is not a cocompact lattice in $\Isom(X)$, then $\nu$ is singular to the Lebesgue measure class on $\partial_{\infty} X$.
\end{corollary}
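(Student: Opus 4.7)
The plan is to argue by contraposition: assume $\nu$ is not singular with respect to the Lebesgue measure class on $\partial_\infty X$ and deduce that $\Gamma$ is a cocompact lattice in $\Isom(X)$.

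First I would produce a Busemann Patterson--Sullivan measure $\mu$ in the Lebesgue class on $\partial_\infty X$. For a rank-one non-compact symmetric space $X$, fix a basepoint $o \in X$ and let $\Ksf < \Isom(X)$ be its stabilizer. Any $\Ksf$-invariant probability measure on $\partial_\infty X$ lies in the Lebesgue measure class, and by the standard conformal description of the boundary, its Radon--Nikodym cocycle under $\Isom(X)$ is $e^{h(X)\,\beta(g^{-1},x)}$, where $\beta$ is the Busemann cocycle based at $o$ and $h(X)$ is the volume entropy of $X$. Restricting to $\Gamma$, this produces a Busemann PS-measure $\mu$ for $\Gamma$ of dimension $\delta = h(X)$ which is mutually absolutely continuous with Lebesgue.

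Next, by the non-singularity hypothesis applied to $\nu$ and $\mu$, Theorem~\ref{thm:random walks GH} gives that $\Gamma$ is quasi-convex in $\Isom(X)$ and that its critical exponent for the Poincar\'e series $\sum_{\gamma \in \Gamma} e^{-s\dist_X(o,\gamma o)}$ equals $h(X)$. In a negatively curved symmetric space every quasi-convex discrete subgroup is convex cocompact, so $\Gamma$ is convex cocompact.

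Finally, I would invoke critical-exponent rigidity: a convex cocompact subgroup $\Gamma < \Isom(X)$ with critical exponent equal to the volume entropy $h(X)$ must be a cocompact lattice. For $X=\Hb^n$ this is Tukia's theorem \cite{Tukia1984}, and the extension to the remaining rank-one symmetric spaces (complex hyperbolic, quaternionic hyperbolic, and the Cayley hyperbolic plane) is classical. I expect this last step to be the only nontrivial external input; the first two steps are direct applications of Theorem~\ref{thm:random walks GH} and the conformal structure of $\partial_\infty X$.
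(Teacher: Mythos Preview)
Your argument is correct and tracks the paper's proof through the first two steps: both produce a Busemann PS-measure in the Lebesgue class and apply Theorem~\ref{thm:random walks GH} to conclude that $\Gamma$ is convex cocompact. The divergence is in the endgame.

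You finish by invoking critical-exponent rigidity (convex cocompact with $\delta_\Gamma = h(X)$ forces a cocompact lattice) as an external black box, citing Tukia for $\Hb^n$ and calling the extension to the other rank-one spaces ``classical.'' The paper instead argues more directly: since the stationary measure $\nu$ is supported on the limit set $\Lambda(\Gamma)$ and is non-singular to Lebesgue, the limit set has positive Lebesgue measure; then the Hopf--Tsuji--Sullivan dichotomy (applied to the Lebesgue-class PS measure, using divergence of the Poincar\'e series at $h(X)$) forces $\Lambda(\Gamma) = \partial_\infty X$, whence $\Gamma$ is a cocompact lattice. This route avoids appealing to critical-exponent rigidity as a separate theorem---indeed, the HTS argument the paper uses is essentially how one would \emph{prove} the rigidity statement you invoke. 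So your approach is valid but imports a result that the paper's approach derives on the spot with less machinery; you might reconsider whether the ``classical'' label in the complex, quaternionic, and Cayley cases is doing more work than you want.
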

 
\begin{proof}
Suppose that $\nu$ is non-singular to the Lebesgue measure class on $\partial_{\infty} X$. Since the Lebesgue measure class contains a Busemann PS-measure for $\Ga$ (cf. \cite[Lem. 6.3]{Quint_PS}), it follows from Theorem \ref{thm:random walks GH} that $\Ga$ is convex cocompact. Since $\nu$ is supported on the limit set on $\Ga$, the limit set has a positive Lebesgue measure class. By the classical Hopf--Tsuji--Sullivan dichotomy \cite{roblin}, the Lebesgue measure class gives a unique PS-measure supported on the limit set. Therefore, $\partial_{\infty} X$ is the limit set of $\Ga$, and hence $\Ga$ must be a cocompact lattice.
\end{proof}

\subsection{Random walks on mapping class groups and Teichm\"uller spaces} 
\label{subsec.RT Teich}

Let $\Mod(\Sigma)$ denote the mapping class group of a closed connected orientable surface $\Sigma$ of genus at least two and let $(\Tc, \dist_{\Tc})$ is the Teichm\"uller space of $\Sigma$ equipped with the Teichm\"uller metric. 

We continue to assume that $\Gamma$ and $\mathsf{m}$ satisfy the assumptions at the start of the section. In this subsection we further suppose that 
\begin{itemize}
   \item $\Ga < \Mod(\Sigma)$ is a non-elementary subgroup.
\end{itemize}

Thurston compactified $\Tc$ with the space $\PMF$ of projective measured foliations on $\Sigma$ \cite{Thurston_geometry_dynamics}.
In this setting, Kaimanovich--Masur showed that there exists a unique $\mathsf{m}$-stationary measure $\nu$ on $\PMF$, and is the hitting measure for a sample path in $\Tc$ and supported on the subset $\UE \subset \PMF$ of uniquely ergodic foliations \cite[Thm. 2.2.4]{KM_MCG}. Since $\UE$ is topologically embedded in the Gardiner--Masur boundary $\partial_{GM}\Tc$ \cite{Miyachi_UE}, $\nu$ can also be regarded as a measure on $\partial_{GM}\Tc$, where PS-measures are defined.

\begin{theorem} \label{thm.RW Teich body}
   If $\mu$ is a Busemann PS-measure for $\Ga$ on $\partial_{GM} \Tc$ of dimension $\delta$ and the measures $\nu$, $\mu$ are not singular, then:
   \begin{enumerate}
   \item For any $o \in \Tc$,
   $$\sup_{\gamma \in \Gamma} \abs{ \dist_G(\id, \gamma) - \delta \dist_{\Tc}(o, \ga o)} < +\infty.$$ In particular, $\delta$ is the critical exponent of $\Ga$ and $\sum_{\ga \in \Ga} e^{-\delta \dist_{\Tc} (o, \ga o)} = + \infty$.
   \item If   $\dist_w$ is a word metric on $\Gamma$ with respect to a finite generating set, then  the map 
   $$
   \gamma \in (\Gamma, \dist_w) \mapsto \gamma o \in (\Tc, \dist_{\Tc})
   $$
   is a quasi-isometric embedding. 
   \end{enumerate} 
   \end{theorem}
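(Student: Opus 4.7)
The plan is to invoke Theorem~\ref{thm:main rigidity theorem} with the Bowditch boundary of $(\Gamma,\Pc)$ as the source PS-system and the Gardiner--Masur boundary as the target. First I would set up the two PS-systems. By Theorem~\ref{thm:random walks are well behaved}, $(\partial(\Gamma,\Pc),\Gamma,\sigma_\mathsf{m},\nu_0)$ is a well-behaved PS-system of dimension $1$ with respect to the trivial hierarchy $\mathscr{H}_1(R)\equiv\Gamma$, with magnitude function $\gamma\mapsto\dist_G(\id,\gamma)$, and moreover $\sum_{\gamma\in\Gamma}e^{-\dist_G(\id,\gamma)}=+\infty$; Theorem~\ref{thm:conical limit set has full measure} then yields $\nu_0(\La^{\rm con}(\mathscr{H}_1))=1$. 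By Theorem~\ref{thm.Teichmueller is wellbehavedPS}, $(\partial_{GM}\Tc,\Gamma,\beta,\mu)$ is a PS-system of dimension $\delta$ with magnitude function $\gamma\mapsto\dist_{\Tc}(o,\gamma o)$.

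Next I would produce a $\Gamma$-equivariant $\nu_0$-a.e.\ defined measurable injective boundary map $f:\partial(\Gamma,\Pc)\to\partial_{GM}\Tc$ satisfying $f_*\nu_0=\nu$. Under the finite superexponential moment hypothesis both $(\partial(\Gamma,\Pc),\nu_0)$ and $(\UE,\nu)\subset(\partial_{GM}\Tc,\nu)$ serve as models of the Poisson boundary of $(\Gamma,\mathsf{m})$---the latter by Kaimanovich--Masur~\cite{KM_MCG} and the former by standard Poisson boundary results for relatively hyperbolic groups---so the resulting $\Gamma$-equivariant measurable isomorphism between these two models yields $f$. By assumption $\nu$ and $\mu$ are not singular, hence neither are $f_*\nu_0$ and $\mu$; applying Theorem~\ref{thm:main rigidity theorem} with $\rho=\id$ then gives
\[
\sup_{\gamma\in\Gamma}\abs{\dist_G(\id,\gamma)-\delta\,\dist_{\Tc}(o,\gamma o)}<+\infty,
\]
which is the main estimate in (1). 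Combined with the divergence of $\sum_\gamma e^{-\dist_G(\id,\gamma)}$, this gives divergence of $\sum_\gamma e^{-\delta\dist_{\Tc}(o,\gamma o)}$; since $\mu$ is a $\beta$-PS measure of dimension $\delta$, standard PS-measure arguments force $\delta$ to equal the critical exponent of $\Gamma$.

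For (2), I would combine the estimate from (1) with \cite[Prop.~7.8]{GT2020}, which asserts that $\dist_G$ is quasi-isometric to any word metric $\dist_w$ on $\Gamma$ with respect to a finite generating set, to conclude that the orbit map $\gamma\mapsto\gamma o$ is a quasi-isometric embedding of $(\Gamma,\dist_w)$ into $(\Tc,\dist_{\Tc})$. The non-existence of multitwists is then immediate: a multitwist $\tau\in\Mod(\Sigma)$ satisfies $\dist_{\Tc}(o,\tau^n o)\asymp\log n$ while $\dist_w(\id,\tau^n)\asymp n$, which would contradict the quasi-isometric embedding property.

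The main obstacle is the construction of the measurable boundary map $f$: it relies on identifying the Bowditch boundary of a relatively hyperbolic group with the Poisson boundary of $(\Gamma,\mathsf{m})$ under the finite superexponential moment assumption. While such identifications are known for large classes of groups and measures, this step draws on non-trivial input external to the abstract PS-system framework developed earlier in the paper; the rest of the argument is a clean application of Theorem~\ref{thm:main rigidity theorem} plus the quasi-isometry of the Green metric to a word metric.
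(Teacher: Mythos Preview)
Your proposal is correct and follows essentially the same route as the paper's proof: set up the source PS-system on the Bowditch boundary via Theorem~\ref{thm:random walks are well behaved}, the target PS-system on $\partial_{GM}\Tc$ (the paper invokes Theorem~\ref{thm.contractingPS} rather than Theorem~\ref{thm.Teichmueller is wellbehavedPS}, but either suffices since only the bare PS-system structure is needed on the target), identify the two Poisson boundaries to obtain the equivariant measurable isomorphism $f$, apply Theorem~\ref{thm:main rigidity theorem}, and then use \cite[Prop.~7.8]{GT2020}. The only place you should be slightly more careful is the assertion $\dist_w(\id,\tau^n)\asymp n$ for a multitwist $\tau\in\Gamma$: this is not automatic for an arbitrary infinite-order element of a finitely generated group, and the paper supplies the missing input by citing \cite{FLM_MCG} (positive stable translation length in $\Mod(\Sigma)$, hence in $\Gamma$ via the Lipschitz inclusion).
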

   
   \begin{proof}
   By~\cite[Thm. 2.2.4]{Kaimanovich2000} the space $(\PMF, \nu)$ is a Poisson boundary for $(\Gamma, \mathsf{m})$ and by~\cite[Remark following Thm. 7.7]{Kaimanovich2000}, the space  $( \partial(\Gamma, \Pc), \nu_0)$ is a Poisson boundary for $(\Gamma, \mathsf{m})$. 
   Hence there is an isomorphism 
   $$
   f : ( \partial(\Gamma, \Pc), \nu_0)  \rightarrow (\PMF, \nu).
   $$
   Since $\nu(\UE) = 1$, we can view $f$ as a map into $\UE \subset \partial_{GM}\Tc$.
   
   By assumption $\nu = f_* \nu_0$ is not singular with respect to $\mu$. By Theorem \ref{thm.contractingPS}, $\mu$ is a PS-measure in a PS-system which has magnitude function 
   $$
   \gamma \mapsto \dist_{\Tc}(o, \ga o).
   $$
   Then by Theorem \ref{thm:main rigidity theorem},
   $$
   \sup_{\gamma \in \Gamma} \abs{ \dist_G(\id, \gamma)-\delta \dist_{\Tc}(o, \ga o)} < +\infty.
   $$
   Since $\mu$ is of dimension $\delta \ge 0$, $\delta$ is at least the critical exponent of the Poincar\'e series
      (\cite[Prop. 4.23]{Coulon_PS}, \cite[Prop. 6.8]{Yang_conformal}). Since  $\sum_{\ga \in \Ga} e^{-\dist_{G}(\id, \ga)} = + \infty$ by Theorem \ref{thm:random walks are well behaved}, we have that $\delta$ is equal to the critical exponent and the Poincar\'e series diverges at $\delta$,
   showing (1).

   By~\cite[Prop. 7.8]{GT2020} the Green metric is quasi-isometric to $\dist_w$. Therefore, (2) follows. 
   \end{proof} 
   
We can now restate (as a corollary) and prove Theorem~\ref{thm.singularity hitting PS MCG multitwist intro}.

\begin{corollary}\label{cor.singularity hitting PS MCG multitwist}  If $\Gamma$ contains a multitwist, then the $\mathsf{m}$-stationary measure $\nu$ is singular to every Busemann Patterson--Sullivan measures on $\partial_{GM} \Tc$. 
\end{corollary}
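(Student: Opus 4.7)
The plan is to derive this corollary as a direct contrapositive of Theorem \ref{thm.RW Teich body}. Suppose for contradiction that $\nu$ is not singular to some Busemann Patterson--Sullivan measure $\mu$ for $\Ga$ on $\partial_{GM}\Tc$. Since $\Ga$ is relatively hyperbolic and $\mathsf{m}$ has finite superexponential moment, the hypotheses of Theorem \ref{thm.RW Teich body} are satisfied, so its conclusion (2) applies: fixing a basepoint $o \in \Tc$ and a finite generating set of $\Ga$ with associated word metric $\dist_w$, the orbit map
\[
\ga \in (\Ga, \dist_w) \ \longmapsto\  \ga o \in (\Tc, \dist_{\Tc})
\]
is a quasi-isometric embedding.

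The remaining step, which corresponds to the ``in particular'' clause of Theorem \ref{thm.RW Teich body}(2), is to observe that the existence of a multitwist in $\Ga$ is incompatible with such a quasi-isometric embedding. On the word-metric side, the Farb--Lubotzky--Minsky theorem \cite{FLM_MCG} ensures that every infinite-order element of $\Mod(\Sigma)$ has positive stable translation length with respect to $\dist_w$, so for a multitwist $\tau \in \Ga$ one has $\dist_w(\id, \tau^n) \asymp n$. On the Teichm\"uller side, a classical estimate (via Minsky's product regions theorem and the geometry of the thin part) gives $\dist_{\Tc}(o, \tau^n o) = O(\log n)$, since a multitwist has vanishing translation length in the Teichm\"uller metric. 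These two asymptotic growth rates cannot be matched by a quasi-isometric embedding, producing the desired contradiction.

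I expect no substantive obstacle: all of the analytic content is absorbed in Theorem \ref{thm.RW Teich body}, and this corollary is merely a packaging statement relying on a standard fact about the geometry of multitwists in Teichm\"uller space. The only care required is to verify that the standing hypotheses of the corollary (relative hyperbolicity of $\Ga$ and finite superexponential moment of $\mathsf{m}$) precisely match those of Theorem \ref{thm.RW Teich body}, which is immediate, and to note that the conclusion applies to every Busemann PS-measure $\mu$ because the argument makes no use of the specific $\mu$ beyond its non-singularity with $\nu$.
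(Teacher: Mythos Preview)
Your proposal is correct and follows essentially the same route as the paper: apply Theorem~\ref{thm.RW Teich body}(2) to get a quasi-isometric orbit embedding, then contrast the positive stable translation length of a multitwist in the word metric (via Farb--Lubotzky--Minsky~\cite{FLM_MCG}) with its vanishing stable translation length on $\Tc$. The paper uses only the qualitative fact that multitwists have zero stable translation length on $\Tc$ rather than your sharper $O(\log n)$ estimate, but the argument is otherwise identical.
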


\begin{proof} By Farb--Lubotzky--Minsky \cite{FLM_MCG}, every infinite order element $g \in \Mod(\Sigma)$ has positive stable translation length on its Cayley graph, i.e., $$\limsup_{n \to \infty} \frac{\dist_w(\id, g^n)}{n} > 0$$
   for any word metric $\dist_w$ on  $\Mod(\Sigma)$ with respect to a finite generating set. On the other hand, an infinite order mapping class has zero stable translation length on $\Tc$ if and only if one of its power is a multitwist. So the result follows from Theorem \ref{thm.RW Teich body}.
   \end{proof}

   For a special class of subgroups, we prove the converse of Theorem~\ref{thm.RW Teich body}.  A subgroup $\Ga'<\Mod(\Sigma)$ is \emph{parabolically geometrically finite (PGF)} if
   \begin{itemize}
      \item $(\Ga', \Pc')$ is relatively hyperbolic for some $\Pc' = \{ P_1, \dots, P_n \}$ where each $P_i < \Ga'$ contains a finite index, abelian subgroup consisting entirely of multitwists;
      \item the coned off Cayley graph of $(\Ga', \Pc')$ embeds $\Ga'$-equivariantly and quasi-isometrically
      into the curve complex of $\Sigma$.
   \end{itemize}
   See \cite[Def. 1.10]{DDLS_PGF} for details. When $\Pc' = \emptyset$, the group $\Ga'$ is convex cocompact. This is equivalent to the original definition of \cite{FM_CC} as shown by \cite{KL_shadows,Hamenstadt_CC}.

   \begin{theorem} \label{thm.RW Teich PGF}
   Suppose  $\Ga$ is PGF.  If $\mu$ is a Busemann PS-measure for $\Ga$ on $\partial_{GM} \Tc$ of dimension $\delta$, then 
   the following are equivalent:
   \begin{enumerate}
   \item The measures $\nu$ and $\mu$ are not singular, 
   \item  The measures $\nu$ and $\mu$ are in the same measure class and the Radon--Nikodym derivatives are a.e. bounded from above and below by a positive number, 
   \item For any $o \in \Tc$, $$\sup_{\gamma \in \Gamma} \abs{ \dist_G(\id, \gamma) - \delta \dist_{\Tc}(o, \gamma o)} < +\infty.$$ In particular, $\Ga$ is convex cocompact, $\delta$ is the critical exponent of $\Ga$, and $ \sum_{\ga \in \Ga} e^{-\delta \dist_{\Tc} (o, \ga o)} = + \infty$.
   \end{enumerate}
   \end{theorem}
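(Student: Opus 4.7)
The implication $(2) \Rightarrow (1)$ is trivial. For $(1) \Rightarrow (3)$, I would first invoke Theorem \ref{thm.RW Teich body} to get the comparison $\sup_{\gamma} |\dist_G(\id,\gamma) - \delta \dist_{\Tc}(o,\gamma o)| < +\infty$, the fact that $\delta$ is the critical exponent with divergent Poincar\'e series, and that the orbit map $\gamma \in (\Gamma,\dist_w) \mapsto \gamma o \in (\Tc,\dist_{\Tc})$ is a quasi-isometric embedding. It then remains to upgrade ``PGF'' to ``convex cocompact,'' i.e., to show that the peripheral collection $\Pc$ is empty. If $\Pc \ne \emptyset$, some $P \in \Pc$ contains an infinite-order multitwist; such an element has positive stable translation length in $(\Gamma,\dist_w)$ (by Farb--Lubotzky--Minsky \cite{FLM_MCG}) but zero stable translation length in $(\Tc,\dist_{\Tc})$, contradicting the quasi-isometric embedding. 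Hence $\Pc = \emptyset$, and Farb--Mosher's characterization \cite{FM_CC} (equivalently Kent--Leininger \cite{KL_shadows} or Hamenst\"adt \cite{Hamenstadt_CC}) shows that $\Gamma$ is convex cocompact.

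For $(3) \Rightarrow (2)$, I would argue as follows. Since $\Gamma$ is convex cocompact, $\Gamma$ is word hyperbolic, $\partial(\Gamma,\Pc) = \partial_\infty \Gamma$, and by \cite[Prop. 3.2]{FM_CC} there is a $\Gamma$-equivariant topological embedding $\iota : \partial_\infty \Gamma \hookrightarrow \UE \subset \partial_{GM} \Tc$ onto the limit set of $\Gamma$. The divergence part of (3) together with the results of Gekhtman \cite{Gekhtman_CC} (see also \cite{Coulon_PS, Yang_conformal}) forces $\mu$ to be supported on $\iota(\partial_\infty \Gamma)$, and since $\partial_\infty \Gamma$ serves as a model of the Poisson boundary for $(\Gamma,\mathsf{m})$, we may identify $\nu = \iota_* \nu_0$ on $\partial_{GM}\Tc$.

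Pulling everything back by $\iota$ reduces the problem to the setting of expanding coarse-cocycles on the word hyperbolic group $\Gamma$. Specifically, using convex cocompactness (so that the orbit map is a quasi-isometric embedding) together with Theorem \ref{thm:theory of potentials}, the pullback $\iota^* \beta$ of the Gardiner--Masur Busemann cocycle is an expanding coarse-cocycle on $\partial_\infty \Gamma$ in the sense of Definition \ref{defn.expanding coarse cocycle}, with magnitude $\gamma \mapsto \dist_{\Tc}(o,\gamma o)$, and $\iota^*\mu$ is a coarse $\delta\cdot \iota^*\beta$-PS measure. On the other hand, by the discussion in Example \ref{ex.RW GH} (valid for finite superexponential moment via \cite{Gou2015}), the cocycle $\sigma_\mathsf{m}$ is expanding on $\partial_\infty \Gamma$ with magnitude $\gamma \mapsto \dist_G(\id,\gamma)$, and $\nu_0$ is a coarse $\sigma_\mathsf{m}$-PS measure of dimension one. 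Condition (3) says exactly that these two magnitudes differ by a uniformly bounded amount. Applying \cite[Prop. 13.1 and 13.2]{BCZZ2024a}, precisely as in the end of the proof of Theorem \ref{thm:GH case meas=>homeo}, yields that $\iota^*\mu$ and $\nu_0$ lie in the same measure class with Radon--Nikodym derivatives bounded above and below $\mu$-a.e. Transporting this back along $\iota$ gives (2).

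The main obstacle is the $(3) \Rightarrow (2)$ direction, and within it the verification that $\iota^*\beta$ is expanding on $\partial_\infty \Gamma$ and that the identification $\nu = \iota_*\nu_0$ is genuine: once these structural inputs are in place, the quantitative comparison becomes the same application of \cite{BCZZ2024a} used throughout the paper. The rest, including the upgrade from PGF to convex cocompact in $(1) \Rightarrow (3)$, is a direct translation-length argument.
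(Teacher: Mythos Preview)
Your proposal is correct and matches the paper's proof: $(1)\Rightarrow(3)$ via Theorem~\ref{thm.RW Teich body} together with the multitwist argument to force $\Pc=\emptyset$, and $(3)\Rightarrow(2)$ by pulling back to $\partial_\infty\Gamma$ and invoking \cite[Prop.~13.1, 13.2]{BCZZ2024a}. The obstacle you flag---that $\iota^*\beta$ is expanding---is exactly what the paper isolates as Proposition~\ref{prop.CCMCG expanding}, proved directly from quasi-convexity of the orbit \cite{FM_CC} and Miyachi's continuity at $\UE$ points \cite{Miyachi_UE}; these are the same ingredients your route through Theorem~\ref{thm:theory of potentials} would need to verify coarse additivity of $\dist_{\Tc}$ on the Cayley graph.
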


   \begin{proof}
   The implication $(2) \Rightarrow (1)$ is clear and $(1) \Rightarrow (3)$ follows from Theorem \ref{thm.RW Teich body}. Now suppose (3). Then $\Ga$ is word hyperbolic and the orbit map $\ga \mapsto \ga o$ continuously extends to a $\Ga$-equivariant map $f : \partial_{\infty} \Ga \to \UE$ which is a homeomorphism onto its image,
   after replacing $o \in \Tc$ with another point if necessary \cite[Thm. 1.1, Prop. 3.2]{FM_CC}. Hence, $\nu = f_* \nu_0$ since both $\nu$ and $\nu_0$ are hitting measures. Since $\sum_{\ga \in \Ga} e^{-\delta \dist_{\Tc}(o, \ga o)} = + \infty$, Theorem \ref{thm.contractinglimitfull} implies that $\mu(f(\partial_{\infty} \Ga)) = 1$. Hence, we can take the pull-back of the measure $\mu$ to $\partial_{\infty} \Ga$ via $f$, which is a PS-measure for the cocycle $\sigma_{\Tc}$ given in Proposition \ref{prop.CCMCG expanding} below. In Proposition \ref{prop.CCMCG expanding} below we will verify that $\sigma_{\Tc}$ is an expanding cocycle. Therefore, (2) follows from \cite[Prop. 13.1 and 13.2]{BCZZ2024a}.
   \end{proof}

   \begin{proposition} \label{prop.CCMCG expanding}
      Suppose $\Ga<\Mod(\Sigma)$ is convex cocompact. Let $f : \partial_{\infty} \Ga \to \UE \subset \partial_{GM} \Tc$ be the $\Ga$-equivariant embedding induced from a quasi-isometric embedding $\ga \in \Ga \mapsto \ga o \in \Tc$ for some $o \in \Tc$.
      Then  the cocycle $\sigma_{\Tc} : \Ga \times \partial_{\infty} \Ga \to \R$ given by
      $$
      \sigma_{\Tc}(\ga, x) := f(x)(\ga^{-1}o, o)
      $$
      is an expanding cocycle with magnitude $\gamma \mapsto \dist_{\Tc}(o, \ga o)$.
   \end{proposition}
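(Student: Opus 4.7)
The plan is to verify Definition~\ref{defn.expanding coarse cocycle} for $\sigma_{\Tc}$ directly, exploiting the horofunction description of $\partial_{GM}\Tc$ developed in Section~\ref{sec.contracting}. First, that $\sigma_{\Tc}$ is an honest cocycle (not merely a coarse-cocycle) is a short computation from the $\Ga$-equivariance of $f$, the $\Ga$-action $(\gamma c)(y,z) = c(\gamma^{-1}y, \gamma^{-1}z)$ on horofunctions, and the identity $c(a, c) = c(a, b) + c(b, c)$ enjoyed by every $c \in \overline{\Tc}$; the associated telescoping gives $\sigma_{\Tc}(\gamma_1 \gamma_2, x) = \sigma_{\Tc}(\gamma_1, \gamma_2 x) + \sigma_{\Tc}(\gamma_2, x)$ exactly. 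Coarse continuity of $\sigma_{\Tc}(\gamma, \cdot)$ holds with $\kappa = 0$, since the horofunction topology is defined by pointwise convergence and $f$ is continuous.

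For the magnitude, I set $\|\gamma\|_{\sigma_{\Tc}} := \dist_{\Tc}(o, \gamma o)$. That $\|\gamma_n\|_{\sigma_{\Tc}} \to +\infty$ along escaping sequences is immediate from the quasi-isometric embedding $\gamma \mapsto \gamma o$. The upper bound $\sigma_{\Tc}(\gamma, x) \le \|\gamma\|_{\sigma_{\Tc}}$ is automatic from the universal estimate $|c(y,z)| \le \dist_{\Tc}(y,z)$ for horofunctions $c \in \overline{\Tc}$. To obtain the lower bound uniformly in $x \in \partial_\infty \Ga \smallsetminus B_\epsilon(\gamma^{-1})$, the strategy is to introduce the $\Ga$-invariant function $\psi : \Ga \times \Ga \to \R$, $\psi(\alpha, \beta) := \dist_{\Tc}(\alpha o, \beta o)$, on the Cayley graph of $\Ga$ and show that $\psi$ is a coarsely additive potential in the sense of~\cite{BCZZ2024a}.

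The first two axioms of a coarsely additive potential follow directly from the quasi-isometric embedding. The third axiom---coarse additivity along Cayley-graph geodesics---is the main obstacle, and it is where convex cocompactness enters critically. By the Farb--Mosher characterization, the orbit $\Ga o$ is quasi-convex in $\Tc$, and Teichm\"uller geodesics between convex cocompact orbit points are contracting, hence enjoy a Morse property. Consequently, the image under the orbit map of a Cayley-graph geodesic from $\alpha$ to $\beta$ is a quasi-geodesic that lies within uniform Hausdorff distance of the Teichm\"uller geodesic $[\alpha o, \beta o]$; a point $u$ in an $r$-neighborhood of the Cayley geodesic then maps to a point $u o$ within bounded distance of some $u' \in [\alpha o, \beta o]$, which yields $|\psi(\alpha, \beta) - \psi(\alpha, u) - \psi(u, \beta)| \le \kappa(r)$ via the triangle inequality.

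Granted that $\psi$ is a coarsely additive potential, Theorem~\ref{thm:theory of potentials}(1) produces an expanding coarse-cocycle $\sigma_\psi$ on $\partial_\infty \Ga$ with magnitude $\gamma \mapsto \dist_{\Tc}(o, \gamma o)$, and the proof concludes by identifying $\sigma_\psi$ with $\sigma_{\Tc}$. By the very construction of $f$ as the continuous extension of the orbit map, any sequence $\alpha_n \in \Ga$ with $\alpha_n \to x \in \partial_\infty \Ga$ satisfies $\alpha_n o \to f(x)$ in $\overline{\Tc}$, so the $\limsup$ defining $\sigma_\psi$ is realized as an actual limit equal to $f(x)(\gamma^{-1}o, o) = \sigma_{\Tc}(\gamma, x)$. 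The expanding estimate for $\sigma_\psi$ therefore transfers verbatim to $\sigma_{\Tc}$, completing the verification.
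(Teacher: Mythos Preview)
Your argument is correct and reaches the same conclusion as the paper, but by a different route. The paper verifies the magnitude estimate in Definition~\ref{defn.expanding coarse cocycle}(2)(b) directly: given $x \notin \overline{B_\epsilon(\gamma^{-1})}$, it takes a Cayley-graph geodesic ray $[\id,x)$, observes that $\gamma$ lies within bounded $\dist_w$-distance of some $\gamma_k$ on this ray, and then uses Farb--Mosher quasi-convexity of $\Gamma o$ together with the Morse lemma \emph{in the hyperbolic group $\Gamma$} (after projecting the Teichm\"uller geodesic $[o,\gamma_n o]$ back to the orbit) to conclude that $\gamma o$ lies within bounded distance of $[o,\gamma_n o]$; the estimate then drops out of the Busemann formula and Miyachi's continuity result. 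You instead repackage the same geometry as the statement that $\psi(\alpha,\beta)=\dist_{\Tc}(\alpha o,\beta o)$ is a coarsely additive potential on the Cayley graph, and then invoke Theorem~\ref{thm:theory of potentials}(1) as a black box; your Morse step takes place in $\Tc$ (contracting Teichm\"uller geodesics between thick-part points are Morse) rather than in $\Gamma$. Both arguments rest on exactly the same ingredients---quasi-convexity of the orbit, a fellow-traveling statement, and Miyachi's identification of $\UE$ in the two compactifications---so the difference is organizational: your route reduces to existing machinery, while the paper's is self-contained.

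One point worth making explicit in your write-up: the assertion that $\alpha_n \to x$ in $\partial_\infty\Gamma$ implies $\alpha_n o \to f(x)$ in the horofunction compactification $\overline{\Tc}$ is not quite ``by construction of $f$,'' since the Farb--Mosher map is built via Thurston's compactification; it requires Miyachi's theorem that convergence to a point of $\UE$ agrees in the Thurston and Gardiner--Masur compactifications. The paper cites this explicitly.
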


   \begin{proof}

      It is clear that $\sigma_{\Tc}$ is a cocycle and $\lim_{n \to \infty}  \dist_{\Tc}(o, \ga_n o) = + \infty$ for any escaping sequence $\{ \ga_n \} \subset \Ga$.  Moreover, since $f(\partial_{\infty} \Ga) \subset \UE$, $\sigma_{\Tc}$ is continuous \cite{Miyachi_UE}. Recalling the metric $\dist$ on $\Ga \sqcup \partial_{\infty} \Ga$ from Section \ref{sec.convergenceaction}, it remains to show that for any $\epsilon > 0$, there exists $C> 0$ such that
      $$
      \dist_{\Tc}(o, \ga o) - C \le \sigma_{\Tc}(\ga, \ga^{-1}x) \le \dist_{\Tc}(o, \ga o) + C
      $$
      whenever $x \in \ga \left( \partial_{\infty} \Ga \smallsetminus \overline{B_{\epsilon}(\ga^{-1})} \right)$, where $B_{\epsilon}$ is the open $\dist$-ball of radius $\epsilon$  centered at $\ga^{-1}$. 

      Let $d_w$ be a word metric on $\Ga$ with respect to a finite generating set.
      Fix $\epsilon > 0$. It is easy to see that there exists $R_0 > 0$ such that for any $\ga \in \Ga$ and $x \in \ga \left(\partial_{\infty} \Ga \smallsetminus \overline{B_{\epsilon}(\ga^{-1})} \right)$, any geodesic ray $[\id, x) \subset \Ga$ with respect to $d_w$ intersects the $\dist_w$-ball of radius $R_0$ centered at $\ga$.

      Let $\ga \in \Ga$ and $x \in \ga \left(\partial_{\infty} \Ga \smallsetminus \overline{B_{\epsilon}(\ga^{-1})} \right)$. Fix a geodesic ray $[\id, x) \subset \Ga$ and for each $n \ge 1$, let $\ga_n \in [\id, x)$ be such that $\dist_w(\id, \ga_n) = n$. By \cite{Miyachi_UE},
      $$
      \sigma_{\Tc}(\ga, \ga^{-1} x) = \lim_{n \to \infty} \dist_{\Tc} (\ga_n o, o) - \dist_{\Tc}(\ga_n o, \ga o).
      $$
      Fix $k \ge 1$ with $\dist_w(\ga, \ga_k) < R_0$. Since the orbit map $\Ga \to \Tc$ is a quasi-isometric embedding, we have 
      $$\dist_{\Tc}(\ga o, \ga_k o) < R$$
       for some $R > 0$ determined by $R_0$.

      For each $n \ge 1$, let $L_n \subset \Tc$ be the geodesic from $o$ to $\ga_n o$. Since $\Ga(o) \subset \Tc$ is quasi-convex \cite{FM_CC}, there exists $C_0 > 0$ such that $L_n$ is contained in the $C_0$-neighborhood of $\Ga(o)$ for all $n \ge 1$. Hence, the nearest-point projection $L_n' \subset \Ga(o)$ of $L_n$ is a quasi-geodesic. Since the orbit map is a quasi-isometric embedding, it follows from the Morse Lemma for $(\Ga, d_w)$ that for some uniform $C_1 > 0$, the quasi-geodesic $\{ \ga_1 o, \dots, \ga_n o\} \subset \Tc$ is contained in the $C_1$-neighborhood of $L_n$, for all $n \ge 1$.

      Now for all $n \ge k$,
      $$
      \dist_{\Tc}(\ga o, L_n) < R + C_1
      $$
      and hence 
      $$
      \abs{\left( \dist_{\Tc} (\ga_n o, o) - \dist_{\Tc}(\ga_n o, \ga o) \right) - \dist_{\Tc}(o, \ga o)} < 2(R + C_1).
      $$
      Taking $n \to \infty$, we have $\abs{  \sigma_{\Tc}(\ga, \ga^{-1} x) -     \dist_{\Tc}(o, \ga o) } \le 2(R + C_1)$, completing the proof with $C := 2(R + C_1)$.
   \end{proof}

\subsection{Random walks on discrete subgroups of Lie groups} \label{subsec.RW Lie gp} We continue to assume that $\Gamma$ and $\mathsf{m}$ satisfy the assumptions at the start of the section. In this subsection we suppose that 
\begin{itemize}
\item $\Gsf$ is connected semisimple Lie group without compact factors and with finite center, and 
\item  $\Gamma < \Gsf$ is a Zariski dense discrete subgroup.
\end{itemize}

Recall that $\Fc = \Gsf/\Psf$ is the Furstenberg broundary. Guivarc'h and Raugi showed that there exists a unique $\mathsf{m}$-stationary measure $\nu$ on $\F$, and it is the hitting measure for a sample path \cite{GR_Furstenberg}.

As a higher rank analogue of critical exponent, Quint introduced the notion of growth indicator on $\Ga$ \cite{Quint_Divergence}. Fixing any norm $\norm{ \cdot }$ on $\fa$, the \emph{growth indicator of $\Gamma$} is the function $\psi_{\Ga} : \fa \to \Rb \cup \{-\infty\}$ defined as follows: for $u \neq 0$,
   $$
   \psi_{\Ga}(u) := \norm{u} \inf_{\Cc \ni u} \left\{ \text{critical exponent of } s \mapsto \sum_{\ga \in \Ga} e^{-s \norm{\kappa(\ga)}} \right\}
   $$
   where the infimum is over all open cones in $\fa$ containing $u$, and $\psi_{\Ga}(0) = 0$. A functional $\phi \in \fa^*$ is \emph{tangent} to the growth indicator of $\Ga$ if $\phi \geq \psi_{\Ga}$ on $\fa$ and there exists non-zero $u \in \fa^+$ with $\phi(u) = \psi_{\Ga}(u)$.

\begin{theorem}\label{thm:random walks Zdense} If $\mu$ is a coarse $\phi$-PS measure for $\Ga$ on $\Fc$ of dimension $\delta$ and  the measures $\nu$, $\mu$ are not singular, then:
\begin{enumerate}
\item $\sup_{\gamma \in \Gamma} \abs{ \dist_G(\id, \gamma) - \delta \phi(\kappa(\gamma))} < +\infty$. In particular, $\sum_{\ga \in \Ga} e^{-\delta \phi(\kappa(\gamma))} =~+ \infty$ and $\delta \phi$ is tangent to the growth indicator of $\Ga$.

\item If  $\dist_w$ is a word metric on $\Gamma$ with respect to a finite generating set, $(X,\dist_X)$ is the symmetric space associated to $\Gsf$, and $x_0 \in X$, then  the map 
$$
\gamma \in (\Gamma, \dist_w) \mapsto \gamma x_0 \in (X,\dist_X)
$$
is a quasi-isometric embedding.
\end{enumerate} 
\end{theorem}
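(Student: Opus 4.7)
The plan is to reduce the theorem to an application of the main rigidity result (Theorem \ref{thm:main rigidity theorem}) by realizing both $\nu_0$ and $\mu$ as PS-measures of PS-systems on the same group $\Gamma$, linked by a Poisson boundary identification. On the Bowditch side, Theorem \ref{thm:random walks are well behaved} provides the well-behaved PS-system $(\partial(\Gamma, \Pc), \Gamma, \sigma_\mathsf{m}, \nu_0)$ of dimension $1$, with trivial hierarchy and magnitude $\norm{\gamma}_{\sigma_\mathsf{m}} = \dist_G(\id, \gamma)$; divergence of the associated Poincar\'e series combined with Theorem \ref{thm:conical limit set has full measure} yields $\nu_0(\Lambda^{\rm con}) = 1$. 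On the flag side, Zariski density of $\Gamma$ implies $\Psf$-irreducibility, so Theorem \ref{thm.irreduciblePS} realizes $\mu$ as the PS-measure of the PS-system $(\Fc, \Gamma, \phi \circ B_\Delta, \mu)$ of dimension $\delta$, with magnitude $\gamma \mapsto \phi(\kappa(\gamma))$.

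To feed these two PS-systems into Theorem \ref{thm:main rigidity theorem} with $\rho = \id_\Gamma$, I would construct a $\Gamma$-equivariant measurable isomorphism $f : (\partial(\Gamma, \Pc), \nu_0) \to (\Fc, \nu)$ using the fact that both spaces serve as Poisson boundaries of $(\Gamma, \mathsf{m})$ under our finite superexponential moment hypothesis: the former via Kaimanovich's theory extended to relatively hyperbolic groups and the latter via the classical identification of the Furstenberg boundary with the Poisson boundary for Zariski dense discrete subgroups of semisimple Lie groups. Since $f_*\nu_0 = \nu$ is non-singular to $\mu$ by hypothesis, the rigidity theorem gives
$$\sup_{\gamma \in \Gamma} \abs{\dist_G(\id, \gamma) - \delta \phi(\kappa(\gamma))} < +\infty,$$
and combining with $\sum_\gamma e^{-\dist_G(\id, \gamma)} = +\infty$ from Theorem \ref{thm:random walks are well behaved} yields divergence of $\sum_\gamma e^{-\delta \phi(\kappa(\gamma))}$. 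For the tangency assertion (when $\mu$ is an honest, non-coarse, $\phi$-PS measure), I would invoke Quint's work: existence of such $\mu$ for the Zariski dense group $\Gamma$ forces $\delta\phi \geq \psi_\Gamma$ on $\fa^+$, while the divergence just proved together with Quint's convergence criterion excludes strict inequality on the limit cone, producing a non-zero $u \in \fa^+$ with $\delta\phi(u) = \psi_\Gamma(u)$.

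Part (2) then follows quickly by post-composing part (1) with the Gekhtman--Tiozzo quasi-isometry \cite[Prop. 7.8]{GT2020} between $\dist_G$ and any word metric $\dist_w$, producing $\dist_w(\id, \gamma) \leq C_0 \delta \phi(\kappa(\gamma)) + C_1$. Since $\phi(\kappa(\gamma)) \leq \norm{\phi}\cdot \norm{\kappa(\gamma)} = \norm{\phi} \dist_X(x_0, \gamma x_0)$ and the reverse Lipschitz bound $\dist_X(x_0, \gamma x_0) \leq C_2 \dist_w(\id, \gamma) + C_3$ is automatic for orbit maps, we conclude that $\gamma \mapsto \gamma x_0$ is a quasi-isometric embedding. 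The main obstacle I foresee is arranging that $f$ is injective on a full-measure set, as required by Theorem \ref{thm:main rigidity theorem}; this forces us to invoke the Poisson boundary identification in its strongest form (isomorphism, not merely quotient) rather than a weaker equivariant factor-map assertion, and is what packages the ergodic-theoretic content of the random walk into the abstract rigidity framework.
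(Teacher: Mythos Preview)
Your proposal is correct and essentially identical to the paper's proof: both identify $(\partial(\Gamma,\Pc),\nu_0)$ and $(\Fc,\nu)$ as Poisson boundaries of $(\Gamma,\mathsf{m})$ (citing \cite[Thm.~10.7 and Remark after Thm.~7.7]{Kaimanovich2000}), feed the resulting equivariant measurable isomorphism $f$ into Theorem~\ref{thm:main rigidity theorem} together with the PS-systems from Theorems~\ref{thm:random walks are well behaved} and~\ref{thm.irreduciblePS}, and then invoke Quint's results (\cite[Lem.~3.1.3]{Quint_Divergence} and \cite[Thm.~8.1]{Quint_PS}) for the tangency and \cite[Prop.~7.8]{GT2020} for part~(2). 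Your worry about injectivity of $f$ is not an obstacle: the uniqueness of the Poisson boundary gives a genuine $\Gamma$-equivariant isomorphism of Lebesgue spaces, hence a bijection off a null set, which is exactly what Theorem~\ref{thm:main rigidity theorem} requires; the paper uses this without further comment.
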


\begin{proof}
By~\cite[Thm. 10.7]{Kaimanovich2000} the space $(\Fc, \nu)$ is a Poisson boundary for $(\Gamma, \mathsf{m})$ and by~\cite[Remark following Thm. 7.7]{Kaimanovich2000}, the space  $( \partial(\Gamma, \Pc), \nu_0)$ is a Poisson boundary for $(\Gamma, \mathsf{m})$. Hence there is an isomorphism $f : ( \partial(\Gamma, \Pc), \nu_0)  \rightarrow (\Fc, \nu)$. 

By assumption $\nu = f_* \nu_0$ is not singular with respect to $\mu$. By Theorem \ref{thm.irreduciblePS}, $\mu$ is a coarse PS-measure in a PS-system which has magnitude function 
$$
\gamma \mapsto \phi(\kappa(\gamma)).
$$
Then by Theorem \ref{thm:main rigidity theorem},
$$
\sup_{\gamma \in \Gamma} \abs{ \dist_G(\id, \gamma)-\delta \phi(\kappa(\gamma))} < +\infty,
$$
showing the first part of (1).  Since  $\sum_{\ga \in \Ga} e^{-\dist_{G}(\id, \ga)} = + \infty$ by Theorem \ref{thm:random walks are well behaved}, we have 
$$
\sum_{\ga \in \Ga} e^{-\delta \phi(\kappa(\gamma))} = + \infty.
$$
Then \cite[Lem. 3.1.3]{Quint_Divergence} implies that $\delta \phi(u) \le \psi_{\Ga}(u)$  for some $u \neq 0$. Finally, the existence of the coarse $\phi$-PS measure $\mu$ of dimension $\delta$ implies that $\delta \phi \ge \psi_{\Ga}$ by \cite[Thm. 8.1]{Quint_PS} and so $\delta \phi$ is tangent to the growth indicator of $\Ga$. Note that while \cite[Thm. 8.1]{Quint_PS} assumes the PS-measure is non-coarse, the same proof works for coarse PS-measures as well.

To show (2), let $S \subset \Gamma$ be the finite symmetric generating set which induces $\dist_w$. By~\cite[Prop. 7.8]{GT2020} the Green metric is quasi-isometric to $\dist_w$ and so there exist $a > 1$ and $b > 0$ such that 
$$
a^{-1} \dist_w(\gamma_1,\gamma_2) - b \leq \phi(\kappa(\gamma_1^{-1}\gamma_2)) \leq a \dist_w(\gamma_1,\gamma_2) + b
$$
for all $\gamma_1,\gamma_2 \in \Gamma$. Then
$$
\dist_w(\gamma_1,\gamma_2) \leq a \phi(\kappa(\gamma_1^{-1}\gamma_2))+b \leq a\norm{\phi} \norm{\kappa(\gamma_1^{-1}\gamma_2)}+b = a\norm{\phi} \dist_X(\gamma_1o,\gamma_2o)+b
$$
and 
$$
\dist_X(\gamma_1x_0, \gamma_2x_0) \leq C \dist_w(\gamma_1, \gamma_2)
$$
where $C : = \max_{s \in S} \dist_X(x_0, sx_0)$. So (2) follows. 
\end{proof}

We now restate and prove Corollary \ref{cor:RW in G wh and unipotent}.

\begin{corollary} \label{cor:RW in G wh and unipotent body} If $\Gamma$ is word hyperbolic (as an abstract group) and contains a unipotent element of $\Gsf$, then $\nu$ is singular with respect to every coarse Iwasawa PS-measure on $\Fc$. 
\end{corollary}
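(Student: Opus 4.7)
The plan is to argue by contradiction using Theorem~\ref{thm:random walks Zdense}. Suppose $\nu$ is not singular to some coarse $\phi$-Iwasawa Patterson--Sullivan measure on $\Fc$ of dimension $\delta \geq 0$. Then part~(1) of Theorem~\ref{thm:random walks Zdense} provides
$$
C := \sup_{\gamma \in \Gamma} \left| \dist_G(\id, \gamma) - \delta \phi(\kappa(\gamma)) \right| < +\infty.
$$
The strategy is to test this supremum along iterates $u^n$ of the given unipotent element $u \in \Gamma$ and derive a contradiction from mismatched growth rates of the two terms.

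For the Green-metric side, I would use that $\Gamma$ is word hyperbolic, so every infinite order element acts loxodromically on any Cayley graph of $\Gamma$; in particular $u$ has positive stable translation length with respect to any word metric $\dist_w$ on $\Gamma$ associated with a finite generating set. Since $\mathsf{m}$ has finite superexponential moment, \cite[Prop.~7.8]{GT2020} shows that $\dist_G$ is quasi-isometric to $\dist_w$, so one obtains
$$
\ell := \liminf_{n \to \infty} \frac{\dist_G(\id, u^n)}{n} > 0.
$$
For the Cartan-projection side, unipotence of $u$ in $\Gsf$ forces $\lambda(u) = 0$. Combined with $\lim_{n \to \infty} \kappa(u^n)/n = \lambda(u)$, this gives $\|\kappa(u^n)\| = o(n)$ and hence $\delta\,\phi(\kappa(u^n)) = o(n)$.

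Putting the two estimates together yields
$$
\dist_G(\id, u^n) - \delta \phi(\kappa(u^n)) \geq \tfrac{\ell}{2}\, n
$$
for all sufficiently large $n$, contradicting the finiteness of $C$. I do not expect any step to present a serious obstacle: Theorem~\ref{thm:random walks Zdense} does the heavy lifting, and what remains is merely to exhibit a sequence in $\Gamma$ along which the Green growth is linear while the Cartan growth is sublinear, for which $\{u^n\}$ is the natural candidate. The only auxiliary facts needed are the loxodromicity of infinite-order elements in a word hyperbolic group and the standard identity $\lim_n \kappa(u^n)/n = \lambda(u)$.
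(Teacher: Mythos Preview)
Your proof is correct and follows essentially the same approach as the paper: argue by contradiction via Theorem~\ref{thm:random walks Zdense}, then exploit that a unipotent element has sublinear Cartan growth (equivalently, sublinear displacement in the symmetric space) while, being infinite order in a word hyperbolic group, it has linear growth in the word/Green metric. The only cosmetic difference is that the paper invokes part~(2) of Theorem~\ref{thm:random walks Zdense} (the quasi-isometric embedding $\gamma \mapsto \gamma x_0$) and compares $\dist_w(u^n,\id)$ against $\dist_X(u^n x_0,x_0)$, whereas you invoke part~(1) directly and compare $\dist_G(\id,u^n)$ against $\delta\phi(\kappa(u^n))$; since $\dist_X(u^n x_0,x_0)=\|\kappa(u^n)\|$ and $\dist_G$ is quasi-isometric to $\dist_w$, the two arguments are equivalent.
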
 

\begin{proof} Suppose for a contradiction that $\nu$ is non-singular to some coarse $\phi$-PS measure $\mu$ of dimension $\delta$. Fix a word metric $\dist_w$ on $\Gamma$ with respect to a finite generating set and $x_0 \in X$. By Theorem~\ref{thm:random walks Zdense},    the map 
$$
\gamma \in (\Gamma, \dist_w) \mapsto \gamma x_0 \in (X,\dist_X)
$$
is a quasi-isometry. However, if $u \in \Gamma$ is a unipotent element of $\Gsf$, then 
$$
\lim_{n \rightarrow \infty} \frac{1}{n} \dist_X(u^n x_0, x_0) = 0
$$
while 
$$
\lim_{n \rightarrow \infty} \frac{1}{n} \dist_w(u^n, \id) >0
$$
since $\Gamma$ is word hyperbolic and $u \in \Gamma$ has infinite order (hence is loxodromic). So we have a contradiction. 
\end{proof}

\bibliographystyle{alpha}
\bibliography{geom}

\end{document}